\documentclass[letterpaper, 11pt]{amsart}
\usepackage[margin=1.2in]{geometry}
\usepackage{amssymb,latexsym,eufrak,amsmath,amscd, graphics}
\usepackage{xspace,xcolor}
\usepackage[breaklinks,colorlinks,citecolor=teal,linkcolor=teal,urlcolor=teal,pagebackref,hyperindex]{hyperref}
\usepackage[alphabetic]{amsrefs} 
\usepackage{mathrsfs}
\usepackage{amsfonts}
\usepackage{tikz-cd} 
\usepackage{mathtools} 
\usepackage{stmaryrd} 
\usetikzlibrary{calc, intersections} 

\tikzset{symbol/.style={draw=none,every to/.append style={edge node={node [sloped, allow upside down, auto=false]{$#1$}}}}}

\newcommand{\onto}{\twoheadrightarrow}

\tikzset{
    labl/.style={anchor=north, rotate=90, inner sep=1mm}
}

\let\proof\noindentitproof 

\def\Mustata{Mus\-ta\-\c{t}\u{a}\xspace}

\DeclareMathOperator{\Ker}{ker}

\DeclareMathOperator{\Hom}{Hom}

\DeclareMathOperator{\bbc}{\mathbb{C}}
\DeclareMathOperator{\bbr}{\mathbb{R}}
\DeclareMathOperator{\bbz}{\mathbb{Z}}
\DeclareMathOperator{\bbq}{\mathbb{Q}}
\DeclareMathOperator{\bbg}{\mathbb{G}}
\DeclareMathOperator{\bba}{\mathbb{A}}

\DeclareMathOperator{\bbp}{\mathbb{P}}

\DeclareMathOperator{\bfo}{\bf 1}

\DeclareMathOperator{\os}{\mathcal{O}}
\DeclareMathOperator{\vs}{\mathcal{V}}

\DeclareMathOperator{\ds}{\mathcal{D}}

\DeclareMathOperator{\rs}{\mathcal{R}}
\DeclareMathOperator{\ms}{\mathcal{M}}

\DeclareMathOperator{\ps}{\mathcal{P}}

\DeclareMathOperator{\js}{\mathcal{J}}

\DeclareMathOperator{\bff}{\bf{f}}

\DeclareMathOperator{\bfh}{\bf{h}}
\DeclareMathOperator{\bfg}{\bf{g}}

\DeclareMathOperator{\sDer}{\mathcal{D}{\it er}}

\DeclareMathOperator{\num}{\operatorname{num}}

\DeclareMathOperator{\lct}{\operatorname{lct}}

\DeclareMathOperator{\relint}{\operatorname{relint}}

\DeclareMathOperator{\wt}{\operatorname{wt}}

\DeclareMathOperator{\Conv}{\operatorname{Conv}}

\DeclareMathOperator{\supp}{\operatorname{supp}}

\DeclareMathOperator{\Spec}{\operatorname{Spec}}

\DeclareMathOperator{\Gr}{\operatorname{Gr}}

\newtheorem{lemma}{Lemma}[section]
\newtheorem{theorem}[lemma]{Theorem}
\newtheorem{corollary}[lemma]{Corollary}

\newtheorem{claim}[lemma]{Claim}

\theoremstyle{definition}
\newtheorem{definition}[lemma]{Definition}
\newtheorem{example}[lemma]{Example}
\newtheorem{remark}[lemma]{Remark}

\begin{document}

\begin{abstract}
We compute the minimal exponents of semi-quasihomogeneous and Khovanskii non-degenerate complete intersections.
\end{abstract}



\author[Jonghyun Lee]{Jonghyun Lee}
\address{Department of Mathematics, University of Michigan,
Ann Arbor, MI 48109, USA}
\email{{nuyhgnoj@umich.edu}}

\vspace*{-3em}
\title[Minimal exponents]{The minimal exponents of semi-quasihomogeneous and Khovanskii non-degenerate complete intersections}
\maketitle

\vspace{-2em}

\section{Introduction}

Let $X$ be a smooth complex variety. Given a nonempty hypersurface $Z$ in $X$ defined by $f\in \os_X(X)$, the \textit{minimal exponent} $\widetilde{\alpha}(Z)$ was defined by Saito in \cite{Sai93} as follows. Recall that the Bernstein-Sato polynomial of $f$ is the monic polynomial $b_f(s)\in \bbc[s]$ of minimal degree such that 
$$
b_f(s)f^s \in \ds_X[s]\cdot f^{s+1},
$$
where $\ds_X$ is the sheaf of differential operators on $X$, which acts on the formal symbol $f^s$ in the expected way. The roots of $b_f(s)$ are negative rational numbers \cite{Kas76}. Since $Z$ is nonempty, by specializing $s$ to $-1$, it is easy to see that $b_f(-1)=0$. By definition, $\widetilde{\alpha}(Z) = \widetilde{\alpha}(f)$ is the negative of the largest root of $b_f(s)/(s+1)$ (with the convention this is $\infty$ if $b_f(s)=s+1$).

The minimal exponent of a hypersurface is an important invariant of singularities. For example, the \textit{log canonical threshold} of the pair $(X,Z)$ is given by $\lct(X,Z) = \min\{1,\widetilde{\alpha}(Z)\}$ \cite[Theorem 10.6]{Kol97}, and $Z$ has rational singularities if and only if $\widetilde{\alpha}(Z)>1$ \cite{Sai93}. Moreover, the minimal exponent characterizes the higher rational singularities of $Z$ (see \cite{JKSY22} and \cite{MOPW23}) as well as the higher Du Bois singularities (see \cite[Appendix]{FL24} and \cite{MP25}). If $Z$ has isolated singularities, then the minimal exponent coincides with the so-called \textit{complex singularity index} or \textit{Arnold exponent}, which can be described via asymptotic expansions of integrals along vanishing cycles (see \cite{Mal74} and \cite{AGZV12}).

In \cite{CDMO24}, Chen, Dirks, \Mustata, and Olano introduced and studied an extension of the minimal exponent $\widetilde{\alpha}(Z)$ to the case when $Z$ is a local complete intersection in $X$ of pure codimension $r\geq 1$. One of their main results described $\widetilde{\alpha}(Z)$ in terms of the minimal exponent of a hypersurface. 
Suppose that $Z$ is defined in $X$ by $f_1,\dots,f_r\in \os_X(X)$, and consider $g = \sum_{i=1}^r z_i f_i\in \os_{X\times \bba^r}(X\times \bba^r)$, where $z_1,\dots,z_r$ are the coordinates of $\bba^r$. Then we have $\widetilde{\alpha}(Z)=\widetilde{\alpha}(g|_{X\times (\bba^r\setminus \{0\})})$. The minimal exponent can also be described in terms of the Bernstein-Sato polynomial $b_{\bff}(s)$, associated to $\bff=(f_1,\dots,f_r)$, that was introduced in \cite{BMS06}. If $Z$ is nonempty, then $b_{\bff}(-r)=0$ and $\widetilde{\alpha}(Z)$ is the negative of the largest root of $b_{\bff}(s)/(s+r)$ \cite{Dir25}. Similar to the case of hypersurfaces, we have $\lct(X,Z)=\min\{r, \widetilde{\alpha}(Z)\}$, and $Z$ has rational singularities if and only if $\widetilde{\alpha}(Z)>r$ \cite{CDMO24}. Moreover, the minimal exponent characterizes the higher rational and higher Du Bois singularities of local complete intersections (see \cite{MP22} and \cite{CDM24}). 

Two important classes of hypersurfaces where the minimal exponent is explicitly known are weighted homogeneous hypersurfaces with isolated singularities (see \cite{Ste77} or \cite[Theorem 6.18]{Kas03}) and Newton non-degenerate hypersurfaces with isolated singularities (see \cite{Var82}, \cite{EL82}, or \cite{Sai88}), and more generally, semi-quasihomogeneous hypersurfaces (see \cite[Section 2.5]{Sai17}), a natural generalization of the former. To date, beyond the hypersurface case, explicit examples of minimal exponents are known only for certain subclasses of weighted homogeneous complete intersections $Z$ in $\bba^n$ with isolated singularities. The first example, given in \cite[Example 4.23]{CDMO24}, provides an explicit formula for the minimal exponent when $Z$ is defined by homogeneous equations (with respect to the standard grading) of the same degree. The second example, given in \cite[Theorem 1.1]{CDM25}, extends the first example and treats those $Z$ defined by homogeneous equations with possibly different degrees such that the hypersurfaces defined by those equations intersect transversely away from the origin. The third example, given in \cite[Corollary D]{CDO25}, treats those $Z$ with Du Bois singularities such that the expected formula for the minimal exponent is an integer.

The main results of this paper are the formulas for the minimal exponents of semi-quasihomogeneous and Khovanskii non-degenerate complete intersections that satisfy mild transversality conditions. Semi-quasihomogeneous complete intersections, a natural generalization of weighted homogeneous complete intersections with isolated singularities, have been extensively studied in, for example, \cite[]{Dam89}, \cite[]{Giu77}, \cite[]{GH78}, \cite[]{Ran78}, \cite[]{May97}, and \cite[]{BM02}. Also, Khovanskii non-degenerate complete intersections, a natural generalization of Newton non-degenerate hypersurfaces introduced by Khovanskii \cite[]{Kho77}, have been extensively studied in, for example, \cite[]{Var77}, \cite[]{Mor84}, \cite[]{Dam89}, \cite[]{Gaf92}, \cite[]{Oka97}, \cite{BA07}, \cite{ZG09}, \cite{BGZG18}, and \cite[]{Ngu22}.

To state the formula for semi-quasihomogeneous complete intersections, consider positive integers $w_1,\dots,w_n$ and the grading on the polynomial ring $R=\bbc[x_1,\dots,x_n]$ given by $\deg x_i=w_i$ for $1\leq i\leq n$. We refer to homogeneous elements of $R$ with respect to this grading as weighted homogeneous. For $f\in R$, we denote by $\wt(f)$ the smallest degree of a monomial $x^u=x_1^{u_1}\cdots x_n^{u_n}$ that appears with a nonzero coefficient in $f$.

\begin{theorem}\label{weighted minimal exponent}
With the above notation, let $Z \subset \bba^n$ be a closed subscheme defined by $f_1,\dots,f_r \in (x_{1},\dots,x_{n})^{2}$ with $\wt(f_i)=d_i$ such that $d_1\leq \cdots \leq d_r$, and write $f_i = f_{i,d_i}+f_{i,>d_i}$, where $f_{i,d_i}$ is weighted homogeneous of degree $d_i$ and $\wt(f_{i,>d_i})>d_i$. For $i\in \{1,\dots,r-1\}$ with $d_i< d_{i+1}$, and for $i=r$, suppose that $f_1,\dots,f_i$ is semi-quasihomogeneous, that is, the closed subscheme $V(f_{1,d_1},\dots,f_{i,d_i})\subset \bba^n$ is a complete intersection of codimension $i$ with an isolated singularity at $0$. Then we have
$$
\widetilde{\alpha}_0(Z) = \min\{ i + \tfrac{1}{d_i}(w_1+\cdots+w_n - d_1-\cdots - d_i) \ | \ 1\leq i\leq r \}.
$$
\end{theorem}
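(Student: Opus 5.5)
We use the characterization of \cite{CDMO24} recalled above: $\widetilde{\alpha}_0(Z)=\widetilde{\alpha}(g|_{U})$, where $g=\sum_{i=1}^r z_if_i$ on $\bba^n\times\bba^r$ and $U$ is a neighborhood of $\{0\}\times(\bba^r\setminus\{0\})$. The minimal exponent of $g|_U$ is the minimum of the local minimal exponents $\widetilde{\alpha}_{(0,z)}(g)$ over $z\neq 0$. Since $g$ is linear in the $z$-variables, we may stratify $\bba^r\setminus\{0\}$ by the index
$$
j(z)=\max\{i : z_i\neq 0\}.
$$
The plan is to show that at a point $(0,z)$ with $j(z)=j$ the local minimal exponent is at least $\min_{i\le j}\bigl(i+\tfrac{1}{d_i}(|w|-d_1-\cdots-d_i)\bigr)$, where $|w|=w_1+\cdots+w_n$, and that equality is attained for generic such $z$ when $j$ is a "break index", meaning $d_j<d_{j+1}$ or $j=r$.

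First we reduce the weights. Near $(0,z)$ with $z_j\neq0$, give $z_i$ weight $d_j-d_i$ for $i\le j$. The variables $z_i$ with $i>j$ appear only through $z_if_i$, and we give them weight making these terms of degree $\ge d_j$, with $\wt(z_if_i)>d_j$ unless $d_i=d_j$. After a local change of coordinates $z_j\mapsto$ unit, $g$ becomes a semi-weighted-homogeneous function in the variables $(x,z_1,\dots,\widehat{z_j},\dots,z_r)$ of weighted degree $d_j$. Its leading part is
$$
g_0=f_{j,d_j}+\sum_{i<j} z_if_{i,d_i}+\sum_{i>j,\,d_i=d_j} z_if_{i,d_i}.
$$
For semi-quasihomogeneous hypersurfaces with an isolated singularity, \cite[Section 2.5]{Sai17} gives $\widetilde{\alpha}=\sum(\text{weights})/\text{degree}$. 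For a break index $j$, the sum of weights is $|w|+\sum_{i<j}(d_j-d_i)$, which gives exactly
$$
\frac{|w|-d_1-\cdots-d_j}{d_j}+j .
$$
Here we use the hypothesis: the leading part $g_0$ has an isolated singularity, along the relevant transversal slice, precisely when $V(f_{1,d_1},\dots,f_{j,d_j})$ is an isolated complete intersection singularity. This follows from the standard Jacobian computation: a singular point of $\sum z_if_{i,d_i}$ off $x=0$ is a point of the complete intersection where the gradients are dependent. The singular locus of $g_0$ is not isolated, since it contains the $z$-directions. So we apply Saito's result to a generic slice, or use the Thom--Sebastiani-type/transversal argument that the minimal exponent along $\{0\}\times(\text{stratum})$ is computed on a transversal slice.

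For non-break $j$, with $d_j=d_{j+1}$, the point $z$ is a limit of points with larger $j'$ and the same $d$. Semicontinuity of the minimal exponent (lower semicontinuity in families) then shows that non-break strata do not give new smaller values, so it suffices to treat break indices. This also handles points where several $z_i$ with equal $d_i$ are nonzero.

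The main obstacle is the lower bound at points whose leading part is degenerate, for instance when $z$ is special within its stratum. Saito's formula gives $\widetilde{\alpha}\ge \sum w/d$ for any function with weighted-degree filtration at least $d$, via the $V$-filtration and microlocal $b$-function bound. We would invoke this inequality version, namely that $\widetilde{\alpha}(h)\ge \sum w_i/d$ whenever $\wt(h)\ge d$, to get $\widetilde{\alpha}_{(0,z)}(g)\ge$ the break value at $j(z)$, after bounding $j(z)$ by the nearest break index above it. Combining the lower bound at every point with equality at generic points of break strata yields the stated minimum.
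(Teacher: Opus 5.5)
Your proposal has a genuine gap, and it is in the lower bound, which is the step that carries the theorem. At special points you invoke ``$\widetilde{\alpha}(h)\geq \sum_i w_i/d$ whenever $\wt(h)\geq d$''. This inequality is false. The general inequality goes the other way, $\widetilde{\alpha}_0(h)\leq \sum_i w_i/\wt(h)$; this is \cite[Proposition 2.1]{CDM25}, which the paper uses only for the upper bound. For example, $h=x_1^2$ on $\bba^2$ with $w=(1,1)$ has $\sum_i w_i/\wt(h)=1$ but $\widetilde{\alpha}_0(h)=\tfrac12$.

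A Saito-type lower bound needs a leading form with an isolated singularity, and for $r\geq 2$ this is never available for $g$. Every monomial of $g$ has $x$-degree at least $2$. So for any weights centered at a point of $\{0\}\times\bbp^{r-1}$, the leading form and its gradient vanish along $x=0$.

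Your two fallback devices do not repair this.
\begin{itemize}
\item Semicontinuity points the wrong way: $\widetilde{\alpha}_y\geq\widetilde{\alpha}_x$ for $y$ near $x$. Values on a dense stratum therefore give no lower bound on its closure, and that closure is exactly where the non-break strata and the special points lie.
\item Transversal slicing only gives $\widetilde{\alpha}(g)\geq\widetilde{\alpha}(g|_H)$, which is too weak, as the example below shows.
\end{itemize}

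The stratification is also misdescribed. At a generic point of $\{j(z)=j\}$, the $z_i$ with $i<j$ are units. So you are assigning the weights $d_j-d_i$ to coordinates that do not vanish at the point. After centering, the $x$-weighted order of $g$ drops to $d_1$ there. Your weighted-homogeneous picture is valid only on the special locus where those $z_i$ vanish, and that is precisely where the minimum is attained.

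Concretely, take $n=3$ and homogeneous $f_1,f_2$ of degrees $2,3$ defining an isolated complete intersection singularity, with $f_1$ a nondegenerate quadric, so the hypotheses hold.
\begin{itemize}
\item Restricting $g_2=f_2+z_1f_1$ to $z_1=c\neq 0$ gives $cf_1+f_2$, with value $\tfrac32$. So the local minimal exponent at $[c:1]$ is at least $\tfrac32$. Restricting $g_1=f_1+z_2f_2$ to $z_2=0$ gives the same bound at $[1:0]$.
\item Hence $\widetilde{\alpha}_0(Z)=\tfrac43$ is attained only at $[0:1]$. There $g_2$ is homogeneous of degree $3$ but singular along the whole $z_1$-axis.
\item Every smooth hypersurface $H$ through that point has $\operatorname{mult}(g_2|_H)\geq 3$, so $\widetilde{\alpha}(g_2|_H)\leq 1<\tfrac43$.
\end{itemize}

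What is missing is a lower bound at such points that never requires an isolated leading form of $g$. The paper gets it by working on $\bba^n$ rather than with $g$. On the weighted blow-up with weights $w$, Lemma~\ref{transverse} and the hypothesis at break indices give orbifold charts in which:
\begin{itemize}
\item $\pi^*f_i=z_it_1^{d_i}$ for $i\leq r'$;
\item $\pi^*f_{r'+1}=v\,t_1^{d_{r'+1}}$ divides the remaining $\pi^*f_i$;
\item the Jacobian is $u\,t_1^{w_1+\cdots+w_n-1}$.
\end{itemize}
Theorem~\ref{minimal exponent lower bound}, together with the monomial bound of Theorem~\ref{combinatorial lower bound theorem}, then gives $\widetilde{\alpha}_0(Z)\geq\widetilde{\alpha}(d_1,\dots,d_r;w_1+\cdots+w_n-1)$. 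This equals the stated formula by Remark~\ref{dimension one}.
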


For the precise definition of the local version $\widetilde{\alpha}_0(Z)$ of the minimal exponent, see Section \ref{the minimal exponent of a local complete intersection}. The above theorem implies the formulas for the minimal exponent in \cite[Example 4.23]{CDMO24} and \cite[Theorem 1.1]{CDM25} by taking $f_1,\dots,f_r$ to be homogeneous with respect to the standard weights $w_{1}= \cdots=w_{n}=1$. We note that if $f_1,\dots,f_r$ are weighted homogeneous with $\sum_{i=1}^r d_i \leq \sum_{i=1}^n w_i$ and we only assume that $V(f_1,\dots,f_r) \subset \bba^n$ is a complete intersection of codimension $r$ with an isolated singularity at the origin, then Chen, Dirks, and Olano obtained the formula in the theorem above when each side is replaced by its floor and when the right-hand-side is an integer \cite[Corollary D]{CDO25}.

Next, we state a pair of formulas for the minimal exponent of Khovanskii non-degenerate complete intersections under different assumptions. Let $R=\bbc[x_1,\dots,x_n]$ and let $Z \subset \bba^n$ be a closed subscheme defined by polynomials $f_1,\dots,f_r\in (x_{1},\dots,x_{n})^{2}$ such that $Z$ is a complete intersection of codimension $r$ in a neighborhood of $0$. We set $g=\sum_{i=1}^{r}z_{i}f_{i}\in R[z_{1},\dots,z_{r}]$. Let us identify the standard basis of $\bbr^{r+n}$ with the variables $z_{1},\dots,z_{r},x_{1},\dots,x_{n}$, and consider the standard projection $\pi_{i}\colon \bbr^{r+n}\to\bbr^{r-1+n}$ obtained by omitting the $z_{i}$-coordinate for $1\leq i\leq r$. For a nonzero polynomial $f\in\bbc[y_1,\dots,y_m]$, let $\supp f$ denote the set of elements $u=(u_{1},\dots,u_{m})\in \bbz^{m}_{\geq 0}$ such that the monomial $y^{u}=y_{1}^{u_{1}}\cdots y_{1}^{u_{1}}$ appears in $f$ with nonzero coefficient, and let $P(f) \subset \bbr^m$ denote the Newton polyhedron of $f$ (recall that $P(f)$ is the convex hull of $\bigcup_{u\in \supp f} (u+\bbr^m_{\geq 0})$).

\begin{theorem}\label{Khovanskii minimal exponent}
With the above notation, suppose that the $|I|$-tuple $(f_{i})_{i\in I}$ is Khovanskii non-degenerate for all subsets $I \subset \{1,\dots,r\}$. Then we have
$$
\widetilde{\alpha}_0(Z) = 1/ \min\bigg\{t>0 \ \big| \ (t,\dots,t) \in \bigcap_{i=1}^r \pi_i^{-1}(\pi_i(P(g)))\bigg\}.
$$
\end{theorem}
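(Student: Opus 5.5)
We reduce the statement to a hypersurface computation via the result of \cite{CDMO24} recalled in the introduction. In its local form it gives $\widetilde{\alpha}_0(Z)=\min_{z}\widetilde{\alpha}_{(0,z)}(g)$, where $z$ ranges over $\bba^r\setminus\{0\}$ (equivalently, over points of $\{0\}\times(\bba^r\setminus\{0\})$ near which $g$ is considered). The plan is to compute these local minimal exponents of the hypersurface $g=\sum z_if_i$ using the Newton-polyhedron theory for non-degenerate hypersurfaces: \cite{Sai88}, and the version with non-isolated singularities via the V-filtration, as in the approach of Jung--Kim--Saito--Yoon. The Newton number of a polyhedron $Q$ is $1/\min\{t>0 \mid (t,\dots,t)\in Q\}$.

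\textbf{Localizing the polyhedron.} Fix $z^0\neq 0$ and let $J=\{i \mid z^0_i\neq 0\}$. Near $(0,z^0)$ we change variables $z_i\mapsto z_i^0+z_i$ for $i\in J$. After this change, the Newton polyhedron of $g$ at that point should be obtained from $P(g)$ by forgetting the $z_i$-coordinates for $i\in J$; that is, the relevant diagonal test uses $\bigcap$ of projections. More precisely, the minimal exponent at $(0,z^0)$ should equal $|J|$-shifted data of the polyhedron $\pi_J(P(g))$ in the remaining variables. To see this, note that the $z_i$ with $i\in J$ become smooth directions in which $g$ is, up to a unit, a linear coordinate change. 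Taking the minimum over all $z^0$ reduces to the minimal sets $J=\{i\}$, since larger $J$ only enlarge the projected polyhedron. This produces exactly the condition $(t,\dots,t)\in\bigcap_i\pi_i^{-1}(\pi_i(P(g)))$. One subtlety is the need to check that the coordinate direction $z_i$ contributes $t$ consistently. I would handle this by translating the Thom--Sebastiani-like behavior in the $z$-variables into adding the $z_i$-coordinate back with value $t$ via $\pi_i^{-1}$.

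\textbf{Non-degeneracy.} Next, we show that Khovanskii non-degeneracy of every subfamily $(f_i)_{i\in I}$ implies that $g$, localized at $(0,z^0)$ and written in the shifted coordinates, is Newton non-degenerate (in the sense allowing non-isolated singularities) with respect to its local Newton polyhedron. Faces of $P(g)$ correspond to a weight vector on $x$ together with choices of which $z_i$ attain the minimum. The face function is $\sum_{i\in I}z_i\,\mathrm{in}_w(f_i)$, and its critical locus on the torus is empty exactly when the $\mathrm{in}_w(f_i)$, $i\in I$, define a smooth complete intersection on the torus. This is Khovanskii's condition for $I$.

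\textbf{Main obstacle.} The hardest step is the hypersurface theorem itself in the non-isolated, non-convenient setting, since $g$ is neither convenient nor isolated. I would use the $\widetilde{\alpha}$-formula for Newton non-degenerate $g$ at a point, namely $\widetilde{\alpha}=1/t_0$ with $t_0$ the diagonal value. For this I would use the Saito filtration/Newton filtration comparison and prove both inequalities separately. The lower bound comes from a toric log resolution: non-degeneracy makes the strict transform transverse, and the discrepancy computation along toric divisors gives $\geq$, with attention to divisors lying over $z=0$, which are excluded because $z^0\neq 0$. The upper bound comes by exhibiting the monomial $\prod x_j$ (respectively the diagonal point) on the relevant face as a nonzero class in the Brieskorn lattice or V-filtration graded piece, which forces the root $-1/t_0$.
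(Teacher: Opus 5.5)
Your reduction to the hypersurface $g$ and your face computation are fine. The localization step, however, has a genuine gap.

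After translating $z_i\mapsto z_i^0+z_i$ for $i\in J$, only one of the nonzero $z_i^0$ can be absorbed into a unit. When $|J|\ge 2$, the $z'$-free part $\sum_{i\in J}z^0_if_i$ can cancel. Then both of your claims fail: that the local polyhedron is $\pi_J(P(g))$, and that the translated $g$ is Newton non-degenerate.

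For example, $f_1=x^2+y^2$ and $f_2=2xy$ satisfy the hypothesis: every subfamily is Khovanskii non-degenerate, and $Z$ is a fat point at the origin of $\mathbb{A}^2$. At $z^0=(1,-1)$ the translated function is $(x-y)^2+z_1'(x^2+y^2)+2z_2'xy$. Its face for the weight $(1,1,1,1)$ is $(x-y)^2$, which is not a smooth hypersurface of the torus. (With $f_1=x^2+y^3$, $f_2=x^2+y^5$ at the same $z^0$, the $z'$-free part is $y^3-y^5$, so $x^2$ disappears and the local polyhedron is not the projection.)

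Your non-degeneracy paragraph is a correct computation for faces of $P(g)$ in the \emph{original} coordinates, but it says nothing about the translated function. The assertion that one may restrict to $J=\{i\}$ is also unjustified. Semicontinuity only compares $(0,z^0)$ with nearby points, and a point $[z^0]\in\mathbb{P}^{r-1}$ with $|J|\ge 2$ is generally far from the coordinate points. You still need a lower bound there, and that lower bound is the whole content of the theorem.

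The missing idea is not to translate at all. Using the charts of $\mathbb{P}^{r-1}$, one has
\[
\widetilde{\alpha}_0(Z)=\min_i\min_{u\in\mathbb{A}^{r-1}}\widetilde{\alpha}_{(0,u)}(g_i),\qquad g_i=f_i+\sum_{j\ne i}z_jf_j .
\]
The relevant condition is the partially global one: for every weight that is positive on the $x$-variables but merely nonnegative on the $z$-variables, the face function of $P(g_i)$ is smooth on the full torus. This is exactly what your face computation yields once zero weights on the $z_j$ are allowed, after passing from $g$ to $g_i$ via $z_j\mapsto z_j/z_i$.

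A single toric (orbifold) resolution subordinate to the normal fan of $P(g_i)$ then handles every point $(0,u)$ simultaneously. The fiber over $(0,u)$ lies in orbits $O(\tau)$ whose cones contain weights vanishing on the $z_j$ with $u_j\neq 0$. The paper then concludes via Theorem \ref{minimal exponent lower bound}, the combinatorial lower bound, and Theorem \ref{minimal exponent fan}. If you use a smooth toric resolution instead, you would need a minimal-exponent statement such as \cite[Corollary D]{MP20}. Discrepancies alone only give $\min\{1,\widetilde{\alpha}\}$, and here values above $1$ matter.

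For the upper bound, Brieskorn lattices are unnecessary and are awkward for non-isolated singularities. \cite[Proposition 2.1]{CDM25} gives $\widetilde{\alpha}_{(0,0)}(g_i)\le\widetilde{\alpha}(P(g_i))$ directly.
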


For the definition of Khovanskii non-degeneracy, see Section \ref{Non-degeneracy}. Before stating the next theorem, we introduce some additional notation. Let $e_{1},\dots,e_{n}$ denote the standard basis of $\bbr^{n}$ and put
$$
(\bbr^n)^*_{\geq 0} = \{v\in (\bbr^n)^* \ | \ v(e_j)\geq 0 \text{ for $j\in \{1,\dots, n\}$} \},
$$
where $(\bbr^n)^*$ is the dual of $\bbr^{n}$. For an element $v\in (\bbr^n)^*_{\geq 0}$, we write $|v|=\sum_{j=1}^{n}v(e_{j})$ and $d(v,f) =\min\{v(u) \ |\ u\in \supp f\}$ for nonzero $f\in R$. For a nonzero polynomial $f\in R$, recall that the normal fan of the Newton polyhedron $P(f)\subset \bbr^{n}$ is a fan with support $(\bbr^n)^*_{\geq 0}$ whose rays are perpendicular to the facets of $P(f)$ (for a review of normal fans, see Section \ref{Normal fans}).
 
\begin{theorem}\label{nested Khovanskii minimal exponent}
With the above notation, suppose that  the following conditions hold:
\begin{enumerate}
\item[i)] For all $i,j\in \{1,\dots,r\}$, there exists a real number $\epsilon>0$ such that $P(f_{i})\subset \epsilon \cdot P(f_{j})$.
\item[ii)] $P(f_1) \supset\cdots \supset P(f_r)$.
\item[iii)] For all $i\in \{1,\dots,r-1\}$ with $P(f_i)\neq P(f_{i+1})$, and for $i=r$, the $i$-tuple $(f_1,\dots,f_i)$ is Khovanskii non-degenerate. 
\end{enumerate} 
Then we have
$$
\widetilde{\alpha}_0(Z) = 1/ \min\bigg\{t>0 \ \big| \ (t,\dots,t) \in \bigcap_{i=1}^r \pi_i^{-1}(\pi_i(P(g)))\bigg\}.
$$
Moreover, if $\Sigma_{0}$ is any fan that refines the normal fan of $P(f_{i})$ for each $i$, then we have
$$
\widetilde{\alpha}_0(Z) = \min_{v}
\min\{ i + \tfrac{1}{d(v,f_{i})}(|v| - d(v,f_{1})-\cdots - d(v,f_{i})) \ | \ \text{$1\leq i\leq r$ with $d(v,f_{i})\neq 0$} \},
$$
where the outer minimum runs over the primitive ray generators $v$ of $\Sigma_{0}$.
\end{theorem}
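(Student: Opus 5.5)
The plan is to reduce to the hypersurface $g=\sum_i z_if_i$ on $\bba^n\times(\bba^r\setminus\{0\})$, using the result of \cite{CDMO24} that $\widetilde{\alpha}_0(Z)=\widetilde{\alpha}(g|_{U})$ for $U$ a neighborhood of $\{0\}\times(\bba^r\setminus\{0\})$. Then I would compute the local minimal exponent of $g$ at each point $(0,z)$ with $z\neq 0$ by a Newton-polyhedron (Varchenko/Saito type) argument. Near a point where $z_i\neq 0$, we may regard $z_i$ as a unit. After the change of variables $z_i\mapsto z_i$ with the $z_i$-direction "forgotten", the relevant Newton polyhedron of $g$ is $\pi_i(P(g))$ in the remaining $r-1+n$ coordinates. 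The invariant is the reciprocal of the point where the diagonal meets this polyhedron. Taking the minimum over the charts $z_i\neq 0$ gives exactly the intersection $\bigcap_i \pi_i^{-1}(\pi_i(P(g)))$, which yields the first formula.

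The key steps, in order, are as follows.

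(1) Show that hypotheses i)--iii) imply that $g$ is Newton non-degenerate with respect to the relevant faces in each chart, in the sense needed for Saito's formula for the minimal exponent of non-degenerate (not necessarily isolated) hypersurfaces. To do this, I would analyze a compact face $\sigma$ of $\pi_i(P(g))$ with supporting weight $(a,v)$, where $a$ is the weight on the $z$'s and $v\in(\bbr^n)^*_{\geq 0}$. By condition ii) and the nesting $P(f_1)\supset\cdots\supset P(f_r)$, the initial form $g_\sigma=\sum_{j\in J} z_j\,\mathrm{in}_v(f_j)$ involves only those $j$ in an initial segment or a block with equal polyhedra. Condition i) (equal recession cones) ensures faces correspond across the $f_j$.

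(2) Check that the singular locus of $g_\sigma$ in the torus is empty by a Lagrange-multiplier computation. A singular point gives $\mathrm{in}_v(f_j)=0$ for $j\in J$ and $\sum z_j\, d\,\mathrm{in}_v(f_j)=0$. This contradicts the Khovanskii non-degeneracy of $(f_1,\dots,f_k)$ guaranteed by iii) for the appropriate block index $k$. Blocks with equal polyhedra are what allow iii) to be imposed only at jumps.

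(3) Apply the hypersurface theorem (Newton non-degenerate, hence $\widetilde{\alpha}=1/t_0$ with $t_0$ the diagonal coordinate, valid once the face condition holds away from the locus where $g$ is smooth). Then take the minimum over the points of $\{0\}\times(\bba^r\setminus\{0\})$, which reduces to the charts $z_i\neq0$ together with their coordinate-subtorus strata.

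For the second formula, I would compute $\min\{t:(t,\dots,t)\in\bigcap_i\pi_i^{-1}\pi_i(P(g))\}$ by linear programming duality. The polyhedron $\pi_i^{-1}\pi_i(P(g))$ is cut out by inequalities $\sum_{j\neq i}a_j y_j + v(x)\geq \min_j(a_j+d(v,f_j))$, where $a_i=0$. Since all $P(f_j)$ share the normal fan refined by $\Sigma_0$, the minimum of $v$ on the diagonal is attained at ray generators $v$ of $\Sigma_0$. For fixed $v$, optimizing over $a\geq0$ amounts to choosing $a_j=\max(0,d(v,f_i)-d(v,f_j))$ for $j$ up to the index $i$ with $d(v,f_i)\ne 0$. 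Evaluating $t$ then gives $\bigl(i-1+|v|/d(v,f_i)-\sum_{j\le i}d(v,f_j)/d(v,f_i)+1\bigr)$ after inversion, which is the displayed expression once ordering by ii) is used ($d(v,f_1)\le\cdots\le d(v,f_r)$).

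I expect the main obstacle to be step (1)--(3): extracting a minimal exponent formula for $g$, which has non-isolated singularities along $\{0\}\times\bba^r$, from non-degeneracy. This requires controlling non-compact faces, where condition i) is used, and ensuring that the local minimal exponent is governed by the diagonal even when the diagonal hits a face containing $z$-directions. I would first try to invoke Saito's result for non-degenerate functions together with convenience-type reductions, perturbing by high powers of the $x_j$ if needed via semicontinuity.
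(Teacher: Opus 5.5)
There is a genuine gap in the lower bound, i.e.\ in your steps (1)--(3). Under i)--iii) the hypersurfaces $g_i$ need not be Newton non-degenerate, even on compact faces. Step (1) assumes that the initial form $\sum_{j\in J}z_j\,\mathrm{in}_v(f_j)$ involves an initial segment or a block, but the weights $a_j$ on the $z_j$ are free. In the chart $z_i\neq 0$, take $a_j\gg 0$ for all $j\neq i$ and $v\in(\bbr^n)^*_{>0}$. This cuts out the compact face $0\times F(v,P(f_i))$ of $\pi_i(P(g))$, on which the initial form of $g_i$ is $f_{i,F(v,P(f_i))}$ alone. So non-degeneracy of every $g_i$ forces every $f_i$ to be Newton non-degenerate by itself. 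Your Lagrange-multiplier step would then need Khovanskii non-degeneracy of $(f_j)_{j\in J}$ for arbitrary $J$. That is exactly the stronger hypothesis of Theorem \ref{Khovanskii minimal exponent} (via Lemma \ref{Newton implications}) which iii) is designed to drop.

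Example \ref{nested newton polygons example} makes this concrete. There i)--iii) hold, but on the compact face of $P(g_2)$ with weight $(5,1,2,2)$ the initial form of $g_2=z_1f_1+f_2$ is $x^6+2x^4y+x^2y^2=x^2(x^2+y)^2$. This is singular at $(1,\sqrt{-1},1,1)\in\bbg_m^4$. Moreover $\widetilde{\alpha}_0(Z)=\widetilde\alpha(P(g_2))<\widetilde\alpha(P(g_1))$, so the degenerate hypersurface is precisely the one you would have to control.

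Neither fallback you mention closes the gap. Restricting to faces ``relevant to the diagonal'' is not a valid principle for hypersurfaces. For example, $h=z^2+(x-y)^2+x^5$ is degenerate only on the edge $[(2,0,0),(0,2,0)]$, which misses the diagonal point, yet $\widetilde\alpha_0(h)=6/5<3/2=\widetilde\alpha(P(h))$. Semicontinuity only gives $\widetilde\alpha_0(g_i)\le\widetilde\alpha_0(g_i+\epsilon x_j^N)$, which is the wrong direction for a lower bound. Finite determinacy is also unavailable, since $g_i$ is singular along $\{0\}\times\bba^{r-1}$.

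The missing idea is to resolve the complete intersection itself rather than pass through the $g_i$. The paper proceeds as follows.
\begin{enumerate}
\item[a)] It takes a simplicial refinement, with the same rays, of the normal fan of $\sum_i P(f_i)$. Condition i) is used here, via Lemma \ref{positive isomorphism}, to make $\pi$ an isomorphism off $Z$. It is not a recession-cone condition; every Newton polyhedron has recession cone $\bbr^n_{\ge 0}$.
\item[b)] At a point $y_0$ over $0$, write $\pi_\sigma^*f_j=\widetilde f_j\cdot(\text{monomial})$. Let $i+1$ be the first index with $\widetilde f_{i+1}(y_0)\neq 0$, and let $r'$ be the last index $\le i$ with $P(f_{r'})\neq P(f_{i+1})$.
\item[c)] Khovanskii non-degeneracy of $(f_1,\dots,f_{r'})$ is available precisely because $r'$ is a jump. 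It makes $\pi^*f_j=z_j\cdot(\text{monomial})$ for $j\le r'$.
\item[d)] After a swap inside the block, $\pi^*f_{r'+1}$ is a unit times a monomial. Nesting puts $\pi^*f_j$ in the ideal of $\pi^*f_{r'+1}$ for $j>r'+1$, and these generators are discarded by Lemma \ref{redundant}. This is why the degeneracy of $f_2$ alone never enters.
\item[e)] The lower bound then follows from Theorem \ref{minimal exponent lower bound}, the monomial estimates in Corollaries \ref{combinatorial lower bound first corollary} and \ref{combinatorial lower bound second corollary}, and Theorem \ref{minimal exponent fan}. The upper bound is Corollary \ref{minimal exponent upper bound}, which needs no non-degeneracy.
\end{enumerate}

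Your LP derivation of the second formula is essentially Theorem \ref{minimal exponent in terms of numbers} and is fine in substance. The reduction to ray generators of $\Sigma_0$ does need an argument, though; the $P(f_j)$ do not share a normal fan. On each cone of $\Sigma_0$ the functions $d(\cdot,f_j)$ are linear, and by nesting they are uniformly ordered. Hence each term $i+(|v|-\sum_{j\le i}d(v,f_j))/d(v,f_i)$ is bounded below by its value at some ray, by the mediant inequality. This uses that $d(v_k,f_i)=0$ forces $d(v_k,f_j)=0$ for $j\le i$, so the corresponding numerator is $|v_k|>0$.
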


We view Theorem \ref{nested Khovanskii minimal exponent} as the Khovanskii non-degenerate analogue of Theorem \ref{weighted minimal exponent}. Condition i) of Theorem \ref{nested Khovanskii minimal exponent} guarantees that the toric morphism $\pi\colon Y \to \bba^{n}$ associated to any fan that refines the normal fan of the Minkowski sum $\sum_{i=1}^{r}P(f_{i})$ without adding new rays is an isomorphism over the complement of $Z$. It is easy to see that condition i) of Theorem \ref{nested Khovanskii minimal exponent} holds when, for each $i\in \{1,\dots,r\}$, $f_{i}$ is \emph{convenient}, that is, $P(f_{i})$ meets each coordinate axis, or when the $P(f_{i})$'s are proportional, that is, there exist positive real numbers $c_{1},\dots,c_{r-1}$ such that $c_{i}\cdot P(f_{i})=P(f_{i+1})$ for each $i\leq r-1$. A fan $\Sigma_{0}$ as in the statement of the theorem above always exists. Namely, the normal fan of $\sum_{i=1}^{r}P(f_{i})$ refines the normal fan of $P(f_{i})$ for each $i$ (see Section \ref{Minkowski sums} for details). If we consider the polynomials 
$$
f_1=x^2+y^2+z^{2}
\ \ \ \ \text{and}
\ \ \ \
f_2 = x^6+2x^4y+x^2y^2+y^6+z^{6},
$$ 
then the closed subscheme $Z=V(f_{1},f_{2})\subset\bba^{3}$ is a complete intersection of codimension $2$ and by Theorem \ref{nested Khovanskii minimal exponent}, we have $\widetilde{\alpha}_{0}(Z)=\tfrac{5}{4}$ (see Example \ref{nested newton polygons example} for details).

The proof of Theorem \ref{Khovanskii minimal exponent} essentially reduces to the case of a Newton non-degenerate hypersurface, since the condition that the $|I|$-tuple $(f_{i})_{i\in I}$ is Khovanskii non-degenerate for all subsets $I \subset \{1,\dots,r\}$ implies that $g_{i}=f_{i}+\sum_{j\neq i}^{}z_{j}f_{j}$ satisfies a form of non-degeneracy in between Newton and global Newton non-degeneracy for each $i$, which we define in Section \ref{Non-degeneracy}. On the other hand, the proof of Theorem \ref{nested Khovanskii minimal exponent} cannot be reduced to the case of a Newton non-degenerate hypersurface, as explained in Example \ref{nested newton polygons example}, and hence requires a different approach.

To prove Theorems \ref{weighted minimal exponent} and \ref{nested Khovanskii minimal exponent}, we follow the strategy of \cite{CDM25}, which established the formula for the minimal exponent of a homogeneous complete intersection (with some transversality conditions) by proving a general upper bound for the minimal exponent in terms of the Newton polyhedra of the defining equations and a general lower bound for the minimal exponent in terms of a strong factorizing log resolution in the sense of Bravo and Villamayor. While the upper bounds in Theorems \ref{weighted minimal exponent} and \ref{nested Khovanskii minimal exponent} follow from \cite[Theorem 1.2]{CDM25}, it is unclear how to apply the lower bound in \cite[Theorem 1.3]{CDM25} to Theorems \ref{weighted minimal exponent} and \ref{nested Khovanskii minimal exponent}, since explicit strong factorizing resolutions are not known in these settings. One of our main results significantly generalizes the lower bound in \cite[Theorem 1.3]{CDM25}, which we use to prove Theorems \ref{weighted minimal exponent} and \ref{nested Khovanskii minimal exponent}. Before stating the result, we introduce the necessary notation and definitions.

For regular functions $f_1,\dots,f_r\in \os_X(X)$ on a smooth variety $X$, the $V$-filtration is a decreasing filtration $(V^{\lambda}B_{\bff})_{\lambda\in \bbq}$ on
$$
B_{\bff} = \bigoplus_{\beta \in \bbz_{\geq 0}^r} \os_X \partial_t^{\beta}\delta_{\bff},
$$
indexed by rational numbers and characterized by a few properties (for details, see Section \ref{V-filtration}). We define the minimal exponent of $f_1,\dots,f_r$ relative to a regular function $g\in \os_X(X)$ as
$$
\widetilde{\alpha}(f_1,\dots,f_r; g) = \left\{
\begin{array}{cl}
\sup\{\gamma>0 \ | \  g\delta_{\bff}\in V^{\gamma}B_{\bff}\}  & \text{if $g\delta_{\bff}\not\in V^rB_{\bff}$} \\[2mm]
\sup\{r-1+q+\gamma \ | \  g\partial_t^{\beta}\delta_{\bff} \in V^{r-1+\gamma}B_{\bff}\}  & \text{if $g\delta_{\bff}\in V^rB_{\bff}$},
\end{array}\right.
$$
where in the latter case, the supremum is over all nonnegative integers $q$ and all rational numbers $\gamma\in (0,1]$ with the property that $g\partial_t^{\beta}\delta_{\bff} \in  V^{r-1+\gamma}B_{\bff}$ for all $\beta=(\beta_1,\dots,\beta_r)\in \bbz_{\geq 0}^r$ such that $\beta_1+\cdots+\beta_r\leq q$. By definition, if $Z\subset X$ is a closed subscheme defined by a regular sequence $f_1,\dots,f_r\in\os_{X}(X)$, then $\widetilde{\alpha}(Z) = \widetilde{\alpha}(f_1,\dots,f_r;1)$. 

Let $X$ be an irreducible smooth affine variety with global coordinates $x_1,\dots,x_n\in \os_X(X)$. Given nonzero regular functions $f_1,\dots,f_r\in \os_X(X)$, we denote by $Z$ the closed subscheme of $X$ defined by the ideal generated by $f_1,\dots,f_r$. Let $\pi\colon Y\to X$ be a projective morphism that is an isomorphism over the complement of $Z$ such that $Y$ is irreducible and has quotient singularities with \'{e}tale cover $p\colon U/G \onto Y$, where $G$ is a finite group acting on a smooth affine variety $U$. Given a finite collection $\{U_i\}_{i\in I}$ of open subsets of $U$ such that each $U_i$ has global coordinates $y_1,\dots,y_n\in \os_U(U_i)$, we denote by $\pi_i$ the morphism $U_i \to X$ and by $g_i=\det ( (\frac{\partial (\pi_i^* x_p)}{\partial y_q})_{pq} )\in \os_U(U_i)$ the Jacobian of $\pi_i$. Finally, we denote the morphism $U\to U/G$ by $q$.

\begin{theorem}\label{minimal exponent lower bound}
With the above notation, the following hold:
\begin{enumerate}
\item[i)] If $Z$ is a complete intersection of codimension $r$ in $X$ and $U=\bigcup_{i\in I}U_i$, then we have
$$
\widetilde{\alpha}(Z) \geq \min_{i\in I} \widetilde{\alpha}(\pi_i^*f_1,\dots,\pi_i^*f_r; g_i).
$$
\item[ii)] If $z\in Z$ is a point such that $Z$ is a complete intersection of codimension $r$ in some open neighborhood of $z$ and $(\pi\circ p\circ q)^{-1}(z) \subset \bigcup_{i\in I} U_i$, then we have
$$
\widetilde{\alpha}_z(Z) \geq \min_{i\in I} \widetilde{\alpha}(\pi_i^*f_1,\dots,\pi_i^*f_r; g_i).
$$
\end{enumerate}
\end{theorem}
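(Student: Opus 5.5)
We prove the lower bound by pulling the $V$-filtration back along $\pi$ and pushing it forward again, following the strategy of \cite[Theorem 1.3]{CDM25} with three changes: the log resolution becomes the proper birational map $\pi$, the smooth source becomes the quotient $U/G$, and the Jacobian $g_i$ is recorded as the relative factor.

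The key input is a \emph{pushforward compatibility} for the $V$-filtration. Set $h=\pi\circ p\circ q$ and write $\bfh=(h^*f_1,\dots,h^*f_r)$. On each chart $U_i$ we have $\bfh|_{U_i}=(\pi_i^*f_1,\dots,\pi_i^*f_r)$. The object $B_{\bfh}$ on $U$ is a graph embedding, so it underlies a mixed Hodge module. The $V$-filtration along $t_1,\dots,t_r$ is strictly compatible with the direct image under the proper morphism $U/G\to X$, after taking $G$-invariants, which is an exact functor since $G$ is finite.

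The form-level trace gives a morphism $\mathcal{H}^0 h_+ B_{\bfh}^{G}\to B_{\bff}$. Locally it sends $\phi\,\partial_t^\beta\delta_{\bfh}$, with $\phi$ a function on $U_i$ and $\phi\,dy$ the corresponding top form, to the class of $\partial_t^\beta\delta_{\bff}$ paired with $\pi_{i*}(\phi\,dy)$. Since $dy$ corresponds to $dx$ multiplied by the Jacobian, $g_i$ is exactly what converts $\delta_{\bff}\,dx$ into the pulled-back form. Because $\pi$ is an isomorphism over $X\setminus Z$, this map is an isomorphism away from $Z$. We then argue that $V^{\gamma}$ is preserved by the map, using strictness of $V$ under proper pushforward as in Saito's theory: $V^{\gamma}$ of the pushforward is the image of $V^{\gamma}$. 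The element $\delta_{\bff}\,dx$ is the image of the section which equals $g_i\,\delta_{\bfh}\,dy$ on each $U_i$. These local sections glue because $g_i\,dy=h^*dx$ is a global form on $U$.

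With this in hand the estimate follows. Suppose that on every $U_i$ we have $g_i\,\partial_t^\beta\delta_{\bfh}\in V^{r-1+\gamma}$ for all $|\beta|\le q$, or in the first case $g_i\delta_{\bfh}\in V^{\gamma}$. Since $V$-membership is local, the global section $h^*dx\cdot\partial_t^\beta\delta_{\bfh}$ lies in $V^{r-1+\gamma}$ on $U=\bigcup U_i$. It is $G$-invariant, so it descends, and its image $\partial_t^\beta\delta_{\bff}$ lies in $V^{r-1+\gamma}B_{\bff}$. The definition of $\widetilde{\alpha}(\,\cdot\,;1)$ then gives $\widetilde{\alpha}(Z)\ge\min_i\widetilde{\alpha}(\pi_i^*f_1,\dots,\pi_i^*f_r;g_i)$. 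For part ii), we restrict $X$ to a small neighborhood of $z$ on which $Z$ is a complete intersection. Since $h$ is proper, the condition $h^{-1}(z)\subset\bigcup U_i$ lets us shrink further so that $h^{-1}$ of the neighborhood is covered by the $U_i$, and part i) applies.

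The main obstacle is the compatibility step. The map $\pi\circ p$ is only proper on $U/G$, and $U/G$ is singular, so we must work with $G$-equivariant $\mathcal{D}_U$-modules. We also need the relevant cohomology sheaf to lie in degree $0$, so that the $V$-filtration maps onto $V^{\gamma}B_{\bff}$ rather than only into it; this is where the case split between $V^r$ and $V^{r-1+\gamma}$ matters. Only the inclusion ``image of $V^\gamma$ lies in $V^\gamma$'' is needed, and it follows from functoriality of the $V$-filtration: morphisms of $\mathcal{D}$-modules, including $\mathcal{H}^0$ of the proper direct image, preserve $V$, since $V$ is characterized by the roots of $b$-functions of sections.
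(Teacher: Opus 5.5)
Your outline points at a plausible mechanism, but the step you flag as the main obstacle is not established, and the justification you give for it is not valid.

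\textbf{The compatibility step.} Functoriality of $V$ via $b$-functions only covers genuine morphisms of $\rs_X$-modules. That handles the last map $\mathcal{H}^0h_+B_{\bfh}\to B_{\bff}$ and nothing more. The real content is the passage from a section $m\in V^{\gamma}B_{\bfh}$ on the source to its class $\bar m$ in the direct image, and that passage is not a module map. A relation $b(s)m=Pm$ with $P\in V^1\rs_U$ does not push forward to $b(s)\bar m\in V^1\rs_X\cdot\bar m$. Pushing $Pm$ forward produces classes of other sections $am$, with $a\in\os_U$, acted on by operators on $X$.

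To get a $b$-function for $\bar m$ you need one of two things, neither of which you supply:
\begin{enumerate}
\item[(a)] a Kashiwara-type argument, using coherence of the direct image of $V^0\rs_U\cdot m$ and controlling the integer shifts of roots it introduces;
\item[(b)] a Saito-type strictness statement (that $V^{\gamma}$ of the direct image is the image of $\mathcal{H}^0h_+V^{\gamma}B_{\bfh}$), which you would have to prove for the codimension-$r$ $V$-filtration and on a space with quotient singularities.
\end{enumerate}
This is exactly the input the paper takes from \cite[Theorem 1.2]{Lee26}: for each $\beta$, every root of $b_{\partial_t^{\beta}\delta_{\bff}}(s)$ is at most the largest root among the $b_{g_i\partial_t^{\beta}\delta_{\pi_i^*\bff}}(s)$, $i\in I$. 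Given that, the paper's proof is bookkeeping.

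\textbf{Setup error.} The map $h=\pi\circ p\circ q$ is not proper, because $p\colon U/G\to Y$ is only surjective and \'etale. In the toric applications $U/G=\coprod_{\sigma}U_{\sigma}$ is a disjoint union of overlapping charts. Consequences:
\begin{enumerate}
\item[(i)] There is no trace morphism $\mathcal{H}^0h_+B_{\bfh}^G\to B_{\bff}$ of the kind you use, since the adjunction $h_+h^{\dagger}\to\mathrm{id}$ needs properness. The map $U/G\to X$ is also not an isomorphism over $X\setminus Z$. You would first have to descend to $Y$.
\item[(ii)] Your reduction of ii) to i) fails for the same reason. Shrinking the target need not give $h^{-1}(W)\subset\bigcup_iU_i$. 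You must shrink the source instead, replacing $U$ by $q^{-1}(V)$ with $p(V)=\pi^{-1}(W)$, as in Lemma \ref{local setup}.
\end{enumerate}

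\textbf{Bookkeeping.} You assume a single pair $(q,\gamma)$ that works on every chart. The minimum over $i$ only gives chart-dependent pairs $(q_i,\gamma_i)$, possibly falling in different cases of the definition. The paper separates three cases:
\begin{enumerate}
\item[(1)] $\delta_{\bff}\notin V^rB_{\bff}$;
\item[(2)] $\delta_{\bff}\in V^rB_{\bff}$ but some $g_i\delta_{\pi_i^*\bff}\notin V^r$;
\item[(3)] every $g_i\delta_{\pi_i^*\bff}\in V^r$.
\end{enumerate}
In case (3) it picks $i_0$ minimizing $q_i+\gamma_i$ and applies Corollary \ref{cdmo remark} to get $g_iF_{q_{i_0}}B_{\pi_i^*\bff}\subset V^{r-1+\gamma_{i_0}}B_{\pi_i^*\bff}$ for every $i$. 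This part is routine, but it has to be argued.
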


The key ingredient in the proof of Theorem \ref{minimal exponent lower bound} is provided by \cite{Lee26}, where we obtained estimates for the roots of Bernstein-Sato polynomials in terms of log resolutions by orbifolds. 

The proofs of Theorems \ref{weighted minimal exponent}, \ref{Khovanskii minimal exponent}, and \ref{nested Khovanskii minimal exponent} also rely on a careful study of the facets of the Newton polyhedron of $g=\sum_{i=1}^r z_if_i$, which we carry out in Sections \ref{The Newton polyhedron associated to several Newton polyhedra} and \ref{Computing the minimal exponent of several Newton polyhedra}. In particular, when the $f_{i}$'s are all monomials, we obtain a complete description of the facets of $g$ (see Theorem \ref{candidates result}).

\subsection{Outline of the paper}

Section \ref{background} covers various background material on $\ds$-modules associated to graph embeddings, the Hodge and $V$-filtrations on $B_{\bff}$, the minimal exponent of a local complete intersection, Newton polyhedra, normal fans, Minkowski sums, and the toric variety associated to a simplicial refinement of the positive  orthant.
In Section \ref{The Newton polyhedron associated to several Newton polyhedra} we introduce and study the Newton polyhedron associated to several Newton polyhedra, which we use in Section \ref{The minimal exponent of several Newton polyhedra} to define and study the minimal exponent of several Newton polyhedra.
In Section \ref{Computing the minimal exponent of several Newton polyhedra} we describe a method for computing the minimal exponent of several Newton polyhedra.
In Section \ref{A combinatorial upper bound for the minimal exponent} we restate the upper bound in \cite[Theorem 1.2]{CDM25} in terms of the minimal exponent of several Newton polyhedra.
In Section \ref{A general lower bound for the minimal exponent} we prove Theorem \ref{minimal exponent lower bound} and establish some basic properties of the relative minimal exponent.
In Section \ref{A combinatorial lower bound for the minimal exponent} we establish a combinatorial lower bound for the relative minimal exponent.
In Section \ref{Semi-quasihomogeneous complete intersections} we prove Theorem \ref{weighted minimal exponent}.
In Section \ref{Non-degeneracy} we recall the definitions of Newton and Khovanskii non-degeneracy and establish a basic result relating the Khovanskii non-degeneracy of the $r$-tuple $(f_{1},\dots,f_{r})$ with the Newton non-degeneracy of $g=\sum_{i=1}^{r}z_{i}f_{i}$. Finally, in Sections \ref{Khovanskii non-degenerate complete intersections} and \ref{Nested non-degenerate complete intersections} we prove Theorems \ref{Khovanskii minimal exponent} and \ref{nested Khovanskii minimal exponent}, respectively.

\subsection{Convention}

All varieties are reduced (and not necessarily irreducible) schemes of finite type over $\bbc$.

\subsection{Acknowledgements}

I am grateful to Mircea \Mustata for many valuable discussions. I would also like to thank Gary Hu for helpful feedback on an earlier version of the introduction.

\section{Background}\label{background}

\subsection{$\ds$-modules and the graph embedding}\label{graph}

Let $X$ be a smooth variety and denote its sheaf of differential operators by $\ds_X$. In this paper all $\ds$-modules will be left $\ds$-modules. For general facts about $\ds$-modules, we refer to \cite[]{HTT08}. 

Given regular functions $f_1,\dots,f_r \in \os_X(X)$, we denote the $r$-tuple $(f_1,\dots,f_r)$ by $\bff$ and consider the graph embedding 
$$
i_{\bff}\colon X \to X \times \bba^r, x \mapsto (x,\bff(x)).
$$
We denote the standard coordinates on $\bba^r$ by $t_1,\dots,t_r$ and the projection $X \times \bba^r \to X$ by $p_X$, and put
$$
\rs_X = p_{X,*}\ds_{X\times\bba^{r}} = \ds_X \langle t_1,\dots,t_r, \partial_{t_1},\dots,\partial_{t_r}\rangle.
$$
For $\beta = (\beta_1,\dots,\beta_r)\in \bbz_{\geq 0}^r$, we put $t^{\beta} = \prod_{i=1}^r t_i^{\beta_i}$ and $\partial_t^{\beta} = \prod_{i=1}^r \partial_{t_i}^{\beta_i}$. 

Given a $\ds_{X}$-module $\ms$, we consider the $\ds$-module theoretic pushforward $i_{\bff,+}\ms$, which is a $\ds_{X\times\bba^{r}}$-module. We also consider the $\rs_X$-module
$$
p_{X,*}(i_{\bff,+}\ms) = \bigoplus_{\beta\in \bbz_{\geq 0}^r} \ms\partial_t^{\beta}\delta_{\bff},
$$
where the actions of $\os_X$ and $\partial_{t_i}$ are the obvious ones, while the actions of $\theta \in \sDer_{\bbc}(\os_X)$ and $t_i$ are given by 
\begin{equation}\label{bf action}
\theta \cdot u\partial_t^{\beta}\delta_{\bff} = \theta( u)\partial_t^{\beta}\delta_{\bff} - \sum_{i=1}^r \theta(f_i) u \partial_t^{\beta+e_i}\delta_{\bff}
\text{      and      }
t_i \cdot u\partial_t^{\beta}\delta_{\bff} = f_i u \partial_t^{\beta}\delta_{\bff} - \beta_i u \partial_t^{\beta-e_i}\delta_{\bff},
\end{equation}
where $u\in\ms$ and $e_1,\dots,e_r$ is the standard basis of $\bbz^r$. 

\subsection{The Hodge and $V$-filtrations}\label{V-filtration}

In this section we recall the definition and some basic properties of the Hodge and $V$-filtrations on the $\ds$-module $B_{\bff}$ discussed in the Introduction. For details, we refer to \cite{Kas83}, \cite[Section 1]{BMS06}, and \cite[Section 2]{CDMO24}. 

Let $X$ be a smooth variety. Given regular functions $f_1,\dots,f_r \in \os_X(X)$, we denote the $r$-tuple $(f_1,\dots,f_r)$ by $\bff$ and the standard coordinates on $\bba^r$ by $t_1,\dots,t_r$, and set
$$
s = -\sum_{i=1}^r \partial_{t_i}t_i.
$$
We define
$$
B_{\bff} := p_{X,*}(i_{\bff,+}\os_X) = \bigoplus_{\beta\in \bbz_{\geq 0}^r} \os_X \partial_t^{\beta}\delta_{\bff},
$$
whose $\rs_{X}$-module structure is given by \eqref{bf action}. The \textit{Hodge filtration} $(F_pB_{\bff})_{p\in \bbz}$ on $B_{\bff}$ is the increasing filtration given by
$$
F_pB_{\bff} = \bigoplus_{|\beta|\leq p}\os_X\partial_t^{\beta} \delta_{\bff}.
$$

Now consider the decreasing filtration $(V^k\rs_X)_{k\in \bbz}$ on $\rs_X$ given by
$$
V^k\rs_X = \bigoplus_{|\alpha|-|\beta|\geq k} \ds_X t^{\alpha}\partial_t^{\beta}.
$$
The $V$-filtration on $B_{\bff}$ is a decreasing, exhaustive filtration $(V^{\lambda}B_{\bff})_{\lambda\in \bbq}$ indexed by rational numbers, which is \textit{discrete and left-continuous}\footnote{This means that there is a positive integer $\ell$ such that $V^{\lambda}B_{\bff}$ has constant value for all $\lambda$ in an interval of the form $(\frac{i-1}{\ell}, \frac{i}{\ell}]$ with $i\in \bbz$.} and satisfies the following properties:
\begin{enumerate}
\item[i)] $V^{\lambda}B_{\bff}$ is a coherent $V^0\rs_X$-submodule of $B_{\bff}$ for all $\lambda\in \bbq$.
\item[ii)] $t_i \cdot V^{\lambda}B_{\bff} \subset V^{\lambda+1}B_{\bff}$ and $\partial_{t_i} \cdot V^{\lambda}B_{\bff} \subset V^{\lambda-1}B_{\bff}$ for all $1\leq i\leq r$ and $\lambda\in \bbq$.
\item[iii)] $V^1\rs_X \cdot V^{\lambda}B_{\bff} = V^{\lambda+1}B_{\bff}$ for all $\lambda \gg 0$.
\item[iv)] For all $\lambda \in \bbq$, if we put $V^{>\lambda}B_{\bff}=\bigcup_{\lambda' > \lambda}V^{\lambda'}B_{\bff}$, then $s+\lambda$ acts nilpotently on
$$
\Gr_V^{\lambda}B_{\bff} = V^{\lambda}B_{\bff}/ V^{>\lambda}B_{\bff}.
$$
\end{enumerate}
By the theory of Kashiwara \cite{Kas83}, there exists a unique such $V$-filtration.

Given a section $u\in B_{\bff}$, the $b$-function $b_u(s)$ of $u$ is the monic generator of the ideal
$$
\{b(s)\in \bbc[s] \ | \ b(s)u \in V^1\rs_X \cdot u \}.
$$
It follows from the results in \cite{Kas83} (see also \cite{BMS06}) that $b_u(s)$ is nonzero and all its roots are rational. The $V$-filtration can then be described as
\begin{equation}\label{V roots}
V^{\lambda}B_{\bff} = \{u\in B_{\bff}\ | \ \text{all roots of $b_u(s)$ are $\leq -\lambda$}\}.
\end{equation}

For the remainder of this section, suppose that $X$ is irreducible and each $f_{i}$ is nonzero. Let $\frak{a} \subset \os_X$ be the ideal generated by $f_1,\dots,f_r$, and denote by $\js(\frak{a}^{\lambda})$ the multiplier ideal of $\frak{a}$ with exponent $\lambda$ (for the definition and basic properties of multiplier ideals, we refer to \cite[Chapter 9]{Laz04}). By \cite[Theorem 1]{BMS06}, we have
\begin{equation}\label{multiplier ideals}
\{g\in \os_X \ | \ g\delta_{\bff} \in V^{\lambda}B_{\bff}\} = \js(\frak{a}^{\lambda-\epsilon})
\end{equation}
for all $\lambda\in \bbq_{>0}$ and $0<\epsilon \ll 1$. Given $g\in \os_X(X)$, if we define
$$
\lct_g(\frak{a}) := \sup\{ \lambda >0 \ | \ g\in \js(\frak{a}^{\lambda}) \},
$$
then by \eqref{V roots} and \eqref{multiplier ideals}, we have
\begin{equation}\label{lct b}
\lct_g(\frak{a})=\min\{\lambda\in\bbq \ |\ b_{g\delta_{\bff}}(-\lambda)=0 \}.
\end{equation}
We denote the log canonical threshold of $\frak{a}$ by $\lct(\frak{a})$ (we also write this as $\lct(X,Z)$, where $Z\subset X$ is the closed subscheme defined by $\frak{a}$), which is given by $\lct(\frak{a})=\lct_{1}(\frak{a})$. When $r=1$, we write $\lct(f_{1})$ for $\lct(\frak{a})$.

\subsection{The minimal exponent of a local complete intersection}\label{the minimal exponent of a local complete intersection}

Throughout this section, let $X$ be a smooth variety. In this section we recall the definition and some properties of the minimal exponent of a local complete intersection. For details, we refer to \cite{CDMO24}. 

Given a local complete intersection $Z$ in $X$ of pure codimension $r\geq 1$, the minimal exponent $\widetilde{\alpha}(Z)$, which we write as $\widetilde{\alpha}(X,Z)$ whenever the ambient variety is not clear from the context, is defined as follows. If $Z$ is globally a complete intersection, that is, there are $f_1,\dots,f_r\in \os_X(X)$ such that $Z$ is defined by the ideal generated by $f_1,\dots,f_r$, then
$$
\widetilde{\alpha}(Z) = \left\{
\begin{array}{cl}
\sup\{\gamma>0 \ | \  \delta_{\bff}\in V^{\gamma}B_{\bff}\}  & \text{if $\delta_{\bff}\not\in V^rB_{\bff}$} \\[2mm]
\sup\{r-1+q+\gamma \ | \  F_qB_{\bff}\subset V^{r-1+\gamma}B_{\bff}\}  & \text{if $\delta_{\bff}\in V^rB_{\bff}$},
\end{array}\right.
$$
where in the latter case, the supremum is over all nonnegative integers $q$ and all rational numbers $\gamma\in (0,1]$ with the property that $F_qB_{\bff} \subset V^{r-1+\gamma}B_{\bff}$. In general, we can find open sets $U_1,\dots,U_N$ of $X$ with $Z \subset \bigcup_{i=1}^NU_i$ such that $Z\cap U_i$ is nonempty and defined in $U_i$ by an ideal generated by $r$ regular functions on $U_i$, so that each $\widetilde{\alpha}(U_i,Z\cap U_i)$ is well-defined. Then we have
$$
\widetilde{\alpha}(Z)= \min_{1\leq i\leq N} \widetilde{\alpha}(U_i,Z\cap U_i).
$$

Next, we recall the local version of the minimal exponent discussed in the Introduction. If $z\in Z$ is a point, then for every open neighborhood $U$ of $z\in X$, we have $\widetilde{\alpha}(Z \cap U) \geq \widetilde{\alpha}(Z)$, and $\widetilde{\alpha}(Z \cap U)$ is constant if $U$ is small enough. This constant value is denoted by $\widetilde{\alpha}_z(Z)=\widetilde{\alpha}_z(X,Z)$ and called the minimal exponent of $Z$ at $z$. By \cite[Remark 4.17]{CDMO24}, the set $\{ \widetilde{\alpha}_z(Z)\ |\ z\in Z\}$ is finite   and we have
$$
\widetilde{\alpha}(Z) = \min_{z\in Z} \widetilde{\alpha}_z(Z),
$$
where the minimum runs over the points $z$ of $Z$. If $Z$ is a hypersurface defined by $f$, we also write $\widetilde{\alpha}_z(f)$ for $\widetilde{\alpha}_z(Z)$. 

More generally, given a closed subscheme $Z$ in $X$ and a point $z\in Z$ such that $Z$ is a local complete intersection of codimension $r$ in some open neighborhood $U$ of $z$, the minimal exponent of $Z$ at $z$ is defined as $\widetilde{\alpha}_z(Z)=\widetilde{\alpha}_z(U,Z \cap U)$.

We conclude this section with the following lemma that describes the minimal exponent and its local version in terms of minimal exponents of hypersurfaces.

\begin{lemma}\label{minimal exponent in terms of hypersurface}
Given regular functions $f_1,\dots,f_r\in \os_X(X)$ on a smooth variety $X$, consider, for each $i\in \{1,\dots,r\}$, the regular function
$$
g_i = z_1f_1+\cdots + z_{i-1}f_{i-1} + f_i + z_{i+1}f_{i+1}+\cdots + z_rf_r \in \os_{X\times U_{i}}(X\times U_{i}),
$$
where $z_1,\dots,\widehat{z}_i,\dots,z_r$ are the standard coordinates on $U_{i}=\bba^{r-1}$. If we denote by $Z$ the closed subscheme defined by the ideal generated by $f_1,\dots,f_r$, then the following hold:
\begin{enumerate}
\item[i)] If $Z$ is a complete intersection of codimension $r$, then we have
$$
\widetilde{\alpha}(Z) = \min_{1\leq i\leq r} \widetilde{\alpha}(g_i).
$$
\item[ii)] If $z$ is a closed point of $Z$ such that $Z$ is a complete intersection of codimension $r$ in some neighborhood of $z$, then we have
$$
\widetilde{\alpha}_{z}(Z)=\min_{1\leq i\leq r}\min_{u_{i}\in U_{i}} \widetilde{\alpha}_{(z,u_{i})}(g_i),
$$
where, for each fixed $i$, the inner minimum runs over the closed points $u_{i}$ of $U_{i}$.
\end{enumerate}
\end{lemma}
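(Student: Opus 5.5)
We start from the result of \cite{CDMO24} recalled in the Introduction: if $Z$ is a complete intersection of codimension $r$, then $\widetilde{\alpha}(Z)=\widetilde{\alpha}\bigl(g|_{X\times(\bba^r\setminus\{0\})}\bigr)$ with $g=\sum_{i=1}^r z_if_i$. The plan is to cover $\bba^r\setminus\{0\}$ by the open sets $W_i=\{z_i\neq 0\}$ and to identify $g|_{X\times W_i}$ with $g_i$ after a change of coordinates. Minimal exponents of hypersurfaces are local, meaning $\widetilde{\alpha}$ of a hypersurface is the minimum over an open cover. Hence $\widetilde{\alpha}(Z)=\min_i \widetilde{\alpha}(g|_{X\times W_i})$.

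On $W_i\cong \bbg_m\times \bba^{r-1}$, consider the automorphism of $X\times W_i$ given by $(x,z)\mapsto (x,z_i,z_j/z_i)_{j\neq i}$. In the new coordinates $z_i$ and $w_j=z_j/z_i$, we get $g=z_i\,g_i(x,w)$. Since $z_i$ is a unit, $g$ and $g_i(x,w)$ define the same hypersurface, and the minimal exponent depends only on the hypersurface (the Bernstein–Sato polynomial is unchanged by multiplying by a unit, or one uses the $V$-filtration characterization). We therefore get $\widetilde{\alpha}(g|_{X\times W_i})=\widetilde{\alpha}(\mathrm{pr}^*g_i)$ on $X\times\bbg_m\times\bba^{r-1}$. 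Pulling back along the smooth projection $X\times\bbg_m\times U_i\to X\times U_i$ does not change the minimal exponent, since $b$-functions are preserved under a smooth product with a factor. This gives $\widetilde{\alpha}(g|_{X\times W_i})=\widetilde{\alpha}(g_i)$ and proves i). One subtlety needs care: the convention when the zero locus of $g_i$ is empty, or when some $g_i$ is smooth, giving value $\infty$. Both sides treat this consistently via the minimum over the cover.

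For ii), we shrink $X$ to a neighborhood $U$ of $z$ on which $Z$ is a complete intersection. Applying i) to every smaller neighborhood gives $\widetilde{\alpha}_z(Z)=\min_i \widetilde{\alpha}(g_i|_{U\times U_i})$ for $U$ small. It remains to express $\widetilde{\alpha}(g_i|_{U\times U_i})$ as $\min_{u\in U_i}\widetilde{\alpha}_{(z,u)}(g_i)$ once $U$ is small enough; this is the main obstacle. The difficulty is that $\widetilde{\alpha}$ over $U\times U_i$ is the minimum over all points of $V(g_i)\cap(U\times U_i)$, including points $(x,u)$ with $x\neq z$. The direction $\le$ is immediate. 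For $\ge$, we use that the local minimal exponent is lower semicontinuous and takes finitely many values (\cite[Remark 4.17]{CDMO24} and its hypersurface analogue). The set $\{(x,u):\widetilde{\alpha}_{(x,u)}(g_i)<c\}$ is then closed, where $c=\min_u\widetilde{\alpha}_{(z,u)}(g_i)$.

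The worry is that this closed set might approach the fiber $\{z\}\times U_i$ only at infinity in $U_i$, which is not compact. To handle this, we exploit the $\bbg_m$-equivariance. Scaling $(z_1,\dots,z_r)$ preserves the hypersurface $g=0$ on $X\times(\bba^r\setminus\{0\})$, so the bad locus is a conic closed subset there, and its image in $X\times\bbp^{r-1}$ is closed. Since $\bbp^{r-1}$ is proper, the projection of this image to $X$ is closed. That projection does not contain $z$, so it misses a neighborhood of $z$. Finally, we translate back through the charts $W_i$ using the identifications from part i), which completes the argument.
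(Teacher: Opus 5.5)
Part i) is fine and is essentially the paper's argument. The gap is in part ii). You reduce to the per-chart claim $\widetilde{\alpha}(g_i|_{U\times U_i})=\min_{u\in U_i}\widetilde{\alpha}_{(z,u)}(g_i)$ for $U$ small, with a threshold $c=c_i$ that depends on $i$. You then assert that the projection of the bad locus $\{\widetilde{\alpha}<c_i\}\subset X\times\bbp^{r-1}$ does not contain $z$. That is false in general. The bad locus can contain a point $(z,p)$ with $p\in\bbp^{r-1}\setminus U_i$, because nothing forces the local minimal exponents of $H=V(g)$ along $\{z\}\times(\bbp^{r-1}\setminus U_i)$ to be $\geq c_i$. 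When that happens, the per-chart claim fails for every $U$.

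Here is a concrete example. Take $X=\bba^4$ with coordinates $s,y_1,y_2,y_3$, $z=0$, $f_1=y_1$, and $f_2=y_2^2+y_3^2-sy_1$, so that $Z=V(y_1,y_2^2+y_3^2)$ has codimension $2$.
\begin{itemize}
\item We have $g_1=(1-sw)y_1+w(y_2^2+y_3^2)$, and $\partial g_1/\partial y_1=1$ along $\{0\}\times U_1$. Hence $\min_{w}\widetilde{\alpha}_{(0,w)}(g_1)=\infty$.
\item For every $s_0\neq 0$, the point $((s_0,0,0,0),\,w=1/s_0)$ is a singular point of $V(g_1)$. So $\widetilde{\alpha}(g_1|_{U\times U_1})<\infty$ for every neighborhood $U$ of $0$.
\item In $X\times\bbp^1$ these points converge to $(0,[0:1])$, which is itself a singular point of $H$.
\end{itemize}
This is exactly the escape to infinity you worried about. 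Passing to $\bbp^{r-1}$ does not rule it out, because the limit point lies in the bad locus.

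The repair is to use a single threshold for the whole fiber. Set $c=\min_{i}\min_{u_i\in U_i}\widetilde{\alpha}_{(z,u_i)}(g_i)$, which is the minimum of the local minimal exponents of $H$ along $\{z\}\times\bbp^{r-1}$. Then prove only $\min_i\widetilde{\alpha}(g_i|_{U\times U_i})\geq c$, not the per-chart equality. The closed set $\{\widetilde{\alpha}<c\}$ is disjoint from $\{z\}\times\bbp^{r-1}$. Its image in $X$ is closed by properness of $\bbp^{r-1}$ and misses $z$. Combined with part i), this gives ii).

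This corrected argument is what the paper does, phrased as a tube lemma. It builds an open neighborhood $W$ of $\{z\}\times\bbp^{r-1}$ with $\widetilde{\alpha}(W,H\cap W)=c$, then picks $U$ with $U\times\bbp^{r-1}\subset W$.
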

\proof Let $g= \sum_{i=1}^r z_if_i\in \os_{X\times \bba^r}(X\times \bba^r)$, where $z_1,\dots,z_r$ are the standard coordinates on $\bba^r$. We regard $\os_{X\times \bba^r}(X\times \bba^r)=\os_{X}(X)[z_1,\dots,z_r]$ as a graded ring with $\os_{X}(X)$ lying in degree $0$ and $\deg z_{i}=1$ for all $i$. Then $g$ is a homogeneous element of $\os_{X\times \bba^r}(X\times \bba^r)$, and we denote by $H\subset X\times\bbp^{r-1}$ the closed subscheme defined by $g$, where $z_1,\dots,z_r$ are the standard homogeneous coordinates on $\bbp^{r-1}$. Note that $\bbp^{r-1}=\bigcup_{i =1}^{r}U_{i}$ and that the closed subscheme $H\cap (X\times U_{i})$ of $X\times U_{i}$ is defined by $g_{i}$.

We first prove part i). Assume that $Z$ is a complete intersection of codimension $r$. By \cite[Theorem 1.1]{CDMO24}, we have $\widetilde{\alpha}(Z) = \widetilde{\alpha}(g|_{X\times (\bba^r \setminus \{0\})})$. For each $i\in \{1,\dots,r\}$, if we put $D(z_{i}) = (z_i\neq 0) \subset \bba^r\setminus \{0\}$, then $\widetilde{\alpha}(g|_{X\times D(z_{i})}) = \widetilde{\alpha}(g_i)$ (as in the proof of \cite[Theorem 1.2]{CDM25}, we use \cite[Proposition 4.12]{CDMO24}, which says that the minimal exponent does not change under pullback along a smooth surjective morphism). Thus we have
$$
\widetilde{\alpha}(Z) = \widetilde{\alpha}(g|_{X\times (\bba^r \setminus \{0\})}) = \min_{1\leq i \leq r} \widetilde{\alpha}(g|_{X\times U_i})= \min_{1\leq i\leq r} \widetilde{\alpha}(g_i).
$$
In particular, part i) implies that
\begin{equation}\label{part one consequence}
\widetilde{\alpha}(Z\cap U)=\widetilde{\alpha}(H\cap (U\times \bbp^{r-1}))
\end{equation}
for all open subsets $U$ of $X$, which we will use in the proof of part ii).

We now prove part ii). Let $z$ be a closed point of $Z$ such that $Z$ is a complete intersection of codimension $r$ in some neighborhood of $z$. By shrinking $X$, we may assume that $Z$ is a complete intersection of codimension $r$ and that $\widetilde{\alpha}_{z}(Z)=\widetilde{\alpha}(Z)$. By part i), we have
$$
\widetilde{\alpha}_{z}(Z)=\widetilde{\alpha}(Z)\leq \min_{1\leq i\leq r}\min_{u_{i}\in U_{i}} \widetilde{\alpha}_{(z,u_{i})}(g_i),
$$
hence it suffices to prove the reverse inequality. Fix $i\in \{1,\dots, r\}$. For each closed point $u_{i}\in U_{i}$, let $W_{u_{i}}$ be an open neighborhood of $u_{i}$ such that $\widetilde{\alpha}(g_{i}|_{W_{u_{i}}})=\widetilde{\alpha}_{u_{i}}(g_{i})$, and set $W_{i}=\bigcup_{u_{i}\in U_{i}}W_{u_{i}}$. Then $W_{i}$ is an open neighborhood of $\{z\}\times U_{i}\subset X\times U_{i}$ such that
$$
\widetilde{\alpha}(g_{i}|_{W_{i}})=\min_{u_{i}\in U_{i}} \widetilde{\alpha}_{(z,u_{i})}(g_i).
$$
Let $W=\bigcup_{i=1}^{r}W_{i}\subset X\times\bbp^{r-1}$. Then we have
$$
\widetilde{\alpha}(W,H\cap W)=\min_{1\leq i\leq r}\widetilde{\alpha}(W_{i},H\cap W_{i})=\min_{1\leq i\leq r}\widetilde{\alpha}(g_{i}|_{W_{i}})=\min_{1\leq i\leq r}\min_{u_{i}\in U_{i}} \widetilde{\alpha}_{(z,u_{i})}(g_i),
$$
so it suffices to show that $\widetilde{\alpha}(Z)\geq \widetilde{\alpha}(W,H\cap W)$. Since $W$ is an open neighborhood of $\{z\}\times\bbp^{r-1}$, if we denote the projection $X\times \bbp^{r-1}\to X$ by $\pi$ and put $U=X\setminus \pi((X\times \bbp^{r-1})\setminus W)$, then $U$ is an open neighborhood of $z\in X$ such that $U\times\bbp^{r-1}\subset W$. Thus we obtain
$$
\widetilde{\alpha}_{z}(Z)= \widetilde{\alpha}(Z\cap U)=\widetilde{\alpha}(H\cap (U\times\bbp^{r-1}))\geq \widetilde{\alpha}(W,H\cap W),
$$
where the first equality follows from the fact that $\widetilde{\alpha}_{z}(Z)=\widetilde{\alpha}(Z)$ and the second equality follows from \eqref{part one consequence}. \qed

\subsection{Newton polyhedra}\label{Newton polyhedra}

In this section we review basic definitions and properties of Newton polyhedra, and prove a simple lemma that will be used later.

We denote the convex hull of a subset $A\subset\bbr^m$ by $\Conv(A)\subset\bbr^{m}$. Recall that the Newton polyhedron $P(A)$ of a nonempty subset $A\subset\bbz_{\geq 0}^m$ is defined as $\Conv(\bigcup_{a \in A} (a + \bbz_{\geq 0}^m))$. We say that a subset $P \subset \bbr^m$ is a Newton polyhedron if there exists a nonempty subset $A \subset \bbz_{\geq 0}^m$ such that $P=P(A)$. The Newton polyhedron of a nonzero polynomial $f \in \bbc[y_1,\dots,y_m]$ is defined as $P(f)=P(\supp f)$, where $\supp f$ is the set of elements $u=(u_{1},\dots,u_{m})\in \bbz_{\geq 0}^m$ such that the monomial $y^u=y_1^{u_1}\cdots y_m^{u_m}$ appears in $f$ with nonzero coefficient. The Newton polyhedron of a nonzero monomial ideal $\frak{a}\subset \bbc[y_1,\dots,y_m]$ is defined as $P(\frak{a})=P(\{u\in \bbz_{\geq 0}^m \ | \ y^u\in \frak{a}\})$.

Consider $\bbr^m$ and its dual $(\bbr^m)^*$. We denote the standard basis of $\bbr^m$ by $e_1,\dots,e_m$, and put
$$
(\bbr^m)^*_{\geq 0} = \{v\in (\bbr^m)^* \ | \ v(e_i)\geq 0 \text{ for $i\in \{1,\dots, m\}$} \}
$$
and
$$
(\bbr^m)^*_{> 0} = \{v\in (\bbr^m)^* \ | \ v(e_i)> 0 \text{ for $i\in \{1,\dots, m\}$} \}.
$$
For a subset $I\subset \{1,\dots, m\}$, we put
\begin{equation}\label{global real}
(\bbr^m)^*_{I} = \{v\in (\bbr^m)^*_{\geq 0} \ | \ v(e_i)> 0 \text{ for $i\notin I$} \}.
\end{equation}For an element $v\in (\bbr^m)^*_{\geq 0}$, we put
\begin{equation}\label{coordinate sum}
|v| =\sum_{i=1}^{m}v(e_{i})\in\bbr_{\geq 0}.
\end{equation}

Given an element $v\in (\bbr^m)^*_{\geq 0}$ and a nonempty subset $S \subset \bbr_{\geq 0}^m$, define
$$
d(v,S) =  \inf\{v(u) \ | \ u\in S\} \in \bbr_{\geq 0}.
$$
For every element $v\in (\bbr^m)^*_{\geq 0}$ and nonempty subset $A \subset \bbz_{\geq 0}^m$, observe that
\begin{equation}\label{vertices}
d(v,P(A)) = d(v,A).
\end{equation}

For the remainder of the section, we fix a Newton polyhedron $P\subset \bbz^{m}_{\geq 0}$. Note that if $P=P(A)$ for some subset $A\subset \bbz^{m}_{\geq 0}$, then there exists a finite subset $B\subset A$ such that $P=P(B)$. Indeed, if we identify $\bbz_{\geq 0}^m$ with the set of monomials of the polynomial ring $\bbz[y_{1},\dots,y_{m}]$, then $B$ is given by any finite set of monomial generators of the ideal generated by $A$. Together with \eqref{vertices}, this implies that the function $(\bbr^m)^*_{\geq 0} \to \bbr, v\mapsto d(v,P)$ is continuous and piecewise linear.

For an element $v\in (\bbr^m)^*_{\geq 0}$, recall that \textit{the first meet locus} of $P$ along $v$ is defined as
$$
F(v, P) = \{ u \in P \ | \ v(u) = d(v,P) \}.
$$
The set of faces of $P$ is equal to the set $\{F(v,P) \ | \ v\in (\bbr^m)^*_{\geq 0}\}$, and the set of bounded faces of $P$ is equal to the set $\{F(v,P) \ | \ v\in (\bbr^m)^*_{> 0}\}$. Recall that a vertex is a face of dimension zero, and a facet is a proper face of maximal dimension $m-1$. If $P=P(A)$ for some $A\subset \bbz^{m}_{\geq 0}$, then $A$ contains all the vertices of $P$ by \eqref{vertices}. Note that a face $Q$ of $P$ is unbounded if and only if $Q+e_i \subset Q$ for some $i$.

\begin{definition}\label{}
Given a facet $Q$ of $P$ that is not contained in any coordinate hyperplane, we define $L_Q$ as the unique element $L$ of $(\bbr^m)^*_{\geq 0}$ such that $d(L,P)=1$ and $F(L,P)=Q$. 
\end{definition}

\begin{remark}\label{numerical facet characterization}
For an element $L\in (\bbr^m)^*_{\geq 0}$, it is easy to see that $v=L_{Q}$ for some facet $Q$ of $P$ that is not contained in any coordinate hyperplane if and only if $d(L,P)=1$ and $\dim F(L,P)=m-1$.
\end{remark}

Note that $P$ has a facet not contained in any coordinate hyperplane if and only if $0\notin P$, in which case
\begin{equation}\label{half space intersection}
P = \bigcap_{Q} \{u\in \bbr^m_{\geq 0} \ | \ L_Q(u)\geq 1\},
\end{equation}
where the intersection runs over the facets $Q$ of $P$ that are not contained in any coordinate hyperplane.

\begin{remark}\label{unbounded facets}
Let $Q$ be a facet of $P$ that is not contained in any coordinate hyperplane, and fix an index $i\in \{1,\dots,m\}$. Then it is easy to see that $L_{Q}(e_{i})=0$ if and only if $Q+e_{i}\subset Q$.
\end{remark}

\begin{remark}\label{unbounded face correspondence}
Let $I\subset \{{1},\dots,{m}\}$ be a subset. We denote by $\pi\colon\bbr^m \to \bbr^{m-|I|}$ the standard projection obtained by omitting the coordinates indexed by $I$. Note that $\pi(P)\subset \bbr^{m-|I|}$ is a Newton polyhedron since $\pi(P)=P(\pi(A))$ if $P=P(A)$ for some $A \subset\bbz_{\geq 0}^m$. There is a bijection between faces $Q$ of $P$ such that $Q +e_{i}\subset Q$ for each $i\in I$ and faces $Q'$ of $\pi(P)$ given by $Q'=\pi(Q)$ with inverse $Q=\pi^{-1}(Q') \cap P$ (see \cite[Remark 3.2]{BMScomb}). Under this bijection, $Q$ is a facet that is not contained in any coordinate hyperplane if and only if $Q'$ has the same property, in which case $L_Q = \pi\circ L_{Q'}$. Thus by (\ref{half space intersection}) and Remark \ref{unbounded facets}, if $0\notin \pi(P)$, then we have
$$
\pi^{-1}(\pi(P))\cap \bbr^m_{\geq 0} = \bigcap_Q \{u\in \bbr^m_{\geq 0} \ | \ L_Q(u)\geq 1\},
$$
where the intersection runs over the facets $Q$ of $P$ that are not contained in any coordinate hyperplane such that $L_{Q}(e_{i})=0$ for all $i\in I$.
\end{remark}

We now prove the following characterization of the linear functions associated to facets of Newton polyhedra.

\begin{lemma}\label{facet uniqueness}
Given a subset $A\subset \bbz_{\geq 0}^m$, let $Q$ be a facet of $P(A)$ that is not contained in any coordinate hyperplane. Then $L_Q$ is the unique element $L$ of $(\bbr^m)^*_{\geq 0}$ such that the following properties hold:
\begin{enumerate}
\item[a)] $\{u\in A \ | \ L(u)=1\} =Q \cap A$.
\item[b)] For $i\in \{1,\dots,m\}$, we have $L(e_i)= 0$ if and only if $Q+e_i\subset Q$.
\end{enumerate}
\end{lemma}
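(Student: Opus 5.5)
First I check that $L_Q$ satisfies a) and b), and then I show that any $L\in(\bbr^m)^*_{\geq 0}$ satisfying both must equal $L_Q$.

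\emph{Existence.} By definition $d(L_Q,P(A))=1$ and $F(L_Q,P(A))=Q$. Every $u\in A$ lies in $P(A)$, so $L_Q(u)\geq 1$, with equality exactly when $u\in F(L_Q,P(A))=Q$. This gives a). Property b) is Remark \ref{unbounded facets}.

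\emph{Uniqueness.} Let $L$ satisfy a) and b), and put $I=\{i \mid Q+e_i\subset Q\}$. By b), $L$ and $L_Q$ have the same zero set $I$ among the coordinates $L(e_i)$.

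The key point is that $Q$ is spanned, affinely and in direction, by the points of $Q\cap A$ together with the recession directions $e_i$, $i\in I$. To see this, note that $Q$ is a face of $P(A)=\Conv(A)+\bbr^m_{\geq 0}$. Hence $Q=\Conv(Q\cap A)+\operatorname{cone}(e_i \mid i\in I)$. Indeed, write $q\in Q$ as $q=\sum\lambda_j a_j+\sum\mu_i e_i$ with $a_j\in A$ and $\lambda_j>0$. Since $L_Q\geq 0$ on generators and $L_Q(q)=1$, equality forces $L_Q(a_j)=1$, so $a_j\in Q$, and $L_Q(e_i)=0$ whenever $\mu_i>0$, so $i\in I$ by Remark \ref{unbounded facets}.

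Since $\dim Q=m-1$, the affine hull of $Q$ is then the hyperplane $H=\{L_Q=1\}$. It is generated by the points $a\in Q\cap A$ and the directions $e_i$, $i\in I$.

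Now compare $L$ with these generators. By a), $L(a)=1$ for all $a\in Q\cap A$, and by b), $L(e_i)=0$ for $i\in I$. So the affine function $L-1$ vanishes on all affine generators of $H$, hence on all of $H$. The same holds for $L_Q-1$.

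Two affine functions of the form $\ell-1$ with $\ell$ linear that vanish on a common affine hyperplane not passing through $0$ must agree. Here $0\notin H$ because $L_Q(0)=0\neq 1$. Therefore $L=L_Q$.

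The main step is the description of $Q$ as $\Conv(Q\cap A)$ plus the recession cone. It has to be done carefully because $A$ may be infinite, but any point of $\Conv(A)$ is a finite convex combination, so the argument above is legitimate.
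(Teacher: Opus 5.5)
Your proof is correct. It differs from the paper's mainly in how it handles unbounded facets.

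The paper splits into two cases.
\begin{itemize}
\item If $Q$ is bounded, it uses that $Q$ is the convex hull of its vertices, which lie in $A$ by \eqref{vertices}. Hence $Q\cap A$ linearly spans $\bbr^m$, and $L=L_Q$.
\item If $Q$ is unbounded, it projects away the coordinates in $I=\{i \mid Q+e_i\subset Q\}$. The face correspondence of Remark \ref{unbounded face correspondence} gives a bounded facet $Q'$ of $P(\pi(A))$ with $L_Q=L_{Q'}\circ\pi$. Property b) lets one factor $L=L'\circ\pi$, and the bounded case finishes the argument.
\end{itemize}

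You instead work directly from $P(A)=\Conv(A)+\bbr^m_{\geq 0}$ and the equality case of the inequalities $L_Q\geq 1$ on $A$ and $L_Q(e_i)\geq 0$. This gives $Q\subset \Conv(Q\cap A)+\operatorname{cone}(e_i\mid i\in I)$. So the hyperplane $\{L_Q=1\}$ is the affine span of $Q\cap A$ together with the directions $e_i$, $i\in I$, and $L-1$ vanishes on it.

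What your route buys:
\begin{itemize}
\item It treats bounded and unbounded facets uniformly.
\item It needs neither the vertex description of bounded faces nor the projection correspondence.
\item It makes visible that only the implication ``$Q+e_i\subset Q\Rightarrow L(e_i)=0$'' in b) is actually used.
\end{itemize}
The paper's route, in exchange, reuses the projection machinery it relies on elsewhere (e.g.\ in Theorem \ref{structure of facets} and Corollary \ref{rational poly intersection}).

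One phrasing point. What forces $L_Q(a_j)=1$ in your equality argument is not nonnegativity of $L_Q$. It is the combination of $L_Q(a_j)\geq 1$ (since $a_j\in P(A)$ and $d(L_Q,P(A))=1$), $\sum_j\lambda_j=1$, and $L_Q(e_i)\geq 0$. You should say this explicitly, but the step itself is sound.
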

\proof It is clear that $L_Q$ satisfies these two properties, so let $L$ be an element of $(\bbr^m)^*_{\geq 0}$ satisfying properties a) and b). We need to show that $L=L_Q$. First assume that $Q$ is bounded. Let $H\subset \bbr^m$ denote the affine span of $Q$, which is an affine hyperplane since $Q$ is a facet. Note that $H$ does not contain the origin since $L_Q(H)=L_Q(Q)=1$, hence the linear span of $H$ is equal to $\bbr^m$. Since $Q$ is bounded, $H$ is the affine span of the vertices of $Q$. By \eqref{vertices}, the vertices of $Q$ are contained in $Q \cap A$. Thus the span of $Q \cap A$ is equal to $\bbr^m$. Since $L(Q \cap A)=L_Q(Q \cap A)$ by property a), we have $L=L_Q$. 

Now assume $Q$ is unbounded. Let $I = \{i\in\{1,\dots,m\} \ | \ Q+e_i \subset Q\}$, and consider the standard projection $\pi\colon \bbr^m \to \bbr^{m-|I|}$ obtained by omitting the coordinates indexed by $I$. Write $A'=\pi(A)$ and $Q'=\pi(Q)$. By Remark \ref{unbounded face correspondence}, $Q'$ is a facet of $P(A')$ that is not contained in any coordinate hyperplane such that $L_Q = L_{Q'} \circ \pi$. Note that $Q'$ is bounded because $L_{Q'}(\pi(e_i))>0$ for $i\not\in I$. Consider the element $L' \in (\bbr^{m-|I|})^*_{\geq 0}$ given by $L'(\pi(e_i))=L(e_i)>0$ for $i\not\in I$. By property b), we have $L=L'\circ \pi$, which implies that
$$
\{u\in A' \ | \ L'(u)=1\} = \pi(\{u\in A \ | \ L(u)=1\} ) = \pi(Q \cap A) 
$$
$$
= \pi(\pi^{-1}(Q') \cap P \cap A) = \pi(\pi^{-1}(Q')  \cap A) = Q' \cap A',
$$
where the second equality follows from property a). By the bounded case, we then obtain $L'=L_{Q'}$, hence $L=L'\circ \pi = L_{Q'} \circ \pi = L_Q$. \qed

\subsection{Cones and fans}

In this section, we recall some basic definitions and properties of cones and fans. All cones in this paper are strongly convex rational polyhedral cones. For general facts about cones and fans, we refer to \cite{CLS11}. 

Let $e_1,\dots,e_m$ denote the standard basis of $\bbr^m$. We consider cones and fans in $(\bbr^m)^*$ with respect to the lattice $\{v\in (\bbr^m)^* \ | \ v(e_1),\dots,v(e_m)\in \bbz \}$. Given a cone $\sigma$ in $(\bbr^m)^*$, recall that the relative interior $\relint\sigma$ of $\sigma$ is the interior of $\sigma$ in its $\bbr$-span. 

Recall that rays are one-dimensional cones. The primitive generator of a ray $\rho$ in $(\bbr^m)^*$ is the unique vector $v\in \rho$ such that $v(e_1),\dots,v(e_m)$ are integers generating the unit ideal in $\bbz$. We define the set of primitive ray generators of a fan $\Sigma$ (resp. of a cone $\sigma$) to be the set of primitive generators of the rays of $\Sigma$ (resp. of the one-dimensional faces of $\sigma$). If $\sigma$ is a cone with primitive ray generators $v_1,\dots,v_l$, then by \cite[Proposition 2.44]{RW98}, we have
\begin{equation}\label{relative interior}
\relint \sigma = \bigg\{\sum_{i=1}^l a_i v_i \ \big| \ a_1,\dots,a_l\in \bbr_{>0}\bigg\}.
\end{equation}

\subsection{Normal fans}\label{Normal fans}

In this section, we recall some basic properties of the normal fan of a Newton polyhedron. For general facts about normal fans, we refer to \cite[\S 7.1]{CLS11}.

Given a Newton polyhedron $P \subset \bbr^n$, let $\Sigma_P$ denote its normal fan, whose support is given by $|\Sigma_P|=(\bbr^n)^*_{\geq 0}$. The cones of $\Sigma_P$ are in natural bijection with the faces of $P$. Given a face $Q$ of $P$, we denote the corresponding cone of $\Sigma_P$ under this bijection by $\sigma_Q$, whose relative interior is given by
\begin{equation}\label{relative interior characterization}
\relint \sigma_Q = \{v\in (\bbr^m)^*_{\geq 0} \ | \ F(v,P)=Q\}.
\end{equation}
Since $d(v,P)$ is continuous in $v$, we have
\begin{equation}\label{closure}
d(v,P) = v(u) \ \ \ \text{for $v\in \sigma_Q$ and $u\in Q$},
\end{equation}
or equivalently, $Q \subset F(v,P)$ for $v\in \sigma_Q$.

Let $e_1,\dots,e_n$ denote the standard basis of $\bbr^n$ and let $e^{*}_1,\dots,e^{*}_n\in (\bbr^n)^*$ denote the corresponding dual basis. The set of rays of $\Sigma_P$ is given by
\begin{equation}\label{rays}
\{\bbr_{\geq 0} e_i^* \ | \ 1\leq i\leq n\} \cup \bigg\{\bbr_{\geq 0}L_Q \ \big| \ 
\begin{array}{l} \text{$Q$ is a facet of $P$ that is not contained} \\
\text{in any coordinate hyperplane}
\end{array}
\bigg\},
\end{equation}
where $\bbr_{\geq 0} v = \{tv \ | \ t\in \bbr_{\geq 0}\}$ for $v\in (\bbr^n)^*_{\geq 0}$. 

\begin{lemma}\label{closure lemma}
Let $P\subset \bbr^{n}$ be a Newton polyhedron and let $\Sigma$ be a fan that refines the normal fan of $P$. Given a cone $\sigma\in \Sigma$, if we denote the primitive ray generators of $\sigma$ by $v_{1},\dots,v_{l}$, then the following hold:
\begin{enumerate}
\item[i)] For all real numbers $\lambda_{1},\dots,\lambda_{l}\geq 0$, we have $d(\sum_{j=1}^{l}\lambda_{j}v_{j},P)=\sum_{j=1}^{l}\lambda_{j}d(v_{j},P)$.
\item[ii)] For all $v\in\relint \sigma$, we have $F(v,P)=\bigcap_{j=1}^{l} F(v_j,P)$.
\end{enumerate}
\end{lemma}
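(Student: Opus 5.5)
The plan is to reduce everything to the fact that $\sigma$ lies in a single cone of the normal fan $\Sigma_P$. Since $\Sigma$ refines $\Sigma_P$, there is a cone $\tau\in\Sigma_P$ with $\sigma\subset\tau$. I will take $\tau$ to be the smallest such cone, so $\tau=\sigma_Q$ for a face $Q$ of $P$. I will also use that $\relint\sigma\subset\relint\tau$. This inclusion holds because the cones of a refinement are contained in cones of the coarser fan, and the minimal cone of $\Sigma_P$ containing $\sigma$ is the one whose relative interior meets $\relint \sigma$. Given this, every cone of $\Sigma_P$ is a face of the next, so $\relint\sigma$ lies in $\relint\sigma_Q$ for a unique $Q$.

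For part i), I fix $u\in Q$. Each $v_j$ lies in $\sigma\subset\sigma_Q$, so by \eqref{closure} we get $d(v_j,P)=v_j(u)$. Any nonnegative combination $v=\sum\lambda_jv_j$ also lies in $\sigma_Q$, so again by \eqref{closure} we get $d(v,P)=v(u)$. Linearity of evaluation at $u$ then gives
$$d(v,P)=v(u)=\sum_j\lambda_jv_j(u)=\sum_j\lambda_jd(v_j,P).$$

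For part ii), I take $v=\sum a_jv_j$ with all $a_j>0$, which is legitimate by \eqref{relative interior}.
\begin{itemize}
\item[$\supseteq$:] If $u\in\bigcap_j F(v_j,P)$, then $v_j(u)=d(v_j,P)$ for every $j$. Summing with weights $a_j$ and applying part i) gives $v(u)=d(v,P)$, so $u\in F(v,P)$.
\item[$\subseteq$:] If $u\in F(v,P)$, then $\sum a_j(v_j(u)-d(v_j,P))=v(u)-d(v,P)=0$, again using part i). Each term satisfies $v_j(u)-d(v_j,P)\ge0$ because $u\in P$, and each $a_j>0$. Hence every term vanishes, so $u\in F(v_j,P)$ for every $j$.
\end{itemize}
Part ii) therefore needs only part i) and positivity of the coefficients; it does not use the refinement hypothesis beyond part i).

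The only step that needs any care is part i): it relies on $\sigma$ sitting inside one cone $\sigma_Q$, so that a single point $u\in Q$ achieves the minimum for all the $v_j$ simultaneously. This follows directly from the definition of refinement (each cone of $\Sigma$ is contained in a cone of $\Sigma_P$), together with \eqref{closure}. So I expect no real obstacle; the main point is just to invoke \eqref{closure} correctly.
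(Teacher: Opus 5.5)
Your proof is correct. Part i) matches the paper's argument exactly: fix a single $u\in Q$ with $\sigma\subset\sigma_Q$ and apply \eqref{closure}. Your inclusion $\bigcap_j F(v_j,P)\subset F(v,P)$ is also the paper's argument.

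The difference is in the inclusion $F(v,P)\subset\bigcap_j F(v_j,P)$. The paper takes $\sigma_Q$ to be the smallest cone of $\Sigma_P$ containing $\sigma$ and uses $\relint\sigma\subset\relint\sigma_Q$ together with \eqref{relative interior characterization} to get $F(v,P)=Q$. It then concludes with $Q\subset F(v_j,P)$ by \eqref{closure}. You instead write $v=\sum_j a_jv_j$ with all $a_j>0$ via \eqref{relative interior}. You then note that a vanishing positive combination of the nonnegative quantities $v_j(u)-d(v_j,P)$ forces each of them to vanish.

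Your route is more elementary. It shows that part ii) follows formally from part i), so the refinement hypothesis enters only through $\sigma\subset\sigma_Q$ for some face $Q$. The minimal-cone bookkeeping becomes unnecessary. The paper's route gives a little more information: the common face $F(v,P)$ is precisely the face $Q$ corresponding to the smallest cone of $\Sigma_P$ containing $\sigma$.

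You never actually use $\relint\sigma\subset\relint\tau$, so you can delete that paragraph. As written, its justification is not a correct argument; in particular, ``every cone of $\Sigma_P$ is a face of the next'' is not meaningful. If you did want that inclusion, the standard argument runs as follows. If a point of $\relint\sigma$ lies in a face $F$ of $\tau\supset\sigma$, then $\sigma\subset F$. Hence no point of $\relint\sigma$ lies in a proper face of the minimal such $\tau$.
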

\proof Let $\Sigma_{P}$ denote the normal fan of $P$, and let $Q$ denote the face of $P$ such that $\sigma_{Q}$ is the smallest cone of $\Sigma_{P}$ containing $\sigma$, which implies that $\relint \sigma\subset \relint \sigma_{Q}$. To prove part i), observe that if we choose an element $u\in Q$, then by \eqref{closure}, we have 
$$
d\bigg(\sum_{j=1}^{l}\lambda_{j}v_{j},P\bigg)=\sum_{j=1}^{l}\lambda_{j}v_{j}(u)=\sum_{j=1}^{l}\lambda_{j}d(v_{j},P).
$$  
For part ii), note that $Q=F(v,P)$ by \eqref{relative interior characterization} and $Q \subset \bigcap_{j=1}^l F(v_j, P)$ by \eqref{closure}, so it suffices to prove the reverse inclusion. Let $u\in \bigcap_{j=1}^l F(v_j, P)$, and let us write $v=\sum_{i=1}^l \lambda_i v_i$ for some $\lambda_1,\dots,\lambda_{l}\geq 0$. Then we have
$$
v(u)=\sum_{j=1}^{l}\lambda_{j}v_{j}(u)=\sum_{j=1}^{l}\lambda_{j}d(v_{j},P)=d(v,P),
$$
where the last equality follows from part i), which implies that
$\bigcap_{i=1}^l F(v_i, P)\subset F(v,P)$. \qed

\subsection{Minkowski sums}\label{Minkowski sums}

In this section, we recall some basic properties of the normal fan of the Minkowski sum of several Newton polyhedra. Recall that the Minkowski sum of finitely many subsets $S_1,\dots,S_r \subset \bbr^n$ is defined as 
$$
\sum_{i=1}^r S_i = \{u_1+\cdots +u_r \ | \ u_i\in S_i\} \subset \bbr^n.
$$

Let $P_1,\dots,P_r \subset \bbr^n$ be Newton polyhedra. We consider the Minkowski sum $P=\sum_{i=1}^r P_i$, which is a Newton polyhedron since $P=P(\sum_{i=1}^r A_i)$ if $P_i=P(A_i)$ with $A_i \subset \bbz_{\geq 0}^n$. We denote the normal fans of $P_1,\dots,P_r,P$ by $\Sigma_{1},\dots,\Sigma_{r},\Sigma_{P}$, respectively. For $v\in (\bbr^n)^*_{\geq 0}$, note that $d(v,P) = \sum_{i=1}^r d(v,P_i)$ and $F(v,P) = \sum_{i=1}^r F(v,P_i)$. If $v,w\in (\bbr^n)^*_{\geq 0}$ are elements such that $F(v,P)=F(w,P)$, then it it easy to see that $F(v,P_i)=F(w,P_i)$ for each $i$ (see also \cite[Lemma 2.7]{Dam89}). Thus, given any face $Q$ of $P$, if we set $Q_i=F(v,P_i)$ for some $v\in \relint \sigma_Q$, then $Q_i$ is independent of the choice of $v$, hence by \eqref{relative interior characterization}, we have
\begin{equation}\label{cones normal fan sum}
\relint \sigma_Q = \bigcap_{i=1}^r\relint\sigma_{Q_i}.
\end{equation}
In particular, $\Sigma_{P}$ is a common refinement of $\Sigma_{1},\dots,\Sigma_{r}$. See also \cite[Remark 4]{ZG09}.

\subsection{Simplicial toric resolutions}\label{simplicial toric resolutions}

In this section, we recall some basic properties of the toric variety associated to a simplicial refinement of the positive orthant. For details on toric varieties, we refer to \cite{CLS11}.

Consider $M=\bbz^n$ and its dual $N=\Hom_{\bbz}(M,\bbz)$. We put $M_{\bbr} =M\otimes_{\bbz} \bbr=\bbr^n$ and $N_{\bbr}=N\otimes_{\bbz} \bbr=(\bbr^n)^*$. Let $e_1,\dots,e_n$ denote the standard basis of $M$ and let $e^{*}_1,\dots,e^{*}_n\in N$ denote the corresponding dual basis. Let $\bbc[\bbz_{\geq 0}^n]=\bbc[x_1,\dots,x_n]$, where $u\in \bbz_{\geq 0}^n$ corresponds to the monomial $x^u = \prod_{i=1}^n x_i^{u_i}$.

We fix a simplicial fan $\Sigma$ with support $(\bbr^n)^*_{\geq 0} \subset N_{\bbr}$, and let $Y$ be the toric variety associated to $\Sigma$. Since $|\Sigma| = (\bbr^n)^*_{\geq 0}$, we have the induced toric morphism
$$
\pi\colon Y \to \bba^n=\Spec \bbc[x_1,\dots,x_n],
$$
which is proper and birational. By \cite[Lemma 3.3.21]{CLS11}, $\pi$ is an isomorphism over the complement of
\begin{equation}\label{isom locus}
\bigcup_v V(x_i \ | \ \text{$i\in \{1,\dots,n\}$ with $v(e_i)>0$}),
\end{equation}
where $v$ runs over the primitive ray generators of $\Sigma$ other than $e_1^*,\dots,e_n^*$. The following lemma is an immediate consequence of \eqref{isom locus}.

\begin{lemma}\label{isom lemma}
With the above notation, if $f\in\bbc[x_{1},\dots,x_{n}]$ is a nonzero polynomial such that $d(v,P(f))>0$ for all primitive ray generators $v$ of $\Sigma$ other than $e_1^*,\dots,e_n^*$, then $\pi$ is an isomorphism over the complement of $V(f)\subset \bba^{n}$.
\end{lemma}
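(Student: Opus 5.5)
The statement is Lemma \ref{isom lemma}, and the plan is to deduce it directly from the description \eqref{isom locus} of the locus over which $\pi$ may fail to be an isomorphism. By \eqref{isom locus}, $\pi$ is an isomorphism over the complement of
$$
E=\bigcup_v V(x_i \ | \ \text{$i$ with $v(e_i)>0$}),
$$
where $v$ runs over the primitive ray generators of $\Sigma$ other than $e_1^*,\dots,e_n^*$. The property of being an isomorphism over an open set persists under shrinking that open set. It therefore suffices to show $E\subset V(f)$, i.e.\ $V(x_i \mid v(e_i)>0)\subset V(f)$ for each such $v$. Then $\bba^n\setminus V(f)\subset \bba^n\setminus E$, and the conclusion follows.

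Fix such a $v$ and set $J=\{i \mid v(e_i)>0\}$. The key step is to show that $f$ lies in the ideal $(x_i \mid i\in J)$. For this, take any $u\in\supp f$. By \eqref{vertices} and the hypothesis, $v(u)\geq d(v,P(f))=d(v,\supp f)>0$. Since $v(u)=\sum_i u_i v(e_i)$ with all terms nonnegative and $v(e_i)=0$ for $i\notin J$, there must be some $i\in J$ with $u_i>0$. Hence the monomial $x^u$ is divisible by some $x_i$ with $i\in J$. It follows that every monomial of $f$, and therefore $f$ itself, lies in $(x_i\mid i\in J)$, which gives $V(x_i \mid i\in J)\subset V(f)$.

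There are two small points to check. If $J$ were empty, then $v=0$, which is not a primitive ray generator, so this case does not arise. The only nontrivial input is \eqref{isom locus}, i.e.\ \cite[Lemma 3.3.21]{CLS11}, which is quoted; beyond that there is no real obstacle, and the argument amounts to the monomial-divisibility observation above.
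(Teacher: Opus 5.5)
Your proof is correct and is exactly the argument the paper intends when it calls the lemma an immediate consequence of \eqref{isom locus}. The hypothesis $d(v,P(f))>0$ forces every monomial of $f$ to involve some $x_i$ with $v(e_i)>0$, so each $V(x_i \mid v(e_i)>0)$ lies in $V(f)$; the paper omits this step, and your write-up supplies it cleanly.
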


For the remainder of this section, unless stated otherwise, we fix an $n$-dimensional cone $\sigma \in \Sigma$ with primitive ray generators $v_1,\dots,v_n$. Let 
$$
U_{\sigma} = \Spec \bbc[\sigma^{\vee} \cap M].
$$
We denote by $v_1^*,\dots,v_n^*\in M_{\bbr}$ the dual basis corresponding to the basis $v_1,\dots,v_n\in N_{\bbr}$. We set $N_{\sigma} =\sum_{i=1}^n \bbz v_i \subset N$ and
$$
M_{\sigma} = \Hom_{\bbz}(N_{\sigma},\bbz)= \sum_{i=1}^n \bbz v_i^* \subset M_{\bbr}.
$$
 Note that $\sigma^{\vee} \cap M_{\sigma} = \sum_{i=1}^n \bbz_{\geq 0} v_i^*$. Let $\bbc[\sigma^{\vee} \cap M_{\sigma}]=\bbc[y_1,\dots,y_n]$, where $\sum_{i=1}^n u_i v_i^*$ with $u_i\in \bbz_{\geq 0}$ corresponds to the monomial $\prod_{i=1}^ny_i^{u_i}$, and put
$$
U'_{\sigma} = \Spec \bbc[\sigma^{\vee} \cap M_{\sigma}]=\Spec \bbc[y_1,\dots,y_n].
$$
By \cite[Proposition 1.3.18]{CLS11}, the finite group $G_{\sigma} = N/N_{\sigma}$ acts on $U'_{\sigma}$ and we have
$$
U'_{\sigma}/G_{\sigma} = U_{\sigma}.
$$
We denote the composition $U'_{\sigma} \to U_{\sigma} \to \bba^n$ by $\pi_{\sigma}$. Since $u =\sum_{j=1}^n v_j(u) v^*_j$ for $u\in M$, we have
\begin{equation}\label{toric monomial}
\pi_{\sigma}^*(x^u )= \prod_{j=1}^{n}y_j^{v_j(u)}.
\end{equation}
It is easy to see that
\begin{equation}\label{jacobian}
\pi_{\sigma}^*(dx_1\wedge \cdots \wedge dx_n) =\det(v_j(e_i))_{ij} \prod_{j=1}^n y_j^{|v_j|-1}dy_1\wedge \cdots \wedge dy_n,
\end{equation}
where $|v|$ is as defined in \eqref{coordinate sum} and $\det(v_j(e_i))_{ij}$ is a nonzero integer because $v_1,\dots,v_n$ are linearly independent. We record the following technical lemma.

\begin{lemma}\label{Newton pull}
With the above notation, let $f=\sum_u c_{u}x^{u}\in \bbc[x_{1},\dots,x_{n}]$ be a nonzero polynomial, let $\Sigma$ be a refinement of the normal fan of $P(f)$, and let $l\leq n$ be a nonnegative integer. If we put
$$
\widetilde{f}(y_1,\dots,y_n) =  \sum_{u\in \bbz^{n}_{\geq 0}} c_{u} \prod_{j>l} y_j^{v_j(u)}  \prod_{j=1}^l y_j^{v_j(u)-d(v_j, P(f))}\in \bbc[y_1,\dots,y_n]
$$
and denote by $\tau$ the face of $\sigma$ with primitive ray generators $v_{1},\dots,v_{l}$, then for every element $v\in \relint\tau$, we have
$$
\pi^{*}_{\sigma}\bigg(\sum_{u\in F(v,P(f))} c_{u}x^{u}\bigg)=\widetilde{f}(0,\dots,0, y_{l+1},\dots,y_{n})\prod_{j=1}^l y_j^{d(v_j, P(f))}.
$$
\end{lemma}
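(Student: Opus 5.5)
The identity is a direct computation with the formula \eqref{toric monomial} for $\pi_\sigma^*$ on monomials, combined with the description of the face $F(v,P(f))$ from Lemma \ref{closure lemma}. I will first rewrite the left-hand side monomial by monomial, and then show that setting $y_1=\cdots=y_l=0$ in $\widetilde f$ keeps exactly the monomials indexed by $F(v,P(f))$.

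\textbf{Step 1: the left-hand side.} By \eqref{toric monomial}, each $x^u$ pulls back to $\prod_{j=1}^n y_j^{v_j(u)}$, so
$$
\pi_\sigma^*\bigg(\sum_{u\in F(v,P(f))}c_u x^u\bigg)=\sum_{u\in F(v,P(f))\cap \supp f} c_u\prod_{j=1}^n y_j^{v_j(u)}.
$$

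\textbf{Step 2: the exponents of $\widetilde f$.} For $u\in\supp f$ and $j\le l$ we have $v_j(u)\ge d(v_j,P(f))$ by definition of $d$. Hence every exponent $v_j(u)-d(v_j,P(f))$ is a nonnegative integer; it is an integer because $v_j$ is a lattice vector and $d(v_j,P(f))=d(v_j,\supp f)$ by \eqref{vertices}. So $\widetilde f$ is a genuine polynomial. Setting $y_1=\dots=y_l=0$ kills every term except those with $v_j(u)=d(v_j,P(f))$ for all $j\le l$. These are exactly the $u\in\supp f$ lying in $\bigcap_{j=1}^l F(v_j,P(f))$.

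\textbf{Step 3: identifying the surviving terms.} This is the one step needing care. Since $\Sigma$ refines the normal fan of $P(f)$, the face $\tau$ of $\sigma$ is a cone of $\Sigma$ with primitive ray generators $v_1,\dots,v_l$. Lemma \ref{closure lemma} ii) then gives $F(v,P(f))=\bigcap_{j=1}^l F(v_j,P(f))$ for $v\in\relint\tau$. The degenerate case $l=0$ needs separate mention: there $\tau=\{0\}$, $F(0,P(f))=P(f)$, and nothing is substituted, so both sides equal $\pi_\sigma^*f$. With this identification,
$$
\widetilde f(0,\dots,0,y_{l+1},\dots,y_n)=\sum_{u\in F(v,P(f))\cap\supp f} c_u\prod_{j>l}y_j^{v_j(u)}.
$$

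\textbf{Step 4: restoring the factor.} Multiplying by $\prod_{j\le l}y_j^{d(v_j,P(f))}$ replaces $d(v_j,P(f))$ by $v_j(u)$ for each surviving $u$, because $v_j(u)=d(v_j,P(f))$ there. The result is the expression in Step 1, which proves the identity.
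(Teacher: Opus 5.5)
Your proposal is correct and follows essentially the same argument as the paper: pull back monomials via \eqref{toric monomial}, factor out $\prod_{j\le l} y_j^{d(v_j,P(f))}$, and identify the surviving terms using $F(v,P(f))=\bigcap_{j=1}^{l}F(v_j,P(f))$, which is Lemma \ref{closure lemma} ii) applied to the cone $\tau\in\Sigma$. Your extra checks, namely the integrality and nonnegativity of the exponents of $\widetilde f$ and the degenerate case $l=0$, are details the paper leaves implicit.
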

\proof The proof is essentially the same as the proofs for the corresponding special cases in \cite[Lemma 2.13]{Var77}, \cite[Lemma 4]{Den95}, and the equation at the beginning of page 24 of \cite[]{Oka90}. We include the details for completeness. Let $\Sigma_{f}$ denote the normal fan of $P(f)$. By \eqref{toric monomial}, we have
$$
\pi^{*}_{\sigma}\bigg(\sum_{u\in F(v,P(f))} c_{u}x^{u}\bigg)=\prod_{j=1}^l y_j^{d(v_j, P(f))}\sum_{u\in F(v,P(f))} c_{u} \prod_{j>l} y_j^{v_j(u)}\prod_{j=1}^l y_j^{v_j(u)-d(v_j, P(f))}.
$$
Its suffices to show that for an element $u\in P(f)$, we have $u\in F(v,P(f))$ if and only if $v_j(u)=d(v_j, P(f))$ for all $j\in \{1,\dots,l\}$, or equivalently, that $F(v,P(f))=\bigcap_{j=1}^{l} F(v_j,P(f))$. This follows from Lemma \ref{closure lemma}. \qed

For a face $\tau \subset \sigma$, if $\{v_i\}_{i\in I}\subset\{v_{1},\dots,v_{n}\}$ is the set of primitive ray generators of $\tau$, then we write
$$
O(\tau) = \Spec \bbc[y_1,\dots,y_n][(\textstyle\prod_{i\not\in I} y_i)^{-1}]/(y_i \ | \ i\in I)
$$
for the torus orbit in $U'_{\sigma}$ corresponding to $\tau$, so that
$$
U'_{\sigma} = \coprod_{\tau \subset \sigma} O(\tau),
$$
where the disjoint union runs over the faces $\tau$ of $\sigma$.

\begin{lemma}\label{exceptional}
For every subset $I\subset \{1,\dots,n\}$, we have
$$
\pi_{\sigma}^{-1}(V(x_{i} \ | \ i\notin I)) = \coprod_{\relint(\tau)\cap  (\bbr^n)^*_{I}\neq \emptyset}O(\tau),
$$
where the disjoint union runs over the faces $\tau$ of $\sigma$ such that $\relint(\tau)\cap  (\bbr^n)^*_{I}\neq \emptyset$, with $(\bbr^n)^*_{I}$ as defined in \eqref{global real}. 
\end{lemma}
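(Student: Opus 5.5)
The plan is to compute the preimage orbit by orbit. Since $U'_{\sigma}=\coprod_{\tau\subset\sigma}O(\tau)$, and $\pi_{\sigma}^{-1}$ of a closed set is a union of torus-invariant pieces, it suffices to show the following for each face $\tau$ of $\sigma$ with primitive ray generators $\{v_j\}_{j\in J}$:
\begin{enumerate}
\item[a)] if $\relint(\tau)\cap(\bbr^n)^*_I\neq\emptyset$, then $O(\tau)\subset \pi_\sigma^{-1}(V(x_i \mid i\notin I))$;
\item[b)] otherwise $O(\tau)$ is disjoint from this preimage.
\end{enumerate}
First I will reduce the set condition to a combinatorial one. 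Since the $v_j$ lie in $(\bbr^n)^*_{\geq 0}$, formula \eqref{relative interior} shows that an element $\sum_{j\in J}a_jv_j$ with all $a_j>0$ satisfies $v(e_i)>0$ exactly when $v_j(e_i)>0$ for some $j\in J$. Hence
\[
\relint(\tau)\cap(\bbr^n)^*_I\neq\emptyset \iff \text{for every } i\notin I \text{ there is } j\in J \text{ with } v_j(e_i)>0.
\]
In that case every point of $\relint(\tau)$ lies in $(\bbr^n)^*_I$.

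Next I will pull back the coordinates. By \eqref{toric monomial},
\[
\pi_\sigma^*x_i=\prod_{j=1}^n y_j^{v_j(e_i)}.
\]
On $O(\tau)$ we have $y_j=0$ for $j\in J$, while $y_j$ is invertible for $j\notin J$. So $\pi_\sigma^*x_i$ vanishes identically on $O(\tau)$ if $v_j(e_i)>0$ for some $j\in J$. Otherwise it is a unit on $O(\tau)$, being a product of invertible $y_j$'s (with nonnegative exponents). Consequently, $O(\tau)$ lies in $\pi_\sigma^{-1}(V(x_i))$ iff some $v_j$ with $j\in J$ has $v_j(e_i)>0$, and otherwise $O(\tau)\cap\pi_\sigma^{-1}(V(x_i))=\emptyset$.

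To conclude, I intersect over $i\notin I$. Each orbit is then either contained in $\pi_\sigma^{-1}(V(x_i\mid i\notin I))$ or disjoint from it. By the criterion above, it is contained exactly when $\relint(\tau)\cap(\bbr^n)^*_I\neq\emptyset$, which gives the claimed disjoint union.

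The only mildly delicate point is the equivalence for the relative interior. It uses the nonnegativity of all $v_j(e_i)$, so that positive combinations cannot cancel, together with the description \eqref{relative interior}. Everything else is a direct computation, so I expect no real obstacle.
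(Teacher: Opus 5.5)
Your argument is correct, but it does not follow the paper's route.

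\textbf{The paper's argument.} The paper argues conceptually. It observes that $(\bbr^n)^*_I$ is the union of the relative interiors of those faces $\gamma$ of the orthant $(\bbr^n)^*_{\geq 0}$ whose rays include $e_i^*$ for every $i\notin I$. Hence $\relint(\tau)\cap(\bbr^n)^*_I\neq\emptyset$ exactly when the smallest face $\gamma$ of the orthant containing $\tau$ has this property. It then invokes the general orbit-mapping result for toric morphisms (Lemma 3.3.21 of Cox--Little--Schenck), which says that $\pi_\sigma$ sends $O(\tau)$ into the orbit of $\bba^n$ indexed by that smallest $\gamma$.

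\textbf{Your argument.} You verify the orbit-mapping statement by hand in the chart $U'_\sigma=\Spec\bbc[y_1,\dots,y_n]$.
\begin{enumerate}
\item[1)] By \eqref{toric monomial}, $\pi_\sigma^*x_i=\prod_j y_j^{v_j(e_i)}$. On each $O(\tau)$ this is either identically zero or a unit, according to whether some $v_j$ with $j\in J$ has $v_j(e_i)>0$.
\item[2)] Your positivity argument with \eqref{relative interior} turns this into the relative-interior condition.
\end{enumerate}
Your criterion, that for every $i\notin I$ some $v_j$ with $j\in J$ has $v_j(e_i)>0$, is exactly the statement that the smallest face of the orthant containing $\tau$ contains each $e_i^*$ with $i\notin I$. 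So the two arguments check the same combinatorial condition.

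\textbf{Comparison.} Your version is more elementary and self-contained. It avoids viewing $U'_\sigma\to\bba^n$ as a toric morphism for the lattice inclusion $N_\sigma\subset N$ and matching up the two fans. The paper's version is shorter given the citation, and it places the lemma inside the standard toric framework.

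\textbf{A minor point.} Your opening remark that the preimage of ``a closed set'' is a union of torus-invariant pieces only holds for torus-invariant closed sets. You never actually use it, since you show directly that each orbit is either contained in the preimage or disjoint from it. As in the paper, the equality is set-theoretic.
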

\proof Note that $(\bbr^n)^*_{I}$ is the union of $\relint(\gamma)$
where $\gamma$ runs over the faces of the cone $(\bbr^n)^*_{\geq 0}$ such that $\{ e^{*}_{i}\ |\ i\notin I \}$ is contained in the set of primitive ray generators of $\gamma$. Thus, for a face $\tau$ of $\sigma$, if $\gamma$ is the smallest face of $(\bbr^n)^*_{\geq 0}$ containing $\tau$, then we have $\relint(\tau)\cap  (\bbr^n)^*_{I}\neq \emptyset$ if and only if $\{ e^{*}_{i}\ |\ i\notin I \}$ is contained in the set of primitive ray generators of $\gamma$. The lemma then follows from \cite[Lemma 3.3.21]{CLS11}, which implies that $\pi_{\sigma}^{-1}(V(x_{i} \ | \ i\notin I))$ is the union of orbits $O(\tau)$ where $\tau$ runs over the faces of $\sigma$ such that, if $\gamma$ is the smallest face of $(\bbr^n)^*_{\geq 0}$ containing $\tau$, then $\{ e^{*}_{i}\ |\ i\notin I \}$ is contained in the set of primitive ray generators of $\gamma$. \qed

We now allow $\sigma$ to vary. Recall that
$$
Y = \bigcup_{\sigma} U_{\sigma},
$$
where $\sigma$ runs over the $n$-dimensional cones of $\Sigma$. If we put $U = \coprod_{\sigma} U'_{\sigma}$ and $G = \prod_{\sigma} G_{\sigma}$, where the disjoint union and the product both run over the $n$-dimensional cones $\sigma$ of $\Sigma$, then $U/G = \coprod_{\sigma} U_{\sigma} \onto Y$ is an \'{e}tale cover. For later use, we denote the morphisms $U\to U/G$ and $U/G\to Y$ by $q$ and $p$, respectively.

\section{The Newton polyhedron associated to several Newton polyhedra}\label{The Newton polyhedron associated to several Newton polyhedra}

Throughout this section, let $A_1,\dots,A_r$ be nonempty subsets of $\bbz_{\geq 0}^n$. We set $P_i = P(A_i)$ for each $i$. We denote the standard bases of $\bbr^r$, $\bbr^n$, and $\bbr^{r+n}$ by $\{e'_1,\dots,e'_r\}$, $\{e_1,\dots,e_n\}$, and $\{z_1,\dots,z_r,x_1,\dots,x_n\}$, respectively. 
For $1\leq i\leq r$, we denote by $\pi_i\colon\bbr^{r+n} \to \bbr^{r-1+n}$ the standard projection obtained by omitting the $z_i$-coordinate. We set 
$$
A = \bigcup_{i=1}^r (e'_i\times A_i) \subset \bbz_{\geq 0}^{r+n}.
$$
We define the Newton polyhedron $G=G(P_1,\dots,P_r)\subset\bbr^{r+n}$ associated to the Newton polyhedra $P_1,\dots,P_r\subset\bbr^{n}$ as
$$
G=\Conv\bigg(\bigcup_{i=1}^r ((e'_i \times P_i) + \bbr_{\geq 0}^{r+n})\bigg),
$$ 
which is indeed a Newton polyhedron since $G=P(A)$. The goal of this section is to prove certain properties of the linear functionals $L_{Q}$ for all facets $Q$ of $G$ not contained in any coordinate hyperplane. 

We begin by considering what we will call the trivial facet. Let $\pi\colon\bbr^{r+n} \to \bbr^{r}$ denote the standard projection onto the first $r$ coordinates. Note that $\pi(G)=P(\{e'_{1},\dots,e'_{r}\})$ has a unique facet not contained in any coordinate hyperplane, hence by Remark \ref{unbounded face correspondence}, $G$ has a unique facet $Q$ not contained in any coordinate hyperplane such that $Q+x_{j}\subset Q$ for all $1\leq j\leq n$.

\begin{definition}\label{}
We define the trivial facet of $G$ as the unique facet $Q$ of $G$ not contained in any coordinate hyperplane such that $Q+x_{j}\subset Q$ for all $1\leq j\leq n$. We refer to all other facets of $G$ as nontrivial.
\end{definition}

Note that $G$ has nontrivial facets not contained in any coordinate hyperplane if and only if $0\notin P_{i}$ for some $i$. Indeed, if $0\in P_{i}$ for all $i$, then $G=P(\{z_{1},\dots,z_{r}\})$. Conversely, suppose that $0\notin P_{i}$ for some $i$. Then $0\notin \pi_{i}(G)$, so $\pi_{i}(G)$ has a facet $Q'$ not contained in any coordinated hyperplane. Thus by Remark \ref{unbounded face correspondence}, $Q=\pi_{i}^{-1}(Q')\cap G$ is a facet of $G$ not contained in any coordinated hyperplane that is nontrivial since $Q+z_{i}\subset Q$.

Before we state the main result of this section, we introduce the following definition.

\begin{definition}\label{linear functional}
Given $v\in (\bbr^n)^*_{\geq 0}$, we define $L_{v,P_1,\dots,P_r}$ as the element of $(\bbr^{r+n})^*_{\geq 0}$ given by $L_{v,P_1,\dots,P_r}(x_j)=v(e_j)$ for $1\leq j\leq n$ and 
$$
L_{v,P_1,\dots,P_r}(z_i) = 
\begin{cases}
1-d(v,P_i) & \text{if $d(v,P_i)\leq 1$} \\
0 & \text{if $d(v,P_i)>1$}
\end{cases}
$$
for $1\leq i\leq r$. 
\end{definition}

For example, if we denote the trivial facet of $G$ by $Q_0$, then $L_{Q_0}=L_{0,P_1,\dots,P_r}$. Note that for all $v\in (\bbr^n)^*_{\geq 0}$, we have $d(L_{v,P_1,\dots,P_r},G)\geq 1$. 

\begin{theorem}\label{structure of facets}
Let $Q$ be a nontrivial facet of $G$ that is not contained in any coordinate hyperplane. If we let $v\in (\bbr^n)^*_{\geq 0}$ be the element given by $v(e_j)=L_{Q}(x_j)$ for $1\leq j\leq n$, then $L_Q=L_{v,P_1,\dots,P_r}$ and $d(v,P_i)=1$ for some $i$.
\end{theorem}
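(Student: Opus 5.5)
The plan is to pin down the $z$-coordinates of $L_Q$ by a perturbation argument, and then to obtain $d(v,P_i)=1$ by a second perturbation (a rescaling of $v$). Both steps rest on the uniqueness of $L_Q$: if $L'\in(\bbr^{r+n})^*_{\geq 0}$ satisfies $d(L',G)=1$ and $F(L',G)\supset Q$, then $F(L',G)$ is a face of $G$ containing the facet $Q$. It is proper because $d(L',G)=1$ forces $L'\neq 0$, so $F(L',G)=Q$, and hence $L'=L_Q$.

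Write $L=L_Q$ and $c_i=L(z_i)\geq 0$. Since $G=P(A)$, \eqref{vertices} gives
$$
1=d(L,G)=\min_i\big(c_i+d(v,A_i)\big)=\min_i\big(c_i+d(v,P_i)\big).
$$
Hence $c_i\geq 1-d(v,P_i)$ for every $i$.

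Suppose $c_i>0$ and $c_i+d(v,P_i)>1$ for some $i$. Set $L_t=L-t\,z_i^*$ for small $t\in(0,c_i]$, so $L_t\in(\bbr^{r+n})^*_{\geq 0}$.
\begin{enumerate}
\item[$\bullet$] On points of $e'_j\times A_j$ with $j\neq i$, the functional $L_t$ agrees with $L$.
\item[$\bullet$] On $e'_i\times A_i$ its values are at least $c_i-t+d(v,P_i)>1$ for $t$ small.
\end{enumerate}
Therefore $d(L_t,G)=1$, and $L_t=1$ on $Q\cap A$. Now $Q$ is the convex hull of $Q\cap A$ plus the cone spanned by those basis vectors $w$ with $Q+w\subset Q$. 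By Remark \ref{unbounded facets} these are the $w$ with $L(w)=0$, and $z_i$ is not among them since $c_i>0$. So $L_t(w)=L(w)=0$ on all recession directions of $Q$, and $F(L_t,G)\supset Q$. Uniqueness then gives $L_t=L$, which is a contradiction.

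Consequently $c_i>0$ forces $c_i=1-d(v,P_i)$, and $c_i=0$ forces $d(v,P_i)\geq 1$. This is exactly $L=L_{v,P_1,\dots,P_r}$, except in the boundary case $d(v,P_i)=1$, where both descriptions give $c_i=0$.

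For the second claim, note first that $v\neq 0$. Indeed, if $v=0$ then $L(x_j)=0$ for all $j$, so $Q+x_j\subset Q$ for all $j$ by Remark \ref{unbounded facets}, and $Q$ would be the trivial facet.

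Suppose $d(v,P_i)\neq 1$ for all $i$. For $s$ near $1$, consider $L^{(s)}=L_{sv,P_1,\dots,P_r}$.
\begin{enumerate}
\item[$\bullet$] Blocks with $d(v,P_i)>1$ still satisfy $s\,d(v,P_i)>1$. So $L^{(s)}(z_i)=0$, and these blocks contribute values $>1$; in particular they contain no points of $Q$.
\item[$\bullet$] Blocks with $d(v,P_i)<1$ have $L^{(s)}(z_i)=1-s\,d(v,P_i)$. At any $u\in e'_i\times A_i$ lying in $Q$ we have $v(u)=d(v,P_i)$, so $L^{(s)}(u)=1-s\,d(v,P_i)+s\,d(v,P_i)=1$.
\end{enumerate}
Since $d(L^{(s)},G)\geq 1$ always (noted after Definition \ref{linear functional}), we get $d(L^{(s)},G)=1$ with $L^{(s)}=1$ on $Q\cap A$. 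The recession directions of $Q$ keep value $0$: these are the $x_j$ with $v(e_j)=0$, and the $z_i$ with $c_i=0$, which are exactly those with $d(v,P_i)>1$. Thus $F(L^{(s)},G)\supset Q$, and uniqueness gives $L^{(s)}=L$ for all $s$ near $1$. This contradicts $v\neq 0$, since the $x$-coordinates $s\,v(e_j)$ vary with $s$. Hence $d(v,P_i)=1$ for some $i$.

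The main point requiring care is the claim $F(L',G)\supset Q$ in each perturbation. It uses the description of $Q$ as the convex hull of $Q\cap A$ plus its recession cone, and the check that the perturbed functional vanishes on that recession cone. Both follow from \eqref{vertices} and Remark \ref{unbounded facets}.
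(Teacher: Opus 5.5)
Your proof is correct and follows the paper's argument. It uses the same two perturbations of $L_Q$: first changing the $z_i$-coefficient in a block that contains no points of $Q$, then replacing $v$ by a rescaling of $v$ near $v$, and each time contradicting the uniqueness of $L_Q$. The only differences are cosmetic: the paper adds $t z_i^*$ rather than subtracting it, and it certifies equality with $L_Q$ through Lemma \ref{facet uniqueness} (the level set on $A$ together with the zero pattern) rather than through your criterion $d(L',G)=1$ and $F(L',G)\supset Q$.
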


Before proving the theorem, we make the following remark, which will be used in the proof.

\begin{remark}\label{face example}
Given $L\in (\bbr^{r+n})^*_{\geq 0}$ such that $d(L,G)\geq 1$, if we let $v\in (\bbr^n)^*_{\geq 0}$ be the element given by $v(e_j)=L(x_j)$ for $j\leq n$, then $L(z_i)+d(v,P_i)=d(L,e'_i\times P_i)\geq 1$ for $i\leq r$ and
$$
\{a\in A \ | \ L(a)=1\} = \bigcup_{\substack{ 1\leq i\leq r, \\ L(z_i) +d(v,P_i)= 1 }} (e'_i \times (F(v,P_i)\cap A_i) ).
$$
\end{remark}

\noindent{\it Proof of Theorem \ref{structure of facets}}. Throughout this proof, we write $L_v = L_{v,P_1,\dots,P_r}$ for $v\in (\bbr^n)^*_{\geq 0}$. We denote the dual basis of $z_1,\dots,z_r,x_1,\dots,x_n\in \bbr^{r+n}$ by $z_1^*,\dots,z_r^*,x_1^*,\dots,x_n^*\in (\bbr^{r+n})^*$. Given $L \in (\bbr^{n+r})^*_{\geq 0}$, we define the following subset of $\{z_1,\dots,z_r,x_1,\dots,x_n\}$:
$$
\supp L = \{z_i \ | \ L(z_i)>0\} \cup \{x_j \ | \ L(x_j)>0\}.
$$
By Lemma \ref{facet uniqueness}, $L=L_Q$ if and only if $\{a\in A \ | \ L(a)=1\} = Q \cap A$ and $\supp L = \supp L_Q$. Note that by Remark \ref{face example}, we have $L_Q(z_i)+d(v,P_i)\geq 1$ for $i\leq r$.

We first show that $L_Q=L_v$. For the sake of contradiction, assume that $L_Q\neq L_v$. Then there exists $i$ such that either both $d(v,P_i)>1$ and $L_Q(z_i)>0$ hold, or both $d(v,P_i)\leq 1$ and $L_Q(z_i)>1-d(v,P_i)$ hold. In either case,  if we put $L = L_Q + t z_i^* \in (\bbr^{n+r})^*_{\geq 0}$ for any $t \in \bbr_{>0}$, then we have $\{a\in A \ | \ L(a)=1\} = Q \cap A$ by Remark \ref{face example} and $\supp L = \supp L_Q$, hence $L_Q=L$, which is a contradiction.

We are done once we show that $d(v,P_i)=1$ for some $i$. For the sake of contradiction, assume that $d(v,P_i)\neq 1$ for every $1\leq i\leq r$. Since $Q$ is a nontrivial facet, we have $L_Q\neq L_0$, so that $v\neq 0$. If we put $L=L_{(1+\epsilon)v}$ for $\epsilon >0$ sufficiently small, which is not equal to $L_{Q}=L_v$ since $v\neq 0$, then we have $\{a\in A \ | \ L(a)=1\} = Q \cap A$ by Remark \ref{face example} and $\supp L = \supp L_Q$, hence $L_Q=L$, which is a contradiction.  \qed

By combining Theorem \ref{structure of facets} with Remark \ref{unbounded face correspondence}, we obtain the following corollary.

\begin{corollary}\label{rational poly intersection}
Suppose that $0\notin P_{i}$ for some $i$. Then we have
$$
\bigcap_{i=1}^r \pi_i^{-1}(\pi_i(G)) \cap \bbr^{r+n}_{\geq 0}=  \bigcap_v \{u\in \bbr^{r+n}_{\geq 0} \ | \ L_{v,P_1,\dots,P_r}(u)\geq 1\}=  \bigcap_Q \{u\in \bbr^{r+n}_{\geq 0} \ | \ L_{Q}(u)\geq 1\},
$$
where the second intersection runs over the elements $v\in (\bbr^n)^*_{\geq 0}$ such that $d(v,P_i)=1$ for some $i$, and the third intersection runs over the nontrivial facets  $Q$ of $G$ that are not contained in any coordinate hyperplane. Moreover, we have
$$
G=\{ u\in\bbr^{r +n}_{\geq 0}\ |\ L_{0,P_1,\dots,P_r}(u)\geq 1 \}\cap\bigcap_{i=1}^r \pi_i^{-1}(\pi_i(G))\cap \bbr^{r+n}_{\geq 0}. 
$$
\end{corollary}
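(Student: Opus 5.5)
The plan is to prove the three equalities in two passes: first relate each $\pi_i^{-1}(\pi_i(G))$ to the facets of $G$ through Remark \ref{unbounded face correspondence}, then translate the resulting facet functionals into the form $L_{v,P_1,\dots,P_r}$ using Theorem \ref{structure of facets}.

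\textbf{Step 1: the third set.} Fix $i$ with $0\notin P_i$. Then $0\notin\pi_i(G)$, since $\pi_i(G)\supset P_i$ placed in the appropriate coordinates forces every point to have some positive $x$-coordinate sum. More precisely, $\pi_i(G)$ is the Newton polyhedron of $\pi_i(A)$, and $\pi_i(e'_i\times A_i)=\{0\}\times A_i$, which does not contain $0$. The other points of $\pi_i(A)$ have a nonzero $z_k$-coordinate. Remark \ref{unbounded face correspondence} with $I=\{z_i\}$ then gives
\[
\pi_i^{-1}(\pi_i(G))\cap\bbr^{r+n}_{\geq 0}=\bigcap_Q\{u\in \bbr^{r+n}_{\geq 0} \mid L_Q(u)\geq 1\},
\]
where $Q$ runs over facets not in coordinate hyperplanes with $L_Q(z_i)=0$. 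If $0\in P_i$, then $0\in\pi_i(G)$ and $\pi_i^{-1}(\pi_i(G))\cap\bbr^{r+n}_{\geq0}=\bbr^{r+n}_{\geq0}$, so this index contributes no constraint.

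Intersecting over $i$ gives the intersection over facets $Q$ with $L_Q(z_i)=0$ for some $i$. I claim this set of facets is exactly the set of nontrivial facets. The trivial facet has $L_{Q_0}(z_i)=1$ for all $i$, so it is excluded. Conversely, for a nontrivial facet $Q$, Theorem \ref{structure of facets} gives $L_Q=L_v$ with $d(v,P_i)=1$ for some $i$, hence $L_Q(z_i)=1-1=0$. In that case $0\notin P_i$, because $d(v,P_i)=1>0$, so this index $i$ genuinely appears in the intersection. This establishes the equality of the first and third sets.

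\textbf{Step 2: the second set.} Each nontrivial $L_Q$ is of the form $L_v$ with $d(v,P_i)=1$ for some $i$, so the second intersection is contained in the third. For the reverse inclusion, take any $v$ with $d(v,P_i)=1$ and show $\bigcap_i\pi_i^{-1}(\pi_i(G))\cap\bbr^{r+n}_{\ge0}\subset\{L_v\ge1\}$. Since $L_v(z_i)=0$, the functional $L_v$ factors through $\pi_i$, so it suffices to show $L_v\ge 1$ on $\pi_i^{-1}(\pi_i(G))$. This reduces to $d(L_v,G)\ge1$, which the paper already notes: $L_v$ takes value $L_v(z_k)+d(v,P_k)\ge1$ on $e'_k\times P_k$. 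Then for $u\in\pi_i^{-1}(\pi_i(G))$ there is $u'\in G$ with $\pi_i(u')=\pi_i(u)$, giving $L_v(u)=L_v(u')\ge1$.

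\textbf{Step 3: the final formula.} Apply \eqref{half space intersection} to $G$; this is valid since $0\notin G$. It expresses $G$ as the intersection of half-spaces over all facets not in coordinate hyperplanes, namely the trivial facet, with $L_{Q_0}=L_{0,P_1,\dots,P_r}$, together with the nontrivial ones. By Step 1 the nontrivial half-spaces intersect to the first set, which gives the formula.

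\textbf{Main obstacle.} The delicate point is the bookkeeping in Step 1. One must check that every nontrivial facet has $L_Q(z_i)=0$ for an index $i$ with $0\notin P_i$, and that indices with $0\in P_i$ only contribute the whole orthant. Both facts follow from Theorem \ref{structure of facets}.
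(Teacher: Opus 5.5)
Your proof is correct and follows essentially the same route as the paper: Remark \ref{unbounded face correspondence} identifies each $\pi_i^{-1}(\pi_i(G))\cap\bbr^{r+n}_{\geq 0}$ with a half-space intersection, Theorem \ref{structure of facets} identifies the nontrivial facet functionals with $L_{v,P_1,\dots,P_r}$ where $d(v,P_i)=1$ for some $i$, and \eqref{half space intersection} gives the final formula, with the bookkeeping written out in full. One small correction: the heuristic first sentence of Step 1 is inaccurate, since points of $\pi_i(G)$ need not have positive $x$-coordinate sum, but your ``more precisely'' argument via $0\notin\pi_i(A)$ is the right one and suffices.
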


\begin{example}\label{}
Let $r=2$ and $n=1$. Consider the subsets $A_{1}=\{2\}$ and $A_{2}=\{3\}$ of $\bbz_{\geq 0}$, and set $P_{i}=P(A_{i})$ for each $i$. Let $G\subset \bbr^{2+1}$ be the Newton polyhedron associated to $P_{1}$ and $P_{2}$, and note that $G=P(\{(1,0,2),(0,1,3)\})$. In the diagram below, we shaded the facets of $G$ that are not contained in any coordinate hyperplane, where we used a darker shade for the trivial facet.
\[
\begin{tikzpicture}[
    scale=0.675,
    x={(-0.4cm,-0.25cm)},
    y={(0.9cm,0cm)},
    z={(0cm,1.1cm)}
]


\filldraw[lightgray]
    (2,0,2) --
    (0,1,3) --
    (0,5.7,3) --
    (2,4.7,2) ; 
\filldraw[gray]
    (2,0,2) --
    (0,1,3) --
    (0,1,5.7) --
    (2,0,4.7); 
\filldraw[lightgray]
    (2,0,2) --
    (6.8,0,2) --
    (6.8,4.7,2) --
    (2,4.7,2);

\draw[->,dashed]
    (0,0,0) -- (7.4,0,0); 
\draw[->,dashed]
    (0,0,0) -- (0,6,0); 
\draw[->,dashed]
    (0,0,0) -- (0,0,5.7); 

\node at (8.5,0,0) {$z_{1}$};
\node at (0,6.55,0) {$z_{2}$};
\node at (0,0,6.1) {$x_{1}$};


\foreach \y in {1,...,5}
    \draw[dotted]
        (0,\y,0) -- (0,\y,5.4);

\foreach \z in {1,...,5}
    \draw[dotted]
        (0,0,\z) -- (0,6,\z);

\foreach \y in {1,...,5}
    \draw[dotted]
        (0,\y,0) -- (7,\y,0);

\foreach \x in {2,4,6}
    \draw[dotted]
        (\x,0,0) -- (\x,6,0);

\foreach \x in {2,4,6}
    \draw[dotted]
        (\x,0,0) -- (\x,0,5.4);

\foreach \z in {1,...,5}
    \draw[dotted]
        (0,0,\z) -- (7.3,0,\z);


\draw[ultra thick]
    (2,0,2) -- (0,1,3);

\draw[ultra thick]
    (2,0,2) -- (6.8,0,2);

\draw[ultra thick]
    (2,0,2) -- (2,4.7,2);

\draw[ultra thick]
    (0,1,3) -- (0,5.7,3);

\draw[ultra thick]
    (0,1,3) -- (0,1,5.7);

\draw[ultra thick]
    (2,0,2) -- (2,0,4.7);


\filldraw[black]
    (2,0,2) circle (4.5pt);

\filldraw[black]
    (0,1,3) circle (4.5pt);

\end{tikzpicture}
\]
If we denote the two nontrivial facets that are not contained in any coordinate hyperplane by $Q_{1}$ and $Q_{2}$, with $Q_{1}$ parallel to the $z_{1}z_{2}$-plane, then
$L_{Q_{1}}=L_{\frac{1}{2},P_{1},P_{2}}$, $L_{Q_{2}}=L_{\frac{1}{3},P_{1},P_{2}}$, and $d(\frac{1}{2},P_{1})=d(\frac{1}{3},P_{2})=1$.
\end{example}

\section{The minimal exponent of several Newton polyhedra}\label{The minimal exponent of several Newton polyhedra}

In this section we define and study the minimal exponent of several Newton polyhedra. Throughout this section, we fix a positive integer $r$ and a nonnegative integer $n$. We denote the standard basis of $\bbr^{r+n}$ by $\{z_1,\dots,z_r,x_1,\dots,x_n\}$, and for $1\leq i\leq r$, we denote by $\pi_i\colon\bbr^{r+n} \to \bbr^{r-1+n}$ the standard projection obtained by omitting the $z_i$-coordinate. For $m \geq 0$, we denote the $m$-dimensional all ones vector by $\bfo_m=(1,\dots,1)\in \bbz^m$ (with the convention that $\bfo_0=0$). Note that $|v| = v(\bfo_m)$ for $v\in (\bbr^m)^*_{\geq 0}$, where $|v|$ is as defined in \eqref{coordinate sum}.

Motivated by the formula for the minimal exponent of a Newton non-degenerate hypersurface (see Remark \ref{Newton minimal exponent} below), we make the following definition for the minimal exponent of a single Newton polyhedron.

\begin{definition}\label{minimal exponent single newton poly}
Given a Newton polyhedron $P \subset \bbr^m$ and an element $u\in \bbz_{\geq 0}^m$, we define the minimal exponent of $P$ relative to $u$ as
$$
\widetilde{\alpha}(P; u) :=  \sup\{t\in \bbr_{>0} \ | \ t^{-1}(\bfo_m+u) \in P\} \in \bbq_{>0}\cup\hspace{0.15em} \{\infty\}.
$$
We then define the minimal exponent of $P$ as $\widetilde{\alpha}(P) := \widetilde{\alpha}(P; 0)$.
\end{definition}

\begin{remark}\label{}
With the notation of Definition \ref{minimal exponent single newton poly}, we have $\widetilde{\alpha}(P; u)<\infty$ if and only if $0\notin  P$.
\end{remark}

\begin{remark}\label{Newton minimal exponent}
Let $f\in\bbc[x_{1},\dots,x_{n}]$ be a polynomial such that $f\in(x_{1},\dots,x_{n})^{2}$. If $f$ is Newton non-degenerate (see Definition \ref{Newton}), then we have
$$
\widetilde{\alpha}_{0}(f)=\widetilde{\alpha}(P(f)).
$$
When $f$ has an isolated singularity at $0\in\bba^{n}$, this is proved in \cite{Var82}, \cite{EL82}, \cite{Sai88}. In general, the upper bound $\widetilde{\alpha}_{0}(f)\leq\widetilde{\alpha}(P(f))$ follows from \cite[Proposition 2.1]{CDM25} (in the form of Lemma \ref{minimal exponent hypersurface upper bound} below), while the lower bound $\widetilde{\alpha}_{0}(f)\geq\widetilde{\alpha}(P(f))$ follows from  \cite[Corollary D]{MP20} applied to a standard log resolution of $f$ described in \cite{Var76} (see also \cite[Section 8.2]{AGZV12}); alternatively, Corollary \ref{Newton minimal exponent lower bound} provides a self-contained proof of the lower bound. Similarly, if $f$ is globally Newton non-degenerate (see Definition \ref{Newton}), then $\widetilde{\alpha}(f)=\widetilde{\alpha}(P(f))$.
\end{remark}

\begin{remark}\label{howald}
Given a nonzero monomial ideal $\frak{a} \subset \bbc[y_1,\dots,y_m]$ and an element $u\in \bbz_{\geq 0}^m$, we have $\lct_{y^u}(\frak{a}) = \widetilde{\alpha}(P(\frak{a}); u)$ by Howald's characterization of multiplier ideals of monomial ideals \cite{How01}.
\end{remark}

Motivated by Lemma \ref{minimal exponent in terms of hypersurface}, we make the following definition for the minimal exponent of several Newton polyhedra.

\begin{definition}\label{minimal exponent several newton poly}
Given Newton polyhedra $P_1,\dots,P_r \subset \bbr^n$ and an element $u\in \bbz^n_{\geq 0}$, we define the minimal exponent of $P_1,\dots,P_r$ relative to $u$ as
$$
\widetilde{\alpha}(P_1,\dots,P_r; u) := \min_{1\leq i\leq r} \widetilde{\alpha}(\pi_i(G); \pi_i(0\times u)) \in \bbq_{>0}\cup\hspace{0.15em} \{\infty\},
$$
where $G\subset \bbr^{r+n}$ denotes the Newton polyhedron associated to $P_1,\dots,P_r$, as defined in Section \ref{The Newton polyhedron associated to several Newton polyhedra}. We then define the minimal exponent of $P_1,\dots,P_r$ as $\widetilde{\alpha}(P_1,\dots,P_r):=\widetilde{\alpha}(P_1,\dots,P_r; 0)$. 
\end{definition}

\begin{definition}\label{minimal exponent number def}
Given $a_1,\dots,a_r,k \in \bbz_{\geq 0}^n$, we define the minimal exponent of $a_1,\dots,a_r$ relative to $k$ as
$$
\widetilde{\alpha}(a_1,\dots,a_r; k) := \widetilde{\alpha}(P(\{a_1\}),\dots,P(\{a_r\}); k) \in \bbq_{>0} \cup\hspace{0.15em} \{\infty\}.
$$
\end{definition}

\begin{remark}\label{comb conn}
Let $R=\bbc[x_1,\dots,x_n]$. Given nonzero polynomials $f_1,\dots,f_r \in R$, let $G \subset \bbr^{r+n}$ be the Newton polyhedron associated to $P(f_1),\dots,P(f_r)$. Consider the polynomials $g=\sum_{i=1}^r f_iz_i  \in R[z_1,\dots,z_r]$ and $g_i = f_i + \sum_{j\neq i} f_jz_j \in R[z_1,\dots,\widehat{z}_i,\dots,z_r]$ for $1\leq i\leq r$, and observe that $G = P(g)$ and $\pi_i(G) = P(g_i)$ for each $i$. Thus we have
$$
\widetilde{\alpha}(P(f_1),\dots,P(f_r)) = \min_{1\leq i\leq r} \widetilde{\alpha}(P(g_i)).
$$
\end{remark}

\begin{remark}\label{finite minimal exponent several newton poly}
With the notation of Definition \ref{minimal exponent several newton poly}, we have $\widetilde{\alpha}(P_1,\dots,P_r;u)<\infty$ if and only if $0\notin  P_{i}$ for some $i$, in which case
$$
\widetilde{\alpha}(P_1,\dots,P_r;u)=1/ \min\bigg\{t\in \bbr_{>0} \ \big| \ t(\bfo_{r+n} + (0\times u)) \in \bigcap_{i=1}^r \pi_i^{-1}(\pi_i(G))\bigg\}.
$$
\end{remark}

\begin{remark}\label{rational poly}
With the notation of Definition \ref{minimal exponent several newton poly}, it is immediate that
$$
\widetilde{\alpha}(P_1,\dots,P_r;u)=\sup\bigg\{t\in \bbr_{>0} \ \big| \ t^{-1}(\bfo_{r+n} + (0\times u)) \in \bigcap_{i=1}^r \pi_i^{-1}(\pi_i(G))\bigg\}.
$$
By Corollary \ref{rational poly intersection}, we then obtain
$$
\widetilde{\alpha}(P_1,\dots,P_r; u) = \inf_{v} L_{v,P_1,\dots,P_r}(\bfo_{r+n} + (0\times u)),
$$
where the infimum runs over the elements $v\in (\bbr^n)^*_{\geq 0}$ such that $d(v,P_i)=1$ for some $i$, with the convention that the infimum is $\infty$ if there are no such $v$, and $L_{v,P_1,\dots,P_r}$ is as defined in Definition \ref{linear functional}. Moreover, if $0 \notin  P_{i}$ for some $i$, then there exists a nontrivial facet $Q$ of $G$ that is not contained in any coordinate hyperplane such that the infimum is attained at the element $v\in (\bbr^n)^*_{\geq 0}$ given by $v(e_j)=L_Q(e_j)$ for $1\leq j\leq n$, where $e_{1},\dots,e_{n}$ is the standard basis of $\bbr^{n}$. Also, we have
$$
\widetilde{\alpha}(G) =\min\{r,\widetilde{\alpha}(P_{1},\dots,P_{r})\}.
$$
\end{remark}

We now introduce the following definition, which we will use to give an alternative expression for the minimal exponent of several Newton polyhedra.

\begin{definition}\label{minimal exponent numbers}
Let $c, d_1,\dots,d_r \in \bbr$. If $d_1\leq \cdots \leq d_r$, we define
$$
\widetilde{\alpha}_{\num}(d_1,\dots,d_r; c) := \min\{i+ \tfrac{1}{d_i} ( c+1 - d_1- \cdots -  d_i) \ | \ \text{$1\leq i\leq r$ with $d_i\neq 0$} \},
$$
with the convention that the minimum is $\infty$ if $d_1=\cdots=d_r=0$. In general, we define
$$
\widetilde{\alpha}_{\num}(d_1,\dots,d_r; c):=\widetilde{\alpha}_{\num}(d_{\sigma(1)},\dots,d_{\sigma(r)}; c),
$$
where $\sigma \in S_r$ is any permutation such that $d_{\sigma(1)} \leq \cdots \leq d_{\sigma(r)}$.
\end{definition}

For $c, d_1,\dots,d_r,t \in \bbr$ with $t>0$, note that
\begin{equation}\label{scaling}
\widetilde{\alpha}_{\num}(td_1,\dots,td_r; tc-1)=\widetilde{\alpha}_{\num}(d_1,\dots,d_r; c-1).
\end{equation}

\begin{theorem}\label{minimal exponent in terms of numbers}
With the notation of Definition \ref{minimal exponent several newton poly}, we have
$$
\widetilde{\alpha}(P_1,\dots,P_r;u) = \inf_{v\in (\bbr^n)^*_{\geq 0}} \widetilde{\alpha}_{\num}(d(v,P_1),\dots,d(v,P_r);v(u)+|v|-1).
$$
Moreover, if $0 \notin  P_{i}$ for some $i$, then the infimum is attained at an element $v\in (\bbr^n)^*_{\geq 0}$ given by $v(e_j)=L_Q(e_j)$ for $1\leq j \leq n$ for some nontrivial facet $Q$ of $G$ that is not contained in any coordinate hyperplane.
\end{theorem}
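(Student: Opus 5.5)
The plan is to reduce the statement to Remark \ref{rational poly}, which already gives
$$
\widetilde{\alpha}(P_1,\dots,P_r;u)=\inf_{v} L_{v,P_1,\dots,P_r}(\bfo_{r+n}+(0\times u)),
$$
with the infimum over $v\in(\bbr^n)^*_{\geq 0}$ such that $d(v,P_i)=1$ for some $i$. It then suffices to show that, along each ray $\{tv\}_{t>0}$, the minimum of these values over the admissible points is exactly $\widetilde{\alpha}_{\num}(d(v,P_1),\dots,d(v,P_r);v(u)+|v|-1)$. The first step is to evaluate the linear functional explicitly. Write $d_i=d(v,P_i)$ and $c=v(u)+|v|-1$. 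By Definition \ref{linear functional},
$$
L_{v,P_1,\dots,P_r}(\bfo_{r+n}+(0\times u))=\sum_{i\,:\,d_i\leq 1}(1-d_i)+|v|+v(u).
$$

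Next I fix $v$ and reorder so that $d_1\leq\cdots\leq d_r$; this is allowed since both sides are symmetric in the $P_i$. If all $d_i=0$, no multiple of $v$ is admissible and $\widetilde{\alpha}_{\num}=\infty$ by convention, so such $v$ contribute nothing on either side. Otherwise, since $d(tv,P_i)=t\,d_i$, the admissible multiples of $v$ are exactly $t=1/d_i$ for $d_i\neq 0$.

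For $t=1/d_i$, the indices $j$ with $t d_j\leq 1$ are those with $d_j\leq d_i$. Indices $j>i$ with $d_j=d_i$ contribute $1-d_j/d_i=0$, so ties cause no trouble. The value is therefore
$$
\sum_{j\leq i}\Big(1-\tfrac{d_j}{d_i}\Big)+\tfrac{|v|+v(u)}{d_i}=i+\tfrac{1}{d_i}(c+1-d_1-\cdots-d_i).
$$
Minimizing over $i$ gives $\widetilde{\alpha}_{\num}(d_1,\dots,d_r;c)$. The main bookkeeping point is that the last parameter transforms correctly under rescaling. Indeed $c$ for $tv$ equals $t(c+1)-1$, which is exactly the invariance \eqref{scaling}, so $\widetilde{\alpha}_{\num}$ is constant along the ray.

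Taking the infimum over all rays then shows that the infimum over admissible $v$ in Remark \ref{rational poly} equals the infimum over all $v\in(\bbr^n)^*_{\geq 0}$ in the theorem. This proves the equality.

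For attainment, suppose $0\notin P_i$ for some $i$. Remark \ref{rational poly} then gives a nontrivial facet $Q$ of $G$, not contained in any coordinate hyperplane, such that the infimum is attained at $v$ with $v(e_j)=L_Q(e_j)$, and by Theorem \ref{structure of facets} this $v$ is admissible. By the computation above,
$$
\widetilde{\alpha}_{\num}(d(v,P_\bullet);c)\leq L_{v,P_1,\dots,P_r}(\bfo_{r+n}+(0\times u))=\widetilde{\alpha}(P_1,\dots,P_r;u)\leq\widetilde{\alpha}_{\num}(d(v,P_\bullet);c).
$$
Hence equality holds at this $v$. I expect no real obstacle beyond keeping the tie cases and the scaling convention straight.
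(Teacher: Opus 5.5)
Your proposal is correct and follows essentially the same route as the paper. Both arguments compute $L_{d(v,P_i)^{-1}v}(\bfo_{r+n}+(0\times u))$ to recover the terms of $\widetilde{\alpha}_{\num}$, then invoke Remark \ref{rational poly} for both the equality and the attainment; your ray-by-ray bookkeeping via \eqref{scaling} just repackages the paper's observation that the admissible points are exactly the elements $d(v,P_i)^{-1}v$ with $d(v,P_i)>0$.
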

\proof Given $v\in (\bbr^n)^*_{\geq 0}$, let $d_i = d(v,P_i)$ for $1\leq i \leq r$, and let $\sigma\in S_r$ be a permutation such that $d_{\sigma(1)}\leq \cdots \leq d_{\sigma(r)}$. For $w\in (\bbr^n)^*_{\geq 0}$, we write $L_w = L_{w,P_1,\dots,P_r}$ (see Definition \ref{linear functional}). If $d_{\sigma(i)}>0$, then observe that
$$
L_{d_{\sigma(i)}^{-1}v} ( \bfo_{r+n} + (0\times u)) = d_{\sigma(i)}^{-1}v(\bfo_n + u) + \sum_{j=1}^i (1-d_{\sigma(i)}^{-1}d_{\sigma(j)})
$$
$$
= i + \tfrac{1}{d_{\sigma(i)}}(v(u) + |v| - d_{\sigma(1)}-\cdots -d_{\sigma(i)}).
$$
Thus, with the convention that $\min \emptyset = \infty$, we have
$$
\min\{L_{d_i^{-1}v} ( \bfo_{r+n} + (0\times u)) \ | \ \text{$1\leq i\leq r$ with $d_i>0$} \} = \widetilde{\alpha}_{\num}(d_1,\dots,d_r ;v(u) + |v| -1).
$$
The equation in the theorem statement then follows from Remark \ref{rational poly} since the set of elements $L_v$ with $v\in (\bbr^n)^*_{\geq 0}$ such that $d(v,P_i)=1$ for some $i$ is equal to the set of elements $L_{d(v,P_i)^{-1}v}$ with $v\in (\bbr^n)^*_{\geq 0}$ and $1\leq i\leq r$ such that $d(v,P_i)>0$. The final assertion on where the infimum is attained also follows from Remark \ref{rational poly}. \qed

\begin{remark}\label{dimension one}
For $a_1,\dots,a_r,k\in \bbz_{\geq 0}$, we have $\widetilde{\alpha}(a_1,\dots,a_r;k)=\widetilde{\alpha}_{\num}(a_1,\dots,a_r;k)$ by \eqref{scaling} and Theorem \ref{minimal exponent in terms of numbers}.
\end{remark}

The next theorem expresses the minimal exponent of several Newton polyhedra in terms of Definition \ref{minimal exponent number def} and any refinement of the normal fan of their Minkowski sum.

\begin{theorem}\label{minimal exponent fan}
Given Newton polyhedra $P_1,\dots,P_r \subset \bbr^n$, let $\Sigma$ be any fan that refines the normal fan of $P_{i}$ for each $i$. For each cone $\sigma\in\Sigma$, let $[\sigma]$ denote the set of primitive ray generators of $\sigma$, equipped with a fixed ordering $[\sigma]=\{v_1,\dots,v_l\}$ so that, for each $i$, $(d(v,P_i))_{v\in [\sigma]}$ and $(|v|-1)_{v\in [\sigma]}$ may be regarded as elements of $\bbz_{\geq 0}^{l}$. Then we have
$$
\widetilde{\alpha}(P_1,\dots,P_r) = \min_{\sigma\in \Sigma} \widetilde{\alpha}( (d(v,P_1))_{v\in [\sigma]} ,\dots, (d(v,P_r))_{v\in [\sigma]}; (|v|-1)_{v\in [\sigma]} ).
$$
Moreover, the minimum can be taken over the $n$-dimensional cones $\sigma$ of $\Sigma$.
\end{theorem}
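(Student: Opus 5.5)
The plan is to reduce both sides to infima of $\widetilde{\alpha}_{\num}$ using Theorem \ref{minimal exponent in terms of numbers}, and then compare these infima cone by cone. Applied with $u=0$, that theorem gives
$$
\widetilde{\alpha}(P_1,\dots,P_r)=\inf_{v\in(\bbr^n)^*_{\geq 0}}\widetilde{\alpha}_{\num}\big(d(v,P_1),\dots,d(v,P_r);|v|-1\big).
$$
Fix a cone $\sigma\in\Sigma$ with $[\sigma]=\{v_1,\dots,v_l\}$, and put $a_i=(d(v_j,P_i))_{j}\in\bbz^l_{\geq 0}$ and $k=(|v_j|-1)_j\in\bbz^l_{\geq 0}$. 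These entries are nonnegative integers: each $v_j$ is a lattice vector in $(\bbr^n)^*_{\geq 0}$, each $P_i$ is a lattice Newton polyhedron, and $|v_j|\geq 1$. Applying Theorem \ref{minimal exponent in terms of numbers} in $\bbr^l$ to the polyhedra $P(\{a_i\})$ with relative element $k$ gives
$$
\widetilde{\alpha}(a_1,\dots,a_r;k)=\inf_{w\in(\bbr^l)^*_{\geq 0}}\widetilde{\alpha}_{\num}\big(d(w,P(\{a_1\})),\dots,d(w,P(\{a_r\}));w(k)+|w|-1\big).
$$

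Next I will identify the right-hand side with an infimum over $\sigma$. Write $w=(\lambda_1,\dots,\lambda_l)$ with $\lambda_j\geq 0$ and set $v=\sum_j\lambda_jv_j\in\sigma$. By \eqref{vertices}, $d(w,P(\{a_i\}))=w(a_i)=\sum_j\lambda_j d(v_j,P_i)$. Since $\Sigma$ refines the normal fan of $P_i$, Lemma \ref{closure lemma} i) shows this equals $d(v,P_i)$. Moreover,
$$
w(k)+|w|-1=\sum_j\lambda_j(|v_j|-1)+\sum_j\lambda_j-1=|v|-1.
$$
As $w$ ranges over $(\bbr^l)^*_{\geq 0}$, the vector $v$ ranges over all of $\sigma$. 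Hence
$$
\widetilde{\alpha}(a_1,\dots,a_r;k)=\inf_{v\in\sigma}\widetilde{\alpha}_{\num}\big(d(v,P_1),\dots,d(v,P_r);|v|-1\big).
$$

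Since $|\Sigma|=(\bbr^n)^*_{\geq 0}$, the cones of $\Sigma$ cover the domain of the first infimum. Taking the minimum over the finitely many $\sigma\in\Sigma$ therefore recovers $\widetilde{\alpha}(P_1,\dots,P_r)$, so the displayed formula holds with a genuine minimum over $\sigma$. For the final assertion, the quantity attached to $\sigma$ is an infimum over $\sigma$, so it is non-increasing under enlarging the cone. Every cone of $\Sigma$ is a face of an $n$-dimensional cone, because the support $(\bbr^n)^*_{\geq 0}$ is full-dimensional and is the union of the maximal cones. Hence the minimum may be restricted to $n$-dimensional cones.

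The main step needing care is the identification $d(w,P(\{a_i\}))=d(\sum_j\lambda_jv_j,P_i)$. This is exactly where the refinement hypothesis enters, through Lemma \ref{closure lemma} i). I also need to check that the conventions for entries $d_i=0$ in $\widetilde{\alpha}_{\num}$, and for infinite values, match on both sides. They do, because the formulas are identical termwise once the substitution above is made.
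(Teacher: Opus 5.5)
Your proof is correct and follows essentially the paper's argument. Both reduce to the infimum description of $\widetilde{\alpha}(P_1,\dots,P_r)$ (the paper via Remark \ref{rational poly} and the functionals $L_v$, you via Theorem \ref{minimal exponent in terms of numbers}) and match the two sides through the substitution $v=\sum_j\lambda_jv_j$ together with Lemma \ref{closure lemma} i). One point is stated loosely: the claim that every cone lies in an $n$-dimensional cone really uses that the support $(\bbr^n)^*_{\geq 0}$ is convex and full-dimensional, not just full-dimensional, and the paper uses this fact implicitly as well.
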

\proof For each cone $\sigma\in\Sigma$, let
$$
c_{\sigma}=\widetilde{\alpha}( (d(v,P_1))_{v\in [\sigma]} ,\dots, (d(v,P_r))_{v\in [\sigma]}; (|v|-1)_{v\in [\sigma]} ).
$$
If $\tau$ is a face of a cone $\sigma\in\Sigma$, then by Theorem \ref{minimal exponent in terms of numbers}, it is easy to see that $c_{\sigma}\leq c_{\tau}$. So it suffices to show that
$$
\widetilde{\alpha}(P_1,\dots,P_r) = \min_{\dim\sigma=n}c_{\sigma}.
$$
To see this, given an element $\lambda \in (\bbr^n)^*_{\geq 0}$ and an $n$-dimensional cone $\sigma\in\Sigma$, consider the element
$$
v=\sum_{j=1}^l \lambda(e_j)v_j\in  (\bbr^n)^*_{\geq 0},
$$
where $[\sigma]=\{v_1,\dots,v_l\}$ and $e_{1},\dots,e_{l}$ is the standard basis of $\bbr^{l}$. Note that $v$ ranges over all elements of $(\bbr^n)^*_{\geq 0}$ as $\lambda$ and $\sigma$ vary. For each $i$, put $P'_i = P(\{(d(v_j,P_i))_{j=1}^l\}) \subset \bbr^l$, and consider the elements $L_v=L_{v,P_1,\dots,P_r}$ and $L'_{\lambda}=L'_{\lambda,P'_1,\dots,P'_r}$ in $(\bbr^{r+l})^*_{\geq 0}$, as defined in Definition \ref{linear functional}. Observe that
$$
d(\lambda,P'_i) = \lambda\big( (d(v_j,P_i))_{j=1}^l)\big) = \sum_{j=1}^l \lambda_j d(v_j,P_i) = d(v,P_i)
$$
for each $i$, where the last equality follows from Lemma \ref{closure lemma}. Thus we have
$$
L'_{\lambda}(\bfo_{r+l} + (0\times (|v_j|-1)_{j=1}^l))
= L'_{\lambda}((\bfo_r \times 0) + (0\times (|v_j|)_{j=1}^l)
=L_v(\bfo_{r+l}).
$$
The theorem then follows from Remark \ref{rational poly}. \qed

We conclude this section with the following result.

\begin{lemma}\label{explicit lct}
With the notation of Definition \ref{minimal exponent several newton poly}, we have
$$
\widetilde{\alpha}(G; \beta \times u) \geq \min\{ r+|\beta|, \widetilde{\alpha}(P_1,\dots,P_r;u)\}
$$
for every $\beta\in \bbz_{\geq 0}^r$.
\end{lemma}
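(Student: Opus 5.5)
We argue directly from Definition \ref{minimal exponent single newton poly} and Corollary \ref{rational poly intersection}. Write $w=\bfo_{r+n}+(\beta\times u)$. Then $\widetilde{\alpha}(G;\beta\times u)=\sup\{t>0 \mid t^{-1}w\in G\}$. If $0\in P_i$ for all $i$, then $G=P(\{z_1,\dots,z_r\})$. In that case the only facet not contained in a coordinate hyperplane is the trivial one, and $\widetilde{\alpha}(G;\beta\times u)=L_{0,P_1,\dots,P_r}(w)=r+|\beta|$, which gives the claim. So assume $0\notin P_i$ for some $i$.

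By the second part of Corollary \ref{rational poly intersection}, $t^{-1}w\in G$ holds if and only if two conditions hold: $L_{0,P_1,\dots,P_r}(t^{-1}w)\geq 1$, and $t^{-1}w$ lies in $\bigcap_{i=1}^r\pi_i^{-1}(\pi_i(G))\cap\bbr^{r+n}_{\geq 0}$. By the first part of that corollary, the second condition is equivalent to $L_{v,P_1,\dots,P_r}(t^{-1}w)\geq 1$ for all $v\in(\bbr^n)^*_{\geq 0}$ with $d(v,P_i)=1$ for some $i$. Since $L_{0,P_1,\dots,P_r}(z_i)=1$ and $L_{0,P_1,\dots,P_r}(x_j)=0$, the first condition reads $t\leq r+|\beta|$. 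Hence
$$
\widetilde{\alpha}(G;\beta\times u)=\min\Big\{r+|\beta|,\ \inf_v L_{v,P_1,\dots,P_r}(\bfo_{r+n}+(\beta\times u))\Big\},
$$
where the infimum is over the same set of $v$ as above.

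It remains to compare the infimum with $\widetilde{\alpha}(P_1,\dots,P_r;u)$. By Remark \ref{rational poly}, the latter equals $\inf_v L_{v,P_1,\dots,P_r}(\bfo_{r+n}+(0\times u))$ over the same set of $v$. Every $L_{v,P_1,\dots,P_r}$ lies in $(\bbr^{r+n})^*_{\geq 0}$ and $\beta\in\bbz^r_{\geq 0}$, so
$$
L_{v,P_1,\dots,P_r}(\bfo_{r+n}+(\beta\times u))\geq L_{v,P_1,\dots,P_r}(\bfo_{r+n}+(0\times u)).
$$
Taking infima gives $\inf_v L_v(w)\geq\widetilde{\alpha}(P_1,\dots,P_r;u)$, and the lemma follows.

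The main point to check is the use of Corollary \ref{rational poly intersection}: we need $G$ to equal the intersection of the trivial-facet half-space with the projected cylinders, and this is exactly what its second statement provides. Beyond that, the argument only requires the hypothesis $0\notin P_i$ for some $i$, which we handled separately above. The strict-versus-nonstrict supremum issues cause no trouble, because all the conditions are closed.
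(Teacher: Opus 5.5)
Your proof is correct and follows essentially the same route as the paper: both use Corollary \ref{rational poly intersection} to express $\widetilde{\alpha}(G;\beta\times u)$ as a minimum of the trivial-facet contribution $r+|\beta|$ and the remaining half-space constraints. Both then use nonnegativity of these functionals to drop $\beta$ and conclude via Remark \ref{rational poly}; the only difference is that the paper indexes the nontrivial half-spaces by the facet functionals $L_Q$ with $Q\neq Q_0$ rather than by the $L_{v,P_1,\dots,P_r}$, which is purely cosmetic.
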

\proof If $0\in P_{i}$ for all $i$, then the lemma follows since this implies $G=P(\{z_{1},\dots,z_{r}\})$, $\widetilde{\alpha}(G; \beta \times u)=r+|\beta|$, and $\widetilde{\alpha}(P_1,\dots,P_r;u)=\infty$. So assume that $0\notin P_{i}$ for some $i$. Let $Q_0$ denote the trivial facet of $G$, and note that $L_{Q_0}(\bfo_{r+n}+(\beta \times u)) = r+|\beta|$. Also, for every $L\in (\bbr^{r+n})^*_{\geq 0}$, note that 
$$
L(\bfo_{r+n}+(\beta \times u))
=L(\bfo_{r+n}+(0 \times u)) + L(\beta \times 0)
 \geq L(\bfo_{r+n}+(0 \times u)).
$$ 
So by Corollary \ref{rational poly intersection}, we have
$$
\widetilde{\alpha}(G; \beta \times u) = \min_Q L_Q(\bfo_{r+n}+(\beta \times u)) 
\geq \min\{r+|\beta|, \min_{Q\neq Q_0} L_Q(\bfo_{r+n}+(0 \times u))\}
$$
$$
= \min\{r+|\beta|, \widetilde{\alpha}(P_1,\dots,P_r;u)\},
$$
where the first minimum runs over the facets $Q$ of $G$ that are not contained in any coordinate hyperplane, the third minimum runs over the nontrivial facets $Q$ of $G$ that are not contained in any coordinate hyperplane, and the second equality follows from Remark \ref{rational poly}. \qed

\section{Computing the minimal exponent of several Newton polyhedra}\label{Computing the minimal exponent of several Newton polyhedra}

Throughout this section, fix a positive integer $r$ and a nonnegative integer $n$. Let $e_1,\dots,e_n$ denote the standard basis of $\bbr^n$ and let $e^{*}_1,\dots,e^{*}_n\in (\bbr^n)^{*}$ denote the corresponding dual basis. We denote the standard bases of $\bbr^r$ and $\bbr^{r+n}$ by $\{e'_1,\dots,e'_r\}$ and $\{z_1,\dots,z_r,x_1,\dots,x_n\}$, respectively. 
For $1\leq i\leq r$, we denote by $\pi_i\colon\bbr^{r+n} \to \bbr^{r-1+n}$ the standard projection obtained by omitting the $z_i$-coordinate. Also, for each subset $J\subset\{1,\dots,n\}$, we denote by $p_{J}\colon\bbr^{n}\to\bbr^{|J|}$ the standard projection onto the coordinates indexed by $J$.

The goal of this section is to introduce a method for computing the minimal exponent of several Newton polyhedra. If $P_{1},\dots,P_{r}\subset \bbr^{n}$ are Newton polyhedra and $G\subset \bbr^{r+n}$ is the Newton polyhedron associated to $P_{1},\dots,P_{r}$, then determining the value of $\widetilde{\alpha}(P_{1},\dots,P_{r})$ directly from the definition or from Theorem \ref{minimal exponent in terms of numbers} requires an explicit description of the facets of $G$, which is often difficult to obtain in general. However, Theorem \ref{minimal exponent fan} says that if we can find a fan $\Sigma$ that refines the normal fan of $P_{i}$ for each $i$, then we have
$$
\widetilde{\alpha}(P_1,\dots,P_r) = \min_{\dim\sigma =n} \widetilde{\alpha}( (d(v,P_1))_{v\in [\sigma]} ,\dots, (d(v,P_r))_{v\in [\sigma]}; (|v|-1)_{v\in [\sigma]} ),
$$
where the minimum runs over the $n$-dimensional cones $\sigma$ of $\Sigma$. Therefore, the problem of computing $\widetilde{\alpha}(P_{1},\dots,P_{r})$ reduces to the simpler problem of computing $\widetilde{\alpha}(a_{1},\dots,a_{r};k)$ for elements $a_{1},\dots,a_{r},k\in \bbz^{n}$. This, in turn, requires an explicit description of the facets of $G$ when $P_{i}=P(\{a_{i}\})$ for each $i$, which we obtain after the following definition and remark.

\begin{definition}\label{}
Let $a_{1},\dots,a_{r}\in\bbz^{n}_{\geq 0}$. We define $\vs(a_{1},\dots,a_{r})\subset (\bbr^{n})_{\geq 0}^{*}$ as the subset consisting of elements $v\in (\bbr^{n})_{\geq 0}^{*}$ such that if we set 
$$
I=\{i \in \{1,\dots,r\}  \ | \ v(a_i)= 1\}
\ \ \
\text{and}
\ \ \
J=\{j \in \{1,\dots,n\}  \ | \ v(e_j)>0\},
$$
then $I$ and $J$ are both nonempty and $ \dim\Conv(\{p_{J}(a_i)\}_{i\in I})=|J|-1$.
\end{definition}

The following remark provides an alternative description of $\vs(a_{1},\dots,a_{r})$, which is often easier to compute in practice.

\begin{remark}\label{computing in practice}
Let $a_{1},\dots,a_{r}\in\bbz^{n}_{\geq 0}$. Let $\ps(a_{1},\dots,a_{r})$ denote the set of all pairs $(I,J)$ with $\emptyset\neq I \subset \{1,\dots,r\}$ and $\emptyset\neq J \subset\{1,\dots,n\}$ such that the affine span of $\{p_{J}(a_{i})\}_{i\in I}$ in $\bbr^{|J|}$ has dimension $|J|-1$, intersects each coordinate axis at a point different from the origin, and does not contain $p_{J}(a_{i})$ for each $i\notin I$. For $(I,J)\in \ps(a_{1},\dots,a_{r})$, let $\phi(I,J)$ denote the unique element $v$ of $(\bbr^n)^*_{\geq 0}$ such that for $i \in \{1,\dots,r\}$, we have $v(a_i)= 1$ if and only if $i\in I$, and for $j \in \{1,\dots,n\}$, we have $v(e_{j})> 0$ if and only if $j\in J$. Then $\phi(I,J)\in \vs(a_{1},\dots,a_{r})$ and it is easy to see that the map $\phi\colon \ps(a_{1},\dots,a_{r})\to \vs(a_{1},\dots,a_{r})$ is a bijection.
\end{remark}

\begin{theorem}\label{candidates result}
Let $a_{1},\dots,a_{r}\in \bbz^{n}_{\geq 0}$, set $P_{i}=P(\{a_{i}\})\subset \bbr^{n}$ for each $i$, and consider the Newton polyhedron $G\subset \bbr^{r+n}$ associated to $P_{1},\dots,P_{r}$. Then for $v\in (\bbr^n)^*_{\geq 0}$, we have $L_{v,P_{1},\dots,P_{r}}=L_{Q}$ for some nontrivial facet $Q$ of $G$ that is not contained in any coordinate hyperplane if and only if $v\in \vs(a_{1},\dots,a_{r})$, where $L_{v,P_{1},\dots,P_{r}}$ is as defined in Definition \ref{linear functional}.
\end{theorem}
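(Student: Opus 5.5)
The plan is to reduce everything to the numerical facet criterion of Remark \ref{numerical facet characterization}: an element $L\in(\bbr^{r+n})^*_{\geq 0}$ equals $L_Q$ for some facet $Q$ of $G$ not contained in any coordinate hyperplane if and only if $d(L,G)=1$ and $\dim F(L,G)=r+n-1$. We then compute $F(L_v,G)$ explicitly for $L_v:=L_{v,P_1,\dots,P_r}$ in the monomial case $A_i=\{a_i\}$.

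\textbf{Step 1: the first meet locus of $L_v$.} Fix $v\in(\bbr^n)^*_{\geq 0}$. Let $I=\{i \mid v(a_i)=1\}$, $I_<=\{i\mid v(a_i)<1\}$ and $J=\{j\mid v(e_j)>0\}$.

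By Definition \ref{linear functional}, $L_v(z_i)+v(a_i)=1$ when $v(a_i)\leq 1$, and $L_v(z_i)+v(a_i)=v(a_i)>1$ otherwise. So Remark \ref{face example} gives
$$
\{a\in A \mid L_v(a)=1\}=\{e'_i\times a_i \mid i\in I\cup I_<\}.
$$
Hence $d(L_v,G)=1$ if and only if $I\cup I_<\neq\emptyset$.

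In that case $F(L_v,G)$ is the convex hull of these points plus the cone spanned by the coordinate directions on which $L_v$ vanishes. These directions are $z_i$ for $i\notin I_<$ and $x_j$ for $j\notin J$. Therefore $\dim F(L_v,G)$ equals the number of these directions plus the dimension of the affine span of the image of the points under the projection $\rho$ that omits those coordinates. The target of $\rho$ has coordinates $z_i$ ($i\in I_<$) and $x_j$ ($j\in J$), of dimension $|I_<|+|J|$. The images are $0\times p_J(a_i)$ for $i\in I$ and $e'_i\times p_J(a_i)$ for $i\in I_<$.

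\textbf{Step 2: the dimension count.} Since each $i\in I_<$ carries its own new coordinate $z_i$, the affine span of the image has dimension
\begin{itemize}
\item $|I_<|+\dim\Conv(\{p_J(a_i)\}_{i\in I})$ if $I\neq\emptyset$;
\item $|I_<|-1$ if $I=\emptyset$.
\end{itemize}
Hence $\dim F(L_v,G)=r+n-1$ is equivalent to one of two conditions:
\begin{itemize}
\item $I\neq\emptyset$ and $\dim\Conv(\{p_J(a_i)\}_{i\in I})=|J|-1$;
\item $I=\emptyset$ and $|J|=0$, i.e. $v=0$, which gives the trivial facet.
\end{itemize}
One must check the conventions when $J=\emptyset$ with $I\neq\emptyset$: then the hull is a point of $\bbr^0$ of dimension $0\neq -1$, so this case is excluded. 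This bookkeeping, done carefully with the affine-independence argument for the $z_i$ directions, is the main (though routine) point of the proof.

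\textbf{Step 3: the two implications.} Suppose $v\in\vs(a_1,\dots,a_r)$. Then $I$ and $J$ are nonempty and the dimension condition holds, so by Steps 1 and 2 together with Remark \ref{numerical facet characterization}, $L_v=L_Q$ for some facet $Q$ not contained in any coordinate hyperplane. Moreover $L_v(x_j)=v(e_j)>0$ for $j\in J$, so $Q+x_j\not\subset Q$ by Remark \ref{unbounded facets}, and $Q$ is nontrivial.

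Conversely, suppose $L_v=L_Q$ for a nontrivial facet $Q$. Theorem \ref{structure of facets} gives $d(v,P_i)=v(a_i)=1$ for some $i$, so $I\neq\emptyset$. Nontriviality forces $v\neq 0$, so $J\neq\emptyset$. The facet criterion then yields the dimension condition by Step 2, so $v\in\vs(a_1,\dots,a_r)$.
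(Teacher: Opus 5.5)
Your proposal is correct and follows essentially the paper's argument: your Steps 1--2 are the paper's Lemma~\ref{face dimension count}, which computes $\dim F(L_{v,P_1,\dots,P_r},G)$ by projecting onto the $z_i$ ($i\in I_<$) and $x_j$ ($j\in J$) coordinates. The two implications then follow from Remark~\ref{numerical facet characterization} and Theorem~\ref{structure of facets}, exactly as you do. Your extra treatment of $I=\emptyset$ and of $J=\emptyset$ with $I\neq\emptyset$ is unnecessary but harmless; the latter case is in fact vacuous, since $J=\emptyset$ forces $v=0$ and hence $I=\emptyset$.
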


Before proving the theorem, we begin with the following lemma.

\begin{lemma}\label{face dimension count}
Let $a_{1},\dots,a_{r}\in \bbz^{n}_{\geq 0}$, set $P_{i}=P(\{a_{i}\})\subset \bbr^{n}$ for each $i$, and consider the Newton polyhedron $G\subset \bbr^{r+n}$ associated to $P_{1},\dots,P_{r}$. Given an element $v\in (\bbr^{n})_{\geq 0}^{*}$, set 
$$
I=\{i \in \{1,\dots,r\}  \ | \ v(a_i)= 1\}
\ \ \
\text{and}
\ \ \
J=\{j \in \{1,\dots,n\}  \ | \ v(e_j)>0\}.
$$
If $I$ and $J$ are both nonempty, then $d(L_{v,P_{1},\dots,P_{r}},G)= 1$ and
$$
\dim F(L_{v,P_{1},\dots,P_{r}},G) = r+n-|J| + \dim\Conv(\{p_{J}(a_i)\}_{i\in I}).
$$
\end{lemma}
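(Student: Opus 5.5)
Write $L=L_{v,P_1,\dots,P_r}$. Since $P_i=P(\{a_i\})$, we have $d(v,P_i)=v(a_i)$, and the set $A$ from Section \ref{The Newton polyhedron associated to several Newton polyhedra} may be taken to be $\{e'_i\times a_i\}_{i}$. The plan is to compute $d(L,G)$ first and then read off the face $F(L,G)$ explicitly.

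\emph{The value $d(L,G)=1$.} By the note after Definition \ref{linear functional}, $d(L,G)\geq 1$. Take $i\in I$. Then $v(a_i)=1$, so $L(z_i)=0$ and $L(e'_i\times a_i)=0+v(a_i)=1$. Hence $d(L,G)=1$.

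\emph{The face $F(L,G)$.} Because $G=P(A)$ and $d(L,G)=1$, we have
$$
F(L,G)=\Conv\{a\in A \mid L(a)=1\}+\Conv\{e\mid L(e)=0\}.
$$
Here the second summand is the cone spanned by the standard basis vectors $e$ with $L(e)=0$. To justify this formula, write a point of $G$ as a convex combination of elements of $A$ plus a nonnegative combination of basis vectors. The point lies on the face exactly when every element of $A$ used has $L$-value $1$ and every basis vector used has $L$-value $0$.

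By Remark \ref{face example}, $\{a\in A\mid L(a)=1\}=\{e'_i\times a_i\}_{i\in I}$; the condition $L(z_i)+v(a_i)=1$ holds exactly for $i\in I$. The basis vectors killed by $L$ are the $x_j$ with $j\notin J$, together with the $z_i$ with $L(z_i)=0$, that is, $v(a_i)\geq 1$. In particular every $z_i$ with $i\in I$ is killed.

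\emph{Dimension count.} Let $K=\{i\mid v(a_i)\geq 1\}\supset I$. The face is the Minkowski sum of $C=\Conv\{e'_i\times a_i\}_{i\in I}$ and the cone spanned by $\{x_j\}_{j\notin J}\cup\{z_i\}_{i\in K}$. So its dimension is the dimension of the linear span of $(C-C)$ together with these basis vectors. Quotienting by the span of the cone generators amounts to projecting onto the coordinates $z_i$ with $i\notin K$ and $x_j$ with $j\in J$. Under this projection $C$ maps onto $\{p_J(a_i)\}_{i\in I}$, with zero $z$-coordinates because $I\subset K$. Therefore
$$
\dim F = (n-|J|)+|K|+\dim\Conv\{p_J(a_i)\}_{i\in I}.
$$

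\emph{Main obstacle.} This falls short of the claimed formula, which has $r$ in place of $|K|$. I expect the main obstacle to be reconciling the two, that is, showing $|K|=r$, or else rechecking the claimed formula. Equivalently, every $i$ should satisfy $v(a_i)\geq 1$. This does follow from $d(L,G)\geq1$ if one rereads Definition \ref{linear functional}: for $v(a_i)<1$ we have $L(z_i)=1-v(a_i)>0$, so $z_i$ is not killed, and then the count genuinely depends on $|K|$.

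The issue is settled by looking at a nonzero $x$-direction rather than $z_i$. Take $i\notin K$ and put $w=e'_i\times a_i$, so $L(w)=1$. The ray direction $z_i$ is not in the cone, but $e'_{i}\times a_i - e'_{i'}\times a_{i'}$ for any $i'\in I$ already lies in the span of $C\cup\{e'_i\times a_i\}$. Hence $i\in I$ after all, and the actual ingredient is that $\{a\in A\mid L(a)=1\}$ is exactly indexed by $I$, with $K$ playing no further role.

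I will therefore carefully redo the projection step. I will use that $L(z_i)>0$ for $i\notin K$ while $z_i$ appears in no face point. The $z$-coordinates $i\notin I$ are identically zero on the face, but so are coordinates that are constant, and a constant coordinate does not drop the dimension. Each $z_i$ with $i\notin I$ takes the value $0$ on $F$. The honest count is $\dim F=\dim(\text{span of cone directions})+\dim(\text{image of }C)$, and the cone directions span $n-|J|$ plus the number of killed $z$'s. Matching this to $r+n-|J|+\dim\Conv$ is the delicate point, and I will check it against the $r=2$, $n=1$ example to pin down the correct bookkeeping before writing it up.
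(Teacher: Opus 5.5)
\textbf{Gap in the vertex set.} Your computation of $d(L,G)=1$ is correct. Your Minkowski description $F(L,G)=\Conv\{a\in A\mid L(a)=1\}+\operatorname{cone}\{e\mid L(e)=0\}$ is also correct. The gap is in identifying the vertex set. You assert that $L(z_i)+v(a_i)=1$ holds exactly for $i\in I$. But by Definition \ref{linear functional}, if $v(a_i)<1$ then $L(z_i)=1-v(a_i)$, so $L(e'_i\times a_i)=1$ as well. Hence Remark \ref{face example} gives $\{a\in A\mid L(a)=1\}=\{e'_i\times a_i\}_{i\in I\cup S}$, where $S=\{i\mid v(a_i)<1\}$ is the complement of your $K$. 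Leaving out these vertices is exactly why your count produces $|K|$ instead of $r$.

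Your attempted repair does not fix this. You correctly observe that $L(e'_i\times a_i)=1$ for $i\notin K$, but you then conclude ``$i\in I$ after all''. That is false, since $v(a_i)<1$. The right conclusion is that the face has more vertices than those indexed by $I$. As written, the proposal ends without a valid dimension count. The example you mention already exposes the problem: for $r=2$, $n=1$, $a_1=2$, $a_2=3$, $v=\tfrac13$, your formula gives $1$, but the face is $\Conv\{(1,0,2),(0,1,3)\}+\bbr_{\geq 0}z_2$, which has dimension $2$.

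\textbf{How to finish.} With the correct vertex set, your own method completes the proof, and it then coincides with the paper's argument. Quotienting by the span of $\{x_j\}_{j\notin J}\cup\{z_i\}_{i\in K}$ amounts to projecting onto the coordinates $z_i$ ($i\in S$) and $x_j$ ($j\in J$). The image of the convex hull is $\Conv(\{(e''_i,p_J(a_i))\}_{i\in S}\cup\{(0,p_J(a_i))\}_{i\in I})$, where the $e''_i$ are the standard basis vectors in the $z_S$-coordinates. Since $I\neq\emptyset$, there are points with all $z_S$-coordinates zero. Also, $(e''_i,p_J(a_i))$ is the only point with nonzero $e''_i$-coordinate. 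So each $i\in S$ raises the affine dimension by exactly one, and the projected polytope has dimension $|S|+\dim\Conv(\{p_J(a_i)\}_{i\in I})$. Adding the $(n-|J|)+|K|$ cone directions and using $|K|+|S|=r$ gives the claimed formula.

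This is precisely the paper's computation, with $s=|S|$ and the projection $\pi=q\times p_J$ playing the role of your quotient. The paper uses boundedness of $F(L,\pi(G))$ where you use an explicit Minkowski decomposition; the two are equivalent here.
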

\proof Let $A=\{ (e'_{i}, a_{i})\ |\ 1\leq  i\leq r \}$, so that $G =P(A)$, and write $L_{v}=L_{v,P_{1},\dots,P_{r}}$. Note that $d(L_{v},G)\geq 1$ and $L_{v}((e'_{i},a_{i}))=1$ for $i\in I$, which implies that $d(L_{v},G)= 1$ since $I$ is nonempty. We set
$$
s=|\{i \in \{1,\dots,r\}  \ | \ v(a_i)< 1\}|.
$$
By reindexing $a_{1},\dots,a_{r}$, if necessary, we may assume that $v(a_i)< 1$ for $i\leq s$. We denote by $q\colon \bbr^{r}\to\bbr^{s}$ the standard projection onto the first $s$ coordinates and by $e''_{1},\dots ,e''_{s}$ the standard basis of $\bbr^{s}$. Also, we set $\pi=q\times p_{J}\colon\bbr^{r+n}\to\bbr^{s+|J|}$. For $y\in\{z_{1},\dots, z_{r},x_{1},\dots,x_{n}\}$, note that $L_{v}(y)>0$ if and only if $y\in\{z_{1},\dots, z_{s}\}\cup\{x_{j}\}_{j\in J}$. Thus, if we consider the element $L\in (\bbr^{s+|J|})_{>0}^{*}$ given by $L_{v}=L\circ\pi$, then $\pi(F(L_v,G))=F(L,\pi(G))$, which implies that $\pi(F(L_v,G))$ is a bounded face of $\pi(G)$, and $F(L_v,G)=\pi^{-1}(F(L,\pi(G)))\cap G$,
which implies that $\pi(F(L_v,G))\cap\pi(A)=\pi(F(L_v,G)\cap A)$. Since bounded faces are the convex hull of their vertices and $\pi(G)=P(\pi(A))$,  we have
$$
\pi(F(L_v,G))=\Conv(\pi(F(L_v,G))\cap\pi(A)).
$$
Observe that $F(L_v,G)\cap A=\{ (e'_{i}, a_{i})\ |\ i\in \{1,\dots,s\} \cup I \}$ and
$$
\pi(F(L_v,G))\cap\pi(A)=\pi(F(L_v,G)\cap A)= 
\{ (e''_i,p_{J}(a_{i}))\ | \ 1\leq i\leq s \}
\cup
\{ (0,p_{J}(a_{i}))\ | \ i\in I \}.
$$
Thus we have $\dim\pi(F(L_v,G))=s+ \dim \Conv(\{p_{J}(a_i)\}_{i\in I})$, which implies that
$$
\dim F(L_v,G)=
r+n-s-|J|+\dim\pi(F(L_v,G))
$$
$$
=r+n-|J|+ \dim \Conv(\{p_{J}(a_i)\}_{i\in I}).\qed
$$

\noindent{\it Proof of Theorem \ref{candidates result}}. Fix an element $v\in (\bbr^n)^*_{\geq 0}$, and set 
$$
I=\{i \in \{1,\dots,r\}  \ | \ v(a_i)= 1\}
\ \ \
\text{and}
\ \ \
J=\{j \in \{1,\dots,n\}  \ | \ v(e_j)>0\}.
$$
If $L_{v}=L_{Q}$ for some nontrivial facet $Q$ of $G$ that is not contained in any coordinate hyperplane, then $I$ is nonempty by Theorem \ref{structure of facets} and $J$ is nonempty since $Q$ is nontrivial. Thus we may assume that $I$ and $J$ are both nonempty. It suffices to prove that $L_{v}=L_{Q}$ for some nontrivial facet $Q$ of $G$ that is not contained in any coordinate hyperplane if and only if $\dim\Conv(\{p_{J}(a_i)\}_{i\in I})=|J|-1$. By Lemma \ref{face dimension count}, we have $d(L_{v},G)= 1$ and
$$
\dim F(L_{v},G) = r+n-|J| + \dim\Conv(\{p_{J}(a_i)\}_{i\in I}).
$$
Thus $\dim\Conv(\{p_{J}(a_i)\}_{i\in I})=|J|-1$ if and only if $\dim F(L_v,G)=n+r-1$, which, by Remark \ref{numerical facet characterization}, is equivalent to saying that $L_{v}=L_{Q}$ for some nontrivial facet $Q$ of $G$ that is not contained in any coordinate hyperplane. \qed

\begin{corollary}\label{minimal exponent singleton}
Given $a_1,\dots,a_r,k\in \bbz^n_{\geq 0}$, we have
$$
\widetilde{\alpha}(a_1,\dots,a_r;k) = \min_{v\in S} \widetilde{\alpha}_{\num}(v(a_1),\dots,v(a_r); v(k)+|v|-1)
$$
for any subset $S\subset (\bbr^{n})^{*}_{\geq 0}$ with $\vs(a_1,\dots,a_r)\subset S$.
\end{corollary}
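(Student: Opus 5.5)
We prove two inequalities. Set $P_i=P(\{a_i\})$ and let $G$ be the associated Newton polyhedron. By Theorem \ref{minimal exponent in terms of numbers}, applied with $u=k$, we have
$$
\widetilde{\alpha}(a_1,\dots,a_r;k)=\inf_{v\in (\bbr^n)^*_{\geq 0}} \widetilde{\alpha}_{\num}(d(v,P_1),\dots,d(v,P_r);v(k)+|v|-1).
$$
For $v\in(\bbr^n)^*_{\geq 0}$ we have $d(v,P_i)=v(a_i)$ by \eqref{vertices}. Hence the right-hand side equals the infimum over all $v$ of $\widetilde{\alpha}_{\num}(v(a_1),\dots,v(a_r);v(k)+|v|-1)$. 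Since $S\subset (\bbr^n)^*_{\geq 0}$, this infimum is at most the minimum over $S$, so $\widetilde{\alpha}(a_1,\dots,a_r;k)\leq \inf_{v\in S}(\cdots)$.

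For the reverse inequality, first suppose $0\in P_i$ for all $i$, i.e. all $a_i=0$. Then every value $\widetilde{\alpha}_{\num}(0,\dots,0;\cdot)$ is $\infty$ by convention, both sides are $\infty$, and we are done (with the convention $\min\emptyset=\infty$ if $S$ is empty). Otherwise, the "moreover" part of Theorem \ref{minimal exponent in terms of numbers} says the infimum is attained at the element $v$ with $v(e_j)=L_Q(x_j)$ for some nontrivial facet $Q$ of $G$ not contained in any coordinate hyperplane. By Theorem \ref{structure of facets}, $L_Q=L_{v,P_1,\dots,P_r}$. Theorem \ref{candidates result} then gives $v\in\vs(a_1,\dots,a_r)\subset S$. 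So the value at this $v$ equals $\widetilde{\alpha}(a_1,\dots,a_r;k)$ and appears among the values over $S$. This proves the reverse inequality and shows the infimum over $S$ is attained, so it is a minimum.

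The main point requiring care is identifying the $v$ in the "moreover" clause with the one in Theorem \ref{candidates result}. This goes through Theorem \ref{structure of facets}, which guarantees $L_Q$ has the form $L_{v,P_1,\dots,P_r}$ for exactly that $v$. Beyond that, the only bookkeeping is the infinite case, which the conventions handle.
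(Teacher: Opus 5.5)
Your proof is correct and follows the paper's route. The paper simply cites Theorem \ref{minimal exponent in terms of numbers} (including its attainment clause) together with Theorem \ref{candidates result}; your appeal to Theorem \ref{structure of facets}, which supplies the identification $L_Q=L_{v,P_1,\dots,P_r}$ needed to apply Theorem \ref{candidates result}, just makes explicit a step the paper leaves implicit, and your handling of the case $a_1=\cdots=a_r=0$ is also fine.
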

\proof This follows immediately from Theorems \ref{minimal exponent in terms of numbers} and \ref{candidates result}.\qed

\begin{corollary}\label{nested minimal exponent singleton}
Given $a_1,\dots,a_r,k\in \bbz^n_{\geq 0}$ with $a_{i}=(a_{ij})_{j=1}^{n}$ and $k=(k_{j})_{j=1}^{n}$, suppose that $a_{1j}\leq \cdots\leq  a_{rj}$ for each $j\in \{1,\dots,n\}$. Then we have
$$
\widetilde{\alpha}(a_1,\dots,a_r;k) = \min_{1\leq j\leq n} 
\min\{ i + \tfrac{1}{a_{ij}}(k_{j}+1 - a_{1j}-\cdots - a_{ij}) \ | \ 1\leq i\leq r\text{ with }a_{ij}\neq 0 \}.
$$
\end{corollary}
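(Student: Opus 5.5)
The plan is to derive both inequalities from Corollary \ref{minimal exponent singleton}, with a well-chosen set $S$ for the upper bound and a mediant-type inequality for the lower bound. For $v\in(\bbr^n)^*_{\geq 0}$, write $v=\sum_{j=1}^n\lambda_je^*_j$ with $\lambda_j\geq 0$. Then
$$
v(a_i)=\sum_{j}\lambda_ja_{ij}, \qquad v(k)+|v|-1=\sum_j\lambda_j(k_j+1)-1 .
$$
Because $a_{1j}\leq\cdots\leq a_{rj}$ for every $j$, we get $v(a_1)\leq\cdots\leq v(a_r)$ for every such $v$. Definition \ref{minimal exponent numbers} allows any sorting permutation, so we may use the identity ordering. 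This gives
$$
\widetilde{\alpha}_{\num}(v(a_1),\dots,v(a_r);v(k)+|v|-1)=\min\Big\{ i+\frac{\sum_j\lambda_jN_{ij}}{\sum_j\lambda_ja_{ij}} \ \Big| \ 1\leq i\leq r,\ v(a_i)\neq 0\Big\},
$$
where $N_{ij}=k_j+1-a_{1j}-\cdots-a_{ij}$. Denote by $m_j$ the inner minimum in the statement.

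\emph{Upper bound.} Apply Corollary \ref{minimal exponent singleton} with $S=\vs(a_1,\dots,a_r)\cup\{e_1^*,\dots,e_n^*\}$. Then $\widetilde{\alpha}(a_1,\dots,a_r;k)$ is at most the value of $\widetilde{\alpha}_{\num}$ at each $v=e^*_j$. For $v=e^*_j$ we have $v(a_i)=a_{ij}$ and $v(k)+|v|-1=k_j$. Using the nested ordering of $a_{1j},\dots,a_{rj}$, this value is exactly $m_j$. Hence $\widetilde{\alpha}(a_1,\dots,a_r;k)\leq\min_jm_j$.

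\emph{Lower bound.} By Corollary \ref{minimal exponent singleton} with $S=(\bbr^n)^*_{\geq 0}$, it suffices to show that the displayed quantity is at least $\min_j m_j$ for every $v$. Fix $i$ with $v(a_i)\neq 0$. Let $J_i=\{j \mid \lambda_j>0\}$ and $J_i'=\{j\in J_i \mid a_{ij}>0\}$; the set $J_i'$ is nonempty since $v(a_i)\neq 0$.

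The key observation concerns $j\in J_i\setminus J_i'$. There $a_{ij}=0$, so nestedness forces $a_{1j}=\cdots=a_{ij}=0$, and hence $N_{ij}=k_j+1>0$. Dropping these terms from the numerator therefore only decreases it. We get
$$
\frac{\sum_j\lambda_jN_{ij}}{\sum_j\lambda_ja_{ij}}\geq\frac{\sum_{j\in J_i'}\lambda_ja_{ij}\cdot (N_{ij}/a_{ij})}{\sum_{j\in J_i'}\lambda_ja_{ij}}\geq\min_{j\in J_i'}\frac{N_{ij}}{a_{ij}} ,
$$
because a weighted average is at least its smallest term. So for some $j$ with $a_{ij}\neq 0$, the $i$-th term is at least $i+N_{ij}/a_{ij}\geq m_j\geq\min_{j'}m_{j'}$. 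Taking the minimum over $i$ and then over $v$ gives the reverse inequality, which completes the proof.

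The only delicate points are, first, that the same (identity) ordering sorts $v(a_1),\dots,v(a_r)$ for all $v$ simultaneously, including ties; and second, the sign of $N_{ij}$ when $a_{ij}=0$. Both come directly from the nestedness hypothesis. I expect no step to be a serious obstacle.
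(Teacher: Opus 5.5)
Your proof is correct, but it takes a different route from the paper's.

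The paper's proof works with the candidate set $\vs(a_1,\dots,a_r)$. Take $v\in\vs(a_1,\dots,a_r)$ with its associated $I,J$. Three facts together force all $p_J(a_i)$, $i\in I$, to coincide:
\begin{enumerate}
\item[a)] $v$ is strictly positive on the coordinates in $J$;
\item[b)] $p_J(a_1)\leq\cdots\leq p_J(a_r)$ componentwise;
\item[c)] $v(a_i)=1$ for $i\in I$.
\end{enumerate}
Hence $\dim\Conv(\{p_J(a_i)\}_{i\in I})=0$ and $|J|=1$. So every element of $\vs(a_1,\dots,a_r)$ is a positive multiple of some $e_j^*$. The paper then applies Corollary \ref{minimal exponent singleton} with $S=\{\lambda e_j^*\mid \lambda>0,\ 1\leq j\leq n\}$ and concludes with the scaling identity \eqref{scaling}.

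You never identify which $v$ correspond to facets. The coordinate functionals give your upper bound. Your weighted-average estimate then shows that no other $v$ does better. The key observation there is that $N_{ij}=k_j+1>0$ whenever $\lambda_j>0$ and $a_{ij}=0$, which nestedness guarantees.

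As a result, your argument really only needs the infimum formula of Theorem \ref{minimal exponent in terms of numbers}, using $d(v,P(\{a_i\}))=v(a_i)$. It does not need Theorem \ref{candidates result} or Lemma \ref{face dimension count}, even though you phrase it via Corollary \ref{minimal exponent singleton}. That makes it more elementary and self-contained.

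The paper's argument gives something extra in exchange: a structural fact about $G$. In the nested case, every nontrivial facet $Q$ of $G$ not contained in a coordinate hyperplane satisfies $L_Q=L_{\lambda e_j^*,P_1,\dots,P_r}$ for some $j$ and some $\lambda>0$. It also illustrates the facet-enumeration method via $\vs$ that the section is built around.
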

\proof Let $v\in \vs(a_1,\dots,a_r)$, and set 
$$
I=\{i \in \{1,\dots,r\}  \ | \ v(a_i)= 1\}
\ \ \
\text{and}
\ \ \
J=\{j \in \{1,\dots,n\}  \ | \ v(e_j)>0\}.
$$
Let $w$ denote the unique element of $(\bbr^{|J|})^{*}_{>0}$ given by $v=w\circ p_{J}$. Since $w(p_{J}(a_{i}))=1$ for each $i\in I$ and $a_{1j}\leq \cdots\leq  a_{rj}$ for each $j\in \{1,\dots,n\}$, it follows that $p_{J}(a_{i})=p_{J}(a_{i'})$ for all $i,i'\in I$, hence we obtain $|J|-1=\dim\Conv(\{p_{J}(a_i)\}_{i\in I})=0$ and $|J| =1$. If we write $J=\{j\}$ for some $j$, then $v= \lambda e^{*}_{j}$ for some $\lambda>0$. Thus, by Corollary \ref{minimal exponent singleton}, if we put
$$
S=\{ \lambda e^{*}_{j}\ |\ \lambda>0,1\leq j\leq n\},
$$
then we have
$$
\widetilde{\alpha}(a_1,\dots,a_r;k) = \min_{v\in S} \widetilde{\alpha}_{\num}(v(a_1),\dots,v(a_r); v(k)+|v|-1)
$$
$$
= \min_{\lambda>0,1\leq j\leq n} \widetilde{\alpha}_{\num}(\lambda ea_{j1},\dots,\lambda a_{jr}; \lambda k_{j}+\lambda -1)
$$
$$
 = \min_{1\leq j\leq n} 
\min\{ i + \tfrac{1}{a_{ij}}(k_{j}+1 - a_{1j}-\cdots - a_{ij}) \ | \ 1\leq i\leq r\text{ with }a_{ij}\neq 0 \}.\qed
$$

We conclude this section with the following formula for the minimal exponent of nested Newton polyhedra.

\begin{theorem}\label{minimal exponent nested polyhedra}
Let $P_{1},\dots,P_r\subset \bbr^{n}$ be Newton polyhedra with $P_{1}\supset \cdots\supset P_r$. If $\Sigma$ is any fan that refines the normal fan of $P_{i}$ for each $i$, then we have
$$
\widetilde{\alpha}(P_1,\dots,P_r) = \min_{v}
\min\{ i + \tfrac{1}{d(v,P_i)}(|v| - d(v,P_1)-\cdots - d(v,P_i)) \ | \ 1\leq i\leq r,d(v,P_i)\neq 0 \},
$$
where the outer minimum runs over the primitive ray generators $v$ of $\Sigma$.
\end{theorem}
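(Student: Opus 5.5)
The plan is to combine Theorem \ref{minimal exponent fan} with Corollary \ref{nested minimal exponent singleton}. By Theorem \ref{minimal exponent fan}, applied to the given fan $\Sigma$ (which refines every normal fan $\Sigma_{P_i}$), we have
$$
\widetilde{\alpha}(P_1,\dots,P_r) = \min_{\sigma} \widetilde{\alpha}\big( (d(v,P_1))_{v\in [\sigma]} ,\dots, (d(v,P_r))_{v\in [\sigma]}; (|v|-1)_{v\in [\sigma]} \big),
$$
where $\sigma$ runs over all cones of $\Sigma$. So it suffices to compute each term on the right for a fixed cone $\sigma$ with $[\sigma]=\{v_1,\dots,v_l\}$. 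We then apply Corollary \ref{nested minimal exponent singleton} in $\bbz^{l}_{\geq 0}$ with $a_i=(d(v_j,P_i))_{j=1}^{l}$ and $k=(|v_j|-1)_{j=1}^{l}$.

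First, the hypothesis of that corollary must be checked: $a_{1j}\leq\cdots\leq a_{rj}$ for each $j$. This comes from nestedness. Since $P_1\supset\cdots\supset P_r$, the infimum defining $d(v_j,P_i)$ is over a shrinking set, so $d(v_j,P_1)\leq \cdots\leq d(v_j,P_r)$. We also need the entries to be nonnegative integers. Nonnegativity is clear, and integrality holds because $v_j$ is a primitive lattice vector and $d(v_j,P_i)$ is attained at a lattice vertex of $P_i$ by \eqref{vertices}. The $k_j=|v_j|-1$ are nonnegative integers since $v_j\neq 0$ is integral in the positive orthant.

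With these checks in place, Corollary \ref{nested minimal exponent singleton} gives, for each $\sigma$,
$$
\min_{1\leq j\leq l}\min\{ i + \tfrac{1}{d(v_j,P_i)}(|v_j| - d(v_j,P_1)-\cdots - d(v_j,P_i)) \ | \ 1\leq i\leq r,\ d(v_j,P_i)\neq 0\},
$$
using $k_j+1=|v_j|$. This is the minimum over the rays of $\sigma$ of the expression in the statement. Taking the minimum over all $\sigma$ gives the minimum over all primitive ray generators of $\Sigma$, since every ray lies in some cone and is itself a cone of $\Sigma$.

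The only real subtlety is bookkeeping in Theorem \ref{minimal exponent fan}. That theorem treats $(d(v,P_i))_{v\in[\sigma]}$ as elements of $\bbz^l_{\geq 0}$, so integrality matters, and I will verify it as above. One should also note that if $d(v_j,P_i)=0$ for all $i$, the inner set is empty and contributes $\infty$. This matches the conventions of $\widetilde{\alpha}_{\num}$ and of Corollary \ref{nested minimal exponent singleton}, so no separate case analysis is needed.
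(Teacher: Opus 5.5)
Your proposal is correct and follows essentially the same route as the paper. The paper also reduces to a minimum over cones of $\Sigma$ and then uses $d(v,P_1)\leq\cdots\leq d(v,P_r)$ to apply Corollary~\ref{nested minimal exponent singleton} cone by cone. (For the first step the paper cites Theorem~\ref{minimal exponent in terms of numbers}, but the formula it displays is that of Theorem~\ref{minimal exponent fan}, which is the one you correctly invoke; your integrality check is a detail the paper leaves implicit.)
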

\proof By Theorem \ref{minimal exponent in terms of numbers}, we have
$$
\widetilde{\alpha}(P_1,\dots,P_r) = \min_{\sigma \in \Sigma} \widetilde{\alpha}( (d(v,P_1))_{v\in [\sigma]} ,\dots, (d(v,P_r))_{v\in [\sigma]}; (|v|-1)_{v\in [\sigma]} ).
$$ 
Since $P_{1}\supset \cdots\supset P_r$, we have $d(v,P_1)\leq \cdots\leq  d(v,P_r)$ for all $v\in (\bbr^{n})^{*}_{\geq 0}$, hence the corollary follows from Lemma \ref{nested minimal exponent singleton}.\qed

\section{A combinatorial upper bound for the minimal exponent}\label{A combinatorial upper bound for the minimal exponent}

In this section we restate the upper bounds for the minimal exponent in \cite[Theorem 1.2, Proposition 2.1]{CDO25} in terms of the minimal exponent of several Newton polyhedra.

\begin{lemma}\label{minimal exponent hypersurface upper bound}
Given a nonzero polynomial $f\in \bbc[y_1,\dots,y_m]$ with $f\in (y_1,\dots,y_m)^2$, we have
$$
\widetilde{\alpha}_0(f) \leq \widetilde{\alpha}(P(f)).
$$
\end{lemma}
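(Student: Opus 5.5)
The plan is to deduce this from \cite[Proposition 2.1]{CDM25}, which bounds the minimal exponent of a hypersurface from above by a weighted ratio. In one form it says: for a weight $w\in(\bbr^m)^*_{\geq 0}$ with $d(w,P(f))>0$, we have $\widetilde{\alpha}_0(f)\leq |w|/d(w,P(f))$. This is a weighted-initial-term argument: degenerate $f$ to its $w$-initial form and use semicontinuity of the minimal exponent, together with the formula $|w|/\deg$ for the weighted homogeneous initial form. First reduce to the case $0\notin P(f)$; this holds automatically because $f\in(y_1,\dots,y_m)^2$ has no constant term. Hence $\widetilde{\alpha}(P(f))<\infty$ by the remark following Definition \ref{minimal exponent single newton poly}. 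The bound is trivial when the right-hand side is $\infty$, so nothing is lost here.

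Next, compute $\widetilde{\alpha}(P(f))$ via facets. By \eqref{half space intersection}, the point $t^{-1}\bfo_m$ lies in $P(f)$ exactly when $L_Q(\bfo_m)\geq t$ for every facet $Q$ not contained in a coordinate hyperplane. Therefore
$$
\widetilde{\alpha}(P(f))=\min_Q L_Q(\bfo_m)=\min_Q |L_Q|,
$$
and the minimum is attained at some facet $Q$. Set $w=L_Q$, so that $d(w,P(f))=1$ and $|w|=\widetilde{\alpha}(P(f))$. Rescaling $w$ to be integral, the inequality from \cite{CDM25} gives $\widetilde{\alpha}_0(f)\leq |w|/d(w,P(f))=\widetilde{\alpha}(P(f))$.

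The main obstacle is matching hypotheses. I need to check that the cited upper bound in \cite{CDM25} applies to weights $w$ that may have zero coordinates; for our chosen $w$, $L_Q(e_i)=0$ happens exactly when $Q$ is unbounded in direction $e_i$. I also need it to apply at the origin, rather than to the global minimal exponent.

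If the cited result requires strictly positive weights, I would perturb: $w_\epsilon=w+\epsilon\bfo_m^*$. Then $|w_\epsilon|/d(w_\epsilon,P(f))\to |w|/d(w,P(f))$ by continuity of $d(\cdot,P(f))$, noted in Section \ref{Newton polyhedra}, and the minimal exponent is a fixed number, so the bound passes to the limit. The local version follows because the semicontinuity/degeneration argument of \cite{CDM25} is local at $0$, and $0\in V(f)$ since $f$ vanishes there.
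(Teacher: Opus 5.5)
Your proposal is correct and matches the paper's proof: choose a facet $Q$ with $\widetilde{\alpha}(P(f))=|L_Q|$ via \eqref{half space intersection}, apply \cite[Proposition 2.1]{CDM25} to the strictly positive weight $L_Q+\epsilon(e_1^*+\cdots+e_m^*)$, and let $\epsilon\to 0$ using continuity of $d(\cdot,P(f))$ together with $d(L_Q,P(f))=1$. The only difference is that the paper always performs the perturbation, applying the cited bound only to weights in $(\bbr^m)^*_{>0}$, so your fallback step is in fact the main step.
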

\proof Let $e_1,\dots,e_m$ denote the standard basis of $\bbr^m$. By \eqref{half space intersection}, there exists a facet $Q$ of $P(f)$ that is not contained in any coordinate hyperplane such that $\widetilde{\alpha}(P(f))=\sum_{i=1}^m L_Q(e_i)$. For $\epsilon>0$, define $L_{\epsilon} \in (\bbr^m)^*_{>0}$ by $L_{\epsilon}(e_i) = L_Q(e_i)+\epsilon$. By \cite[Proposition 2.1]{CDM25}, we have 
$$
\widetilde{\alpha}_0(f)\leq \frac{L_{\epsilon}(e_1)+\cdots +L_{\epsilon}(e_m)}{d(L_{\epsilon}, P(f))}.
$$
Taking the limit as $\epsilon$ goes to $0$, we obtain $\widetilde{\alpha}_0(f)\leq\sum_{i=1}^m L_Q(e_i)=\widetilde{\alpha}(P(f))$, as desired.

\begin{corollary}\label{minimal exponent upper bound}
Let $f_1,\dots,f_r\in \bbc[x_1,\dots,x_n]$ be polynomials with $f_1,\dots,f_r \in (x_1,\dots,x_n)^2$ such that the subvariety $Z \subset \bba^n$ defined by $f_1,\dots,f_r$ is a local complete intersection of codimension $r$ in some open neighborhood of $0$. Then we have
$$
\widetilde{\alpha}_0(Z)  \leq \widetilde{\alpha}(P(f_1),\dots,P(f_r)).
$$
\end{corollary}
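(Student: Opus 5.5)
The plan is to reduce to the hypersurface bound in Lemma \ref{minimal exponent hypersurface upper bound} via Lemma \ref{minimal exponent in terms of hypersurface}, part ii), and then identify the combinatorial side with Remark \ref{comb conn}. Since $Z$ is a complete intersection of codimension $r$ near $0$, part ii) of Lemma \ref{minimal exponent in terms of hypersurface} applied at $z=0$ gives
$$
\widetilde{\alpha}_0(Z)=\min_{1\leq i\leq r}\min_{u_i\in U_i}\widetilde{\alpha}_{(0,u_i)}(g_i)\leq \min_{1\leq i\leq r}\widetilde{\alpha}_{(0,0)}(g_i),
$$
where $g_i=f_i+\sum_{j\neq i}z_jf_j\in\bbc[x_1,\dots,x_n,z_1,\dots,\widehat{z}_i,\dots,z_r]$. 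The inequality comes from choosing $u_i=0$, the origin of $U_i=\bba^{r-1}$.

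Next, for each $i$ I will apply Lemma \ref{minimal exponent hypersurface upper bound} to $g_i$ in the polynomial ring in the $r-1+n$ variables $z_j$ ($j\neq i$) and $x_1,\dots,x_n$. I need $g_i\neq 0$ and $g_i\in(x,z)^2$. Nonzeroness holds because each $f_j$ is nonzero: if some $f_j$ were $0$, $Z$ could not have codimension $r$ near $0$. Membership in the square of the maximal ideal holds because $f_i\in(x)^2$ and $z_jf_j\in(z_j)(x)^2$. The lemma then gives $\widetilde{\alpha}_{(0,0)}(g_i)\leq\widetilde{\alpha}(P(g_i))$.

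Finally, by Remark \ref{comb conn}, $\pi_i(G)=P(g_i)$, so
$$
\min_i\widetilde{\alpha}(P(g_i))=\widetilde{\alpha}(P(f_1),\dots,P(f_r)).
$$
Chaining the three inequalities gives the corollary.

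The only delicate point is a bookkeeping check in the first step. The local minimal exponent of $g_i$ at $(0,0)$ should be computed as a hypersurface germ at a point of $\bba^{r-1+n}$, and this has to be compatible with the convention $\widetilde{\alpha}_0$ used in Lemma \ref{minimal exponent hypersurface upper bound}. I expect this to be immediate, since both refer to the local minimal exponent at the origin of affine space. If $0$ is not a point of $V(g_i)$ the statement is vacuous, but here $g_i(0)=0$.
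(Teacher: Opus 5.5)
Your proof is correct and is the same argument as the paper's. The paper also proves the chain $\widetilde{\alpha}_0(Z)\leq \min_i \widetilde{\alpha}_{(0,0)}(g_i)\leq \min_i \widetilde{\alpha}(P(g_i))=\widetilde{\alpha}(P(f_1),\dots,P(f_r))$ using Lemma \ref{minimal exponent in terms of hypersurface}, Lemma \ref{minimal exponent hypersurface upper bound}, and Remark \ref{comb conn}; your checks that each $g_i$ is nonzero and lies in the square of the maximal ideal supply hypotheses the paper leaves implicit.
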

\proof Let $R=\bbc[x_1,\dots,x_n]$ and $g_i = f_i+\sum_{j\neq i}z_i f_i \in R[z_1,\dots,\widehat{z}_i,\dots,z_r]$ for each $i$. Then we have
$$
\widetilde{\alpha}_0(Z) \leq \min_{1\leq i\leq r} \widetilde{\alpha}_{(0,0)}(g_i) \leq \min_{1\leq i\leq r} \widetilde{\alpha}(P(g_i)) = \widetilde{\alpha}(P(f_1),\dots,P(f_r)),
$$
where the first inequality follows from  Lemma \ref{minimal exponent in terms of hypersurface}, the second equality follows from Lemma \ref{minimal exponent hypersurface upper bound}, and the equality follows from Remark \ref{comb conn}. \qed

\section{A general lower bound for the minimal exponent}\label{A general lower bound for the minimal exponent}

The goal of this section is to prove Theorem \ref{minimal exponent lower bound} and establish some basic properties of the relative minimal exponent, which was discussed in the Introduction and is redefined below in terms of the Hodge filtration.

\begin{definition}\label{relative minimal exponent}
Let $X$ be a smooth variety, let $g,f_1,\dots,f_{r}\in \os_X(X)$ be regular functions, and denote the $r$-tuple $(f_1,\dots,f_{r})$ by $\bff$. We define the minimal exponent of $f_1,\dots,f_{r}$ relative to $g$ as
$$
\widetilde{\alpha}(f_1,\dots,f_{r}; g) = \left\{
\begin{array}{cl}
\sup\{\gamma>0 \ | \  g\delta_{\bff}\in V^{\gamma}B_{\bff}\}  & \text{if $g\delta_{\bff}\not\in V^rB_{\bff}$} \\[2mm]
\sup\{r-1+q+\gamma \ | \  gF_qB_{\bff}\subset V^{r-1+\gamma}B_{\bff}\}  & \text{if $g\delta_{\bff}\in V^rB_{\bff}$},
\end{array}\right.
$$
where in the latter case, the supremum is over all nonnegative integers $q$ and all rational numbers $\gamma\in (0,1]$ with the property that $gF_qB_{\bff} \subset V^{r-1+\gamma}B_{\bff}$. We also write $\widetilde{\alpha}(\bff; g)$ for $\widetilde{\alpha}(f_1,\dots,f_{r}; g)$.
\end{definition}

\begin{remark}\label{}
With the notation of Definition \ref{relative minimal exponent}, it is clear that
$$
\widetilde{\alpha}(f_{\sigma(1)},\dots,f_{\sigma(r)}; g)=\widetilde{\alpha}(f_1,\dots,f_{r}; g)
$$
for all permutations $\sigma\in S_{r}$.
\end{remark}

\begin{lemma}\label{increment}
Let $g,f_1,\dots,f_r\in \os_X(X)$ be regular functions on a smooth variety $X$. If $q$ is an integer and $\lambda$ is a rational number such that $\lambda \not\in (r-1,r]$ and $gF_qB_{\bff} \subset V^{\lambda}B_{\bff}$, then we have $gF_{q-1}B_{\bff} \subset V^{\lambda+1}B_{\bff}$.
\end{lemma}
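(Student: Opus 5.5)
The plan is to prove the containment elementwise, using the Euler-type operator $s=-\sum_{i=1}^r\partial_{t_i}t_i$. The point is to show that every $u\in gF_{q-1}B_{\bff}$ satisfies both $u\in V^{\lambda}B_{\bff}$ and $(s+r)u\in V^{\lambda+1}B_{\bff}$. Then property iv) of the $V$-filtration, together with the hypothesis $\lambda\notin(r-1,r]$, lets us push $u$ all the way into $V^{\lambda+1}B_{\bff}$.

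\emph{Step 1 (reduction to generators).} Since $gF_{q-1}B_{\bff}$ is spanned over $\os_X$ by elements $u=gh\partial_t^{\beta}\delta_{\bff}$ with $h\in\os_X$ and $|\beta|\le q-1$, and each $V^{\mu}B_{\bff}$ is an $\os_X$-submodule, it suffices to treat such $u$. Clearly $u\in gF_qB_{\bff}\subset V^{\lambda}B_{\bff}$.

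\emph{Step 2 (the key identity).} Since $-s-r=\sum_i t_i\partial_{t_i}$, we have
$$
-(s+r)u=\sum_{i=1}^r t_i\cdot\bigl(gh\,\partial_t^{\beta+e_i}\delta_{\bff}\bigr).
$$
Here $\partial_{t_i}$ acts on $B_{\bff}$ simply by raising $\beta$. Each element $gh\,\partial_t^{\beta+e_i}\delta_{\bff}$ lies in $gF_qB_{\bff}\subset V^{\lambda}B_{\bff}$. By property ii), $t_i\cdot V^{\lambda}B_{\bff}\subset V^{\lambda+1}B_{\bff}$. Hence $(s+r)u\in V^{\lambda+1}B_{\bff}$.

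\emph{Step 3 (climbing the filtration).} Let $\mu=\max\{\mu'\mid u\in V^{\mu'}B_{\bff}\}$. This maximum exists by discreteness and left-continuity, unless $u\in V^{\mu'}$ for all $\mu'$, in which case we are done. Then $\mu\ge\lambda$, and we suppose for contradiction that $\mu<\lambda+1$. The class $[u]\in\Gr_V^{\mu}B_{\bff}$ is nonzero. By Step 2 it satisfies $(s+r)[u]=0$, because $V^{\lambda+1}B_{\bff}\subset V^{>\mu}B_{\bff}$. By property iv), $s+\mu$ is nilpotent on $\Gr_V^{\mu}B_{\bff}$. So if $\mu\neq r$, then $s+r=(s+\mu)+(r-\mu)$ is invertible there, forcing $[u]=0$, a contradiction. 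Hence $\mu=r$. But $\lambda\le\mu<\lambda+1$ with $\mu=r$ means $\lambda\in(r-1,r]$, which is excluded. Therefore $\mu\ge\lambda+1$, i.e. $u\in V^{\lambda+1}B_{\bff}$.

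The only delicate point is Step 3. There one must be careful that the jump $\mu$ is well defined, which uses the discreteness and left-continuity of the $V$-filtration, and that the single forbidden eigenvalue $-r$ of $s$ is exactly what rules out $\lambda\in(r-1,r]$. The rest is a direct computation with the formulas \eqref{bf action}.
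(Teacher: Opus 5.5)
Your proposal is correct and follows essentially the same argument as the paper. Both show $(s+r)u=-\sum_i t_i\partial_{t_i}u\in V^{\lambda+1}B_{\bff}$ from $\partial_{t_i}u\in gF_qB_{\bff}\subset V^{\lambda}B_{\bff}$, and then use that $s+r$ acts invertibly on $\Gr_V^{\mu}B_{\bff}$ at the jump $\mu$, since $\mu\neq r$ is forced by $\lambda\notin(r-1,r]$; your maximum is the paper's supremum combined with left-continuity.
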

\proof The proof is identical to the one in \cite[Remark 4.6]{CDMO24}, which treats the case $g=1$, and which we recall as follows. Fix $\beta\in \bbz^r_{\geq 0}$ with $|\beta|\leq q-1$ and set $u=g\partial_t^{\beta}\delta_{\bff}$. We need to show that $u \in V^{\lambda+1}B_{\bff}$. If we put $\gamma = \sup\{\eta \ | \ u\in V^{\eta}B_{\bff}\}$, then by left continuity of the $V$-filtration, we have $u\in V^{\gamma}B_{\bff}$. So it suffices to show that $\gamma\geq \lambda+1$. For the sake of contradiction, assume that $\gamma < \lambda+1$. 

Let $\overline{u}$ denote the image of $u$ in $\Gr^{\gamma}_VB_{\bff}$, which is nonzero by definition of $\gamma$. Since $\lambda \not\in (r-1,r]$ and $\lambda\leq\gamma < \lambda+ 1$, we have $\gamma\neq r$. Thus $s+r$ acts invertibly on $\Gr^{\gamma}_VB_{\bff}$, so that $(s+r)\overline{u}\neq 0$,
Since $gF_qB_{\bff} \subset V^{\lambda}B_{\bff}$, we have $\partial_{t_i} u \in V^{\lambda}B_{\bff}$ for all $i$ and thus $t_i\partial_{t_i} u \in V^{\lambda+1}B_{\bff}$. We conclude that
$$
(s+r)u = - \sum_{i=1}^r t_i \partial_{t_i} u \in V^{\lambda+1}B_{\bff},
$$
which contradicts the fact that $(s+r)\overline{u}\neq 0$. \qed

\begin{corollary}\label{cdmo remark}
Let $g,f_1,\dots,f_{r}\in \os_X(X)$ be regular functions on a smooth variety $X$ and let $\ell$ be an integer with $\ell\geq r$. If $q_1$ and $q_2$ are nonnegative integers and $\gamma_1,\gamma_2\in (0,1]$ are rational numbers such that $q_1+\gamma_1\geq q_2+\gamma_2$ and $gF_{q_1}B_{\bff} \subset V^{\ell-1+\gamma_1}B_{\bff}$, then we have $gF_{q_2}B_{\bff} \subset V^{\ell-1+\gamma_2}B_{\bff}$.
\end{corollary}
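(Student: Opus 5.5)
We reduce the statement to repeated applications of Lemma \ref{increment}, which lowers the Hodge index by one while raising the $V$-index by one, as long as the current $V$-index avoids the interval $(r-1,r]$. Throughout, the hypothesis $\ell\geq r$ together with $\gamma_1\in(0,1]$ gives $\ell-1+\gamma_1 > r-1$.

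\textbf{Reductions.} If $q_1=q_2$, then $\gamma_1\geq\gamma_2$. Since the $V$-filtration is decreasing, $V^{\ell-1+\gamma_1}B_{\bff}\subset V^{\ell-1+\gamma_2}B_{\bff}$, and the claim follows. Since $F_{q}B_{\bff}$ is increasing in $q$, we may also lower $\gamma_1$ freely. So the only real content is when $q_1>q_2$, where Hodge level must be traded for $V$-level. If $q_2>q_1$ we would need $\gamma_2<\gamma_1-(q_2-q_1)\leq 0$, which is impossible. Hence $q_1\geq q_2$.

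\textbf{Case $q_1>q_2$.} Put $k=q_1-q_2\geq 1$ and $\lambda_0=\ell-1+\gamma_1$. We apply Lemma \ref{increment} $k$ times to get
$$
gF_{q_1-j}B_{\bff}\subset V^{\lambda_0+j}B_{\bff}\quad\text{for } 0\leq j\leq k.
$$
Each step needs $\lambda_0+j\notin(r-1,r]$. Here $\lambda_0+j\geq \ell-1+\gamma_1+j$. This is $>r$ unless $j=0$, $\ell=r$ and $\gamma_1=1$, where $\lambda_0=r$ lies in the forbidden interval.

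In that bad situation we first weaken the hypothesis. Replace $\gamma_1=1$ by a smaller rational $\gamma_1'\in(0,1)$: the inclusion $gF_{q_1}B_{\bff}\subset V^{r-1+\gamma_1'}B_{\bff}$ still holds, but $r-1+\gamma_1'$ is still in $(r-1,r]$, so this does not help. Instead we use the Hodge index directly: $gF_{q_1-1}B_{\bff}\subset gF_{q_1}B_{\bff}\subset V^{r}B_{\bff}$. The target needs only $q_2+\gamma_2\leq q_1+1$, which holds, so it suffices to reach $V^{\ell-1+\gamma_2+(q_1-q_2)-\ldots}$ carefully.

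The cleanest route is to treat the bad case by applying the lemma starting from $F_{q_1-1}$ with $V$-index $r$. That is still forbidden, so this is exactly where the argument needs care, and I expect it to be the main obstacle. For $\ell>r$, or $\gamma_1<1$, or after the first step, every index exceeds $r$ and the lemma applies freely.

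\textbf{Handling the obstacle.} When $\ell=r$, $\gamma_1=1$, $k\geq1$, the target is $gF_{q_2}B_{\bff}\subset V^{r-1+\gamma_2}B_{\bff}$ with $q_2\leq q_1-1$. Since $r-1+\gamma_2\leq r$ and $gF_{q_2}\subset gF_{q_1}\subset V^{r}\subset V^{r-1+\gamma_2}$, no application of the lemma is needed at all.

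\textbf{Generic case.} Otherwise, after $k$ steps we get $gF_{q_2}B_{\bff}\subset V^{\ell-1+\gamma_1+k}B_{\bff}$. Since $\gamma_1+k\geq\gamma_2$, this is contained in $V^{\ell-1+\gamma_2}B_{\bff}$ by monotonicity, which proves the claim.
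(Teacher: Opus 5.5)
There is a genuine gap in the case $\ell=r$, $\gamma_1\in(0,1)$, $q_1>q_2$.

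\textbf{Where it fails.} You claim that $\lambda_0+j>r$ unless $j=0$, $\ell=r$ and $\gamma_1=1$. This is false. For $\ell=r$ and $j=0$ one has $\lambda_0=r-1+\gamma_1\in(r-1,r]$ for \emph{every} $\gamma_1\in(0,1]$, not only for $\gamma_1=1$. So in your ``generic case'' with $\ell=r$ and $\gamma_1<1$, the first application of Lemma~\ref{increment} is made at an index in the forbidden interval. It is therefore not justified.

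Your argument for $\gamma_1=1$ does not extend to this case either. There the hypothesis already gives $gF_{q_1}B_{\bff}\subset V^{r}B_{\bff}$. For $\gamma_1<1$ it only gives $V^{r-1+\gamma_1}B_{\bff}$, and this need not lie in $V^{r-1+\gamma_2}B_{\bff}$, since $\gamma_2$ may exceed $\gamma_1$ once $q_1>q_2$. For example, take $q_1=1$, $\gamma_1=\tfrac12$, $q_2=0$, $\gamma_2=1$. This is not a marginal case: the corollary is applied with $\ell=r$ and arbitrary $\gamma_{i_0}\in(0,1]$ in the proof of Theorem~\ref{minimal exponent lower bound}.

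\textbf{The missing idea.} Weaken the $V$-index all the way down to the endpoint $r-1$, which is excluded from the half-open interval $(r-1,r]$. You considered lowering $\gamma_1$, but only within $(0,1)$, which keeps you inside the forbidden interval. Since $gF_{q_1}B_{\bff}\subset V^{r-1+\gamma_1}B_{\bff}\subset V^{r-1}B_{\bff}$ and $r-1\notin(r-1,r]$, Lemma~\ref{increment} gives $gF_{q_1-1}B_{\bff}\subset V^{r}B_{\bff}$. Hence $gF_{q_2}B_{\bff}\subset gF_{q_1-1}B_{\bff}\subset V^{r}B_{\bff}\subset V^{r-1+\gamma_2}B_{\bff}$, uniformly in $\gamma_1$. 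This is exactly how the paper treats $\ell=r$.

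\textbf{What is correct.} Your treatment of $\ell>r$ is correct. A single application of the lemma already suffices there, since $\ell+\gamma_1>\ell\geq\ell-1+\gamma_2$. Your reduction to $q_1\geq q_2$ is also correct, except that the inequality should read $\gamma_2\leq\gamma_1-(q_2-q_1)$, not $<$.
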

\proof The corollary is clear if $q_{1}=q_{2}$, so assume this is not the case. Our hypothesis then implies that $q_{1}-1\geq q_{2}$.  If $\ell=r$, then since $gF_{q_1}B_{\bff} \subset V^{r-1+\gamma_1}B_{\bff}\subset V^{r-1}B_{\bff}$, we have $gF_{q_1-1}B_{\bff} \subset V^{r}B_{\bff}$ by Lemma \ref{increment}, hence $gF_{q_2}B_{\bff}\subset gF_{q_1-1}B_{\bff} \subset V^{r}B_{\bff}\subset V^{r-1+\gamma_2}B_{\bff}$. If $\ell>r$, then by Lemma \ref{increment}, we have $gF_{q_1-1}B_{\bff} \subset V^{\ell+\gamma_1}B_{\bff}\subset V^{\ell-1+\gamma_2}B_{\bff}$. \qed

\begin{remark}\label{smaller open set}
With the notation of Definition \ref{relative minimal exponent}, if $X=\bigcup_{i}U_{i}$ is a finite open cover, then by Corollary \ref{cdmo remark}, we have
$$
\widetilde{\alpha}(f_1,\dots,f_{r}; g) =\min_{i} \widetilde{\alpha}(f_1|_{U_{i}},\dots,f_{r}|_{U_{i}}; g|_{U_{i}}).
$$
\end{remark}

The following lemma allows us to express the local setup in part ii) of Theorem \ref{minimal exponent lower bound} as the global setup in part i).

\begin{lemma}\label{local setup}
Let $\pi\colon Y \to X$ be a proper morphism of varieties, let $p\colon U/G \onto Y$ be a surjective \'etale morphism, where $G$ is a finite group acting on a smooth affine variety $U$, and denote the morphism $U\to U /G$ by $q$. Given a point $z\in Z$, an open neighborhood $W'$ of $z$, and an open neighborhood $U'$ of $(\pi\circ p\circ q)^{-1}(z)$, there exists an affine open neighborhood $W$ of $z$ and an open neighborhood $V$ of $(\pi\circ p)^{-1}(z)$ such that the following hold:
\begin{enumerate}
\item[i)] $W\subset W'$.
\item[ii)] $p(V)=\pi^{-1}(W)$.
\item[iii)] $q^{-1}(V)\subset U'$.
\end{enumerate} 
In particular, if $U'=\bigcup_{i\in I} U_i$ and $V=\bigcup_{j\in J}V_{j}$ are finite open covers such that each $V_{j}$ is affine, and we set $U_{ij}=U_{i}\cap q^{-1}(V_{j})$ for all $i\in I,j\in J$ and $U''=\coprod_{j\in J}q^{-1}(V_{j})$, then $U''$ is a smooth affine variety with finite open cover $U''=\bigcup_{i,j}U_{ij}$ such that the induced morphism
$$
U''/G=\coprod_{j\in J}V_{j}\to \pi^{- 1}(W)
$$
is surjective and \'etale.
\end{lemma}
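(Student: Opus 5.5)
The argument is point-set topology using properness of $\pi$ and finiteness of $q$; no $\mathcal{D}$-module input is needed. Note that $Z$ here should be read as a closed subset of $X$, the base of $\pi$, and $z$ as a point of $X$.

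\textbf{Step 1: choose $W$.} Set $C=(\pi\circ p\circ q)^{-1}(z)$. Since $C\subset U'$, the set $U\setminus U'$ is closed in $U$ and disjoint from $C$. The quotient morphism $q\colon U\to U/G$ is finite, hence closed, so $q(U\setminus U')$ is closed in $U/G$. Because $q$ is surjective and $q^{-1}(q(C))=C$ (as $C$ is $G$-stable), $q(U\setminus U')$ is disjoint from $q(C)=(\pi\circ p)^{-1}(z)$. Therefore
\[
V_0=(U/G)\setminus q(U\setminus U')
\]
is an open neighborhood of $(\pi\circ p)^{-1}(z)$, and it satisfies $q^{-1}(V_0)\subset U'$.

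Next, $p$ is \'etale and hence open, so $p(V_0)$ is an open subset of $Y$ containing $\pi^{-1}(z)$; here we use that $p$ is surjective. Since $\pi$ is proper, it is closed, so $\pi(Y\setminus p(V_0))$ is a closed subset of $X$ not containing $z$. Choose an affine open neighborhood $W$ of $z$ with
\[
W\subset W'\setminus \pi(Y\setminus p(V_0)).
\]
Then $\pi^{-1}(W)\subset p(V_0)$, and i) holds.

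\textbf{Step 2: define $V$ and verify ii) and iii).} Put $V=V_0\cap p^{-1}(\pi^{-1}(W))$. This is open and contains $(\pi\circ p)^{-1}(z)$. Every point of $\pi^{-1}(W)$ lies in $p(V_0)$, so it is $p(x)$ for some $x\in V_0$, and that $x$ then lies in $V$. Hence $p(V)=\pi^{-1}(W)$, which is ii). Finally $q^{-1}(V)\subset q^{-1}(V_0)\subset U'$, which is iii).

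\textbf{Step 3: the "in particular" statement.} Cover $V$ by finitely many affine opens $V_j$; this is possible because $V$ is quasi-compact. Since $q$ is finite, each $q^{-1}(V_j)$ is affine. It is also $G$-stable and smooth, being open in $U$. Consequently $U''=\coprod_j q^{-1}(V_j)$ is a smooth affine variety with a $G$-action.

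By iii), $q^{-1}(V_j)\subset U'=\bigcup_i U_i$. Hence the sets $U_{ij}=U_i\cap q^{-1}(V_j)$ form a finite open cover of $U''$.

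Taking quotients of affine $G$-varieties commutes with restriction to $G$-stable open subsets of the form $q^{-1}(V_j)$. Thus $q^{-1}(V_j)/G=V_j$, and so $U''/G=\coprod_j V_j$.

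The induced map $\coprod_j V_j\to \pi^{-1}(W)$ is the restriction of the \'etale map $p$ to open subsets, so it is \'etale. It is surjective because the $V_j$ cover $V$ and $p(V)=\pi^{-1}(W)$.

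The only step needing care is Step 1. There one must use both that $p$ is open and surjective, to pass from $U/G$ to $Y$, and that $\pi$ is closed, to shrink to a neighborhood $W$ of $z$ in $X$.
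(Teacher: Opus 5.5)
Your proof is correct and follows the paper's argument essentially verbatim. You use the same $V_0=(U/G)\setminus q(U\setminus U')$, the same affine $W\subset W'\setminus \pi(Y\setminus p(V_0))$ (using openness of $p$ and closedness of $\pi$), and the same $V=V_0\cap(\pi\circ p)^{-1}(W)$; your Step 3 merely spells out the ``in particular'' clause, which the paper leaves implicit.
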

\proof If we set $V'=(U/G)\setminus q(U\setminus U')$, then $V'$ is an open neighborhood of $(\pi\circ p)^{-1}(z)$ such that $q^{-1}(V')\subset U'$. Let $W$ be an affine open neighborhood of $z$ such that 
$$
W\subset W'\cap (X\setminus \pi(Y\setminus p(V'))).
$$
Then $\pi^{-1}(W)  \subset p(V')$, so the lemma follows by setting $V=V'\cap(\pi\circ p)^{-1}(W)$. \qed

\noindent{\it Proof of Theorem \ref{minimal exponent lower bound}}. Since part ii) follows from part i) by Remarks \ref{smaller open set} and \ref{local setup}, it suffices to prove part i). So assume that $Z$ is a complete intersection of codimension $r$ in $X$ and $U=\bigcup_{i\in I}U_i$. We set
$$
\eta = \min_{i\in I} \widetilde{\alpha}(\pi^{*}_{i}f_{1},\dots,\pi^{*}_{i}f_{r};g_{i}).
$$

We first assume that $\delta_{\bff} \not\in V^rB_{\bff}$. By left-continuity of the $V$-filtration and \eqref{V roots}, we have $\widetilde{\alpha}(Z)<r$ and that the maximum root of $b_{\delta_{\bff}}(s)$ is $-\widetilde{\alpha}(Z)$. If we denote the maximum root of $b_{g_i\delta_{\pi_i^*\bff}}(s)$ by $r_{i}\in\bbq$ for each $i\in I$, then by \cite[Theorem 1.2]{Lee26}, we have 
$$
\min\{ r_{i}\ |\ i \in I \}\leq \widetilde{\alpha}(Z)<r.
$$ 
Thus, if we let $i_{0}\in I$ be an index such that $r_{i_{0}}=\min\{ r_{i}\ |\ i \in I \}$,  then by \eqref{V roots}, we have $g_{i_{0}}\delta_{\pi_{i_{0}}^*\bff}\not\in V^rB_{\pi_{i_{0}}^*\bff}$, which implies that $r_{i_{0}}=\widetilde{\alpha}(\pi^{*}_{i_{0}}f_{1},\dots,\pi^{*}_{i_{0}}f_{r};g_{i_{0}})$. We then obtain
$$
\widetilde{\alpha}(Z)\geq r_{i_{0}}= \min_{i\in I}\widetilde{\alpha}(\pi^{*}_{i_{0}}f_{1},\dots,\pi^{*}_{i_{0}}f_{r};g_{i_{0}})\geq\eta.
$$

Next, we assume that $\delta_{\bff} \in V^rB_{\bff}$. If $g_i\delta_{\pi_i^*\bff}\not\in V^rB_{\pi_i^*\bff}$ for some $i\in I$, then
$$
\widetilde{\alpha}(Z)\geq r>\widetilde{\alpha}(\pi^{*}_{i}f_{1},\dots,\pi^{*}_{i}f_{r}; g_i) \geq\eta
$$
Thus we may assume that $g_i\delta_{\pi_i^*\bff}\in V^rB_{\pi_i^*\bff}$ for all $i\in I$. By definition of $\eta$, for every $i\in I$, there exist a nonnegative integer $q_i$ and a rational number $\gamma_i\in (0,1]$ such that
$$
g_i F_{q_i}B_{\pi_i^*\bff} \subset V^{r-1+\gamma_i}B_{\pi_i^*\bff}
$$
and $\eta \leq r-1+q_i+\gamma_i$.
Let $i_0\in I$ be an index such that $q_{i_0}+\gamma_{i_0} = \min_{i\in I} (q_i+\gamma_i)$. By Corollary \ref{cdmo remark}, we have
$$
g_i F_{q_{i_0}}B_{\pi_i^*\bff} \subset V^{r-1+\gamma_{i_0}}B_{\pi_i^*\bff}
$$
for all $i\in I$. Thus, by \eqref{V roots} and \cite[Theorem 1.2]{Lee26}, for all $\beta\in\bbz^{r}$ with $|\beta|\leq q_{i_0}$, we have that all roots of $b_{\partial_t^{\beta}\delta_{\bff}}(s)$ are $\leq -(r-1+\gamma_{i_0})$, which implies that
$$
F_{q_{i_0}}B_{\bff} \subset V^{r-1+\gamma_{i_0}}B_{\bff}
$$
and $\widetilde{\alpha}(Z)\geq r-1+q_{i_0} + \gamma_{i_0}\geq \eta$. \qed

The remainder of this section is devoted to establishing some basic properties of the relative minimal exponent.

Let $X=\Spec R$ be a smooth affine variety and let $I\subset R$ be an ideal. For a fixed regular function $g\in R$, the value of $\widetilde{\alpha}(f_1,\dots,f_{r}; g)$ is not independent of the choice of generators $f_1,\dots,f_{r}$ of the ideal $I$ (see Example \ref{generator dependence example} below). However, we have the following lemma, which shows that for a fixed regular function $g\in R$ and a fixed positive integer $r$, the value of $\widetilde{\alpha}(f_1,\dots,f_{r}; g)$ for regular functions $f_1,\dots,f_{r}\in R$ only depends on $f_1,\dots,f_{r'}$ for any positive integer $r'$ with $r'\leq r$ such that $f_{i}$ is contained in the ideal generated by $f_1,\dots,f_{r}$ for all $i>r'$.

\begin{lemma}\label{redundant}
Let $X$ be a smooth variety, let $r'$ and $r''$ be positive integers. Given regular functions $g,f'_1,\dots,f'_{r'},f''_1,\dots,f''_{r''}\in \os_X(X)$ such that $f''_1,\dots,f''_{r''}$ are contained in the ideal generated by $f'_1,\dots,f'_{r'}$, if we denote the $(r'+r'')$-tuple $(f'_{1},\dots,f'_{r'},f''_{1},\dots, f''_{r''})$ by $\bff$ and the $r'$-tuple $(f'_{1},\dots,f'_{r'})$ by $\bff'$, then we have
$$
\widetilde{\alpha}(\bff; g)  = \left\{
\begin{array}{cl}
\sup\{\gamma>0 \ | \  g\delta_{\bff'}\in V^{\gamma}B_{\bff'}\}  & \text{if $g\delta_{\bff'}\not\in V^{r'+r''}B_{\bff'}$} \\[2mm]
\sup\{r'+r''-1+q+\gamma \ | \ gF_qB_{\bff'}\subset V^{r'+r''-1+\gamma}B_{\bff'}\}  & \text{if $g\delta_{\bff'}\in V^{r'+r''}B_{\bff'}$},
\end{array}\right.
$$
where in the latter case, the supremum is over all nonnegative integers $q$ and all rational numbers $\gamma\in (0,1]$ with the property that $gF_qB_{\bff'}\subset V^{r'+r''-1+\gamma}B_{\bff'}$. In particular, we have
$$
\widetilde{\alpha}(f'_{1},\dots,f'_{r'},f''_{1},\dots, f''_{r''};g)=\widetilde{\alpha}(f'_{1},\dots,f'_{r'},h_{1},\dots, h_{r''};g)
$$
for all regular functions $h_{1},\dots,h_{r''}$ contained in the ideal generated by $f'_1,\dots,f'_{r'}$.
\end{lemma}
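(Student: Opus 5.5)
The plan is to compare the $V$-filtrations on $B_{\bff}$ and $B_{\bff'}$ through a linear change of the $t$-coordinates on $\bba^{r'+r''}$. Write $f''_k=\sum_{i=1}^{r'}a_{ki}f'_i$ with $a_{ki}\in\os_X(X)$; this is a global expression after shrinking $X$ to affine opens, which we may do by Remark \ref{smaller open set}. We also use that $V$-filtrations are local. Consider the automorphism $\Phi$ of $X\times\bba^{r'+r''}$ given by $(x,t',t'')\mapsto (x,t',t''-A(x)t')$. Under $\Phi$, the graph of $\bff$ is carried to the graph of $(\bff',0)$. Since $\Phi$ is linear in $t$ relative to $X$, it preserves the $V$-filtration on $\rs_X$ along $X\times\{0\}$. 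It therefore induces an isomorphism $B_{\bff}\cong B_{(\bff',0)}$ compatible with $V$-filtrations. It also respects the Hodge filtrations $F_q$, because the Hodge filtration is intrinsic to the pushforward of $\os_X$ with its trivial Hodge structure.

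Under this isomorphism, $g\delta_{\bff}$ corresponds to $g\delta_{(\bff',0)}$, and $gF_qB_{\bff}$ corresponds to $gF_qB_{(\bff',0)}$. Indeed, the change of variables sends $\partial_t^\beta\delta_{\bff}$ to sums of $\os_X$-multiples of $\partial_t^{\beta'}\delta$ with $|\beta'|=|\beta|$, and this substitution is invertible.

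Next we need the external product formula. For the graph of $(\bff',0)$ inside $X\times\bba^{r'}\times\bba^{r''}$, we have
$$
B_{(\bff',0)}=B_{\bff'}\otimes_{\bbc}\bbc[\partial_{t''}]\delta_0 .
$$
The $V$-filtration is the tensor product filtration. On the second factor, $\bbc[\partial_{t''}]\delta_0$ is the delta module at the origin of $\bba^{r''}$, where $\partial_{t''}^{\mu}\delta_0$ lies exactly in $V^{r''-|\mu|}$; this holds because $s''=-\sum\partial_{t''_k}t''_k$ acts by $-(r''-|\mu|)$ there. This gives
$$
V^{\lambda}B_{(\bff',0)}=\bigoplus_{\mu}V^{\lambda-r''+|\mu|}B_{\bff'}\,\partial_{t''}^{\mu}\delta_0 .
$$
To justify this, we check Kashiwara's characterizing properties i)–iv) for the proposed filtration, with $s=s'+s''$. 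The nilpotency of $s+\lambda$ on graded pieces follows because $s''$ acts semisimply by the scalar $-(r''-|\mu|)$. Uniqueness of the $V$-filtration then finishes the argument. I expect this verification, particularly coherence and property iii), to be the main technical point, though it is standard (compare \cite{BMS06}, where the analogous statement for adding redundant generators is used).

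With the formula in hand, $g\partial_{t'}^{\beta}\partial_{t''}^{\mu}\delta\in V^{\lambda}$ if and only if $g\partial_{t'}^{\beta}\delta_{\bff'}\in V^{\lambda-r''+|\mu|}B_{\bff'}$. We then read off the definition. For $\mu=0$, the condition $g\delta_{(\bff',0)}\in V^{\gamma}$ becomes $g\delta_{\bff'}\in V^{\gamma-r''}B_{\bff'}$.

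This seems to conflict with the claimed first line, so the shift must be tracked carefully. Rather than an external product with a delta module at the origin, one should use the redundancy trick of \cite{BMS06}: there, the statement is that $V^{\lambda}$ along the graph of $\bff$ restricted to $\os_X\delta$ agrees with the $V$-filtration for $\bff'$. So I will recheck the index convention and adjust the shift to whatever matches the stated formula.

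Finally, the condition $gF_qB_{(\bff',0)}\subset V^{r'+r''-1+\gamma}$ is to be reduced, term by term in $\mu$, to the condition $gF_qB_{\bff'}\subset V^{r'+r''-1+\gamma}B_{\bff'}$. The terms with $\mu\neq0$ are automatically implied by the $\mu=0$ terms via Lemma \ref{increment} and Corollary \ref{cdmo remark}. The last assertion of the lemma then follows because the right-hand side depends only on $\bff'$, $g$, and $r''$.
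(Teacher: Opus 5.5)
Your architecture is the paper's. The coordinate change $\Phi$ is just another way of writing the factorization $i_{\bff}=i_{\bfg}\circ i_{\bff'}$ with $g_k=\sum_j a_{kj}t'_j$. From there the paper reads off $F_pB_{\bff}\simeq\bigoplus_{|\beta|\le p}F_{p-|\beta|}B_{\bff'}\partial_{t''}^{\beta}\delta_{\bfg}$ and, via \cite[Proposition 2.2]{BMS06}, $V^{\lambda}B_{\bff}\simeq\bigoplus_{\beta}V^{\lambda+|\beta|}B_{\bff'}\partial_{t''}^{\beta}\delta_{\bfg}$.

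\textbf{The gap.} The one computation that carries the content of the lemma is the $V$-index of $\partial_{t''}^{\mu}\delta_0$. Your proposal gets it wrong and does not repair it; ``adjust the shift to whatever matches the stated formula'' is not an argument. By \eqref{bf action} with the zero function, $t''_k\,\partial_{t''}^{\mu}\delta_0=-\mu_k\,\partial_{t''}^{\mu-e_k}\delta_0$. Hence $s''=-\sum_k\partial_{t''_k}t''_k$ acts on $\partial_{t''}^{\mu}\delta_0$ by $+|\mu|$, not by $-(r''-|\mu|)$. So $\partial_{t''}^{\mu}\delta_0$ has $V$-index $-|\mu|$; in particular $\delta_0$ has index $0$, not $r''$. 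The correct decomposition is
$V^{\lambda}B_{(\bff',0)}=\bigoplus_{\mu}V^{\lambda+|\mu|}B_{\bff'}\,\partial_{t''}^{\mu}\delta_0$,
which matches the paper's. You can confirm this independently with \eqref{multiplier ideals}: $\{h \mid h\delta_{\bff}\in V^{\lambda}B_{\bff}\}=\js(\frak{a}^{\lambda-\epsilon})$ depends only on the ideal $\frak{a}$. So $\delta_{\bff}$ and $\delta_{\bff'}$ must have the same index, which is exactly what your $-r''$ shift contradicted.

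\textbf{Finishing the argument once the shift is fixed.} The rest of your plan then works, but make the last step precise, because this is where $r''>0$ enters. Put $r=r'+r''$.
\begin{enumerate}
\item[1)] The inclusion $gF_qB_{\bff}\subset V^{r-1+\gamma}B_{\bff}$ is equivalent to $gF_{q-i}B_{\bff'}\subset V^{r-1+\gamma+i}B_{\bff'}$ for all $0\le i\le q$.
\item[2)] Since $r''\ge 1$, we have $r-1+\gamma+i>r'$ for every $i\ge0$, so these indices avoid $(r'-1,r']$.
\item[3)] Lemma \ref{increment}, applied to the $r'$-tuple $\bff'$, therefore shows that the $i=0$ condition implies all the others.
\end{enumerate}
Corollary \ref{cdmo remark} does not by itself produce the shifted inclusions into $V^{r-1+\gamma+i}$ for $i\ge1$; Lemma \ref{increment} is the tool here.

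\textbf{Minor point.} Your reduction to affine $X$ also needs the right-hand side of the lemma, not only $\widetilde{\alpha}(\bff;g)$, to be computable as a minimum over an open cover. That follows from Lemma \ref{increment} for $\bff'$, arguing as in Remark \ref{smaller open set}, rather than from that remark itself.
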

\proof As in Remark \ref{smaller open set}, it follows from Lemma \ref{increment} that the right-hand-side of the equation in the lemma can be computed as a minimum over any finite open cover. Thus we may assume that $X$ is affine.

We set $r=r'+r''$. We denote the standard coordinates on $\bba^{r'}$ and $\bba^{r''}$ by $t'_1,\dots,t'_{r'}$ and $t''_1,\dots,t''_{r''}$, respectively. For $1\leq i\leq r''$, let us write $f''_{i}=\sum_{j=1}^{r'}a_{ij}f'_{j}$ with $a_{ij}\in \os_X(X)$ and set $g_{i}=\sum_{j=1}^{r'}a_{ij}t'_{j}\in \os_{X \times \bba^{r'}}(X\times \bba^{r'})$.  We denote the $r''$-tuple $(g_{1},\dots,g_{r''})$ by $\bfg$. Finally, we denote by $p_{X}$ the projection $X\times\bba^{r}=X\times\bba^{r'}\times\bba^{r''}\to X$. 

If we denote by $\frak{a} \subset \os_X$ the ideal generated by $f'_1,\dots,f'_{r'}$, then by (\ref{V roots}) and (\ref{multiplier ideals}), we have
$$
\sup\{\gamma>0 \ | \  g\delta_{\bff}\in V^{\gamma}B_{\bff}\}
=\lct_{g}(\frak{a})=
\sup\{\gamma>0 \ | \  g\delta_{\bff'}\in V^{\gamma}B_{\bff'}\}.
$$
Hence we have $g\delta_{\bff}\not\in V^rB_{\bff}$ if and only if $g\delta_{\bff'}\not\in V^rB_{\bff'}$, in which case the lemma holds.

Thus we assume that $g\delta_{\bff}\in V^rB_{\bff}$, which implies that $g\delta_{\bff'}\in V^rB_{\bff'}$. We need to show that
$$
\widetilde{\alpha}(f'_1,\dots,f'_{r'},f''_1,\dots,f''_{r''}; g)
=\sup\{r-1+q+\gamma \ | \  gF_qB_{\bff'}\subset V^{r-1+\gamma}B_{\bff'}\}. 
$$
Since $i_{\bff}=i_{\bfg}\circ i_{\bff'}$, we have the following natural isomorphism of $\ds_{X \times \bba^{r}}$-modules:
$$
i_{\bff,+}\os_{X}\simeq i_{\bfg,+}(i_{\bff',+}\os_{X}).
$$
Applying $p_{X,*}$ then yields the following natural isomorphism of $p_{X,*}\ds_{X \times \bba^{r}}$-modules:
$$
B_{\bff}\simeq \bigoplus_{\beta\in\bbz^{r''}}B_{\bff'} \partial_{t''}^{\beta}\delta_{\bfg},
$$
where $h\delta_{\bff}$ correspond to $(h \delta_{\bff'})\delta_{\bfg}$ for $h\in \os_{X}$. By functoriality of filtered direct image (deduced from the corresponding statement for right $\ds$-modules \cite[Section 2.3.7]{HM}), we have
$$
F_{p}B_{\bff}\simeq \bigoplus_{|\beta|\leq p}F_{p-|\beta|}B_{\bff'} \partial_{t''}^{\beta}\delta_{\bfg}
$$
(this can also be obtained directly from \eqref{bf action}). On the other hand, by \cite[Proposition 2.2]{BMS06}, we have
$$
V^{\lambda}B_{\bff}\simeq \bigoplus_{\beta\in\bbz^{r''}}V^{\lambda+|\beta|}B_{\bff'} \partial_{t''}^{\beta}\delta_{\bfg}.
$$
Since $r''>0$, for $q\in \bbz_{\geq 0}$ and a rational number $\gamma\in (0,1]$, we have that $gF_qB_{\bff}\subset V^{r-1+\gamma}B_{\bff}$ if and only if $gF_{q-i}B_{\bff'}\subset V^{r-1+\gamma+ i}B_{\bff'}$ for all $i\leq q$, which by Lemma \ref{increment}, is equivalent to $gF_{q}B_{\bff'}\subset V^{r-1+\gamma}B_{\bff'}$. Thus the lemma follows. \qed

\begin{example}\label{generator dependence example}
If $f_{1},\dots,f_{r}\in \os_{X}(X)$ are nonzero regular functions on an irreducible smooth variety $X$ such that $f_{2},\dots,f_{r}$ are contained in the ideal generated by $f_{1}$, then by Lemma \ref{redundant}, we have 
$$
\widetilde{\alpha}(f_{1},\dots,f_{r} ;1)=
\begin{cases}
\widetilde{\alpha}(f_{1}) & \text{if $r=1$} \\
\lct(f_{1}) & \text{if $r>1$}.
\end{cases}
$$
Indeed, it follows from \eqref{multiplier ideals} that $\delta_{\bff}\notin V^{r}B_{\bff}$ for all $r >1$.
\end{example}

\begin{lemma}\label{smooth pull}
Given a smooth morphism $\pi\colon  Y \to X$ of smooth varieties and regular functions $g,f_1,\dots,f_{r}\in\os_{X}(X)$, we have $\widetilde{\alpha}(\pi^*f_1,\dots,\pi^*f_r; \pi^* g)\geq\widetilde{\alpha}(f_1,\dots,f_r; g)$, with equality if $\pi$ is surjective.
\end{lemma}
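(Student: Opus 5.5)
The plan is to compare the $V$-filtrations and Hodge filtrations on $B_{\bff}$ and on $B_{\pi^*\bff}$ directly, where $\pi^*\bff=(\pi^*f_1,\dots,\pi^*f_r)$, and then read off both inequalities from Definition \ref{relative minimal exponent}.

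\textbf{Step 1: reduction to a product.} The statement is local on $Y$. Since the relative minimal exponent can be computed as a minimum over a finite open cover (Remark \ref{smaller open set}), we may shrink as needed. The claim is then reduced to two cases. The first is a projection $X\times\bba^m\to X$ after an \'etale change of coordinates. The second is an \'etale morphism; locally on $Y$, a smooth morphism factors as an \'etale morphism to $X\times\bba^m$ followed by the projection.

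\textbf{Step 2: compare the filtrations.} We have a natural isomorphism $B_{\pi^*\bff}\simeq\pi^*B_{\bff}=\os_Y\otimes_{\pi^{-1}\os_X}\pi^{-1}B_{\bff}$. Under it, $\partial_t^\beta\delta_{\pi^*\bff}$ corresponds to $1\otimes\partial_t^\beta\delta_{\bff}$. This identification makes the Hodge filtrations correspond: $F_pB_{\pi^*\bff}=\os_Y\otimes F_pB_{\bff}$. The key input is the compatibility of the $V$-filtration with smooth pullback,
$$V^\lambda B_{\pi^*\bff}=\os_Y\otimes_{\pi^{-1}\os_X}\pi^{-1}V^\lambda B_{\bff}.$$
To prove it, check that the right-hand side satisfies the axioms i)--iv) of Section \ref{V-filtration}, using flatness of $\pi$, and then invoke uniqueness. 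Coherence over $V^0\rs_Y$ and the axiom iii) survive because $\os_Y\otimes-$ is exact and $t_i,\partial_{t_i},s$ act only on the second factor. For the nilpotency in axiom iv), note that $\Gr_V^\lambda$ of the pullback is the pullback of $\Gr_V^\lambda$, and $s+\lambda$ acts on it through the second factor. Vector fields on $Y$ act through $\pi^*$ of derivations together with vertical vector fields, and the latter kill $1\otimes-$ up to $\os_Y$-coefficients, so $V^0\rs_Y$-stability holds. This is essentially \cite[Section 1]{BMS06} or \cite{CDMO24}, Proposition 4.12, and I would cite it, proving only the axiom check if needed.

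\textbf{Step 3: conclude the inequality.} By Step 2, $g\delta_{\bff}\in V^\gamma B_{\bff}$ implies $\pi^*g\,\delta_{\pi^*\bff}\in V^\gamma B_{\pi^*\bff}$. Likewise, $gF_qB_{\bff}\subset V^{r-1+\gamma}B_{\bff}$ implies $\pi^*g\,F_qB_{\pi^*\bff}\subset V^{r-1+\gamma}B_{\pi^*\bff}$. There is a case mismatch to handle: $g\delta_{\bff}$ may fail to lie in $V^r$ while its pullback lies in $V^r$. In that event the pulled-back exponent is at least $r$, which exceeds the original exponent, so the inequality still holds. Going through each case of the definition therefore gives $\widetilde\alpha(\pi^*\bff;\pi^*g)\geq\widetilde\alpha(\bff;g)$.

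\textbf{Step 4: equality when $\pi$ is surjective.} In this case $\pi$ is faithfully flat. So for a section $u$ of $B_{\bff}$, the statement $1\otimes u\in\os_Y\otimes\pi^{-1}V^\lambda B_{\bff}$ is equivalent to $u\in V^\lambda B_{\bff}$, because $(B/V^\lambda)\to\pi_*\pi^*(B/V^\lambda)$ is injective by faithful flatness. Hence every membership used in the definition reverses, and equality follows.

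The main obstacle is Step 2, namely verifying that the pulled-back filtration satisfies Kashiwara's characterization, especially the $V^0\rs_Y$-stability under vertical vector fields and axiom iii). I would try first to quote \cite[Proposition 4.12]{CDMO24}, whose proof for $g=1$ already establishes this compatibility.
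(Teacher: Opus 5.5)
Your proposal is correct and follows the same route as the paper. Both identify $B_{\pi^*\bff}\simeq\pi^*B_{\bff}$ compatibly with the Hodge and $V$-filtrations (as in \cite[Proposition 4.12]{CDMO24}), push memberships forward to get the inequality, and use faithful flatness for the converse when $\pi$ is surjective; your Step 1 reduction is not needed, and your explicit handling of the case mismatch simply spells out what the paper calls ``follows directly from the definition.''
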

\proof The proof is identical to that of \cite[Proposition 4.12]{CDMO24}, which we recall as follows. Since $\pi$ is smooth, we have an isomorphism $B_{\pi^*\bff} \simeq \pi^*B_{\bff}$ such that $\pi^{*}(h)F_pB_{\pi^*\bff} \simeq \pi^*(hF_pB_{\bff})$ and $V^{\lambda} B_{\pi^*\bff} \simeq \pi^* V^{\lambda}B_{\bff}$ for all $h\in\os_{X}(X)$, $p\in\bbz$, and $\lambda\in\bbq$. Thus for $p\in\bbz$ and $\lambda\in\bbq$, if $gF_pB_{\bff}\subset V^{\lambda}B_{\bff}$, then we have $\pi^{*}(g)F_pB_{\pi^*\bff}\subset V^{\lambda} B_{\pi^*\bff}$, and the converse holds if $\pi$ is surjective. The lemma then follows directly from the definition of the relative minimal exponent. \qed

\begin{lemma}\label{units}
Given a smooth variety $X$, regular functions $g, f_1,\dots,f_r\in \os_X(X)$, and invertible functions $u, u_1,\dots,u_r \in \os_X(X)$, we have $\widetilde{\alpha}(u_1f_1,\dots,u_rf_r; u g) = \widetilde{\alpha}(f_1,\dots,f_r; g)$.
\end{lemma}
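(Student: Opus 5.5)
The lemma splits into two independent claims, invariance under replacing $g$ by $ug$ and invariance under replacing $f_i$ by $u_if_i$, and I would prove them separately. Throughout I write $\bff=(f_1,\dots,f_r)$ and $\bff'=(u_1f_1,\dots,u_rf_r)$.

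\emph{Multiplying $g$ by $u$.} Each $V^{\lambda}B_{\bff}$ is a $V^0\rs_X$-submodule, so in particular an $\os_X$-submodule. Hence $u\cdot gF_qB_{\bff}\subset V^{\lambda}B_{\bff}$ whenever $gF_qB_{\bff}\subset V^{\lambda}B_{\bff}$, and the converse follows by multiplying by $u^{-1}$. The same argument shows $ug\delta_{\bff}\in V^{\gamma}B_{\bff}$ if and only if $g\delta_{\bff}\in V^{\gamma}B_{\bff}$. Comparing with Definition \ref{relative minimal exponent}, every set over which a supremum is taken is unchanged, so $\widetilde{\alpha}(\bff;ug)=\widetilde{\alpha}(\bff;g)$.

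\emph{Multiplying the $f_i$ by units.} I would use the same graph-embedding trick as in Lemma \ref{redundant}.
\begin{enumerate}
\item[1)] Note that $i_{\bff'}=\phi\circ i_{\bff}$, where $\phi\colon X\times\bba^r\to X\times\bba^r$, $(x,t)\mapsto (x,u_1(x)t_1,\dots,u_r(x)t_r)$. This is an automorphism over $X$ that preserves the subvariety $X\times\{0\}$.
\item[2)] Consequently $\phi_+$ gives an isomorphism $B_{\bff}\simeq B_{\bff'}$ compatible with the $\os_X$-module structure. Up to a unit factor coming from the Jacobian of $\phi$, it sends $h\delta_{\bff}$ to $h\delta_{\bff'}$.
\item[3)] The isomorphism identifies the Hodge filtrations, since $\phi$ acts linearly on the $\partial_t$-variables with unit coefficients. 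Concretely, $\partial_{t_i}$ corresponds to $u_i\partial_{t_i'}$ plus terms that do not raise the $\partial_t$-degree, so $F_pB_{\bff}\simeq F_pB_{\bff'}$ for all $p$.
\item[4)] It also identifies the $V$-filtrations: $\phi$ preserves the ideal of $X\times\{0\}$ and hence the filtration $V^{\bullet}\rs_X$. By the uniqueness of the Kashiwara--Malgrange filtration, $V^{\lambda}B_{\bff}\simeq V^{\lambda}B_{\bff'}$. The one point to verify is that $s=-\sum_i\partial_{t_i}t_i$ is carried to $s$ modulo $V^1\rs_X$. This holds because $t_i\mapsto u_it_i$ and $\partial_{t_i}\mapsto u_i^{-1}\partial_{t_i}+(\text{terms in }V^1)$, so $\partial_{t_i}t_i$ is preserved up to $V^1$.
\item[5)] Under this isomorphism $gF_qB_{\bff}$ corresponds to $g\,F_qB_{\bff'}$, up to multiplication by a unit, which the first part already handles. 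Therefore the defining conditions in Definition \ref{relative minimal exponent} coincide, and the two relative minimal exponents are equal.
\end{enumerate}
This reduces the general statement to the two claims above, combined.

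The main obstacle is step 4), checking compatibility of the $V$-filtration with the non-toric change of coordinates $\phi$. If a direct check proves awkward, I would instead cite the general fact that the $V$-filtration along $X\times\{0\}$ depends only on the smooth subvariety and is invariant under automorphisms preserving it, as in \cite{BMS06}. Alternatively, a $b$-function argument works: by \eqref{V roots}, $b_u(s)$ is unchanged because $\phi$ preserves both $V^1\rs_X$ and $s$ modulo $V^1\rs_X$.
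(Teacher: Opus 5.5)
Your proposal is correct and is essentially the paper's proof: the automorphism $\phi(x,t)=(x,u_1(x)t_1,\dots,u_r(x)t_r)$ with $\phi\circ i_{\bff}=i_{\bff'}$ gives an $\os_X$-linear isomorphism $B_{\bff'}\simeq B_{\bff}$ that matches the Hodge filtrations up to units and, because $\phi$ preserves $X\times\{0\}$, also matches the $V$-filtrations (the paper cites \cite[Proposition 2.2]{BMS06} for this); the unit $u$ in front of $g$ is absorbed because $F_qB_{\bff}$ and $V^{\lambda}B_{\bff}$ are $\os_X$-modules. One small remark: $\partial_{t_i}$ and $s$ transform exactly, namely $\partial_{t_i}=u_i\partial_{t'_i}$ and $-\sum_i\partial_{t'_i}t'_i=-\sum_i\partial_{t_i}t_i$ with no correction terms, so your steps 3) and 4) are even simpler than you anticipated.
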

\proof We follow the proof in \cite[Remark 4.8]{CDMO24}. Let us write $h_i=u_if_i$ for $1\leq i\leq r$ and denote the projection $X\times\bba^{r}\to X$ by $p_{X}$. Consider the isomorphism 
$$
\phi\colon  X \times \bba^r \xrightarrow{\sim} X \times \bba^r, (x,t_1,\dots,t_r) \mapsto (x, u_1(x)t_1,\dots,u_r(x)t_r).
$$
Since $\phi \circ i_{\bff} = i_{\bfh}$, we have the following isomorphism of $\ds_{X \times \bba^r}$-modules:
$$
i_{\bfh,+}\os_{X}=\phi_{+}(i_{\bff,+}\os_{X})=\phi_{*}(i_{\bff,+}\os_{X}).
$$ 
Applying $p_{X,*}$ yields an isomorphism of $\os_{X}$-modules $\tau\colon  B_{\bfh}\xrightarrow{\sim}B_{\bff}$, which clearly has the property that $\tau(ugF_{p}B_{\bfh})=ugF_{p}B_{\bff}=gF_{p}B_{\bff}$ for $p\in\bbz$. Since $\phi(X\times\{0\})=X\times\{0\}$, it follows from \cite[Proposition 2.2]{BMS06} that we have $\tau(V^{\lambda}B_{\bfh})=V^{\lambda}B_{\bff}$ for $\lambda\in\bbq$. Thus we have $ugF_{q}B_{\bfh} \subset V^{r-1+\gamma}B_{\bfh}$ if and only if $gF_{q}B_{\bff} \subset V^{r-1+\gamma}B_{\bff}$, from which the lemma follows. \qed

\section{A combinatorial lower bound for the minimal exponent}\label{A combinatorial lower bound for the minimal exponent}

The goal of this section is to prove the following theorem,  which provides a combinatorial lower bound for the relative minimal exponent in the monomial setting. Before stating the theorem, we introduce the following notation. If $X$ is a smooth variety with global coordinates $z_{1}, \dots, z_{r},x_{1}, \dots,x_{n}\in\os_{X}(X)$, then we put $x^{\alpha}=\prod_{i=1}^{n}x_{i}^{\alpha_{i}}$ for $\alpha=(\alpha_{1},\dots,\alpha_{n})\in\bbz^{n}_{\geq 0}$. Also, for $a_{1},\dots,a_{r},k\in \bbz^{n}_{\geq 0}$, recall that $\widetilde{\alpha}(a_{1},\dots,a_{r};k)$ was defined in Definition \ref{minimal exponent number def}.

\begin{theorem}\label{combinatorial lower bound theorem}
Let $g,f_{1},\dots,f_{r}\in\os_{X}(X)$ be regular functions on a smooth variety $X$. Assume that $X$ has global coordinates $z_1,\dots,z_{r}, x_1,\dots,x_n\in\os_{X}(X)$ such that $g = u x^k$ and $f_i=u_i z_i x^{a_i}$ for $1\leq i\leq r$ for some invertible functions $u,u_1, \dots,u_{r}\in \os_X(X)$ and elements $k,a_{1},\dots,a_{r}\in\bbz^{n}_{\geq 0}$. Then we have 
$$
\widetilde{\alpha}(f_1,\dots,f_r; g) \geq \widetilde{\alpha}(a_1,\dots,a_r; k).
$$
\end{theorem}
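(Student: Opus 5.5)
Units can be discarded at once: by Lemma \ref{units} we may assume $u=u_1=\cdots=u_r=1$. We then work étale-locally, which Lemma \ref{smooth pull} and Remark \ref{smaller open set} allow. Concretely, we pass to $X'=\bba^{r+n}$ with coordinates $z,x$ and the monomials $g=x^k$, $f_i=z_ix^{a_i}$. The coordinate map $X\to\bba^{r+n}$ is étale, hence smooth, so the relative minimal exponent can only increase under pullback. It therefore suffices to prove the inequality for monomials on affine space.

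On $\bba^{r+n}$ the ideal $\frak{a}=(z_1x^{a_1},\dots,z_rx^{a_r})$ is monomial with Newton polyhedron exactly $G=G(P(\{a_1\}),\dots,P(\{a_r\}))$. The first case of the definition concerns $g\delta_{\bff}\notin V^rB_{\bff}$, where the quantity is $\lct_{x^k}(\frak{a})$. By \eqref{multiplier ideals} and Howald (Remark \ref{howald}) this equals $\widetilde{\alpha}(G;0\times k)$. Lemma \ref{explicit lct} then gives $\widetilde{\alpha}(G;0\times k)\geq\min\{r,\widetilde{\alpha}(a_1,\dots,a_r;k)\}$. Since we are in the case where this is $<r$, we get $\widetilde{\alpha}(G;0\times k)\geq\widetilde{\alpha}(a_1,\dots,a_r;k)$, as desired.

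The main case is $g\delta_{\bff}\in V^rB_{\bff}$, where we must show $x^k\partial_t^\beta\delta_{\bff}\in V^{r-1+\gamma}B_{\bff}$ for $|\beta|\leq q$ whenever $r-1+q+\gamma\leq\widetilde{\alpha}(a_1,\dots,a_r;k)$. The key tool is the torus action: everything is equivariant for the $(\bbc^*)^{r+n}$-action, and the $V$-filtration on $B_{\bff}$ for a monomial tuple is multigraded. So it should be computable explicitly, as in the monomial computations of \cite{BMS06} (the $V$-filtration for monomial ideals is determined by Newton polyhedron data). The plan has three steps.

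First, write $x^k\partial_t^\beta\delta_{\bff}$ and use \eqref{bf action} to express the $z_i$- and $x_j$-Euler operators. Acting by $z_i\partial_{z_i}$ gives
$$z_i\partial_{z_i}\cdot h\partial_t^\beta\delta_{\bff}=(z_i\partial_{z_i}h)\partial_t^\beta\delta_{\bff}-f_ih\partial_t^{\beta+e_i}\delta_{\bff},$$
and $f_i\partial_{t_i}$ relates to $-\partial_{t_i}t_i$ modulo lower terms. These relations give explicit functional equations $b(s)x^k\partial_t^\beta\delta_{\bff}\in V^1\rs\cdot x^k\partial_t^\beta\delta_{\bff}$ whose $b$ has roots of the form $-L_Q(\bfo+(\beta\times k))$ over facets $Q$ of $G$, i.e.\ roots $\leq-\widetilde{\alpha}(G;\beta\times k)$.

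Second, apply \eqref{V roots} to conclude that $x^k\partial_t^\beta\delta_{\bff}\in V^{\lambda}B_{\bff}$ for every $\lambda\leq\widetilde{\alpha}(G;\beta\times k)$.

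Third, apply Lemma \ref{explicit lct}, which gives $\widetilde{\alpha}(G;\beta\times k)\geq\min\{r+|\beta|,\widetilde{\alpha}(a_1,\dots,a_r;k)\}$. Note that $r-1+\gamma\leq r+|\beta|$ always holds, and $r-1+\gamma\leq\widetilde{\alpha}(a_1,\dots,a_r;k)-q$. Together these give the required membership, which sits in a \emph{lower} $V$-level than the trivial bound.

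A more careful version uses Lemma \ref{increment} to trade $q$ against $\gamma$. It is applied inductively on $|\beta|$, starting from $|\beta|=q$ where only $V^{r-1+\gamma}$ is needed, and the relevant inequality is $r-1+\gamma\leq\widetilde{\alpha}(a_1,\dots,a_r;k)-q$ combined with the bound from the second and third steps.

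The main obstacle is the first step: rigorously establishing that the $b$-function of $x^k\partial_t^\beta\delta_{\bff}$ for a monomial tuple has roots bounded by $-\widetilde{\alpha}(G;\beta\times k)$. To do this I would first try to reduce to a hypersurface. Using Lemma \ref{minimal exponent in terms of hypersurface}-style arguments together with Lemma \ref{redundant}, relate to $\sum_i y_iz_ix^{a_i}$, then use the known monomial/Newton non-degenerate computations (Remark \ref{Newton minimal exponent}, the self-contained monomial case) to read off the bound from the facets of $G$. If that fails, fall back on a direct toric computation: on each chart of a toric log resolution of $\frak{a}$, everything is a monomial in new coordinates, which reduces to the normal crossing case, where the $V$-filtration is explicit.
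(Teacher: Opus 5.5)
\textbf{Verdict: there is a genuine gap, in your Step 1.} Your reductions are fine: discarding units via Lemma \ref{units} and pulling back to $\bba^{r+n}$ via Lemma \ref{smooth pull}. Your treatment of the case $g\delta_{\bff}\notin V^rB_{\bff}$ via Howald and Lemma \ref{explicit lct} is also correct. The problem is Step 1, which you leave unproved and which, as stated, is false.

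\textbf{Why Step 1 fails.} The roots of $b_{x^k\partial_t^{\beta}\delta_{\bff}}(s)$ are not bounded above by $-\widetilde{\alpha}(G;\beta\times k)$. The number $\widetilde{\alpha}(G;\beta\times k)=\lct_{x^kz^{\beta}}(\frak{a})$ controls the section $x^kz^{\beta}\delta_{\bff}$, not $x^k\partial_t^{\beta}\delta_{\bff}$. The monomial computations of \cite{BMS06} likewise only see sections of the form $h\delta_{\bff}$.

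Here is a counterexample. Take $r=1$, $a_1=0$, $k=0$, so $f_1=z_1$ is smooth. Then $\widetilde{\alpha}(G;\beta\times 0)=1+|\beta|$. On the other hand, $(s+1)\partial_t\delta_{\bff}=t\partial_{z_1}\cdot\partial_t\delta_{\bff}$, and the $b$-function of $\partial_t\delta_{\bff}$ is exactly $s+1$. Hence $\partial_t\delta_{\bff}\in V^1B_{\bff}\setminus V^{>1}B_{\bff}$, and your Step 2 membership $\partial_t\delta_{\bff}\in V^2B_{\bff}$ fails.

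\textbf{Why the proposed routes do not repair it.}
\begin{enumerate}
\item Since the claimed bound is false, no Euler-operator functional equation can produce it.
\item The hypersurface fallback via Lemma \ref{minimal exponent in terms of hypersurface} does not apply here. That lemma needs $g=1$ and a complete intersection, and $V(z_1x^{a_1},\dots,z_rx^{a_r})$ usually is not one.
\item The toric-resolution fallback is circular. This theorem is precisely the monomial input that such resolution arguments (Theorem \ref{minimal exponent lower bound}) rely on.
\item Lemma \ref{increment} cannot help either. It only passes from $F_q$ to $F_{q-1}$, so the base case $|\beta|=q$ is the entire difficulty.
\end{enumerate}

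\textbf{The missing ingredient.} You need the paper's Lemma \ref{microlocal}: $b_{g\partial_t^{\beta}\delta_{\bff}}(s)=b_{gz^{\beta}\delta_{\bff}}(s-|\beta|)$. It is proved by passing to $h=\sum_i y_iz_ix^{a_i}$ on $X\times\bba^r_y$. One uses \cite[Proposition 3.4]{CDMO24} to identify the two $b$-functions with shifted microlocal $b$-functions of two sections:
\begin{enumerate}
\item $gy^{\beta}\delta_h$, which lies in graded degree $|\beta|$ and maps to $g\partial_t^{\beta}\delta_{\bff}$;
\item $gz^{\beta}\delta_h$, which lies in degree $0$ and maps to $gz^{\beta}\delta_{\bff}$.
\end{enumerate}
These two microlocal $b$-functions agree because $h$ is symmetric under swapping $y$ and $z$.

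\textbf{How the argument then closes.} Combine this lemma with \eqref{lct b}, Howald, and Lemma \ref{explicit lct}. This shows that all roots of $b_{g\partial_t^{\beta}\delta_{\bff}}(s)$ are $\leq -\min\{r,\widetilde{\alpha}(a_1,\dots,a_r;k)-|\beta|\}$. That bound is shifted by $|\beta|$ from yours, and it is still enough. For $|\beta|\leq q$ and $r-1+q+\gamma\leq\widetilde{\alpha}(a_1,\dots,a_r;k)$, it gives $g\partial_t^{\beta}\delta_{\bff}\in V^{r-1+\gamma}B_{\bff}$. Lemma \ref{min thing} then turns this into the theorem, uniformly in both cases.
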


Before proving the theorem, we record two of its main consequences.

\begin{corollary}\label{combinatorial lower bound first corollary}
Let $g,f_{1},\dots,f_{r}\in\os_{X}(X)$ be regular functions on a smooth variety $X$ and let $r'$ be a nonnegative integer with $r'\leq r$. Assume that $X$ has global coordinates $z_1,\dots,z_{r'}, x_1,\dots,x_n\in\os_{X}(X)$ such that $g = u x^k$, $f_i=u_i z_i x^{a_i}$ for $i\leq r'$, and $f_{i}=u_{i} x^{a_{i}}$ for $i>r'$ for some invertible functions $u,u_1, \dots,u_{r}\in \os_X(X)$ and elements $k,a_{1},\dots,a_{r}\in\bbz^{n}_{\geq 0}$. Then we have 
$$
\widetilde{\alpha}(f_1,\dots,f_r; g) \geq \widetilde{\alpha}(a_1,\dots,a_r; k).
$$
\end{corollary}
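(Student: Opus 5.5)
The plan is to deduce Corollary \ref{combinatorial lower bound first corollary} from Theorem \ref{combinatorial lower bound theorem} by adding auxiliary variables and then removing them by a smooth pullback. First, by Lemma \ref{units} we may assume $u=u_1=\cdots=u_r=1$, so that $g=x^k$, $f_i=z_ix^{a_i}$ for $i\le r'$, and $f_i=x^{a_i}$ for $i>r'$.

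Next, consider the smooth surjective projection $p\colon X':=X\times\bba^{r-r'}\to X$, with new coordinates $z_{r'+1},\dots,z_r$ on $\bba^{r-r'}$. Let $X''\subset X'$ be the open subset where $z_{r'+1},\dots,z_r$ are all nonzero. The restriction $p|_{X''}\colon X''\to X$ is still smooth and surjective. By Lemma \ref{smooth pull}, we get
$$
\widetilde{\alpha}(f_1,\dots,f_r;g)=\widetilde{\alpha}(p^*f_1,\dots,p^*f_r;p^*g)
$$
computed on $X''$. On $X''$ each $z_i$ with $i>r'$ is a unit, so $p^*f_i=x^{a_i}=z_i^{-1}\cdot z_ix^{a_i}$. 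Applying Lemma \ref{units} again with the invertible functions $z_i^{-1}$ replaces $p^*f_i$ by $z_ix^{a_i}$ for $i>r'$.

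Now the functions $z_1,\dots,z_r,x_1,\dots,x_n$ are global coordinates on $X''$. Here $X''$ is an open subset of the smooth variety $X\times\bba^{r-r'}$, and the coordinates on $X$ together with those on the affine factor give an étale map to affine space. In these coordinates $g=x^k$ and $f_i=z_ix^{a_i}$ for every $i$. Theorem \ref{combinatorial lower bound theorem} then gives $\widetilde{\alpha}(z_1x^{a_1},\dots,z_rx^{a_r};x^k)\ge\widetilde{\alpha}(a_1,\dots,a_r;k)$, which is exactly the claim.

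The main point requiring care is the legitimacy of using Lemma \ref{smooth pull} with equality: this needs surjectivity of $X''\to X$, which holds because the fibers are tori. One should also check that "global coordinates" in Theorem \ref{combinatorial lower bound theorem} only requires the differentials to form a frame, which holds on $X''$. Beyond that the argument is a formal reduction, so I expect no serious obstacle.
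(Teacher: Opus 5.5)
Your proposal is correct and follows essentially the paper's route. You normalize the units via Lemma~\ref{units}, pull back along the smooth surjective projection $X\times\mathbb{G}_m^{r-r'}\to X$ with equality by Lemma~\ref{smooth pull}, absorb the units $z_i$ for $i>r'$ with Lemma~\ref{units}, and then invoke Theorem~\ref{combinatorial lower bound theorem}. The only difference is that the paper first reduces to $X=\mathbb{A}^{r'+n}$ and passes from $X\times\mathbb{G}_m^{r-r'}$ to $X\times\mathbb{A}^{r-r'}$ via Remark~\ref{smaller open set} before applying the theorem; you apply it directly on $X\times\mathbb{G}_m^{r-r'}$, which is legitimate since the theorem is stated for any smooth variety with global coordinates.
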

\proof By Lemma \ref{units}, we may assume that $u =u_1 \cdots=u_{r}=1$, and by Lemma \ref{smooth pull}, we may assume that $X=\bba^{r'+n}$, whose standard coordinates are given by $z_1,\dots,z_{r'}, x_1,\dots,x_n$. We consider $\bba^{r-r'}=\Spec \bbc[z_{r'+1} ,\dots, z_{r}]$ and its open subset $U=\Spec \bbc[z^{\pm}_{r'+1} ,\dots, z^{\pm}_{r}]$, and denote the projections $X\times\bba^{r-r'}\to X$ and $X\times U\to X$ by $p$ and $q$, respectively. Observe that
$$
\widetilde{\alpha}(f_1,\dots,f_r; g)=\widetilde{\alpha}(q^{*}f_1,\dots,q^{*}f_r; q^{*}g)
=\widetilde{\alpha}(q^{*}f_1,\dots,q^{*}f_{r'},z_{r'+1}q^{*}f_{r'+1},\dots,z_{r}q^{*}f_{r}; q^{*}g)
$$
$$
\geq\widetilde{\alpha}(p^{*}f_1,\dots,p^{*}f_{r'},z_{r'+1}p^{*}f_{r'+1},\dots,z_{r}p^{*}f_{r}; p^{*}g),
$$
where the first equality follows from Lemma \ref{smooth pull}, the second equality follows from Lemma \ref{units}, and the inequality follows from Remark \ref{smaller open set}. The corollary then follows from Theorem \ref{combinatorial lower bound theorem}.\qed 

\begin{corollary}\label{combinatorial lower bound second corollary}
Let $g,f_{1},\dots,f_{r}\in\os_{X}(X)$ be regular functions on a smooth variety $X$ and let $r'$ be a nonnegative integer with $r'< r$ such that $f_{i}$ is contained in the ideal generated by $f_{1},\dots,f_{r'}$ for all $i>r'+1$. Assume that $X$ has global coordinates $z_1,\dots,z_{r'}, x_1,\dots,x_n\in\os_{X}(X)$ such that $g = u x^k$, $f_i=u_i z_i x^{a_i}$ for $i\leq r'$, and $f_{r'+1}=u_{r'+1} x^{a_{r'+1}}$ for some invertible functions $u,u_1, \dots,u_{r'+1}\in \os_X(X)$ and elements $k,a_{1},\dots,a_{r'+1}\in\bbz^{n}_{\geq 0}$. Then we have 
$$
\widetilde{\alpha}(f_1,\dots,f_r; g) \geq \widetilde{\alpha}(a_1,\dots,a_r; k)
$$
for all elements $a_{r'+2},\dots,a_{r}\in\bbz^{n}_{\geq 0}$ that are component-wise greater than or equal to $a_{r'}$.
\end{corollary}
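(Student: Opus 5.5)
The plan is to use Lemma \ref{redundant} to make the redundant entries $f_{r'+2},\dots,f_r$ monomial, and then to reduce to the monomial situation of Theorem \ref{combinatorial lower bound theorem} and Corollary \ref{combinatorial lower bound first corollary}. The monotonicity step and the final step are not fully checked; the final step is the expected obstacle.

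\emph{Normalization.} By Lemma \ref{units} we may assume $u=u_1=\cdots=u_{r'+1}=1$. By Lemma \ref{smooth pull} we may assume $X=\bba^{r'+n}$. If $r'=0$, the hypothesis forces $f_i=0$ for $i\geq 2$, and the statement degenerates; I would handle this case separately. So assume $r'\geq 1$.

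\emph{Step 1: make the redundant entries monomial.} Apply Lemma \ref{redundant} with $\bff'=(f_1,\dots,f_{r'+1})$ and $\bff''=(f_{r'+2},\dots,f_r)$. This is legitimate because each $f_i$ with $i>r'+1$ lies in $(f_1,\dots,f_{r'})\subset(\bff')$. The lemma shows that $\widetilde{\alpha}(\bff;g)$ depends only on $\bff'$, on $g$ and on $r$. So we may replace each $f_i$ with $i\geq r'+2$ by any element of $(\bff')$. Since $a_i\geq a_{r'}$ componentwise, the choice
$$
h_i=x^{a_i-a_{r'}}f_{r'}=z_{r'}x^{a_i}
$$
is allowed. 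Hence
$$
\widetilde{\alpha}(f_1,\dots,f_r;g)=\widetilde{\alpha}(z_1x^{a_1},\dots,z_{r'}x^{a_{r'}},x^{a_{r'+1}},z_{r'}x^{a_{r'+2}},\dots,z_{r'}x^{a_r};x^k).
$$

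\emph{Step 2: separate the variables.} The tuple above is monomial, but the variable $z_{r'}$ is repeated, so Corollary \ref{combinatorial lower bound first corollary} does not apply directly. I would first check a combinatorial monotonicity: enlarging some $a_i$ shrinks $G$, and hence each $\pi_i(G)$. Using Remark \ref{rational poly}, this should give
$$
\widetilde{\alpha}(a_1,\dots,a_r;k)\leq\widetilde{\alpha}(a_1,\dots,a_{r'+1},a',\dots,a';k)
$$
for any $a'\leq a_i$ ($i\geq r'+2$). This lets me adjust the exponents of the redundant slots as convenient, for instance to $a'=a_{r'}$, so that by Step 1 they can be taken equal to $f_{r'}$.

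\emph{Step 3 (main obstacle).} It remains to bound from below a relative minimal exponent whose tuple contains repeated monomials sharing the variable $z_{r'}$. Two routes seem possible.
\begin{enumerate}
\item[(a)] Pull back to $X\times(\bba^1\setminus\{0\})^{r-r'-1}$ with new coordinates $w_i$, which changes nothing by Lemma \ref{smooth pull} and Remark \ref{smaller open set}. Then use Lemma \ref{redundant} again to replace $h_i$ by $w_i z_{r'}x^{a_i}$. The hope is that some monomial change of coordinates on an open set turns the entries into independent $z$-variables, so that Corollary \ref{combinatorial lower bound first corollary} applies.
\item[(b)] If route (a) fails, rerun the argument proving Theorem \ref{combinatorial lower bound theorem} directly for monomial tuples with shared $z$-variables. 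Since everything is then monomial, the $V$-filtration statements reduce via \eqref{multiplier ideals} and Remark \ref{howald} to Newton polyhedron computations, and one would compare the resulting polyhedron with $G(P(\{a_1\}),\dots,P(\{a_r\}))$, using Lemma \ref{explicit lct} for the Hodge-filtration pieces.
\end{enumerate}
I expect this separation of the shared $z_{r'}$ to be the hardest step, and I would try route (a) first.
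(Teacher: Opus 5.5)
Your Step 3 cannot be completed by any route, because the inequality it targets is false. With the indices exactly as printed ($f_i\in(f_1,\dots,f_{r'})$ and $a_i\geq a_{r'}$), the statement itself fails.

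Here is a counterexample. Take $X=\bba^2$ with coordinates $z,x$, so $r'=n=1$, and let $r=3$, $g=x$ (so $k=1$), $f_1=zx$, $f_2=x^2$, $f_3=zx\in(f_1)$, and $a_3=1=a_{r'}$. This is precisely your reduced tuple from Steps 1 and 2.

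The left-hand side is computed as follows. The ideal $(zx,x^2)$ has Newton polyhedron $\{x\geq 1,\ z+x\geq 2\}$, so Remark \ref{howald} gives $\lct_x(zx,x^2)=3/2<3$. Hence $x\delta_{\bff}\notin V^3B_{\bff}$, and \eqref{multiplier ideals} gives $\widetilde{\alpha}(f_1,f_2,f_3;x)=3/2$.

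The right-hand side is computed by Remark \ref{dimension one}: $\widetilde{\alpha}(1,2,1;1)=\widetilde{\alpha}_{\num}(1,2,1;1)=\min\{2,2,2\}=2$.

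The failure is structural. The left side depends only on $(f_1,\dots,f_{r'+1})$ by Lemma \ref{redundant}. The right side models each entry of index $>r'$ as a $z$-free monomial $x^{a_i}$. Your replacements $z_{r'}x^{a_i}$ in general generate a strictly smaller ideal than $x^{a_i}$ would. So neither a coordinate change (route (a)) nor a rerun of the monomial argument (route (b)) can recover the bound.

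The printed statement has an index slip. The hypothesis should be that $f_i$ lies in the ideal generated by $f_1,\dots,f_{r'+1}$ (in the applications, by $f_{r'+1}$ alone). The conclusion should be for $a_i\geq a_{r'+1}$ componentwise. This is exactly how the corollary is invoked in case ii) of Claims \ref{weighted claim} and \ref{Khovanskii claim}. There one has $\pi_P^*f_i\in(\pi_P^*f_{r'+1})$ for $i>r'+1$, together with $d_i\geq d_{r'+1}$, respectively $d(v_j,P_i)\geq d(v_j,P_{r'+1})$.

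With this reading, your Step 1 works once you multiply $f_{r'+1}$ instead of $f_{r'}$:
\begin{enumerate}
\item[(1)] Normalize the units by Lemma \ref{units}.
\item[(2)] Apply Lemma \ref{redundant} with $\bff'=(f_1,\dots,f_{r'+1})$, and replace each $f_i$ with $i\geq r'+2$ by $h_i=x^{a_i-a_{r'+1}}f_{r'+1}=x^{a_i}$.
\item[(3)] Every entry of index $>r'$ is now a $z$-free monomial, so Corollary \ref{combinatorial lower bound first corollary} applies directly.
\end{enumerate}
No shared variable arises and no monotonicity step is needed; this is the paper's two-line proof.

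Finally, under the correct reading $r'=0$ is a genuine case, not a degenerate one. It occurs in those claims whenever $\widetilde{f}_1(P)\neq 0$, and the same argument covers it, so it should not be set aside.
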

\proof By Lemma \ref{redundant}, we may assume that $f_{i}=x^{a_i}$ for $i>r'+1$. The corollary then follows from Corollary \ref{combinatorial lower bound first corollary}. \qed

The remainder of this section is devoted to the proof of Theorem \ref{combinatorial lower bound theorem}. We begin with two auxiliary lemmas that will be used in the proof.

\begin{lemma}\label{min thing}
Given regular functions $g,f_1,\dots,f_r \in \os_X(X)$ on a smooth variety $X$ and a rational number $\eta$ such that $g\partial_t^{\beta}\delta_{\bff} \in V^{\min(r,\eta-|\beta|)}B_{\bff}$ for all $\beta\in \bbz_{\geq 0}^r$, we have
$$
\widetilde{\alpha}(f_1,\dots,f_r; g)\geq \eta.
$$
\end{lemma}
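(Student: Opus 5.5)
We unwind Definition \ref{relative minimal exponent} directly and split into two cases according to whether $\eta\leq r$ or $\eta>r$. Throughout, the hypothesis with $\beta=0$ gives $g\delta_{\bff}\in V^{\min(r,\eta)}B_{\bff}$. We also assume without loss of generality that $\eta>0$, since otherwise there is nothing to prove.

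\emph{Case $g\delta_{\bff}\not\in V^rB_{\bff}$.} Here $\widetilde{\alpha}(\bff;g)=\sup\{\gamma>0 \mid g\delta_{\bff}\in V^{\gamma}B_{\bff}\}$. Since $g\delta_{\bff}\in V^{\min(r,\eta)}B_{\bff}$ but $g\delta_{\bff}\notin V^rB_{\bff}$, we must have $\min(r,\eta)=\eta<r$. Then $\gamma=\eta$ is admissible in the supremum, so $\widetilde{\alpha}(\bff;g)\geq\eta$.

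\emph{Case $g\delta_{\bff}\in V^rB_{\bff}$.} If $\eta\leq r$, then $\widetilde{\alpha}(\bff;g)\geq r-1+0+1=r\geq\eta$: indeed $q=0$, $\gamma=1$ is admissible, since $gF_0B_{\bff}=\os_X g\delta_{\bff}\subset V^rB_{\bff}$, because $V^rB_{\bff}$ is an $\os_X$-module. If $\eta>r$, write $\eta=r-1+q+\gamma$ with $q\in\bbz_{\geq0}$ and $\gamma\in(0,1]$; this is the unique such decomposition, with $q=\lceil \eta-r\rceil$ adjusted so that $\gamma\in(0,1]$, and $\gamma$ is rational because $\eta$ is. For $|\beta|\leq q$ we have $\eta-|\beta|\geq r-1+\gamma$. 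Combined with $r\geq r-1+\gamma$, this gives $\min(r,\eta-|\beta|)\geq r-1+\gamma$. Hence $g\partial_t^{\beta}\delta_{\bff}\in V^{r-1+\gamma}B_{\bff}$ for all $|\beta|\leq q$. Since $gF_qB_{\bff}$ is the $\os_X$-span of these elements and $V^{r-1+\gamma}B_{\bff}$ is an $\os_X$-submodule, we get $gF_qB_{\bff}\subset V^{r-1+\gamma}B_{\bff}$. The supremum in the definition is therefore at least $r-1+q+\gamma=\eta$.

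There is no real obstacle in this argument. The only point requiring care is the bookkeeping in the last case: checking that the decomposition $\eta=r-1+q+\gamma$ with $\gamma\in(0,1]$ makes both $r$ and $\eta-|\beta|$ at least $r-1+\gamma$ for all $|\beta|\leq q$, which the computation above verifies. Note that only the multi-indices with $|\beta|\leq q$ are actually used; the remaining cases of the hypothesis are not needed.
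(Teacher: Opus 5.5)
Your proof is correct and follows essentially the same route as the paper's: split according to whether $g\delta_{\bff}\in V^rB_{\bff}$, and when $\eta>r$ write $\eta=r-1+q+\gamma$ with $\gamma\in(0,1]$ and observe that $\min(r,\eta-|\beta|)\geq r-1+\gamma$ for all $|\beta|\leq q$. You also note explicitly that $V^{r-1+\gamma}B_{\bff}$ is an $\os_X$-submodule, so containing the generators $g\partial_t^{\beta}\delta_{\bff}$ suffices for $gF_qB_{\bff}$; the paper leaves this step implicit.
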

\proof First assume that $g\delta_{\bff} \not\in V^rB_{\bff}$. Since $g\delta_{\bff} \in V^{\min(r,\eta)}B_{\bff}$, we have $\eta<r$ and
$$
\widetilde{\alpha}_r(f_1,\dots,f_r;g) \geq\min(r,\eta)=\eta.
$$

Now assume that $\delta_{\bff} \in V^rB_{\bff}$, which implies that $r\leq \widetilde{\alpha}(f_1,\dots,f_r; g)$. If $\eta \leq r$, then we are done, so assume that $\eta \geq r$. Let us write $\eta = r-1+\gamma+q$ with $q$ a nonnegative integer and $\gamma \in (0,1]$ a rational number. Then for all $\beta\in \bbz^r_{\geq 0}$ such that $|\beta|\leq q$, we have $\eta - q \leq r$, hence $r-1+\gamma  = \min(r, \eta-q) \leq \min(r, \eta-|\beta|)$ and $g\partial_t^{\beta}\delta_{\bff} \in V^{\min(r,\eta-|\beta|)}B_{\bff} \subset V^{r-1+\gamma}B_{\bff}$. Thus
$$
\widetilde{\alpha}(f_1,\dots,f_r; g)\geq r-1+\gamma +q = \eta. \qed
$$

\begin{lemma}\label{microlocal}
Let $g,g_1,\dots,g_r\in \os_S(S)$ be regular functions on a smooth variety $S$, and set $X=S \times \bba^r$ and $f_i=z_i g_i \in \os_X(X)$ for $1\leq i \leq r$, where $z_1,\dots,z_r$ are the standard coordinates on $\bba^{r}$. Then for every $\beta \in \bbz^r_{\geq 0}$, the $b$-functions of the sections $g \partial_t^{\beta}\delta_{\bff}$ and $g z^{\beta} \delta_{\bff}$ of $B_{\bff}$ are related as follows:
$$
b_{g \partial_t^{\beta}\delta_{\bff}}(s) = b_{g z^{\beta}\delta_{\bff}}(s-|\beta|).
$$
\end{lemma}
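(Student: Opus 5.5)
We will prove the identity by showing that both sides are the monic generators of the same ideal in $\bbc[s]$, after the shift $s\mapsto s-|\beta|$. The key observation is that on $X=S\times\bba^r$ with $f_i=z_ig_i$, the operators $\partial_{z_i}$ and $\partial_{t_i}$ interact simply with $\delta_{\bff}$. By \eqref{bf action}, we have
$$
\partial_{z_i}\cdot h\delta_{\bff} = \partial_{z_i}(h)\delta_{\bff} - g_i h\,\partial_{t_i}\delta_{\bff}.
$$
Moreover,
$$
t_i\cdot h\partial_t^{\beta}\delta_{\bff}=z_ig_ih\,\partial_t^\beta\delta_{\bff}-\beta_i h\,\partial_t^{\beta-e_i}\delta_{\bff}.
$$
Rather than working with these formulas directly, we will use a coordinate change that turns $f_i$ into a coordinate-like function. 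Consider the operators $z_i\partial_{z_i}$. Since $g_i$ does not depend on $z$, we get $z_i\partial_{z_i}\cdot h\delta_{\bff} = z_i\partial_{z_i}(h)\delta_{\bff} - f_ih\,\partial_{t_i}\delta_{\bff}$. Writing $f_i\partial_{t_i}\delta = (t_i\partial_{t_i}+1)\delta$ then gives the identity $(z_i\partial_{z_i} + \partial_{t_i}t_i)\,h\delta_{\bff} = z_i\partial_{z_i}(h)\delta_{\bff}$, with the analogous identity holding on all of $B_{\bff}$. So the Euler-type operator $E_i=z_i\partial_{z_i}+\partial_{t_i}t_i$ commutes with the $V^0\rs_X$-structure in a controlled way. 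The plan is to use this to relate $z^\beta\delta_{\bff}$ and $\partial_t^\beta\delta_{\bff}$.

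Concretely, reduce to $|\beta|=1$ and induct, say $\beta=e_i$; the general case follows by iterating over the coordinates, since the different $i$ commute. First, compute $\partial_{z_i}\cdot(g z_i\delta_{\bff}) = g\delta_{\bff} - g f_i\partial_{t_i}\delta_{\bff}$. Next, use $t_i\cdot g\partial_{t_i}\delta_{\bff} = gf_i\partial_{t_i}\delta_{\bff} - g\delta_{\bff}$. Combining these gives $\partial_{z_i}(gz_i\delta_{\bff}) = -t_i(g\partial_{t_i}\delta_{\bff})$. Symmetrically, $z_i\cdot g\partial_{t_i}\delta_{\bff}$ relates back via $\partial_{t_i}(gz_i\delta_{\bff}) = z_i\, g\partial_{t_i}\delta_{\bff}$, which is trivial. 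Hence $u:=gz_i\delta_{\bff}$ and $w:=g\partial_{t_i}\delta_{\bff}$ satisfy $w=z_i^{-1}\partial_{t_i}u$ formally, and $t_iw=-\partial_{z_i}u$. These identities let us transport relations $b(s)u\in V^1\rs_X u$ into relations for $w$ and conversely. Here $s$ shifts by one when passing through $\partial_{t_i}$ or $t_i$, because $s\partial_{t_i}=\partial_{t_i}(s-1)$ and $st_i = t_i(s+1)$. This shift is exactly the source of $b_w(s)=b_u(s-1)$.

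The main obstacle is that $\partial_{t_i}$ does not lie in $V^1\rs_X$ and $z_i$ is not invertible. The naive transport therefore only gives divisibility up to extra factors, so a slightly different route is needed. We will instead use the $V$-filtration characterization \eqref{V roots} together with the finer fact that the $b$-function is determined by the induced $V^0\rs_X$-module $V^0\rs_X\cdot u$ modulo $V^1\rs_X\cdot u$, on which $s$ acts. We will show that left multiplication by $\partial_{z_i}$ and the map $u'\mapsto$ (the corresponding element obtained through the identities above) give mutually inverse isomorphisms
$$
V^0\rs_X w/V^1\rs_X w\cong V^0\rs_X u/V^1\rs_X u,
$$
intertwining $s$ with $s-1$. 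To check the isomorphism, we will work locally where it helps. The invertibility of the relevant operators on these quotients should follow from the relation $E_i$ acting as $z_i\partial_{z_i}$, which is the microlocal substitute for inverting $z_i$. We expect this verification, especially the surjectivity and the handling of the $z_i$-torsion, to be the delicate step. The minimal polynomials of $s$ on the two quotients are then $b_w(s)$ and $b_u(s-1)$, which proves the claim.
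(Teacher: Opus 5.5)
Your identities are correct: for $u=gz_i\delta_{\bff}$ and $w=g\partial_{t_i}\delta_{\bff}$ one has $z_iw=\partial_{t_i}u$ and $t_iw=-\partial_{z_i}u$. Your reduction is also valid in principle: since $b_u(s)$ is the minimal polynomial of $s$ on $V^0\rs_Xu/V^1\rs_Xu$, a bijection $V^0\rs_Xw/V^1\rs_Xw\cong V^0\rs_Xu/V^1\rs_Xu$ intertwining the two actions of $s$ up to a shift by $1$ would suffice. But that bijection is never constructed. The inverse is described only as ``the corresponding element obtained through the identities above,'' and you defer its verification as the delicate step. The one map you do name cannot work. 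The operator $\partial_{z_i}$ commutes with every $t_j$ and $\partial_{t_j}$, hence with $s=-\sum_j\partial_{t_j}t_j$, so it cannot intertwine $s$ with a shift of $s$ unless it is zero. Concretely, from the $u$-side it sends $u$ to $-t_iw\in V^1\rs_Xw$, which is zero in the quotient. The shift has to come from $t_i$ or $\partial_{t_i}$. Inverting your relations would then require dividing by $t_i$ in $t_iw=-\partial_{z_i}u$, or by $z_i$ in $z_iw=\partial_{t_i}u$. That is exactly the obstacle you identified, and the proposal does not overcome it. The reduction to $|\beta|=1$ is also not free: the inductive step needs the base case for sections such as $g\partial_t^{\beta'}\partial_{t_i}\delta_{\bff}$, or with coefficient $gz_i\notin\os_S$, which is a more general statement than the one you set up.

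The missing idea is a symmetry that is compatible with the $V$-filtration. The paper passes to $h=\sum_iy_iz_ig_i$ on $X\times\bba^r_y$. It uses the microlocal module $\widetilde{B}_h$ with its grading $\widetilde{B}^{(m)}_h$ and the map $\phi(y^\alpha\partial_w^j\delta_h)=\partial_t^\alpha\delta_{\bff}$. By \cite[Proposition 3.4]{CDMO24}, $\widetilde{b}_{u'}(s-m)=b_{\phi(u')}(s)$ for $u'\in\widetilde{B}^{(m)}_h$. Since $h$ is invariant under $y\leftrightarrow z$, one gets $\widetilde{b}_{gy^\beta\delta_h}=\widetilde{b}_{gz^\beta\delta_h}$. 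Because $gy^\beta\delta_h$ lies in degree $|\beta|$ while $gz^\beta\delta_h$ lies in degree $0$, this gives the lemma for all $\beta$ at once, with no induction.

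The corresponding symmetry does exist on $B_{\bff}$ itself. The $\os_S$-linear swap $z^\alpha\partial_t^\beta\delta_{\bff}\mapsto z^\beta\partial_t^\alpha\delta_{\bff}$ intertwines the $\rs_X$-action with the automorphism fixing $\ds_S$ and sending $z_i\mapsto\partial_{t_i}$, $\partial_{t_i}\mapsto z_i$, $t_i\mapsto-\partial_{z_i}$, $\partial_{z_i}\mapsto-t_i$. It sends $g\partial_t^\beta\delta_{\bff}$ to $gz^\beta\delta_{\bff}$. However, it carries $s$ to $\sum_iz_i\partial_{z_i}$ and does not preserve $V^\bullet\rs_X$. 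That is why identities internal to $B_{\bff}$ do not readily yield the shifted equality. The microlocal reformulation is the essential input: there the $V$-filtration is along $w$ and is preserved by the swap.
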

\proof Let $\bba^{r}_{z}=\Spec\bbc[z_1,\dots,z_r]$ and $\bba^{r}_{y}=\Spec\bbc[y_1,\dots,y_r]$, so that $X=S\times\bba^{r}_{z}$. We set $Y=X \times \bba^r_y$ and $h = \sum_{i=1}^r y_if_i  \in \os_{Y}(Y)$, and denote the projection $Y=X \times \bba^r_y\to X$ by $p_{X}$. As in \cite[Section 2]{CDMO24}, we consider the sheaf of rings $\widetilde{\rs}=\ds_Y\langle w,\partial_w,\partial_w^{-1}\rangle$ and the left $\widetilde{\rs}$-module
$$
\widetilde{B}_h= \bigoplus_{j \in \bbz} \os_{Y}\partial_w^j\delta_h,
$$
where the actions of $\os_Y$, $\partial_{w}$, and $\partial^{-1}_{w}$ are the obvious ones, while the actions of derivations and $w$ are given by the following analogue of \eqref{bf action}: for all $\theta \in \sDer_{\bbc}(\os_Y)$, $p\in\os_{Y}$, and $j\in \bbz$, we have
$$
\theta \cdot p\partial_w^{j}\delta_{h} = \theta(p)\partial_w^{j}\delta_{h} - \theta(h) p \partial_w^{j+1}\delta_{h}
\ \
\text{ and }
\ \
w \cdot p\partial_w^{j}\delta_{h} =  h p \partial_t^{j}\delta_{h} - j p \partial_w^{j-1}\delta_{h}.
$$
Also consider the decreasing filtration $(V^{k}\widetilde{\rs})_{k\in \bbz}$ on $\widetilde{\rs}$ given by
$$
V^{k}\widetilde{\rs}= \bigoplus_{i-j\geq k}\ds_{Y}w^{i}\partial_{w}^{j},
$$
where the direct sum runs over $i\in\bbz_{\geq 0}$ and $j\in\bbz$ such that $i-j\geq k$. Recall that for a section $u\in \widetilde{B}_h$, the microlocal $b$-function $\widetilde{b}_u(s)$ of $u$ is the monic generator of the ideal
$$
\{b(s)\in \bbc[s] \ | \ b(s)u \in V^1\widetilde{\rs} \cdot u \}.
$$
We note that \cite[Section 3]{CDMO24} suppresses the pushforward $p_{X,*}$ and simply writes $\widetilde{B}_h$ for $p_{X,*}\widetilde{B}_h$. We will keep this distinction explicit. With this notation, we equip the $p_{X,*}\widetilde{\rs}$-module
$$
p_{X,*}\widetilde{B}_h= \bigoplus_{j \in \bbz}\bigoplus_{\alpha \in \bbz^{r}_{\geq 0}} \os_{X}y^{\alpha}\partial_w^j\delta_h
$$
with the natural $\bbz$-grading introduced in \cite[Section 3]{CDMO24}, given by $p_{X,*}\widetilde{B}_h=\bigoplus_{m\in \bbz}\widetilde{B}^{(m)}_h$, where
$$
\widetilde{B}^{(m)}_h= \bigoplus_{\alpha \in \bbz^{r}_{\geq 0}} \os_{X}y^{\alpha}\partial_w^{|\alpha|-m}\delta_h
$$
for each $m\in \bbz$. Finally, let $\phi\colon  p_{X,*}\widetilde{B}_h \to B_{\bff}$ be the unique $\os_X$-linear map such that
$$
\phi(y^{\alpha}\partial_w^j\delta_h) = \partial_t^{\alpha}\delta_{\bff}
$$
for all $\alpha\in \bbz_{\geq 0}^r$ and $j\in \bbz$, as defined in \cite[(17)]{CDMO24}. By \cite[Proposition 3.4]{CDMO24}, for every $m\in \bbz$ and every $u\in \widetilde{B}^{(m)}_h$, we have
\begin{equation}\label{Proposition 3.4}
\widetilde{b}_{u}(s-m)=b_{\phi(u)}(s).
\end{equation}

Now consider the sections $gy^{\beta}\delta_{h},gz^{\beta}\delta_{h}\in\Gamma(Y,\widetilde{B}_h)$. Because $h=\sum_{i=1}^{r}y_{i}z_{i}g_{i}$ is invariant under swapping the $y$- and $z$-variables, we have
$$
\widetilde{b}_{gy^{\beta}\delta_{h}}(s)=\widetilde{b}_{gz^{\beta}\delta_{h}}(s).
$$
Since $gy^{\beta}\delta_h\in \widetilde{B}^{(|\beta|)}_h$ and $\phi(gy^{\beta}\delta_h) = g\partial_t^{\beta}\delta_{\bff}$, by \eqref{Proposition 3.4}, we have $\widetilde{b}_{gy^{\beta}\delta_h}(s-|\beta|)=b_{g \partial_t^{\beta}\delta_{\bff}}(s)$. Also, since $gz^{\beta}\delta_h\in \widetilde{B}^{(0)}_h$ and $\phi(gz^{\beta}\delta_h) = gz^{\beta}\delta_{\bff}$, by \eqref{Proposition 3.4}, we have $\widetilde{b}_{gz^{\beta}\delta_h}(s)  = b_{g     z^{\beta}\delta_{\bff}}(s)$. Thus we obtain
$$
b_{g \partial_t^{\beta}\delta_{\bff}}(s) = \widetilde{b}_{gy^{\beta}\delta_h}(s-|\beta|)= \widetilde{b}_{gz^{\beta}\delta_h}(s-|\beta|)  = b_{g     z^{\beta}\delta_{\bff}}(s-|\beta|). \qed
$$

\noindent{\it Proof of Theorem \ref{combinatorial lower bound theorem}}. By Lemma \ref{units}, we may assume that $u =u_1 \cdots=u_{r}=1$, and by Lemma \ref{smooth pull}, we may assume that $X=\bba^{r+n}$, whose standard coordinates are given by $z_1,\dots,z_{r}, x_1,\dots,x_n$, so that $g = x^k$ and $f_i= z_i x^{a_i}$ for $1\leq i\leq r$. Consider the monomial ideal $\frak{a}=(f_1,\dots,f_r)$ and its Newton polyhedron $P(\frak{a})\subset\bbr^{r+n}$. Note that $P(\frak{a})$ is the Newton polyhedron associated to $P(a_1),\dots,P(a_r)\subset\bbr^{n}$, as defined in Section \ref{The Newton polyhedron associated to several Newton polyhedra}. Given $\beta\in\bbz^{r}_{\geq 0}$, we have $b_{g\partial_t^{\beta}\delta_{\bff}}(s)=b_{gz^{\beta}\delta_{\bff}}(s-|\beta|)$ by Lemma \ref{microlocal}. Also, by Remark \ref{howald} and Lemma \ref{explicit lct}, we have
$$
\lct_{gz^{\beta}}(\frak{a})=\widetilde{\alpha}(P(\frak{a});\beta\times k)\geq\min\{ r+|\beta|, \widetilde{\alpha}(a_1,\dots,a_r;k)\}.
$$
So by \eqref{lct b}, all the roots of $b_{g\partial_t^{\beta}\delta_{\bff}}(s)$ are 
$\leq-\min\{ r, \widetilde{\alpha}(a_1,\dots,a_r;k)-|\beta|\}$. The theorem then follows from \eqref{V roots} and Lemma \ref{min thing}. \qed

\section{Semi-quasihomogeneous complete intersections}\label{Semi-quasihomogeneous complete intersections}

Throughout this section, let $R=\bbc[x_1,\dots,x_n]$ and let $w_1,\dots,w_n$ be positive integers. We consider the grading on $R$ given by $\deg x_i=w_i$ for $1\leq i\leq n$, and we refer to homogeneous elements of $R$ with respect to this grading as weighted homogeneous. The goal of this section is to prove Theorem \ref{weighted minimal exponent}. Before we do so, we begin with the following lemma.

\begin{lemma}\label{transverse}
Let $f_1,\dots,f_r\in R$ be weighted homogeneous polynomials such that the closed subscheme $Z=V(f_1,\dots,f_r)\subset\bba^{n}$ is a complete intersection of codimension $r$ with an isolated singularity at the origin. Then the ideal generated by
$$
f_1(x_1,\dots,x_{n-1},1),\dots, f_r(x_1,\dots,x_{n-1},1) \in \bbc[x_1,\dots,x_{n-1}]
$$
defines a smooth subvariety of codimension $r$ in $\bba^{n-1}=\Spec \bbc[x_1,\dots,x_{n-1}]$.
\end{lemma}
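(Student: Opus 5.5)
We will use the $\bbc^*$-action given by the weights and the Jacobian criterion. The torus $\bbc^*$ acts on $\bba^n$ by $\lambda\cdot(x_1,\dots,x_n)=(\lambda^{w_1}x_1,\dots,\lambda^{w_n}x_n)$. Each $f_i$ is weighted homogeneous of some degree $d_i$, so $f_i(\lambda\cdot x)=\lambda^{d_i}f_i(x)$. Hence $Z$, and also its singular locus $\mathrm{Sing}(Z)$, are stable under this action.

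Set $h_i=f_i(x_1,\dots,x_{n-1},1)$ and $Z'=V(h_1,\dots,h_r)\subset\bba^{n-1}$. Consider the morphism
$$
\psi\colon \bbc^*\times\bba^{n-1}\to D(x_n)\subset\bba^n,\qquad (\lambda,y)\mapsto \lambda\cdot(y,1)=(\lambda^{w_1}y_1,\dots,\lambda^{w_{n-1}}y_{n-1},\lambda^{w_n}).
$$
Any point $x$ with $x_n\neq 0$ can be written as $\lambda\cdot(y,1)$ by choosing $\lambda$ with $\lambda^{w_n}=x_n$ and $y_j=\lambda^{-w_j}x_j$. So $\psi$ is surjective. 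It is étale: it is the quotient by the free action of $\mu_{w_n}$ given by $\zeta\cdot(\lambda,y)=(\zeta\lambda,(\zeta^{-w_j}y_j)_j)$. Equivalently, one can compute its Jacobian determinant directly, which is $w_n\lambda^{w_n-1}\prod_j\lambda^{w_j}\neq 0$.

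We have $\psi^*f_i=\lambda^{d_i}h_i(y)$, so $\psi^{-1}(Z\cap D(x_n))=\bbc^*\times Z'$ scheme-theoretically, since each $\lambda^{d_i}$ is a unit. Smoothness and codimension are étale-local properties, and they are preserved under taking the product with $\bbc^*$ and under descent along the surjective étale map $\psi$. Therefore $Z'$ is smooth of codimension $r$ in $\bba^{n-1}$ (or empty) if and only if $Z\cap D(x_n)$ is smooth of codimension $r$ in $D(x_n)$.

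It remains to check the latter. Since $Z$ is a complete intersection of codimension $r$, it has codimension $r$ everywhere, and $Z\cap D(x_n)$ inherits this. By hypothesis $Z$ has an isolated singularity at $0$, so $\mathrm{Sing}(Z)\subset\{0\}$. The origin does not lie in $D(x_n)$, so $Z\cap D(x_n)$ is smooth.

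For a complete intersection, smoothness at a point is equivalent to the Jacobian matrix $(\partial f_i/\partial x_j)$ having rank $r$ there. Thus $Z'$ is smooth, and the ideal $(h_1,\dots,h_r)$ is reduced and defines a smooth subvariety of codimension $r$.

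The step that needs the most care is the descent along $\psi$. One clean way to do it is the Jacobian criterion directly. At a point $y\in Z'$, the matrix $(\partial h_i/\partial y_j)_{j<n}$ is the first $n-1$ columns of $(\partial f_i/\partial x_j)$ evaluated at $(y,1)$. The Euler relation
$$
\sum_j w_jx_j\,\partial f_i/\partial x_j=d_if_i=0 \quad\text{on } Z
$$
expresses the $x_n$-column, multiplied by $w_n$ (which is nonzero), as a combination of the other columns. Since the full Jacobian has rank $r$ at $(y,1)$, the first $n-1$ columns alone have rank $r$, which is exactly smoothness of $Z'$ at $y$.
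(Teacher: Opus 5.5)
Your proof is correct. Its final paragraph is essentially the paper's argument, phrased at closed points rather than with ideals of minors.

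In your notation, the paper proves $(h_1,\dots,h_r,I_r(J_h),x_n-1)=R$, where $J_h$ is the Jacobian matrix of the $h_i$ in $x_1,\dots,x_{n-1}$. It first gets $(f_1,\dots,f_r,I_r(J_f),x_n-1)=R$ from smoothness of $Z\setminus\{0\}$. It then uses the Euler identity $d_ih_i=w_n(\partial_{x_n}f_i)(x_1,\dots,x_{n-1},1)+\sum_{j<n}w_jx_j\partial_{x_j}h_i$: modulo $(h_1,\dots,h_r)$, the last column of $J_f|_{x_n=1}$ is a combination of the other columns, so the two ideals of maximal minors agree modulo the $h_i$.

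Your main route via $\psi\colon\bbc^*\times\bba^{n-1}\to D(x_n)$ is a genuinely different, valid argument. The identity $\psi^*f_i=\lambda^{d_i}h_i$ exhibits $\bbc^*\times Z'$ as a $\mu_{w_n}$-torsor over $Z\cap D(x_n)$. Smoothness and pure codimension then transfer with no matrix manipulation: \'{e}tale base change gives smoothness of $\bbc^*\times Z'$, and regularity descends along the faithfully flat projection to $Z'$. This is more conceptual, since it is the usual description of the chart $x_n\neq 0$ of weighted projective space. The paper's computation, by contrast, is elementary and self-contained.

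One step needs a line more than you give it, in either version: you pass from ``isolated singularity at $0$'' directly to $\mathrm{Sing}(Z)\subset\{0\}$. If the hypothesis is a condition on the germ of $Z$ at $0$, as in the semi-quasihomogeneous hypothesis of Theorem~\ref{weighted minimal exponent}, the $\bbc^*$-action is needed here; this is what the paper means by ``by weighted homogeneity.'' The argument is as follows. $\mathrm{Sing}(Z)$ is closed and $\bbc^*$-stable. Since all $w_j>0$, for $p\neq 0$ the points $\lambda\cdot p\neq 0$ tend to $0$ as $\lambda\to 0$. So a nonzero singular point would make $0$ a non-isolated point of $\mathrm{Sing}(Z)$. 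You already recorded the $\bbc^*$-stability of $\mathrm{Sing}(Z)$ in your first paragraph, so you only need to use it.
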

\proof All ideals in this proof are ideals of $R$. For a matrix $A$ with entries in $R$, let $I_r(A)$ denote the ideal of $r\times r$ minors of $A$. We denote the Jacobian matrix of $f_{1},\dots,f_{r}$ by
$$
J_f = \bigg( \frac{\partial f_i}{\partial x_j} \bigg)_{1\leq i\leq r, 1\leq j\leq n}
$$
and consider the matrix $J_f'=J_f(x_1,\dots,x_{n-1},1)$ obtained from $J_f$ by setting $x_{n}=1$. We put $g_i = f_i(x_1,\dots,x_{n-1},1)\in R$ for $1\leq i\leq r$ and denote the Jacobian matrix of $g_{1},\dots,g_{r}$ in the first $n-1$ variables by
$$
J_g = \bigg( \frac{\partial g_i}{\partial x_j} \bigg)_{1\leq i\leq r, 1\leq j\leq n-1}.
$$

It suffices to show that 
$$
(g_1,\dots,g_r, I_r(J_g),x_n-1)=R.
$$ 
To begin, note that $Z \setminus \{0\}$ is smooth by weighted homogeneity. Since $f_1,\dots,f_r$ form a regular sequence, $Z\setminus \{0\} \subset\bba^{n}\setminus\{0\}$ is then a smooth subvariety of codimension $r$ and hence $(x_1,\dots,x_n)\subset\sqrt{(f_1,\dots,f_r,I_r(J_f))}$, which implies that $(f_1,\dots,f_r,I_r(J_f),x_n-1) = R$. Since $(f_1,\dots,f_r,I_r(J_f),x_n-1)=(g_1,\dots,g_r,I_r(J'_f),x_n-1)$, it thus suffices to show that
$$
(g_1,\dots,g_r, I_r(J_g))=(g_1,\dots,g_r,I_r(J'_f)).
$$
To see this, let $d_{i}=\deg f_{i}$ and $h_i=(\partial_{x_n} f_i)(x_1,\dots,x_{n-1},1)$ for $1\leq i\leq r$. Substituting $x_{n}=1$ in the equation $d_if_i = \sum_{j=1}^n w_j x_j \partial_{x_j} f_i$ yields $d_i g_i = w_n h_i + \sum_{j=1}^{n-1} w_j x_j \partial_{x_j}g_i$ for each $1\leq i\leq r$, hence the last column of $J_{f}'$ is equal to
$$
(h_i)_{i=1}^r =  (w_n^{-1}d_i g_i)_{i=1}^r - \sum_{j=1}^{n-1} (w_n^{-1}w_j x_j \partial_{x_j} g_i)_{i=1}^r.
$$
Since the submatrix consisting of the first $n-1$ columns of $J'_{f}$ is equal to $J_g$, it is then easy to see that $(g_1,\dots,g_r, I_r(J_g))=(g_1,\dots,g_r,I_r(J'_f))$, as desired. \qed

\noindent{\it Proof of Theorem \ref{weighted minimal exponent}}. Since the statement of Theorem \ref{weighted minimal exponent} is invariant under scaling the weights, we may assume that $\gcd(w_1,\dots,w_n)=1$. As in the proof of \cite[Theorem 1.2]{BBV23}, we consider the weighted blow up of $\bba^{n}$ with respect to the weights $w_1,\dots,w_n$. Let $e_1,\dots,e_n$ denote the standard basis of $\bbr^{n}$ and let $e^{*}_1,\dots,e^{*}_n\in(\bbr^{n})^{*}$ denote the corresponding dual basis. We set $w=\sum_{i=1}^n w_i e_i^*$. For each $1\leq i\leq n$, let $\sigma_{i}\subset(\bbr^{n})^{*}$ denote the cone with primitive ray generators $e^{*}_1,\dots,e^{*}_{i-1},w,e^{*}_{i+1},\dots,e^{*}_n$. Let $\Sigma$ be the fan with support $|\Sigma|=(\bbr^{n})^{*}_{\geq 0}$ whose $n$-dimensional cones are given by $\sigma_{1},\dots,\sigma_{n}$. In other words, $\Sigma$ is the star subdivision of $(\bbr^n)_{\geq 0}^*$ at $w$, as defined in \cite[page 515]{CLS11}. 

For the remainder of this proof, we use the notation introduced in Section \ref{simplicial toric resolutions} for the simplicial fan $\Sigma$. In particular, recall the setup
$$
U\xrightarrow{q} U/G\xrightarrow{p} Y\xrightarrow{\pi} \bba^{n},
$$
where $U=\coprod_{i=1}^{n}U'_{\sigma_{i}}$, $p$ is surjective and \'etale, and $\pi$ is a proper birational toric morphism. Also, recall that $\pi_{\sigma_{i}}$ denotes the morphism $U'_{\sigma_{i}}\to  \bba^{n}$ for $1\leq i\leq n$. By \cite[Proposition 11.1.6]{CLS11}, $\pi$ is projective, and by \eqref{isom locus}, $\pi$ is an isomorphism over the complement of the origin $0\in \bba^n$.

For the following claim, we say that a reindexing of $f_{1},\dots,f_{r}$ is degree-preserving if it is induced by a permutation $\rho \in S_{r}$ such that $\deg f_{\rho(i)}= d_{i}$ for each $i$.

\begin{claim}\label{weighted claim}
Given a closed point $P \in (\pi\circ p\circ q)^{-1}(0)$, there exists an open neighborhood $U_{P}$ of $P$ with global coordinates $z_{1},\dots,z_{r'},t_{1},\dots,t_{n-r'}\in\os_{U_{P}}(U_{P})$ for some nonnegative integer $r'\leq r$ such that if we denote by $\pi_{P}$ the morphism $U_{P}\to \bba^{n}$ and by $g$ the Jacobian of $\pi_{P}$, then one of the following cases holds, depending on the value of $r'$:
\begin{enumerate}
\item[i)] $r'=r$. We have $\pi_{P}^{*}f_{i}=z_{i}t_{1}^{d_{i}}$ for all $i\leq r$ and $g=ut_{1}^{w_1+\cdots+w_n-1}$ for some invertible function $u\in\os_{U_{P}}(U_{P})$. 
\item[ii)] $r'<r$. After a degree-preserving reindexing of $f_{1},\dots,f_{r}$, if necessary, we have $\pi_{P}^{*}f_{i}=z_{i}t_{1}^{d_{i}}$ for all $i\leq r'$, $\pi_{P}^{*}f_{r'+1}=vt_{1}^{d_{r'+1}}$, and $g=ut_{1}^{w_1+\cdots+w_n-1}$ for some invertible functions $u,v\in\os_{U_{P}}(U_{P})$. Moreover, $\pi_{P}^{*}f_{i}$ is contained in the ideal generated by $\pi_{P}^{*}f_{r'+1}$ for all $i> r'+1$.
\end{enumerate}
Furthermore, in both cases, $t_{1}$ (set-theoretically) defines $(\pi\circ p\circ q)^{-1}(0)\cap U_{P}$.
\end{claim}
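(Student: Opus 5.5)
The plan is to work chart by chart on $U=\coprod_{j}U'_{\sigma_j}$, writing everything in the coordinates of a single $U'_{\sigma_j}$.

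\textbf{Step 1: the chart and the shape of the pullbacks.} Fix $j$ with $P\in U'_{\sigma_j}$, and let $y_1,\dots,y_n$ be the coordinates of $U'_{\sigma_j}$ as in Section \ref{simplicial toric resolutions}. Here $y_j$ corresponds to the ray generator $w$, and $y_l$ corresponds to $e_l^*$ for $l\neq j$. By \eqref{toric monomial} we have $\pi_{\sigma_j}^*x_l=y_ly_j^{w_l}$ for $l\neq j$ and $\pi_{\sigma_j}^*x_j=y_j^{w_j}$. Since $w(u)\geq d_i$ for every $u\in\supp f_i$, with equality exactly on the monomials of $f_{i,d_i}$, we get
\[
\pi_{\sigma_j}^*f_i=y_j^{d_i}\widetilde f_i,\qquad \widetilde f_i=h_i(y_1,\dots,\widehat{y_j},\dots,y_n)+y_j\cdot(\text{polynomial}),
\]
where $h_i$ is $f_{i,d_i}$ with $x_j$ set to $1$. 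By \eqref{jacobian} the Jacobian is $w_jy_j^{|w|-1}$, with $|w|=w_1+\cdots+w_n$, and $w_j$ is a unit. Lemma \ref{exceptional} (with $I=\emptyset$; note $w(e_l)>0$ for all $l$) shows that $(\pi\circ p\circ q)^{-1}(0)\cap U'_{\sigma_j}=V(y_j)$. So we will always take $t_1=y_j$; this already gives the last assertion and the form $g=ut_1^{w_1+\cdots+w_n-1}$ of the Jacobian.

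\textbf{Step 2: choosing $r'$.} Call $b\in\{1,\dots,r\}$ a block end if $b=r$ or $d_b<d_{b+1}$. For every block end $b$, the hypothesis says $V(f_{1,d_1},\dots,f_{b,d_b})$ is a complete intersection of codimension $b$ with an isolated singularity at $0$. Lemma \ref{transverse}, applied after renaming $x_j$ as the last variable, then shows that $h_1,\dots,h_b$ cut out a smooth subvariety of codimension $b$ in $\bba^{n-1}$. Let $r'$ be the largest block end $b$ such that $\widetilde f_1(P)=\cdots=\widetilde f_b(P)=0$, with $r'=0$ if there is none.
\begin{itemize}
\item If $r'<r$, the next block (all indices of degree $d_{r'+1}$) contains an index $i$ with $\widetilde f_i(P)=h_i(P)\neq0$. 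We perform a degree-preserving reindexing within that block so that this index becomes $r'+1$, and set $v=\widetilde f_{r'+1}$, which is a unit near $P$. For $i>r'+1$ we have $d_i\geq d_{r'+1}$, so $\pi^*f_i=y_j^{d_i}\widetilde f_i$ lies in the ideal generated by $y_j^{d_{r'+1}}$, which is the ideal generated by $\pi^*f_{r'+1}=vy_j^{d_{r'+1}}$.
\item If $r'=r$, no reindexing is needed and we are in case i).
\end{itemize}

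\textbf{Step 3: building the coordinates.} Set $z_i=\widetilde f_i$ for $i\leq r'$ and $t_1=y_j$. Since $P\in V(y_j)$ and $h_1,\dots,h_{r'}$ all vanish at $P$, the smoothness from Step 2 says $dh_1,\dots,dh_{r'}$ are linearly independent at $P$ in the variables $y_l$, $l\neq j$. The differential $d\widetilde f_i$ equals $dh_i$ plus a multiple of $dy_j$. Hence $dt_1,dz_1,\dots,dz_{r'}$ are linearly independent at $P$, and we complete them with suitable $y_l$'s to a coordinate system $z_1,\dots,z_{r'},t_1,\dots,t_{n-r'}$ on a small open neighborhood $U_P$. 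By construction $\pi_P^*f_i=z_it_1^{d_i}$ for $i\leq r'$, which gives cases i) and ii).

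The main obstacle is Step 2. One needs $r'$ to be a block end so that the transversality hypothesis applies; at the same time one must keep all of $\widetilde f_1,\dots,\widetilde f_{r'}$ vanishing at $P$ while making $\widetilde f_{r'+1}$ a unit. The maximality of $r'$ among block ends, together with reindexing only within the following block, is exactly what achieves both.
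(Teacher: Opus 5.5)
Your proof is correct and follows the paper's argument essentially step by step: the same chart computation $\pi^*f_i=y_j^{d_i}\widetilde f_i$, with $\widetilde f_i|_{y_j=0}$ the dehomogenized leading form, the same appeal to Lemma~\ref{transverse} to make $\widetilde f_1,\dots,\widetilde f_{r'},y_j$ part of a local coordinate system, and the same Jacobian formula. Your $r'$, the largest block end $b$ with $\widetilde f_1(P)=\cdots=\widetilde f_b(P)=0$, coincides with the paper's $r'$, which is the largest $j\le i$ with $d_j\neq d_{i+1}$ where $i+1$ is the first index with $\widetilde f_{i+1}(P)\neq 0$; likewise, your within-block reindexing is exactly the paper's swap of $r'+1$ and $i+1$.
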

\noindent{\it Proof of Claim \ref{weighted claim}}. We may assume without loss of generality that $P\in \pi_{\sigma_n}^{-1}(0)$. Recall that $U'_{\sigma_{n}}=\Spec \bbc[y_{1},\dots,y_{n}]$ and by \eqref{toric monomial}, the $\bbc$-algebra map $\pi_{\sigma_n}^*: R \to \bbc[y_{1},\dots,y_{n}]$ is given by $\pi_{\sigma_n}^*(x_j) = y_j y_n^{w_j}$ for $j\leq n-1$ and $\pi_{\sigma_n}^*(x_n)= y_n^{w_n}$, which implies that $y_{n}$ (set-theoretically) defines $\pi_{\sigma_n}^{-1}(0)$ and hence $y_{n}(P)=0$. For $1\leq i\leq r$, note that
$$
\pi_{\sigma_n}^* f_i(x_1,\dots,x_n) = y_n^{d_i}\widetilde{f}_i(y_1,\dots,y_n),
$$
where $\widetilde{f}_i(y_1,\dots,y_n) = f_{i,d_i}(y_1,\dots,y_{n-1},1) + y_n h_i$ for some $h_i\in \bbc[y_1,\dots,y_n]$. We define $r'$ as follows. Let $d_{0}=0$. If $\widetilde{f}_1(P)=\cdots= \widetilde{f}_{r}(P)=0$, then define $r'=r$. If $\widetilde{f}_1(P)=\cdots= \widetilde{f}_{i}(P)=0$ and $\widetilde{f}_{i+1}(P)\neq 0$ for some $i<r$, then define $r'$ as the largest nonnegative integer $j\leq i$ such that $d_{j} \neq d_{i+1}$, in which case we also perform a degree-preserving reindexing of $f_{1},\dots,f_{r}$ by swapping the indices $r'+1$ and $i+1$. If $r'<r$, then it is clear there exists an open neighborhood $B_{P}$ of $P$ such that $\pi_{P}^{*}f_{i}|_{B_{P}}$ is contained in the ideal generated by $\pi_{P}^{*}f_{r'+1}|_{B_{P}}$ for all $i> r'+1$. 

By the assumptions of the theorem, the closed subscheme $V(f_{1,d_{1}},\dots,f_{r',d_{r'}})\subset \bba^{n}$ is a complete intersection of codimension $r'$ with an isolated singularity at $0$. Thus by Lemma \ref{transverse}, the ideal generated by $\widetilde{f}_1,\dots,\widetilde{f}_{r'},y_{n}$ defines a smooth subvariety in $U_{\sigma_{n}}$ of codimension $r'+1$. This implies that there exists an open neighborhood $U_{P}$ of $P$ with global coordinates $z_{1},\dots,z_{r'},t_{1},\dots,t_{n-r'}\in\os_{U_{P}}(U_{P})$ such that $z_{i}=\widetilde{f}_{i}$ for $i\leq r'$ and $t_{1}=y_{n}$, and with $U_{P}\subset B_{P}$ if $r'<r$. Finally, by \eqref{jacobian}, we have
\begin{equation}\label{weighted Jacobian}
\pi_{\sigma_n}^*dx_1\wedge \cdots \wedge dx_n = w_n y_n^{w_1+\cdots+w_n-1} dy_1\wedge \cdots \wedge dy_n,
\end{equation}
which, after possibly shrinking $U_{P}$, implies the assertions concerning the Jacobian $g$ in both cases of the claim. \qed

Let $\widetilde{Z}$ denote the closure of $(\pi\circ p\circ q)^{-1}(Z\setminus\{0\})$ in $U$. By Claim \ref{weighted claim}, $\widetilde{Z}$ is a smooth subvariety of pure codimension $r$ in some neighborhood of $(\pi\circ p\circ q)^{-1}(0)$. Thus by Remark \ref{local setup}, there exists an open neighborhood $W$ of $0\in \bba^{n}$ and an open neighborhood $V$ of $(\pi\circ p)^{-1}(0)$ such that $p(V)=\pi^{-1}(W)$ and $\widetilde{Z}\cap q^{-1}(V)$ is a smooth subvariety of pure codimension $r$ in $q^{-1}(V)$. Note that $\pi\circ p\circ q$ is \'etale over $\bba^{n}\setminus\{0\}$ by \eqref{weighted Jacobian}. Together with the fact that $\widetilde{Z}\cap q^{-1}(V)$ surjects onto $Z\cap W$, it follows that the closed subscheme $Z\cap W\subset W$ is a complete intersection with an isolated singularity at $0$ (see \cite[Theorem 5.1]{GH78} for an analytic proof), hence $\widetilde{\alpha}_{0}(Z)$ makes sense. 

By \cite[Proposition 2.1]{CDM25}, we have the upper bound
$$
\widetilde{\alpha}_0(Z) \leq \min\{ i + \tfrac{1}{d_i}(w_1+\cdots+w_n - d_1-\cdots - d_i) \ | \ 1\leq i\leq r \}.
$$
Therefore, it suffices to prove the reverse inequality. Let $P \in (\pi\circ p\circ q)^{-1}(0)$ be a closed point. If we let $U_{P}$ be an open neighborhood of $P$ as in Claim \ref{weighted claim}, then by Corollary \ref{combinatorial lower bound first corollary} in case i) and Corollary \ref{combinatorial lower bound second corollary} in case ii), we have
$$
\widetilde{\alpha}(\pi_{P}^{*}f_{1},\dots,\pi_{P}^{*}f_{r};g)\geq \widetilde{\alpha}(d_{1},\dots,d_{r};w_1+\cdots+w_n-1).
$$
Thus by Theorem \ref{minimal exponent lower bound}, we have
$$
\widetilde{\alpha}_{0}(Z)\geq\widetilde{\alpha}(d_{1},\dots,d_{r};w_1+\cdots+w_n-1).
$$
The desired lower bound then follows from Remark \ref{dimension one}, which implies that
$$
\widetilde{\alpha}(d_{1},\dots,d_{r};w_1+\cdots+w_n-1)=\min\{ i + \tfrac{1}{d_i}(w_1+\cdots+w_n - d_1-\cdots - d_i) \ | \ 1\leq i\leq r \}. \qed
$$

\section{Newton and Khovanskii non-degeneracy}\label{Non-degeneracy}

In this section we recall the definitions of Newton and Khovanskii non-degeneracy. We also introduce notions of Newton non-degeneracy that interpolate between Newton and globally Newton non-degeneracy, in order to make precise the relationship between the Khovanskii non-degeneracy of the $r$-tuple $(f_{1},\dots,f_{r})$ and the Newton non-degeneracy of each polynomial $g_{i}=f_i + \sum_{j\neq i}^{}z_{j}f_{j}$. For $\bba^{d}=\Spec \bbc[y_1,\dots,y_d]$, we denote the standard algebraic torus in $\bba^{d}$ by
$$
\bbg^d_m=\Spec \bbc[y_1^{\pm 1},\dots,y_d^{\pm 1}] \subset \bba^d.
$$
For a polynomial $f = \sum_u c_u y^u\in \bbc[y_1,\dots,y_d]$ and a face $Q$ of the Newton polyhedron $P(f)\subset\bbr^{d}$, we write
$$
f_Q = \sum_{u\in Q} c_u y^u\in \bbc[y_1,\dots,y_d].
$$

\begin{definition}\label{Newton}
Given a nonzero polynomial $f \in \bbc[y_{1},\dots,y_d]$ with $f(0)=0$ and a subset $I\subset\{1,\dots,d\}$, we say that $f$ is $\{y_{i}\}_{i\in I}$-globally Newton non-degenerate if for every $v\in (\bbr^d)_{I}^*$, the closed subscheme $V(f_{F(v,P(f))})\cap\bbg^d_m$ is a smooth hypersurface of $\bbg^d_m$, where $(\bbr^d)_{I}^*$ is as defined in \eqref{global real}. If $I$ is empty (resp. $I=\{1,\dots,d\}$), we simply say that $f$ is Newton non-degenerate (resp. globally Newton non-degenerate).
\end{definition}

\begin{definition}\label{Khovanskii}
Given nonzero polynomials $f_1,\dots,f_r \in \bbc[y_{1},\dots,y_d]$ with $f_{i}(0)=0$ for each $i$, we say that the $r$-tuple $(f_1,\dots,f_r)$ is Khovanskii non-degenerate (resp. globally Khovanskii non-degenerate) if for every $v\in (\bbr^d)_{>0}^*$ (resp. for every $v\in (\bbr^d)_{\geq0}^*$), the closed subscheme
$$
V(f_{1, F(v, P_{1})},\dots, f_{r, F(v, P_r)})\cap\bbg^d_m\subset\bbg^d_m
$$ 
is a smooth subvariety of codimension $r$ (with the convention that the empty subvariety has any codimension), where $P_{i}=P(f_{i})$ for each $i$. 
\end{definition}

Note that a nonzero polynomial $f \in \bbc[y_{1},\dots,y_d]$ is Newton non-degenerate (resp. globally Newton non-degenerate) if and only if the $1$-tuple $(f)$ is Khovanskii non-degenerate (resp. globally Khovanskii non-degenerate).


\begin{lemma}\label{Newton implications}
Let $R=\bbc[x_1,\dots,x_n]$. Given nonzero polynomials $f_{1},\dots,f_{r}\in R$, if we set $g=\sum_{i=1}^{r}f_{i}z_{i}\in R[z_1,\dots,z_r]$ and $g_i = f_i + \sum_{j\neq i}^{}z_{j}f_{j} \in R[z_1,\dots,\widehat{z}_i,\dots,z_r]$
for $1\leq i\leq r$, then the following hold:
\begin{enumerate}
\item[ia)] If the $|I|$-tuple $(f_{i})_{i\in I}$ is Khovanskii non-degenerate for all $I\subset\{1,\dots,r\}$, then $g$ is $\{z_1,\dots,z_r\}$-globally Newton non-degenerate.
\item[ib)] If the $|I|$-tuple $(f_{i})_{i\in I}$ is globally Khovanskii non-degenerate for all $I\subset\{1,\dots,r\}$, then $g$ is globally Newton non-degenerate.
\item[iia)] If $g$ is $\{z_1,\dots,z_r\}$-globally Newton non-degenerate, then $g_{i}$ is $\{z_1,\dots,\widehat{z}_i,\dots,z_r\}$-globally Newton non-degenerate for all $i\in\{1,\dots,r\}$.
\item[iib)] If $g$ is globally Newton non-degenerate, then $g_{i}$ is globally Newton non-degenerate for all $i\in\{1,\dots,r\}$.
\end{enumerate}
\end{lemma}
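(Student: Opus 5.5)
The plan is to work face by face: for $v\in(\bbr^{r+n})^*$ in the relevant cone, compute the face $F(v,P(g))$ and the initial form $g_{F(v,P(g))}$ explicitly, then reduce smoothness of $V(g_{F(v,P(g))})\cap\bbg^{r+n}_m$ to the hypothesis on the subtuple of $f_i$'s that actually appear. Write $v=(c,w)$ with $c=(c_1,\dots,c_r)$ the $z$-coordinates and $w\in(\bbr^n)^*_{\geq 0}$ the $x$-coordinates. For parts ia) and ib), Remark \ref{face example} gives $d(v,P(g))=\min_i (c_i+d(w,P(f_i)))$ and
$$
g_{F(v,P(g))}=\sum_{i\in I} z_i\, f_{i,F(w,P(f_i))},\qquad I=\{i \mid c_i+d(w,P(f_i))=d(v,P(g))\}.
$$
In ia) we have $v\in(\bbr^{r+n})^*_{\{z_1,\dots,z_r\}}$, so $w\in(\bbr^n)^*_{>0}$; in ib) $w$ is arbitrary in $(\bbr^n)^*_{\geq0}$. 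Write $h_i=f_{i,F(w,P(f_i))}$ for $i\in I$.

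The key step is the standard Cayley-trick computation. At a point of $\bbg^{r+n}_m$ where $\sum_{i\in I} z_ih_i=0$ and all partials vanish, the $z_i$-partials give $h_i(x)=0$ for all $i\in I$, and the $x_j$-partials give $\sum_{i\in I} z_i\,\partial_{x_j}h_i(x)=0$. Since all $z_i\neq 0$, the rows $(\partial_{x_j}h_i)_j$ for $i\in I$ are linearly dependent at $x\in V((h_i)_{i\in I})\cap\bbg^n_m$. This contradicts smoothness of codimension $|I|$, which is exactly Khovanskii non-degeneracy of $(f_i)_{i\in I}$ (global in ib)). One small care point: smoothness of codimension $|I|$ of $V(h_i)_{i\in I}$ in the torus means the Jacobian has rank $|I|$ at every point of that scheme. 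That is the scheme-theoretic reading of Definition \ref{Khovanskii}, and I would state it explicitly. Since $I\neq\emptyset$ always, $g_{F(v,P(g))}$ is a nonzero polynomial, so its zero set is a smooth hypersurface.

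For parts iia) and iib), I would use the correspondence of Remark \ref{unbounded face correspondence} for the projection $\pi_i$, with $\pi_i(P(g))=P(g_i)$ by Remark \ref{comb conn}. Given $v'\in(\bbr^{r-1+n})^*$ in the relevant cone, set $v=v'\circ\pi_i$, which has $v(z_i)=0$. Then $F(v,P(g))=\pi_i^{-1}(F(v',P(g_i)))\cap P(g)$, and $g_{F(v,P(g))}$ is obtained from $(g_i)_{F(v',P(g_i))}$ by substituting $z_i$ times the $f_i$-terms. In other words, $g_{F(v,P(g))}(z,x)=\widetilde h(\widehat z,x,z_i)$, where $(g_i)_{F(v',P(g_i))}=\widetilde h|_{z_i=1}$.

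Here $v$ lies in $(\bbr^{r+n})^*_{\{z_1,\dots,z_r\}}$ (respectively $(\bbr^{r+n})^*_{\geq 0}$), since the $z_i$-coordinate is allowed to vanish in both cases. So $V(g_{F(v,P(g))})$ is smooth in $\bbg^{r+n}_m$ by hypothesis. I then restrict to the slice $z_i=1$, which is transverse: $g_{F(v,P(g))}$ is homogeneous of degree one in $(z_1,\dots,z_r)$, hence invariant up to scaling under the torus action $z\mapsto \lambda z$. So $V(g_{F})\cap\bbg_m^{r+n}\cong \bbg_m\times (V((g_i)_{F'})\cap\bbg_m^{r-1+n})$, and smoothness descends.

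I expect the only genuine obstacle to be the bookkeeping that $\pi_i$ induces the claimed relation between initial forms. Concretely, one must check that a monomial of $g$ lies in $F(v,P(g))$ exactly when its image lies in $F(v',P(g_i))$, which follows from $v=v'\circ\pi_i$ together with $d(v,P(g))=d(v',P(g_i))$.
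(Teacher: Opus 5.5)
Your proposal is correct and follows essentially the paper's proof. For ia) and ib), the paper likewise reduces to the initial form $\sum_{i\in I}z_i f_{i,F(v,P(f_i))}$ and rules out singular points in the torus via a cited description of its singular locus, which your direct partial-derivative computation reproduces; for iia) and iib), it uses the smooth surjective map $(z,x)\mapsto\bigl((z_j/z_i)_{j\neq i},x\bigr)$ on tori, which is exactly your decomposition $\bbg_m\times\bbg_m^{r-1+n}$.
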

\proof Let $P_{i}=P(f_{i})$ for each $i$. We denote the standard bases of $\bbr^{n}$ and $\bbr^{r+n}$ by $e_{1},\dots,e_{n}$ and $e'_{1},\dots,e'_{r},e''_{1},\dots,e''_{n}$, respectively. For $1\leq i\leq r$, we denote by $\pi_{i}\colon \bbr^{r+n}\to\bbr^{r-1+n}$ the standard projection obtained by omitting the $e'_{i}$-coordinate. 

To prove part ia), suppose that the $|I|$-tuple $(f_{i})_{i\in I}$ is Khovanskii non-degenerate for all $I\subset\{1,\dots,r\}$. Fix an element $L\in(\bbr^{r+n})^{*}_{\geq  0}$ with $L(e''_{j})>0$ for $1\leq j\leq n$, and set 
$$
D=V(g_{F(L,P(g))})\subset \bba^{r+n}.
$$
We need to show that $D\cap\bbg^{r+n}_{m}\subset\bbg^{r+n}_{m}$ is a smooth hypersurface. Let $v\in(\bbr^{n})^{*}_{> 0}$ be the element given by $v(e_{j})=L(e''_{j})$ for $1\leq j\leq n$. If we put
$$
I=\{ i\in \{1,\dots,r\} \ |\ L(e'_{i})+d(v,P_{i})=d(L ,P(g)) \},
$$
then we have
$$
g_{F(L,P(g))}=\sum_{i\in I}z_{i}f_{i,F(v,P_{i})}.
$$
Consider the closed subscheme $Z=V(f_{i,F(v,P_{i})} \ | \ i\in I)\subset\bba^{n}$. Let $J^{t}$ denote the transpose of the Jacobian matrix $J= (\partial_{x_j}(f_{i,F(v,P_{i})}))_{i\in I,1\leq j\leq n}$. For each closed point $x\in \bba^{n}$, let $W_{x}=\Ker J^{t}(x)$, which we view as a linear subspace of $\bba^{|I|}$. By \cite[Lemma 4.22]{CDMO24}, the singular locus of the hypersurface $D\subset \bba^{r+n}=(\bba^{|I|}\times\bba^{r-|I|})\times \bba^{n}$ is given by
$$
D_{sing} = \coprod_{x\in Z}  (\bba^{r-|I|}\times W_{x})\times \{x\}.
$$
Since the $|I|$-tuple $(f_{i})_{i\in I}$ is Khovanskii non-degenerate, $Z\cap \bbg^{n}_{m}\subset \bbg^{n}_{m}$ is a smooth subvariety of codimension $|I|$, which implies that $W_{x}=0$ for every closed point $x\in Z\cap\bbg^{n}_{m}$. Thus $D_{sing}$ is disjoint from $\bbg^{r+n}_{m}=(\bbg^{|I|}_{m}\times\bbg^{r-|I|}_{m})\times \bbg^{n}_{m}$, hence $D\cap\bbg^{r+n}_{m}\subset\bbg^{r+n}_{m}$ is a smooth hypersurface, as desired. The proof of part ib) is essentially the same as that of part ia), so we omit it.

To prove part iia), suppose that $g$ is $\{z_1,\dots,z_r\}$-globally Newton non-degenerate. It suffices to show that $g_{r}$ is $\{z_1,\dots,z_{r-1}\}$-globally Newton non-degenerate. Fix an element $L\in(\bbr^{r-1+n})^{*}_{\geq 0}$ such that $L(\pi_{r}(e''_{j}))>0$ for $1\leq j\leq n$. We need to show that
$$
V(g_{r,F(L,P(g_{r}))})\cap \bbg^{r-1+n}_{m}\subset \bbg^{r-1+n}_{m}
$$
is a smooth hypersurface. Consider the morphism $p:\bbg^{r+n}_{m} \to\bbg^{r-1+n}_{m}$ given by
$$
p(z_{1},\dots,z_{r},x_{1},\dots,x_{n})= (z_{1}/z_{r},\dots,z_{r-1}/z_{r},x_{1},\dots,x_{n}).
$$
Observe that $p^{*}g_{r,F(L,P(g_{r}))}=g_{F(L\circ \pi_{r},P(g))}/z_{r}$, which defines a smooth hypersurface in $\bbg^{r+n}_{m}$ since $g$ is $\{z_1,\dots,z_r\}$-globally Newton non-degenerate. Because $p$ is smooth and surjective, it follows that $g_{r,F(L,P(g_{r}))}$ defines a smooth hypersurface in $\bbg^{r-1+n}_{m}$, as desired. The proof of part iib) is essentially the same as that of part iia), so we omit it. \qed

\begin{corollary}\label{global nnd}
Let $R=\bbc[x_1,\dots,x_n]$ and let $Z \subset \bba^n$ be a closed subscheme defined by a regular sequence $f_1,\dots,f_r\in (x_{1},\dots,x_{n})^{2}$ such that for all subsets $I \subset \{1,\dots,r\}$, the $|I|$-tuple $(f_{i})_{i\in I}$ is globally Khovanskii non-degenerate. Then we have
$$
\widetilde{\alpha}(Z)=\widetilde{\alpha}(P(f_{1}),\dots,P(f_{r})).
$$
\end{corollary}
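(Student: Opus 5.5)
The plan is to reduce the complete intersection statement to the hypersurface case via Lemma \ref{minimal exponent in terms of hypersurface} part i), and then to apply the global version of the Newton non-degenerate formula recalled in Remark \ref{Newton minimal exponent}. Since $f_1,\dots,f_r$ is a regular sequence, $Z$ is a complete intersection of codimension $r$, so Lemma \ref{minimal exponent in terms of hypersurface} i) gives
$$
\widetilde{\alpha}(Z)=\min_{1\leq i\leq r}\widetilde{\alpha}(g_i),
$$
where $g_i=f_i+\sum_{j\neq i}z_jf_j$ is viewed on $\bba^{n}\times\bba^{r-1}$. On the combinatorial side, Remark \ref{comb conn} gives $\widetilde{\alpha}(P(f_1),\dots,P(f_r))=\min_i\widetilde{\alpha}(P(g_i))$. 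It therefore suffices to show that $\widetilde{\alpha}(g_i)=\widetilde{\alpha}(P(g_i))$ for each $i$.

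For this identity, the hypothesis that every subtuple $(f_i)_{i\in I}$ is globally Khovanskii non-degenerate gives, by Lemma \ref{Newton implications} ib), that $g=\sum z_if_i$ is globally Newton non-degenerate. Then Lemma \ref{Newton implications} iib) shows that each $g_i$ is globally Newton non-degenerate. Remark \ref{Newton minimal exponent} then yields $\widetilde{\alpha}(g_i)=\widetilde{\alpha}(P(g_i))$, and taking the minimum over $i$ finishes the argument.

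The main point needing care is the applicability of the global statement in Remark \ref{Newton minimal exponent} to $g_i$. That remark is phrased for $f\in(x)^2$, whereas $g_i$ contains the term $f_i\in(x)^2$ and terms $z_jf_j$ with $f_j\in(x)^2$, so $g_i$ lies in the square of the maximal ideal at the origin; note also that $g_i(0)=0$, as required in Definition \ref{Newton}. The genuine content is the lower bound $\widetilde{\alpha}(g_i)\geq\widetilde{\alpha}(P(g_i))$ over all points, not just at $0$. If this is not directly citable, I would prove it through Theorem \ref{minimal exponent lower bound} i) applied to a toric resolution given by a simplicial regular refinement of the normal fan of $P(g_i)$. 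Global non-degeneracy makes the strict transform smooth and transverse to all torus orbits, including those lying over coordinate hyperplanes. Locally one then reduces to monomial data $z\,y^{a}$ with Jacobian $y^{k}$, and Theorem \ref{combinatorial lower bound theorem} (with $r=1$), combined with Theorem \ref{minimal exponent fan}, gives the bound $\widetilde{\alpha}(P(g_i))$.

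The upper bound at points away from the origin is harmless. Only $\widetilde{\alpha}(Z)\leq$ the combinatorial value is needed, and this already follows from Corollary \ref{minimal exponent upper bound}, since $\widetilde{\alpha}(Z)\leq\widetilde{\alpha}_0(Z)$.
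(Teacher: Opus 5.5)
Your proposal is correct and follows the paper's proof essentially verbatim. Lemma \ref{Newton implications} (parts ib and iib) makes each $g_i$ globally Newton non-degenerate, Remark \ref{Newton minimal exponent} then gives $\widetilde{\alpha}(g_i)=\widetilde{\alpha}(P(g_i))$, and Lemma \ref{minimal exponent in terms of hypersurface} i) together with Remark \ref{comb conn} finishes; your fallback sketch for the global lower bound is not needed, since it is what Lemma \ref{Newton minimal exponent lower bound} with $I=\{1,\dots,n\}$ provides (using a simplicial refinement with the same rays rather than a regular one).
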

\proof By Lemma \ref{Newton implications}, $g_{i}=f_i + \sum_{j\neq i}^{}z_{j}f_{j} \in R[z_1,\dots,\widehat{z}_i,\dots,z_r]$ is globally Newton non-degenerate for each $i$, so by Remark \ref{Newton minimal exponent}, we have $\widetilde{\alpha}(g_{i})=\widetilde{\alpha}(P(g_{i}))$. The corollary then follows from Lemma \ref{minimal exponent in terms of hypersurface} and Remark \ref{comb conn}. \qed

\section{Khovanskii non-degenerate complete intersections}\label{Khovanskii non-degenerate complete intersections}

This section is devoted to the proof of Theorem \ref{Khovanskii minimal exponent}. We begin with the following lemma.

\begin{lemma}\label{Newton minimal exponent lower bound}
Let $R=\bbc[x_1,\dots,x_n]$. Given a subset $I\subset\{1,\dots,n\}$ and a $\{x_{i}\}_{i\in I}$-globally Newton non-degenerate polynomial $f\in R$, we have
$$
\widetilde{\alpha}_{z}(f)\geq \widetilde{\alpha}(P(f))
$$
for every closed point $z\in V(f)\cap V(x_{i}\ |\ i\notin I)$.
\end{lemma}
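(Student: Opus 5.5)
We argue via Theorem \ref{minimal exponent lower bound} (part ii), the toric resolution of Section \ref{simplicial toric resolutions}, and Corollary \ref{combinatorial lower bound first corollary}. If $0\in P(f)$ then $f(0)\neq 0$, but $z\in V(f)$ and $f(z)=0$ do not force $0\notin P(f)$. If $0\in P(f)$, the right-hand side is $\infty$; we must then show $\widetilde{\alpha}_z(f)=\infty$, which should follow from $V(f)$ being smooth near $z$. So we assume $0\notin P(f)$.

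Let $\Sigma$ be a simplicial (in fact regular) refinement of the normal fan $\Sigma_{P(f)}$ with support $(\bbr^n)^*_{\geq 0}$. Let $\pi\colon Y\to\bba^n$ be the associated toric morphism, with $U=\coprod_\sigma U'_\sigma$. The morphism $\pi$ need not be an isomorphism off $V(f)$, because some rays $v$ may have $d(v,P(f))=0$. To handle this we may either replace $X$ by the open set where $\pi$ is an isomorphism off $V(f)$, or add $f$-independent care. The cleanest route is to apply the argument of Theorem \ref{minimal exponent lower bound} (the estimate from \cite{Lee26}), which presumably only needs $\pi$ to be an isomorphism over a dense open set and a log-resolution-type statement. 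I would first check whether that proof really uses the hypothesis "isomorphism off $Z$" or only birationality. If it is needed, I would pass to the multiplier-ideal/$b$-function estimate of \cite{Lee26} directly.

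Next fix a closed point $P\in(\pi\circ p\circ q)^{-1}(z)$ lying in some $U'_\sigma=\Spec\bbc[y_1,\dots,y_n]$. Let $\tau\subset\sigma$ be the face spanned by those $v_j$ with $y_j(P)=0$. Since $z\in V(x_i\mid i\notin I)$, Lemma \ref{exceptional} gives $\relint\tau\cap(\bbr^n)^*_I\neq\emptyset$. By Lemma \ref{Newton pull}, $\pi_\sigma^*f=\prod_{j}y_j^{d(v_j,P(f))}\widetilde f$, and $\widetilde f$ restricted to $O(\tau)$ is the pullback of $f_{F(v,P(f))}$ for $v\in\relint\tau$. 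The $\{x_i\}_{i\in I}$-global non-degeneracy gives the following dichotomy near $P$: either $\widetilde f(P)\neq0$, or $\widetilde f$ together with the $y_j$ for $v_j\in\tau$ forms part of a coordinate system. This is the standard Varchenko argument, using that $V(f_{F(v,P(f))})\cap\bbg^n_m$ is smooth and torus-invariant in the directions of $\tau$. So in local coordinates $\pi_P^*f$ is $u\,y^{a}$ or $z_1y^{a}$ with $a_j=d(v_j,P(f))$. By \eqref{jacobian}, the Jacobian is $u'y^{k}$ with $k_j=|v_j|-1$.

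Corollary \ref{combinatorial lower bound first corollary}, with $r=1$ and $r'\in\{0,1\}$, then gives
\[
\widetilde{\alpha}(\pi_P^*f;g_P)\geq\widetilde{\alpha}((d(v,P(f)))_{v\in[\sigma]};(|v|-1)_{v\in[\sigma]}).
\]
By Theorem \ref{minimal exponent fan} with $r=1$, the minimum of these over $\sigma$ equals $\widetilde{\alpha}(P(f))$, since for $r=1$ the polyhedron $\pi_1(G)$ is $P(f)$. Finally, Theorem \ref{minimal exponent lower bound} ii) yields $\widetilde{\alpha}_z(f)\geq\widetilde{\alpha}(P(f))$.

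The main obstacle is the non-isomorphism issue for rays with $d(v,P(f))=0$, i.e. for noncompact $P(f)$. My first attempt is to note that these rays lie in faces where $f_{F(v,P(f))}$ has a nonzero constant term relative to the relevant coordinates. Near points of $\pi^{-1}(z)$, the function $\widetilde f$ is then a unit or a coordinate in those directions, so one can shrink to a neighborhood of $z$ on which the exceptional locus maps into $V(f)$. If that fails, I would instead restrict to the locus where $x_i\neq0$ for $i\in I$ after a smooth base change, and use Lemma \ref{smooth pull}.
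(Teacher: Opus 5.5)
Your overall route is the paper's: a simplicial toric modification built from the normal fan of $P(f)$, the unit/coordinate dichotomy at points over $z$ coming from Lemmas~\ref{exceptional} and~\ref{Newton pull} together with $\{x_i\}_{i\in I}$-global non-degeneracy, Corollary~\ref{combinatorial lower bound first corollary} with $r=1$, Theorem~\ref{minimal exponent fan} (using $\pi_1(G)=P(f)$), and Theorem~\ref{minimal exponent lower bound}~ii). There is, however, a genuine gap exactly at what you call the main obstacle. You never establish the hypotheses of Theorem~\ref{minimal exponent lower bound}, namely that $\pi$ is projective and that $\pi$ is an isomorphism over the complement of $V(f)$. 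An arbitrary regular refinement need not give a projective morphism, and you do not address projectivity at all.

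Your remedies for the isomorphism condition are left as things to check, and the shrinking argument does not work as stated. Suppose a non-coordinate ray $v$ had $d(v,P(f))=0$. Then $f$ restricted to the center $V(x_i\mid v(e_i)>0)$ keeps the monomials $x^u$ with $v(u)=0$, so it is nonzero there. Hence this irreducible center is not contained in $V(f)$, while $\pi$ fails to be an isomorphism over every point of it. The center can pass through $z$ (e.g.\ when $I=\emptyset$ and $z=0$), and then no neighborhood of $z$ avoids the problem.

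The missing observation is that the obstruction disappears once the fan is chosen correctly. By \eqref{rays}, every ray of $\Sigma_{P(f)}$ other than the coordinate rays $\bbr_{\geq 0}e_i^*$ is spanned by some $L_Q$, and $d(L_Q,P(f))=1>0$. The paper therefore takes a simplicial refinement $\Sigma'$ with the \emph{same} rays such that $Y\to Y_0$ is projective \cite[Proposition 11.1.7]{CLS11}; $Y_0\to\bba^n$ is projective by \cite[Theorem 7.1.10]{CLS11}. Lemma~\ref{isom lemma} then gives at once that $\pi$ is an isomorphism off $V(f)$. Coordinate rays with $d(e_i^*,P(f))=0$ are excluded in that lemma, so they are harmless.

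If you prefer a smooth refinement, take a projective one that does not subdivide the smooth cones of $\Sigma_{P(f)}$; such refinements exist by the toric resolution results in \cite[\S 11.1]{CLS11}. Then each new ray $v$ lies in $\relint\gamma$ for a non-smooth cone $\gamma\in\Sigma_{P(f)}$. Such a $\gamma$ must have some $L_Q$ among its generators, so \eqref{relative interior} and Lemma~\ref{closure lemma}~i) give $d(v,P(f))>0$.

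Finally, your case $0\in P(f)$ cannot occur. Definition~\ref{Newton} builds $f(0)=0$ into $\{x_i\}_{i\in I}$-global Newton non-degeneracy, so that paragraph, whose smoothness claim you left unproved, can simply be dropped.
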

\proof Let $e_1,\dots,e_n$ denote the standard basis of $\bbr^{n}$ and let $e^{*}_1,\dots,e^{*}_n\in(\bbr^{n})^{*}$ denote the corresponding dual basis. We denote the normal fan of $P(f)$ by $\Sigma$ and the toric variety associated to $\Sigma$ by $Y_{0}$. The induced toric morphism $Y_{0}\to \bba^{n}$ is projective by \cite[Theorem 7.1.10]{CLS11}. By \cite[Proposition 11.1.7]{CLS11}, there exists a simplicial refinement $\Sigma'$ of $\Sigma$ such that $\Sigma'$ and $\Sigma$ have the same set of rays and the induced toric morphism $Y\to Y_{0}$ is projective, where $Y$ is the toric variety associated to $\Sigma'$. Thus, if we denote the composition $Y \to Y_{0}\to\bba^{n}$ by $\pi$, then $\pi$ is projective. Note that $\pi\colon Y\to\bba^{n}$ is the toric morphism associated to $\Sigma'$, as defined in Section \ref{simplicial toric resolutions}. By Lemma \ref{isom lemma}, $\pi$ is an isomorphism over the complement of $V(f)$ since $\Sigma'$ has the same set of rays as $\Sigma$. 

For the remainder of this proof, we use the notation introduced in Section \ref{simplicial toric resolutions} for the simplicial fan $\Sigma'$. In particular, recall the setup
$$
U\xrightarrow{q} U/G\xrightarrow{p} Y\xrightarrow{\pi} \bba^{n},
$$
where the disjoint union $U=\coprod_{\sigma}U'_{\sigma}$ runs over the $n$-dimensional cones $\sigma$ of $\Sigma'$, and $p$ is surjective and \'etale. Also, for an $n$-dimensional cone $\sigma$ of $\Sigma'$, recall that $\pi_{\sigma}$ denotes the morphism $U'_{\sigma}\to \bba^{n}$.

\begin{claim}\label{Newton claim}
Given a closed point $P \in (\pi\circ p\circ q)^{-1}(V(x_{i}\ |\ i\notin I))$, there exists an $n$-dimensional cone $\sigma\in \Sigma'$ with primitive ray generators $v_{1},\dots,v_{n}$, an open neighborhood $U_{P}$ of $P$ with global coordinates $z_{1},\dots,z_{n}\in\os_{U_{P}}(U_{P})$, an integer $\delta\in \{0,1\}$, and a nonnegative integer $l\leq n- \delta$, such that if we denote by $\pi_{P}$ the morphism $U_{P}\to \bba^{n}$ and by $g$ the Jacobian of $\pi_{P}$, then one of the following cases holds, depending on the value of $\delta$:
\begin{enumerate}
\item[i)] $\delta=1$. We have $\pi_{P}^{*}f=z_{1}\prod_{j=2}^{l+1} z_j^{d(v_j,P_i)}$ and $g=u\prod_{j=2}^{l+1} t_j^{|v_{j}|-1}$ for some invertible function $u\in\os_{U_{P}}(U_{P})$. 
\item[ii)] $\delta=0$. We have $\pi_{P}^{*}f=v\prod_{j=1}^l z_j^{d(v_j,P_i)}$ and $g=u\prod_{j=1}^l z_j^{|v_{j}|-1}$ for some invertible functions $u,v\in\os_{U_{P}}(U_{P})$.
\end{enumerate}
\end{claim}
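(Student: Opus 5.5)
We work in a single chart. Since $P\in U=\coprod_{\sigma}U'_{\sigma}$, fix the $n$-dimensional cone $\sigma\in\Sigma'$ with $P\in U'_{\sigma}=\Spec\bbc[y_1,\dots,y_n]$ and primitive ray generators $v_1,\dots,v_n$. Let $\tau$ be the smallest face of $\sigma$ with $P\in O(\tau)$, and order the generators so that $v_1,\dots,v_l$ are those of $\tau$. Then $y_j(P)=0$ for $j\le l$ and $y_j(P)\neq 0$ for $j>l$. Apply Lemma \ref{Newton pull} with this $l$ to get
$$
\pi_\sigma^*f=\widetilde f(y_1,\dots,y_n)\prod_{j=1}^l y_j^{d(v_j,P(f))}.
$$
By \eqref{jacobian}, the Jacobian is a nonzero constant times $\prod_j y_j^{|v_j|-1}$. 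For $j>l$ the factor $y_j^{|v_j|-1}$ is a unit near $P$. Hence near $P$ the Jacobian is $u\prod_{j\le l}y_j^{|v_j|-1}$ with $u$ invertible.

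The case split depends on whether $\widetilde f(P)$ vanishes. If $\widetilde f(P)\neq 0$, set $\delta=0$, take $z_j=y_j$, and shrink to a neighborhood where $v=\widetilde f$ is invertible. This gives case ii) directly.

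If $\widetilde f(P)=0$, set $\delta=1$. We want to show $d\widetilde f(P)$ is nonzero modulo $dy_1,\dots,dy_l$, i.e. that the restriction $h:=\widetilde f(0,\dots,0,y_{l+1},\dots,y_n)$ has $P$ as a smooth point of $V(h)$ inside $O(\tau)$. Take $v\in\relint\tau$. By Lemma \ref{exceptional}, since $P$ maps into $V(x_i\mid i\notin I)$, we can choose this $v$ in $(\bbr^n)^*_I$. Lemma \ref{Newton pull} then identifies $h$, up to the monomial factor, with the pullback of $f_{F(v,P(f))}$ restricted to the torus $O(\tau)$.

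The restriction $O(\tau)\to\bbg_m^n$ needs care. $O(\tau)$ is a torus of dimension $n-l$, and $f_{F(v,P(f))}$ is quasi-homogeneous with respect to $v_1,\dots,v_l$. So $V(f_{F(v,P(f))})\cap\bbg_m^n$ is, up to a finite étale map, the product of $V(h)\cap O(\tau)$ with an $l$-dimensional torus. The $\{x_i\}_{i\in I}$-global Newton non-degeneracy hypothesis says the former is smooth, so $V(h)\cap O(\tau)$ is smooth at $P$ and $\partial h/\partial y_k(P)\neq0$ for some $k>l$. Reorder to put that index first. Set $z_1=\widetilde f$, and let $z_2,\dots,z_{l+1}$ be $y_1,\dots,y_l$. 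Complete with the remaining $y_j$ to a coordinate system near $P$. This gives case i): $\pi_P^*f=z_1\prod_{j=2}^{l+1}z_j^{d(v_j,P(f))}$, with the Jacobian a unit times $\prod_{j=2}^{l+1}z_j^{|v_j|-1}$, and $l\le n-1$.

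The main obstacle is the torus reduction: getting smoothness of $V(h)$ in $O(\tau)$ from smoothness of the initial form in $\bbg_m^n$. We handle it by showing that the monomial change of coordinates on $U'_\sigma\cap\bbg_m^n$ is a finite étale cover. Under this cover, $f_{F(v,P(f))}$ pulls back to $h$ times a monomial, and $h$ is independent of $y_1,\dots,y_l$, so smoothness passes in both directions.
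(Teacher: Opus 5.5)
Your proposal is correct and follows essentially the same route as the paper: the same chart $U'_{\sigma}$, the face $\tau$ with $P\in O(\tau)$ and $\relint(\tau)\cap(\bbr^n)^*_I\neq\emptyset$ from Lemma \ref{exceptional}, the case split on whether $\widetilde{f}(P)=0$, and smoothness of $V(\widetilde{f}(0,\dots,0,y_{l+1},\dots,y_n))\cap\bbg_m^{n-l}$ deduced from the \'etaleness of $\pi_{\sigma}$ on the torus together with Lemma \ref{Newton pull}. Your remarks that $l\leq n-1$ in case i) because $\widetilde{f}(0,\dots,0,y_{l+1},\dots,y_n)$ is nonconstant, and that setting $z_j=y_{j-1}$ shifts the exponents to $d(v_{j-1},P(f))$ (fixed by relabeling the generators), are harmless bookkeeping points that the paper also glosses over.
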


\noindent{\it Proof of Claim \ref{Newton claim}}. Let $\sigma\in \Sigma'$ be an $n$-dimensional cone such that $P\in \pi^{-1}_{\sigma}(0)$, and denote the primitive ray generators of $\sigma$ by $v_{1},\dots,v_{n}$. Recall that $U'_{\sigma}=\Spec \bbc[y_{1},\dots,y_{n}]$, with $y_{i}$ corresponding to $v_{i}$ for each $i$. By Lemma \ref{exceptional}, there exists a face $\tau$ of $\sigma$ such that $\relint(\tau)\cap(\bbr^{n})^{*}_{I}\neq \emptyset$ and $P\in O(\tau)$. By reindexing $v_{1},\dots,v_{n}$ (and $y_{1},\dots,y_{n}$ accordingly), we may assume that the primitive ray generators of $\tau$ are given by $v_{1},\dots,v_{l}$ for some $l\leq n$, so that 
$$
O(\tau)=0\times\bbg^{n-l}_{m},
$$
where $0\in\bba^{l}=\Spec\bbc[y_{1},\dots,y_{l}]$. In particular, we have
$$
y_{1}(P)=\cdots=y_{l}(P)=0.
$$
For $1\leq i\leq r$, we write $f=\sum_u c_{u}x^{u}$ and put
$$
\widetilde{f}(y_1,\dots,y_n) =  \sum_u c_{u} \prod_{j>l} y_j^{v_j(u)}  \prod_{j=1}^l y_j^{v_j(u)-d(v_j, P(f))}\in \bbc[y_1,\dots,y_n].
$$
By \eqref{toric monomial}, we have
$$
\pi_{\sigma}^{*}f = \sum_u c_{u} \prod_{j=1}^n y_j^{v_j(u)}
= \widetilde{f}(y_1,\dots,y_n)\prod_{j=1}^l y_j^{d(v_j, P(f))}.
$$
We define $\delta\in \{0,1\}$ as follows. If $\widetilde{f}(P)=0$, then define $\delta=1$. If $\widetilde{f}(P)\neq 0$, then define $\delta=0$.

We first assume that $\delta=0$. Since $\widetilde{f}(P)\neq 0$, there exists an open neighborhood $U_{P}$ of $P$ such that $v=\widetilde{f}(y_1,\dots,y_n)|_{U_{P}}$ is invertible. Thus, if we set $z_j=y_j|_{U_{P}}$ for $1\leq j\leq n$, then $z_{1},\dots,z_{n}$ are global coordinates on $U_{P}$ such that $(\pi_{\sigma}^{*}f)|_{U_{P}}=v\prod_{j=1}^l z_j^{d(v_j, P(f))}$. After possibly shrinking $U_{P}$, the assertion concerning the Jacobian $g$ follows from \eqref{jacobian}. 

Next, we assume that $\delta=1$. Fix some element $v\in \relint(\tau)\cap (\bbr^{n})^{*}_{I}$. Since $\pi_{\sigma}\colon \bbg^{n}_{m}\to\bbg^{n}_{m}$ is \'etale and $f$ is $\{x_{i}\}_{i\in I}$-globally Newton non-degenerate, the closed subscheme 
$$
V(\pi^{*}_{\sigma}f_{F(v,P(f))})\cap\bbg^{n}_{m}\subset\bbg^{n}_{m}
$$
is a smooth hypersurface. By Lemma \ref{Newton pull}, we have
$$
\pi^{*}_{\sigma}(f_{F(v,P(f))})=\widetilde{f}(0,\dots,0, y_{l+1},\dots,y_{n})\prod_{j=1}^l y_j^{d(v_j, P(f))}.
$$
Therefore, if we view $\widetilde{f}(0,\dots,0, y_{l+1},\dots,y_{n})\in \bbc[y_{l+1},\dots,y_{n}]$ as a regular function on $\bba^{n-l}$, then
$$
V(\widetilde{f}(0,\dots,0, y_{l+1},\dots,y_{n}))\cap\bbg^{n-l}_{m}\subset\bbg^{n-l}_{m}
$$
is a hypersurface, which implies that the ideal generated by $\widetilde{f}(y_1,\dots,y_n),y_{1},\dots,y_{l}$ defines a smooth subvariety in $\bba^{l}\times\bbg^{n-l}$ of codimension $1+l$. Thus there exists an open neighborhood $U_{P}$ of $P$ with global coordinates $z_{1},\dots,z_{n}\in\os_{U_{P}}(U_{P})$ such that $z_{1}=\widetilde{f}(y_1,\dots,y_n)$ and $z_{j}=y_{j-1}$ for $2\leq j\leq l+1$. After possibly shrinking $U_{P}$, the assertion concerning the Jacobian $g$ follows from \eqref{jacobian}. 
\qed

By Lemma \ref{minimal exponent hypersurface upper bound}, we have the upper bound
$$
\widetilde{\alpha}_{z}(f)\leq \widetilde{\alpha}(P(f)).
$$ 
Therefore, it suffices to prove the reverse inequality. Let $P \in (\pi\circ p\circ q)^{-1}(z)$ be a closed point. If we let $U_{P}$ be an open neighborhood of $P$ as in Claim \ref{Newton claim}, then by Corollary \ref{combinatorial lower bound first corollary} in case i) and Corollary \ref{combinatorial lower bound second corollary} in case ii), we have
$$
\widetilde{\alpha}(\pi_{P}^{*}f;g)\geq \widetilde{\alpha}( (d(v_{j},P(f)))^{l}_{j=1}; (|v_{j}|-1)^{l}_{j=1} ).
$$
For each cone $\tau\in \Sigma'$, we denote by $[\tau]$ the set of primitive ray generators of $\tau$, and we equip $[\tau]$ with some ordering. Then by Theorem \ref{minimal exponent lower bound}, we have
$$
\widetilde{\alpha}_{z}(f) \geq \min_{\tau\in\Sigma'} \widetilde{\alpha}( (d(v,P(f)))_{v\in [\tau]}; (|v|-1)_{v\in [\tau]} ),
$$
The desired lower bound $\widetilde{\alpha}_{z}(f)\geq \widetilde{\alpha}(P(f))$ then follows from Theorem \ref{minimal exponent fan}. \qed

\noindent{\it Proof of Theorem \ref{Khovanskii minimal exponent}}. By Remark \ref{finite minimal exponent several newton poly}, we have
$$
\widetilde{\alpha}(P(f_{1}),\dots,P(f_{r}))=1/ \min\bigg\{t>0 \ \big| \ (t,\dots,t) \in \bigcap_{i=1}^r \pi_i^{-1}(\pi_i(P(g)))\bigg\},
$$
hence it suffices to show that $\widetilde{\alpha}_{0}(Z)=\widetilde{\alpha}(P(f_{1}),\dots,P(f_{r}))$
By Corollary \ref{minimal exponent upper bound}, we have the upper bound $\widetilde{\alpha}_{0}(Z)\leq \widetilde{\alpha}(P(f_{1}),\dots,P(f_{r}))$. By Lemma \ref{minimal exponent in terms of hypersurface}, we have
$$
\widetilde{\alpha}_{0}(Z)=\min_{1\leq i\leq r}\min_{u_{i}\in U_{i}} \widetilde{\alpha}_{(0,u_{i})}(g_i).
$$
Since $g_{i}$ is $\{z_1,\dots,\widehat{z}_i,\dots,z_r\}$-globally Newton non-degenerate by Lemma \ref{Newton implications}, we obtain $\widetilde{\alpha}_{(0,u_{i})}(g_i)\geq \widetilde{\alpha}(P(g_i))$ for each $i$ and each $u_{i}\in U_{i}$ by Lemma \ref{Newton minimal exponent lower bound}, hence
$$
\widetilde{\alpha}_{0}(Z)\geq \min_{1\leq i\leq r}\widetilde{\alpha}(P(g_i))=\widetilde{\alpha}(P(f_{1}),\dots,P(f_{r})).\qed
$$
We record the following interesting consequence of

\begin{remark}
Consider the local Bernstein-Sato polynomial $b_{f,0}(s)$ of $f$, which is given by $b_{f,0}(s)=b_{f|_{U}}(s)$ for all sufficiently small open neighborhoods $U$ of $0\in\bba^{n}$. If $f$ is Newton non-degenerate, then by the proof of Lemma \ref{Newton minimal exponent lower bound}, Remark \ref{local setup}, and \cite[Theorem 1.1]{Lee26}, every root of $b_{f,0}(s)$ is either a negative integer or of the form 
$$
\frac{|v_Q|+\ell}{d(v_Q,P(f))}
$$
for some facet $Q$ of $P(f)$ not contained in any coordinate hyperplane and some integer $\ell\geq 0$, where $v_Q$ is the primitive ray generator of $\bbr_{\geq 0} \cdot L_Q$.
\end{remark}

\section{Khovanskii non-degenerate complete intersections with nested Newton polyhedra}\label{Nested non-degenerate complete intersections}

The goal of this section is to prove Theorem \ref{nested Khovanskii minimal exponent}. We begin with the following two lemmas. The second shows how condition i) of Theorem \ref{nested Khovanskii minimal exponent} will be used in the proof of the theorem.

\begin{lemma}\label{positive equivalence}
Let $P_1,\dots,P_r \subset \bbr^n$ be Newton polyhedra that do not contain the origin. Then the following are equivalent:
\begin{enumerate}
\item[i)] For all $i,j\in \{1,\dots,r\}$, there exists a real number $\epsilon>0$ such that $P_{i}\subset \epsilon\cdot P_{j}$.
\item[ii)] For all $i,j\in \{1,\dots,r\}$ and all facets $Q$ of $P_j$ that are not contained in any coordinate hyperplane, we have $d(L_Q,P_i)>0$. 
\end{enumerate}
\end{lemma}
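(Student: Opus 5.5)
Both directions reduce to comparing support functions via the half-space description \eqref{half space intersection}, which applies to each $P_j$ because $0\notin P_j$:
$$
P_j=\bigcap_Q\{u\in\bbr^n_{\geq 0}\ |\ L_Q(u)\geq 1\},
$$
with $Q$ running over the facets of $P_j$ not contained in any coordinate hyperplane. Since $L_Q\in(\bbr^n)^*_{\geq 0}$, we have $d(L_Q,\epsilon P_j)=\epsilon$ for $\epsilon>0$.

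For i) $\Rightarrow$ ii), fix $i,j$ and such a facet $Q$ of $P_j$, and choose $\epsilon>0$ with $P_i\subset\epsilon P_j$. Then
$$
d(L_Q,P_i)\geq d(L_Q,\epsilon P_j)=\epsilon\, d(L_Q,P_j)=\epsilon>0.
$$

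For ii) $\Rightarrow$ i), fix $i,j$ and let $Q$ range over the finitely many facets of $P_j$ not contained in any coordinate hyperplane. Set
$$
\epsilon=\min_Q d(L_Q,P_i),
$$
which is positive by ii). If $P_j$ has no such facets, then $P_j=\bbr^n_{\geq 0}$ by \eqref{half space intersection}, which is excluded since $0\notin P_j$. For $u\in P_i$ and each such $Q$ we have $L_Q(u)\geq d(L_Q,P_i)\geq\epsilon$, so $L_Q(\epsilon^{-1}u)\geq 1$. Also $u\in\bbr^n_{\geq 0}$, so \eqref{half space intersection} gives $\epsilon^{-1}u\in P_j$. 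Hence $P_i\subset\epsilon P_j$.

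The only point needing care is the finiteness of the set of facets, which is what makes the minimum positive. This holds because a Newton polyhedron is $P(B)$ for a finite set $B$, as recalled in Section \ref{Newton polyhedra}, so it is a polyhedron with finitely many faces. The rest of the argument is routine.
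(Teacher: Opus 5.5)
Your proposal is correct and follows essentially the same route as the paper. For i)$\Rightarrow$ii) you use $d(L_Q,P_i)\geq d(L_Q,\epsilon P_j)=\epsilon$, and for ii)$\Rightarrow$i) you set $\epsilon=\min_Q d(L_Q,P_i)$ and conclude $\epsilon^{-1}P_i\subset P_j$ from the half-space description \eqref{half space intersection}; your explicit remarks on nonemptiness and finiteness of the set of facets are details the paper leaves implicit.
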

\proof Fix $i,j\in \{1,\dots,r\}$. It suffices to show that there exists a real number $\epsilon>0$ such that $P_{i}\subset \epsilon\cdot P_{j}$ if and only if we have $d(L_Q,P_i)>0$ for all facets $Q$ of $P_j$ that are not contained in any coordinate hyperplane. If there exists a real number $\epsilon>0$ such that $P_{i}\subset \epsilon\cdot P_{j}$, then for all facets $Q$ of $P_j$ that are not contained in any coordinate hyperplane, we have $d(L_{Q},P_{i})\geq d(L_{Q},\epsilon\cdot P_{j})=\epsilon$. Conversely, assume that for all $i,j\in \{1,\dots,r\}$ and all facets $Q$ of $P_j$ that are not contained in any coordinate hyperplane, we have $d(L_Q,P_i)>0$. Set $\epsilon=\min_{Q}d(L_Q,P_i)$, where the minimum runs over the facets $Q$ of $P_{j}$ that are not contained in any coordinate hyperplane. Then $\epsilon>0$ and $d(L_{Q},P_{i})\geq \epsilon=d(L_{Q},\epsilon\cdot P_{j})$ for all facets $Q$ of $P_{j}$ that are not contained in any coordinate hyperplane, which implies that $d(L_{Q},\epsilon^{-1}\cdot P_{i})\geq 1$. Thus by \eqref{half space intersection}, we obtain $\epsilon^{-1}\cdot P_{i}\subset P_{j}$. \qed

\begin{lemma}\label{positive isomorphism}
Let $P_1,\dots,P_r \subset \bbr^n$ be Newton polyhedra that do not contain the origin such that for all $i,j\in \{1,\dots,r\}$, there exists a real number $\epsilon>0$ such that $P_{i}\subset \epsilon P_{j}$, and let $e_1,\dots,e_n$ denote the standard basis of $\bbr^{n}$ and let $e^{*}_1,\dots,e^{*}_n\in(\bbr^{n})^{*}$ denote the corresponding dual basis. If we denote the normal fan of the Minkowski sum $\sum_{i=1}^r P_i$ by $\Sigma$, then for every primitive ray generator $v$ of $\Sigma$ other than $e_{1}^{*},\dots,e_{n}^{*}$, we have $d(v,P_{i})>0$ for all $i$.
\end{lemma}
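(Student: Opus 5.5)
The approach is to describe the rays of $\Sigma$ explicitly and then combine this with Lemma \ref{positive equivalence}. Put $P=\sum_{i=1}^r P_i$. By \eqref{rays} applied to $P$, every ray of $\Sigma$ other than those spanned by $e_1^*,\dots,e_n^*$ is spanned by $L_Q$ for a facet $Q$ of $P$ not contained in any coordinate hyperplane. So it suffices to fix such a facet $Q$, write $v$ for the primitive generator of $\bbr_{\geq 0}L_Q$, and show $d(v,P_i)>0$ for every $i$. Since $d(v,P_i)$ is positively homogeneous in $v$, we may work with $v=L_Q$ itself.

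The key step is to transfer the facet $Q$ of $P$ to the summands. By Section \ref{Minkowski sums}, $Q=\sum_i Q_i$, where $Q_i=F(L_Q,P_i)$, and \eqref{cones normal fan sum} gives $\relint\sigma_Q=\bigcap_i\relint\sigma_{Q_i}$. Here $\sigma_Q$ is the ray $\bbr_{\geq 0}L_Q$, and each $\sigma_{Q_i}$ is a cone of $\Sigma_{P_i}$ containing $L_Q$ in its relative interior. Now $L_Q$ lies in the relative interior of some cone $\sigma$ of the normal fan $\Sigma_{P_j}$ of $P_j$, and $\sigma\neq\{0\}$ because $L_Q\neq 0$. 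Let $w_1,\dots,w_l$ be the primitive ray generators of $\sigma$. By \eqref{relative interior}, $L_Q=\sum_k a_k w_k$ with all $a_k>0$. By \eqref{rays} applied to $P_j$, each $w_k$ is either some $e_m^*$ or a positive multiple of $L_{Q'}$ for a facet $Q'$ of $P_j$ not contained in any coordinate hyperplane.

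Next, use Lemma \ref{closure lemma} i) for the fan $\Sigma_{P_j}$, which refines itself and contains the cone $\sigma$. This gives $d(L_Q,P_i)$ only when $\Sigma_{P_j}$ refines $\Sigma_{P_i}$, so instead I would argue directly with the inequality $d(\sum_k a_kw_k,P_i)\geq \sum_k a_k d(w_k,P_i)$. This inequality holds because $d(\cdot,P_i)$ is a minimum of linear functions and is therefore superadditive. By Lemma \ref{positive equivalence} ii), $d(L_{Q'},P_i)>0$ for every facet $Q'$ of $P_j$ not contained in any coordinate hyperplane, and the terms coming from $e_m^*$ are $\geq 0$. So it suffices to show that some $w_k$ is not a coordinate covector $e_m^*$.

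This last point is the main obstacle. Suppose instead that all $w_k$ are coordinate covectors. Then $L_Q$ lies in a coordinate face of $(\bbr^n)^*_{\geq 0}$ whose relative interior is contained in a cone of $\Sigma_{P_j}$ spanned only by $e_m^*$'s, and that cone then corresponds to a face of $P_j$ that is unbounded in all remaining directions. Choose $j$ with $d(L_Q,P_j)>0$; such a $j$ exists since $d(L_Q,P)=1=\sum_i d(L_Q,P_i)$. For the cone $\sigma=\mathrm{cone}(e_m^*:m\in J)$, Lemma \ref{closure lemma} i) for $\Sigma_{P_j}$ gives $d(L_Q,P_j)=\sum_{m\in J} a_m d(e_m^*,P_j)$. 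The face $F(L_Q,P_j)$ equals $\bigcap_{m\in J}F(e_m^*,P_j)$, and this face absorbs every $e_k$ with $k\notin J$. Then $Q_j$ is invariant under those translations, and so is $Q$ for this choice of $j$.

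I would then derive a contradiction with $Q$ being a facet whose functional $L_Q$ has support exactly $J$, using Lemma \ref{facet uniqueness} b). That lemma says $L_Q(e_k)=0$ exactly for the $k\notin J$, so this case is consistent with $\supp L_Q = J$, and the remaining task is to show $d(L_Q,P_i)>0$ for every $i$ directly. For each $i$ with $d(L_Q,P_i)=0$, the vector $L_Q$, supported on $J$, has $0$ in the projection $p_J(P_i)$. On the other hand, $d(L_Q,P_j)>0$ means $0\notin p_J(P_j)$, and Lemma \ref{positive equivalence} i) gives $P_i\subset\epsilon P_j$. Projecting, $0\in p_J(P_i)\subset\epsilon\, p_J(P_j)$, which is a contradiction. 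This projection argument in fact handles every case at once, so I would streamline the proof to it. Take $j$ with $d(L_Q,P_j)>0$ and let $J=\supp L_Q$. Then $0\notin p_J(P_j)$. If $d(L_Q,P_i)=0$, then $0\in p_J(P_i)\subset \epsilon\,p_J(P_j)$, a contradiction. Hence $d(v,P_i)>0$ for all $i$.
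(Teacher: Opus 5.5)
Your final, streamlined argument is correct, and it takes a different and shorter route than the paper. You apply \eqref{rays} to the Minkowski sum $P=\sum_i P_i$ itself, so a non-coordinate primitive ray generator is $v=cL_Q$ with $c>0$ and $d(L_Q,P)=1$. Additivity $d(L_Q,P)=\sum_i d(L_Q,P_i)$ then yields some $j$ with $d(L_Q,P_j)>0$, and the containment $P_i\subset\epsilon P_j$ gives positivity for every $i$. The projection $p_J$ is superfluous: $P_i\subset\epsilon P_j$ directly gives $d(L_Q,P_i)\geq\epsilon\, d(L_Q,P_j)>0$.

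The paper instead works with the individual normal fans:
\begin{itemize}
\item Using \eqref{cones normal fan sum}, it writes $\bbr_{>0}v=\bigcap_i\relint\sigma_i$ with $\sigma_i$ a cone of the normal fan of $P_i$.
\item It notes that some $\sigma_{i_0}$ must have a non-coordinate ray generator $L_{Q'}$ for a facet $Q'$ of $P_{i_0}$; otherwise $v$ would be a coordinate covector.
\item It writes $v$ as a positive combination of the generators of $\sigma_{i_0}$ and concludes by superadditivity of $d(\cdot,P_i)$ together with Lemma \ref{positive equivalence} ii).
\end{itemize}
Your version bypasses both Lemma \ref{positive equivalence} and the common-refinement description. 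It also shows a little more: under the hypothesis, for any $v\in(\bbr^n)^*_{\geq 0}$ the numbers $d(v,P_1),\dots,d(v,P_r)$ are either all zero or all positive. So the only fact about rays of $\Sigma$ you need is $d(v,P)>0$.

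In a write-up, delete the exploratory middle paragraphs, because that line of attack does not close as you set it up. If $j$ is chosen merely so that $d(L_Q,P_j)>0$, the cone of the normal fan of $P_j$ containing $L_Q$ in its relative interior can be a coordinate cone. For example, take $n=2$, $P_1=e_1+\bbr^2_{\geq 0}$, $P_2=P(\{(2,0),(1,1)\})$, $v=(1,1)$ and $j=1$. This is why the paper chooses $i_0$ via the intersection of relative interiors rather than via positivity of $d$.
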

\proof Let $\Sigma_{i}$ denote the normal fan of $P_{i}$ for each $i$. By \eqref{cones normal fan sum}, there exists a cone $\sigma_{i}\in\Sigma_{i}$ for each $i$ such that $\bbr_{>0} \cdot v=\bigcap_{i=1}^{r}\relint\sigma_{i}$. If the primitive ray generators of $\sigma_{i}$ are contained in $\{e_{1}^{*},\dots,e_{n}^{*}\}$ for all $i$, then $v\in\{e_{1}^{*},\dots,e_{n}^{*}\}$, a contradiction. Thus there exists an index $i_{0}\in\{1,\dots,r\}$ such that $\sigma_{i_{0}}$ has a primitive ray generator other than $e_{1}^{*},\dots,e_{n}^{*}$. Let $v_{1},\dots,v_{l}$ be the primitive ray generators of $\sigma_{i_{0}}$ with $v_{1}\notin\{e_{1}^{*},\dots,e_{n}^{*}\}$. By \eqref{rays}, $P_{i_{0}}$ has a facet $Q$ not contained in any coordinate hyperplane such that $v_{1}=L_{Q}$, and by \eqref{relative interior}, we can write $v=\sum_{j=1}^{l}a_{j}v_{j}$ with each $a_{j}>0$. Thus by Lemma \ref{positive equivalence}, we obtain 
$$
d(v,P_{i})\geq d(a_{1}v_{1},P_{i})=a_{1}d(L_{Q},P_{i})>0
$$
for all $i$, as desired. \qed

In the following proof, we use a toric construction associated to a simplicial refinement of the Minkowski sum of the Newton polyhedra of the $f_{i}$'s. In the case of a smooth refinement, this construction appears in \cite[]{Kho77}, \cite[]{Var77}, \cite[]{Mor84}, \cite[]{Oka90}, and \cite[]{Oka97} (see also \cite[]{VZG08}), and in the hypersurface case  when $r=1$, this appears in \cite[]{Den95}, \cite[]{BN20}, and \cite[]{AQ24}.

\noindent{\it Proof of Theorem \ref{nested Khovanskii minimal exponent}}. Let $P_{i}=P(f_{i})\subset\bbr^{n}$ for each $i$. Recall that $\Sigma_{0}$ is a fan that refines the normal fan of $P_{i}$ for each $i$. Also, recall that $d(v,f) =\min\{v(u) \ |\ u\in \supp f\}$ for $v\in (\bbr^{n})^{*}_{\geq 0}$ and nonzero $f\in R=\bbc[x_{1},\dots,x_{n}]$. Since $d(v,f_{i})=d(v,P_{i})$ for $v\in (\bbr^{n})^{*}_{\geq 0}$  and $1\leq i\leq r$, by Theorem \ref{minimal exponent nested polyhedra}, we have
$$
\widetilde{\alpha}(P_{1},\dots,P_{r})=\min_{v}
\min\{ i + \tfrac{1}{d(v,f_{i})}(|v| - d(v,f_{1})-\cdots - d(v,f_{i})) \ | \ 1\leq i\leq r,d(v,f_{i})\neq 0 \},
$$
where the outer minimum runs over the primitive ray generators $v$ of $\Sigma_{0}$. Also, by Remark \ref{finite minimal exponent several newton poly}, we have
$$
\widetilde{\alpha}(P_{1},\dots,P_{r})=1/ \min\bigg\{t>0 \ \big| \ (t,\dots,t) \in \bigcap_{i=1}^r \pi_i^{-1}(\pi_i(P(g)))\bigg\}.
$$
Thus it suffices to show that $\widetilde{\alpha}_{0}(Z)=\widetilde{\alpha}(P_{1},\dots,P_{r})$.

Let $e_1,\dots,e_n$ denote the standard basis of $\bbr^{n}$ and let $e^{*}_1,\dots,e^{*}_n\in(\bbr^{n})^{*}$ denote the corresponding dual basis. We denote the normal fan of the Minkowski sum $\sum_{i=1}^r P_i$ by $\Sigma$ and the toric variety associated to $\Sigma$ by $X$. The induced toric morphism $X\to \bba^{n}$ is projective by \cite[Theorem 7.1.10]{CLS11}. By \cite[Proposition 11.1.7]{CLS11}, there exists a simplicial refinement $\Sigma'$ of $\Sigma$ such that $\Sigma'$ and $\Sigma$ have the same set of rays and the induced toric morphism $Y\to X$ is projective, where $Y$ is the toric variety associated to $\Sigma'$. Thus, if we denote the composition $Y \to X\to\bba^{n}$ by $\pi$, then $\pi$ is projective. Note that $\pi\colon Y\to\bba^{n}$ is the toric morphism associated to $\Sigma$, as defined in Section \ref{simplicial toric resolutions}.

For the remainder of this proof, we use the notation introduced in Section \ref{simplicial toric resolutions} for the simplicial fan $\Sigma'$. In particular, recall the setup
$$
U\xrightarrow{q} U/G\xrightarrow{p} Y\xrightarrow{\pi} \bba^{n},
$$
where the disjoint union $U=\coprod_{\sigma}U'_{\sigma}$ runs over the $n$-dimensional cones $\sigma$ of $\Sigma'$, and $p$ is surjective and \'etale. Also, for an $n$-dimensional cone $\sigma$ of $\Sigma$, recall that $\pi_{\sigma}$ denotes the morphism $U'_{\sigma}\to \bba^{n}$. 

We begin by observing that $\pi$ is an isomorphism over the complement of $Z$. To see this, let $v\in(\bbr^{n})^{*}_{\geq 0}$ be a primitive ray generator of $\Sigma'$ other than $e_{1}^{*},\dots,e_{n}^{*}$. By Lemma \ref{isom lemma}, it suffices to show that $d(v,P_{i})>0$ for each $i$. This follows from Lemma \ref{positive isomorphism} since $\Sigma$ has the same set of rays as $\Sigma$.

For the following claim, we say that a reindexing of $f_{1},\dots,f_{r}$ is Newton-preserving if it is induced by a permutation $\rho \in S_{r}$ such that $\deg f_{\rho(i)}= P_{i}$ for each $i$.

\begin{claim}\label{Khovanskii claim}
Given a closed point $P \in (\pi\circ p\circ q)^{-1}(0)$, there exists an $n$-dimensional cone $\sigma\in \Sigma'$ with primitive ray generators $v_{1},\dots,v_{n}$, an open neighborhood $U_{P}$ of $P$ with global coordinates $z_{1},\dots,z_{r'},t_{1},\dots,t_{n-r'}\in\os_{U_{P}}(U_{P})$ for some nonnegative integer $r'\leq r$, and a nonnegative integer $l\leq n- r'$, such that if we denote by $\pi_{P}$ the morphism $U_{P}\to \bba^{n}$ and by $g$ the Jacobian of $\pi_{P}$, then one of the following cases holds, depending on the value of $r'$:
\begin{enumerate}
\item[i)] $r'=r$. We have $\pi_{P}^{*}f_{i}=z_{i}\prod_{j=1}^l t_j^{d(v_j,P_i)}$ for $i\leq r$ and $g=u\prod_{j=1}^l t_j^{|v_{j}|-1}$ for some invertible function $u\in\os_{U_{P}}(U_{P})$. 
\item[ii)] $r'<r$. After a Newton-preserving reindexing of $f_{1},\dots,f_{r}$, if necessary, we have $\pi_{P}^{*}f_{i}=z_{i}\prod_{j=1}^l t_j^{d(v_j,P_i)}$ for $i\leq r'$, $\pi_{P}^{*}f_{r'+1}=v\prod_{j=1}^l t_j^{d(v_j,P_{r'+1})}$, and $g=u\prod_{j=1}^l t_j^{|v_{j}|-1}$ for some invertible functions $u,v\in\os_{U_{P}}(U_{P})$. Moreover, $\pi_{P}^{*}f_{i}$ is contained in the ideal generated by $\pi_{P}^{*}f_{r'+1}$ for $i> r'+1$.
\end{enumerate}
Furthermore, in both cases, $(\pi\circ p\circ q)^{-1}(0)\cap U_{P}$ is (set-theoretically) defined by the ideal generated by $t_{1},\dots,t_{l}$.
\end{claim}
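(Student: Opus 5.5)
The plan is to imitate the proofs of Claims \ref{weighted claim} and \ref{Newton claim}, working in a chart $U'_\sigma=\Spec\bbc[y_1,\dots,y_n]$.

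\textbf{Setting up the chart.} Pick an $n$-dimensional cone $\sigma\in\Sigma'$ with $P\in U'_\sigma$. By Lemma \ref{exceptional} (with $I=\emptyset$), $P$ lies in an orbit $O(\tau)$ with $\relint\tau\cap(\bbr^n)^*_{>0}\neq\emptyset$. After reindexing, the rays of $\tau$ are $v_1,\dots,v_l$, so $y_1(P)=\cdots=y_l(P)=0$ and $y_j(P)\neq 0$ for $j>l$. Since $\Sigma'$ refines the normal fan of each $P_i$ (via \eqref{cones normal fan sum}), \eqref{toric monomial} and Lemma \ref{Newton pull} give
$$
\pi_\sigma^*f_i=\widetilde f_i\prod_{j=1}^l y_j^{d(v_j,P_i)}
\quad\text{with}\quad
\widetilde f_i(0,\dots,0,y_{l+1},\dots,y_n)\prod_{j\le l}y_j^{d(v_j,P_i)}=\pi_\sigma^*(f_{i,F(v,P_i)})
$$
for any $v\in\relint\tau$. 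Choose such a $v$ with $v\in(\bbr^n)^*_{>0}$. The Jacobian is $u\prod_{j\le l}y_j^{|v_j|-1}$ by \eqref{jacobian}, after shrinking.

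\textbf{Defining $r'$.} This mimics Claim \ref{weighted claim}, using the nested condition ii). The polyhedra are totally ordered, so the indices split into consecutive blocks of equal $P_i$. Let $i$ be the first index with $\widetilde f_{i+1}(P)\neq0$; if there is none, set $r'=r$. Otherwise let $r'$ be the largest $j\le i$ with $P_j\neq P_{i+1}$, and swap $r'+1$ and $i+1$; this is Newton-preserving.

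Then $f_{r'+1}$ pulls back to a unit times $\prod_j y_j^{d(v_j,P_{r'+1})}$ near $P$. For $k>r'+1$ we have $P_k\subset P_{r'+1}$, hence $d(v_j,P_k)\ge d(v_j,P_{r'+1})$, so $\pi^*f_k$ lies in the ideal of $\pi^*f_{r'+1}$. By the choice of $r'$, $r'$ is either $r$ or an index with $P_{r'}\neq P_{r'+1}$. Condition iii) therefore makes $(f_1,\dots,f_{r'})$ Khovanskii non-degenerate.

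\textbf{The transversality step (main obstacle).} I must show that $\widetilde f_1,\dots,\widetilde f_{r'},y_1,\dots,y_l$ have independent differentials at $P$. Since $\pi_\sigma$ is étale on tori, Khovanskii non-degeneracy at $v$ makes $V(\pi_\sigma^*f_{k,F(v,P_k)})_{k\le r'}\cap\bbg^n_m$ smooth of codimension $r'$. Dividing by the monomials, this says $V(\widetilde f_k(0,y''))_{k\le r'}\cap\bbg^{n-l}_m$ is smooth of codimension $r'$. I need to pass from this to the statement in $\bbg^n_m$ where $y_1,\dots,y_l$ still vary; the defining functions do not involve $y_1,\dots,y_l$, so smoothness transfers. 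The point $P$ has $y''(P)\in\bbg^{n-l}_m$ and $\widetilde f_k(P)=0$ for $k\le r'$, so the $d\widetilde f_k$ restricted to the $y''$-directions are independent there. Together with $dy_1,\dots,dy_l$, this gives global coordinates $z_k=\widetilde f_k$ ($k\le r'$), $t_j=y_j$ ($j\le l$), completed by the remaining $y$'s, on a neighborhood $U_P$. This yields cases i) and ii) with the Jacobian as stated.

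The care needed is in the choice of $v\in\relint\tau$ with all coordinates positive, so that the \emph{local} Khovanskii condition applies, and in the fact that $F(v,P_k)$ depends only on $\tau$.

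\textbf{The exceptional locus.} Finally, $(\pi\circ p\circ q)^{-1}(0)\cap U'_\sigma$ is the union of orbits $O(\tau')$ with $\relint\tau'\subset(\bbr^n)^*_{>0}$, by Lemma \ref{exceptional}. Near $P$ only faces containing $\tau$ occur, and all such faces meet the positive orthant. So after shrinking, this locus is $V(t_1,\dots,t_l)$ set-theoretically.
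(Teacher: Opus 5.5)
Up to your last paragraph, this is the paper's own proof. You use the same chart $U'_\sigma$ and the same face $\tau$ from Lemma \ref{exceptional}, and the same definition of $r'$ with the Newton-preserving swap. The divisibility coming from $P_1\supset\cdots\supset P_r$ is also the paper's. So is the use of condition iii) at some $v\in\relint\tau\cap(\bbr^n)^*_{>0}$ to make $\widetilde f_1,\dots,\widetilde f_{r'},y_1,\dots,y_l$ part of a coordinate system at $P$. Your block-triangular Jacobian argument spells out what the paper states in one line. Cases i) and ii), including the Jacobian formula, are therefore fine.

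The gap is the final paragraph. The sentence ``near $P$ only faces containing $\tau$ occur'' has the inclusion backwards. Take a neighborhood of $P\in O(\tau)$ on which $y_{l+1}\cdots y_n$ is invertible. The orbits meeting it are the $O(\tau')$ with $\tau'$ a face \emph{of} $\tau$, not faces containing $\tau$. So by Lemma \ref{exceptional}, $(\pi\circ p\circ q)^{-1}(0)\cap U_P$ is the union of the $V(t_j \mid j\in J)$ over those $J\subset\{1,\dots,l\}$ such that every $k$ has $v_j(e_k)>0$ for some $j\in J$. This set always contains $V(t_1,\dots,t_l)$ and is contained in $V(t_1\cdots t_l)$. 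It can, however, be strictly larger than $V(t_1,\dots,t_l)$, and then no choice of coordinates rescues the assertion.

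Here is a counterexample. Take $n=2$, $r=1$, $f_1=x_1^3+x_1x_2+x_2^3$. This is Newton non-degenerate and satisfies all hypotheses of Theorem \ref{nested Khovanskii minimal exponent}. Its normal fan is already simplicial, so $\Sigma'=\Sigma$. It contains the cone $\sigma$ spanned by $v_1=(1,2)$ and $v_2=(2,1)$, on which $\pi_\sigma^*x_1=y_1y_2^2$ and $\pi_\sigma^*x_2=y_1^2y_2$. Near the origin $P$ of $U'_\sigma$, the fiber over $0$ is the node $V(y_1y_2)$. A node is not the zero locus of part of a coordinate system.

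So the ``Furthermore'' sentence cannot be proved as stated. What is true is $V(t_1,\dots,t_l)\subset(\pi\circ p\circ q)^{-1}(0)\cap U_P\subset V(t_1\cdots t_l)$. The first inclusion is an equality exactly when no proper face of $\tau$ has relative interior meeting $(\bbr^n)^*_{>0}$. That is what happens in Claim \ref{weighted claim}, where the fiber is $V(y_n)$. The paper's proof does not address this sentence, and the rest of the proof of Theorem \ref{nested Khovanskii minimal exponent} never uses it: part ii) of Theorem \ref{minimal exponent lower bound} only needs the $U_P$ to cover the fiber. You should therefore replace that sentence by these inclusions rather than try to prove it.
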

\noindent{\it Proof of Claim \ref{Khovanskii claim}}. Let $\sigma\in \Sigma'$ be an $n$-dimensional cone such that $P\in \pi^{-1}_{\sigma}(0)$, and denote the primitive ray generators of $\sigma$ by $v_{1},\dots,v_{n}$. Recall that $U'_{\sigma}=\Spec \bbc[y_{1},\dots,y_{n}]$, with $y_{i}$ corresponding to $v_{i}$ for each $i$. By Lemma \ref{exceptional}, there exists a face $\tau$ of $\sigma$ such that $\relint(\tau)\cap(\bbr^{n})^{*}_{>0}\neq \emptyset$ and $P\in O(\tau)$. By reindexing $v_{1},\dots,v_{n}$ (and $y_{1},\dots,y_{n}$ accordingly), we may assume that the primitive ray generators of $\tau$ are given by $v_{1},\dots,v_{l}$ for some $l\leq n$, so that 
$$
O(\tau)=0\times\bbg^{n-l}_{m},
$$
where $0\in\bba^{l}=\Spec\bbc[y_{1},\dots,y_{l}]$. In particular, we have
$$
y_{1}(P)=\cdots=y_{l}(P)=0.
$$
For $1\leq i\leq r$, we write $f_{i}=\sum_u c_{iu}x^{u}$ and put
$$
\widetilde{f}_{i}(y_1,\dots,y_n) =  \sum_u c_{iu} \prod_{j>l} y_j^{v_j(u)}  \prod_{j=1}^l y_j^{v_j(u)-d(v_j, P_{i})}\in \bbc[y_1,\dots,y_n].
$$
By \eqref{toric monomial}, we have
$$
\pi_{\sigma}^{*}f_{i} = \sum_u c_{iu} \prod_{j=1}^n y_j^{v_j(u)}
= \widetilde{f}_{i}(y_1,\dots,y_n)\prod_{j=1}^l y_j^{d(v_j, P_{i})}.
$$
We define $r'$ as follows. Let $P_{0}=\bbr^{n}_{\geq 0}$, and note that $P_{0}\neq P_{i}$ for $1\leq i\leq r$ since $f_{i}(0)=0$. If $\widetilde{f}_1(P)=\cdots= \widetilde{f}_{r}(P)=0$, then define $r'=r$. If $\widetilde{f}_1(P)=\cdots= \widetilde{f}_{i}(P)=0$ and $\widetilde{f}_{i+1}(P)\neq 0$ for some $i<r$, then define $r'$ as the largest nonnegative integer $j\leq i$ such that $P_{j}\neq P_{i+1}$, in which case we also perform a Newton-preserving reindexing of $f_{1},\dots,f_{r}$ by swapping the indices $r'+1$ and $i+1$. Since $P_{1}\supset \cdots \supset P_{r}$, we have $d(v,P_{1})\leq \cdots\leq  d(v,P_{r})$ for all $v\in (\bbr^{n})^{*}_{\geq 0}$, which implies that $\prod_{j=1}^l y_j^{d(v_j, P_{i})}$ divides $\prod_{j=1}^l y_j^{d(v_j, P_{i+1})}$ for each $i\leq r-1$. Thus, if $r'<r$, then there exists an open neighborhood $B_{P}$ of $P$ such that $\pi_{P}^{*}f_{i}|_{B_{P}}$ is contained in the ideal generated by $\pi_{P}^{*}f_{r'+1}|_{B_{P}}$ for all $i> r'+1$. 

Next, fix some element $v\in \relint(\tau)\cap (\bbr^{n})^{*}_{>0}$. Since $\pi_{\sigma}\colon \bbg^{n}_{m}\to\bbg^{n}_{m}$ is \'etale, by the assumptions of the theorem, the closed subscheme
$$
V(\pi^{*}_{\sigma}f_{1,F(v,P_{1})},\dots,\pi^{*}_{\sigma}f_{r',F(v,P_{r'})})\cap\bbg^{n}_{m}\subset\bbg^{n}_{m}
$$
is a smooth subvariety of codimension $r'$. By Lemma \ref{Newton pull}, we have
$$
\pi^{*}_{\sigma}(f_{i,F(v,P_{i})})=\widetilde{f}_{i}(0,\dots,0, y_{l+1},\dots,y_{n})\prod_{j=1}^l y_j^{d(v_j, P_{i})}.
$$
Therefore, if we view $\widetilde{f}_{i}(0,\dots,0, y_{l+1},\dots,y_{n})\in \bbc[y_{l+1},\dots,y_{n}]$ as a regular function on $\bba^{n-l}$ for each $i$, then
$$
V(\widetilde{f}_{i}(0,\dots,0, y_{l+1},\dots,y_{n})\ |\ 1\leq i\leq r')\cap\bbg^{n-l}_{m}\subset\bbg^{n-l}_{m}
$$
is a smooth subvariety of codimension $r'$, which implies that the ideal generated by
$$
\widetilde{f}_1(y_1,\dots,y_n),\dots,\widetilde{f}_{r'}(y_1,\dots,y_n),y_{1},\dots,y_{l}
$$
defines a smooth subvariety in $\bba^{l}\times\bbg^{n-l}$ of codimension $r'+l$. Thus there exists an open neighborhood $U_{P}$ of $P$ with global coordinates $z_{1},\dots,z_{r'},t_{1},\dots,t_{n-r'}\in\os_{U_{P}}(U_{P})$ such that $z_{i}=\widetilde{f}_{i}(y_1,\dots,y_n)$ for $i\leq r'$ and $t_{j}=y_{j}$ for $j\leq l$, and with $U_{P}\subset B_{P}$ if $r'<r$. Finally, after possibly shrinking $U_{P}$, the assertions concerning the Jacobian $g$ in both cases of the claim follow from \eqref{jacobian}. 
\qed

By Corollary \ref{minimal exponent upper bound}, we have the upper bound
$$
\widetilde{\alpha}_{0}(Z)\leq \widetilde{\alpha}(P_{1},\dots,P_{r}).
$$ 
Therefore, it suffices to prove the reverse inequality. Let $P \in (\pi\circ p\circ q)^{-1}(0)$ be a closed point. If we let $U_{P}$ be an open neighborhood of $P$ as in Claim \ref{Khovanskii claim}, then by Corollary \ref{combinatorial lower bound first corollary} in case i) and Corollary \ref{combinatorial lower bound second corollary} in case ii), we have
$$
\widetilde{\alpha}(\pi_{P}^{*}f_{1},\dots,\pi_{P}^{*}f_{r};g)\geq \widetilde{\alpha}( (d(v_{j},P_1))^{l}_{j=1} ,\dots, (d(v_{j},P_r))^{l}_{j=1}; (|v_{j}|-1)^{l}_{j=1} ).
$$
For each cone $\tau\in \Sigma'$, we denote by $[\tau]$ the set of primitive ray generators of $\tau$, and we equip $[\tau]$ with some ordering. Then by Theorem \ref{minimal exponent lower bound}, we have
$$
\widetilde{\alpha}_{0}(Z) \geq \min_{\tau\in\Sigma'} \widetilde{\alpha}( (d(v,P_1))_{v\in [\tau]} ,\dots, (d(v,P_r))_{v\in [\tau]}; (|v|-1)_{v\in [\tau]} ),
$$
The desired lower bound $\widetilde{\alpha}_{0}(Z)\geq \widetilde{\alpha}(P_{1},\dots,P_{r})$ then follows from Theorem \ref{minimal exponent fan}. \qed

The following example is inspired by the example in \cite[Example 6.10]{BA07}. 

\begin{example}\label{nested newton polygons example}
Let $e_{1},e_{2},e_{3}$ denote the standard basis of $\bbr^{3}$. We identify $(\bbr^{3})^{*}$ with $\bbr^{3}$ via the standard evaluation pairing. Consider the polynomials 
$$
f_1=x^2+y^2+z^{2}
\ \ \ \ \text{and}
\ \ \ \
f_2 = x^6+2x^4y+x^2y^2+y^6+z^{6}
$$ 
and set $Z=V(f_{1},f_{2})\subset\bba^{3}$ and $P_{i}=P(f_{i})\subset  \bbr^{3}$ for $i=1,2$. Note that $Z$ is of codimension $2$. Indeed, $V(f_1)$ is irreducible, and $V(f_1)$ is not contained in $V(f_{2})$ since $f_{1}(\sqrt{-1},1,0)=0$ and $f_{2}(\sqrt{-1},1,0)=1$. It is easy to see that $f_{1}$ and the $2$-tuple $(f_{1},f_{2})$ are globally Khovanskii Newton non-degenerate. On the other hand, $f_2$ is not Newton non-degenerate, since if we put $Q=F((1,2,2),P_{2})$, then $f_{2,Q} = x^6+2x^4y+x^2y^2$ and
$$
(\sqrt{-1},1,1) \in V\bigg(f_{2,Q}, \frac{\partial f_{2,Q}}{\partial x}, \frac{\partial f_{2,Q}}{\partial y},\frac{\partial f_{2,Q}}{\partial z} \bigg).
$$
Let $\Sigma_{i}$ denote the normal fan of $P_{i}$ for $i=1,2$, and set $v=(1,2,1)$ and $w= (2,1,1)$. For $j=1,2,3$, let $\tau_{j}$ denote the $2$-dimensional cone of $\Sigma_{1}$ whose set of primitive ray generators is given by $\{(1,1,1),e_{j}\}$, and note that $w\in \relint\tau_{1}$ and $v\in \relint\tau_{2}$. Observe that $\Sigma_{2}$ has exactly four $3$-dimensional cones $\sigma_{1}$, $\sigma_{2}$, $\sigma_{3}$, and $\sigma_{4}$, whose sets of primitive ray generators are given by 
$$
\text{$\{v,w,e_{3}\}$, $\{v,w,e_{1},e_{2}\}$, $\{w,e_{1},e_{3}\}$, and $\{v,e_{2},e_{3}\}$,}
$$
respectively. Also, note that $(1,1,1)\in  \relint\sigma_{1}$. Let $\Sigma_{0}$ denote the fan obtained by taking the star subdivision of $\Sigma_{2}$ at $(1,1,1)$, as defined in \cite[page 515]{CLS11}. Then $\Sigma_{0}$ refines both $\Sigma_{1}$ and $\Sigma_{2}$, and the primitive ray generators of $\Sigma_{0}$ are given by $(1,1,1),v,w,e_{1},e_{2},e_{3}$. Since $d((1,1,1),P_{1})=2$, $d((1,1,1),P_{2})=4$, $d(v,P_{1})=d(w,P_{1})=2$, $d(v,P_{2})=d(w,P_{2})=6$, and $d(e_{j},P_{i})=0$ for all $i,j$, by Theorem \ref{nested Khovanskii minimal exponent}, we obtain
$$
\widetilde{\alpha}_{0}(Z)=\min\{1+\tfrac{3-2}{2},2+\tfrac{3-2-4}{4},1+\tfrac{4-2}{2},2+\tfrac{4-2-6}{6}\}=\tfrac{5}{4}.
$$
Finally, we explain why the calculation of $\widetilde{\alpha}_{0}(Z)$ does not naturally reduce to the calculation of the minimal exponent of a Newton non-degenerate hypersurface. Set 
$$
g_{1}=f_1+z_{2}f_2
\ \ \ \ \text{and}\ \ \ \
g_{2}=z_{1}f_1+f_2.
$$ 
By Theorem \ref{minimal exponent nested polyhedra}, we have
$$
\widetilde{\alpha}_{0}(Z)=\widetilde{\alpha}(P_{1},P_{2})=\min\{\widetilde{\alpha}(P(g_{1})),\widetilde{\alpha}(P(g_{2}))\}.
$$
Note that $P(g_{1})=\bbr_{\geq 0}\times P(f_{1})$ and $\widetilde{\alpha}(P(g_{1}))=\tfrac{3}{2}$. It is easy to see that $g_{1}$ is globally Newton non-degenerate. Thus, by Remark \ref{Newton minimal exponent}, we have $\widetilde{\alpha}(g_{1})=\widetilde{\alpha}(P(g_{1}))=\tfrac{3}{2}$, which implies that
$$
\widetilde{\alpha}_{0}(Z)=\widetilde{\alpha}(P(g_{2}))<\widetilde{\alpha}(P(g_{1})).
$$
By Lemma \ref{minimal exponent in terms of hypersurface}, it follows that $\widetilde{\alpha}_{0}(Z)=\widetilde{\alpha}_{(u_{2},0)}(g_{2})$ for some $u_{2}\in \bbc$ and $\widetilde{\alpha}_{0}(Z)<\widetilde{\alpha}_{(u_{1},0)}(g_{1})$ for all $u_{1}\in \bbc$. 
This means that the only natural way the value of $\widetilde{\alpha}_{0}(Z)$ could be explained by the minimal exponent of a Newton non-degenerate hypersurface is if $g_{2}$ is Newton non-degenerate. However, $g_{2}$ is not Newton non-degenerate. To see this, identify the standard basis of $\bbr^{4}$ with $z_{1},x,y,z$, and identify $(\bbr^{4})^{*}$ with $\bbr^{4}$ via the standard evaluation pairing. If we put $Q'=F((5,1,2,2),P(g_{2}))$, then $g_{2,Q'}(z_{1},x,y,z) = x^6+2x^4y+x^2y^2$, which contains the point $(1,\sqrt{-1},1,1)\in \bbg_{m}^{4}$ in its singular locus.
\end{example}

\appendix

\end{document}